\providecommand{\BBb}[1]{{\mathbb{#1}}}
\providecommand{\cal}[1]{{\mathcal{#1}}}   
\newcommand{\ang}[1]{\langle#1\rangle}
\newcommand{\Bcirc}{\overset{\lower 1.5pt%
              \hbox{$@,@,@,@,@,\scriptscriptstyle\circ$}}B{}}
\newcommand{\Binfty}{\overset{\lower 1.5pt%
              \hbox{$@,@,@,@,@,\scriptscriptstyle\infty$}}B{}}
\newcommand{\bigdot}{\mathbin{\raise.65\jot\hbox{$\scriptscriptstyle\bullet$}}}
\newcommand{\B}{{\BBb B}}
\newcommand{\C}{{\BBb C}}
\newcommand{\dual}[2]{\langle\,#1,\,#2\,\rangle}
\newcommand{\erd}{\overset{\lower 1pt\hbox{\large.}}{e}
                  \overset{\lower 1pt\hbox{\large.}}{r}}
\newcommand{\Fcirc}{\overset{\lower 1.5pt%
               \hbox{$@,@,@,@,@,\scriptscriptstyle\circ$}}F{}}
\newcommand{\fracc}[2]{{
                \textstyle\frac{#1}{\raise 1pt\hbox{$\scriptstyle #2$}}}}
\newcommand{\fracnp}{\fracc np}
\newcommand{\fracci}[2]{{\frac{#1}{\raise 1pt\hbox{$\scriptscriptstyle #2$}}}}
\newcommand{\im}{\operatorname{i}}
\newcommand{\lap}{\operatorname{\Delta}}
\newcommand{\loc}{\operatorname{loc}}
\newcommand{\nrm}[2]{\|#1\|_{#2}}
\newcommand{\Nrm}[2]{\bigl\|#1\bigr\|_{#2}}
\newcommand{\norm}[2]{\mathinner{\|}#1\,|#2\|}
\newcommand{\op}[1]{\operatorname{#1}}
\newcommand{\OP}{\operatorname{OP}}
\newcommand{\N}{\BBb N}
\newcommand{\R}{{\BBb R}}
\newcommand{\Rn}{{\BBb R}^{n}}
\providecommand{\rom}[1]{\upn{#1}}
\newcounter{enmcount}\renewcommand{\theenmcount}{{\rm\arabic{enmcount}}}
\newcounter{rmcount}\renewcommand{\thermcount}{{\rm\roman{rmcount}}}
\newenvironment{rmlist}{%
\begin{list}{{\rm(\thermcount)}}{\setlength{\labelwidth}{\leftmargin}%
\usecounter{rmcount}}}{\end{list}}
\newcounter{Rmcount}\renewcommand{\theRmcount}{{\rm\Roman{Rmcount}}}
\newcommand{\scal}[2]{(\,#1\,|\, #2\,)}
\newcommand{\singsupp}{\operatorname{sing\,supp}}
\newcommand{\supp}{\operatorname{supp}}
\newcommand{\Z}{\BBb Z}
\renewcommand{\check}[1]{\overset{{\scriptscriptstyle \vee}}{#1}}
\renewcommand{\hat}[1]{\overset{{\scriptscriptstyle \wedge}}{#1}}
\renewcommand{\b}{\operatorname{b}}
\newcommand{\OPT}{\widetilde{\operatorname{OP}}}
\numberwithin{equation}{section}
\newtheorem{thm}{Theorem}
\numberwithin{thm}{section}
\newtheorem{prop}[thm]{Proposition}
\newtheorem{lem}[thm]{Lemma}
\newtheorem{cor}[thm]{Corollary}
\theoremstyle{definition}
\newtheorem{defn}[thm]{Definition}
\newtheorem{exmp}[thm]{Example}
 \numberwithin{exercise}{section}
\theoremstyle{remark}
\newtheorem{rem}[thm]{Remark}
\title[Type $1,1$-operators]{Type 1,1-operators defined 
by\\ vanishing frequency modulation}
\author{Jon Johnsen}
\address{Department of Mathematical Sciences, Aalborg University, 
Fredrik Bajers Vej 7G, DK-9220 Aalborg {\O}st, Denmark}
\subjclass[2000]{35S05}
\keywords{Exotic pseudo-differential operators, type $1,1$, pseudo-local,
spectral support rule, regular convergence, flipped wavefront sets%
\\[4\jot] {\tt Appeared in 
"New developments in pseudo-differential operators"
      (L.~Rodino, M.~W.~Wong) Birkh{\"a}user 2008. 
      Operator Theory: Advances and Applications, Vol.~189, pp.~201--246.}%
}
\begin{document}
 \begin{abstract}
This paper presents a general definition of pseudo-differential operators of
type $1,1$; the definition is shown to be the largest one that is both 
compatible with negligible operators and stable under vanishing frequency
modulation. Elaborating counter-examples of Ching, H{\"o}rmander and 
Parenti--Rodino, 
type $1,1$-operators with unclosable graphs are proved to exist; 
others are shown to lack the microlocal property as they flip the wavefront
set of an almost nowhere differentiable function. 
In contrast the definition is shown to imply the pseudo-local 
property, so type $1,1$-operators cannot create singularities but only
change their nature. The familiar rule
 that the support of the argument is transported by the support of 
the distribution kernel is generalised to arbitrary type $1,1$-operators. 
A similar spectral support rule is also proved. 
As no restrictions appear for classical type $1,0$-operators, 
this is a new result which in many cases makes it unnecessary to
reduce to elementary symbols. As an important tool, a convergent sequence of
distributions is said to converge regularly if it moreover converges as
smooth functions outside the singular support of the limit. This notion is
shown to allow limit processes in extended versions 
of the formula relating operators and kernels.
 \end{abstract}
\maketitle
\section{Introduction}   \label{intr-sect}
Pseudo-differential operators are generally well understood
as a result of extensive analysis since the mid 1960s;
but there is an exception for operators of type $1,1$.
These have symbols in the H{\"o}rmander class $S^d_{1,1}(\Rn\times\Rn)$,
which is sometimes called exotic because of the operators' atypical
properties.

Recall that a symbol $a(x,\eta)\in C^{\infty}(\R^{2n})$ belongs to 
$S^{d}_{1,1}(\Rn\times \Rn)$ if it for all multiindices $\alpha$, $\beta$
satisfies the estimates
\begin{equation}
  |D^\alpha_\eta D^\beta_x a(x,\eta)|\le C_{\alpha,\beta}
  (1+|\eta|)^{d-|\alpha|+|\beta|}.
\end{equation}
For such a symbol, $a(x,D)u=\OP(a)u=Au$ is defined at least for $u$
in the Schwartz space $\cal S(\Rn)$ by the usual integral,
whereby 
$\cal Fu(\xi)=\hat u(\xi)=\int_{\Rn}e^{-\im x\cdot\xi} u(x)\,dx$
denotes the Fourier transformation,
\begin{equation}
  a(x,D)u(x)=(2\pi)^{-n}\int_{\Rn} e^{\im \xi\cdot\eta}
                a(x,\eta)\cal F u(\eta)\,d\eta.
  \label{auint-eq}
\end{equation}
The purpose of the present article is to suggest a \emph{general} definition of
operators with type $1,1$-symbols; that is, to define $a(x,D)u$ for
$u$ in a maximal subspace $D(A)$ such that 
\begin{equation}
  \cal S(\Rn)\subset D(A)\subset\cal S'(\Rn).  
  \label{DA-eq}
\end{equation}
Seemingly this question has not been addressed directly before.
But as a fundamental contribution,
L.~H{\"o}rmander \cite{H88,H89} used $H^s$-estimates 
to extend type $1,1$-operators by continuity from $\cal S(\Rn)$
and characterised the possible $s$ up to a limit point.

For other questions it seems necessary to have an explicit definition 
of type $1,1$-operators. Consider eg the pseudo-local property,
\begin{equation}
  \singsupp Au\subset \singsupp u\quad\text{for all}\quad u\in  D(A).
  \label{psdlocal-eq}
\end{equation}
In the proof of this, it is of course of little use just to know the
action of $A$ on $u\in \cal S(\Rn)$, as both sets are 
empty for such $u$. And to apply the fact that the distribution
kernel $K(x,y)$ of $A$ is $C^\infty$  for $x\ne y$
one would have to know more on $A$ and its domain $D(A)$
than just \eqref{DA-eq}.

To give a brief account of the present contribution, let $\psi\in
C^\infty_0(\Rn)$ denote an auxiliary function for which $\psi=1$ in a
neighbourhood of the origin. 
Then the frequency modulated versions of $u\in \cal
S'(\Rn)$ and of $a(x,\eta)$ with respect to $x$ are given for $m\in \N$ by
\begin{align}
  u^m &=\psi(2^{-m}D)u=\cal F^{-1}(\psi(2^{-m}\cdot)\cal Fu)
  \label{um-id}  \\
  a^m &=\psi(2^{-m}D_x)a=\cal F^{-1}_{\xi\to x}(\psi(2^{-m}\xi)\cal
F_{x\to\xi} a(\xi,\eta)).
  \label{am-id}
\end{align}
Therefore $a(x,D)$ is said to be \emph{stable} under 
\emph{vanishing} frequency modulation if for every $u$ in its domain
\begin{equation}
  a^m(x,D)u^m \xrightarrow[m\to\infty]{~} a(x,D)u 
 \quad\text{in}\quad \cal D'(\Rn).
  \label{stabl-id}
\end{equation}
Whilst classical pseudo-differential operators have this property, the
purpose is to show that \eqref{stabl-id} can be used as a 
definition of $a(x,D)u$ when $a\in S^\infty_{1,1}(\Rn\times\Rn)$ is given;
hereby $D(a(x,D))$ consists of the $u\in \cal S'(\Rn)$ for which
the limit exists independently of $\psi$. 
The limit in \eqref{stabl-id} serves as a substitute of 
the usual extensions by continuity from $\cal S(\Rn)$. 

In this introduction it is to be understood in \eqref{stabl-id} that, 
for all $u\in \cal S'(\Rn)$, 
\begin{equation}
  a^m(x,D)u^m=\OP(a^m(x,\eta)\psi(2^{-m}\eta))u,  
  \label{amum-eq}
\end{equation}
where  the right-hand side is in $\OP(S^{-\infty})$.
The expression $a^m(x,D)u^m$ itself is brief, but problematic if
taken literally since also $a^m(x,\eta)\in S^\infty_{1,1}$.
However, using that $\supp\cal F(u^m)\Subset\Rn$, it will later be seen 
that $a^m(x,D)u^m$ can be defined via \eqref{amum-eq} and that this is
compatible with \eqref{stabl-id}; thenceforth $a^m(x,D)u^m$ will be a short
and safe notation.

The definition is discussed in detail below, and shown to imply that 
type $1,1$-operators are pseudo-local 
(cf \eqref{psdlocal-eq} and Theorem~\ref{psdloc-thm}).
In comparison they do not in general preserve wavefront sets, 
for following C.~Parenti and L.~Rodino~\cite{PaRo78} 
a version of a well-known example due to C.~H.~Ching 
is shown to flip 
the wavefront set $\op{WF}(w_\theta)=\Rn\times(\R_+\theta)$ 
into $\Rn\times(\R_+(-\theta))$ for
some $w_\theta$, that when the order $d\in \,]0,1]$
is an almost nowhere differentiable function.
 
Moreover the following well-known support rule
is extended to arbitrary $a(x,D)\in
\OP(S^\infty_{1,1})$ with distribution kernel $K$
(cf Theorem~\ref{supprule11-thm}),
\begin{equation}
  \supp a(x,D)u\subset\overline{\supp K\circ\supp u}
  \quad\text{for all}\quad u\in D(a(x,D)).
  \label{Ka-eq}
\end{equation}
Here $\supp K\circ\supp u:= \bigl\{\,x\in \Rn \bigm| 
\exists y\in \supp u\colon (x,y)\in \supp K\,\bigr\}$, whereby $\supp K$ is
thought of as a relation on $\Rn$ that maps, or transports, every set
$M\subset \Rn$ to the set $(\supp K)\circ M$ of everything related to an
element of $M$.

There is an analogous result which seems to be new, 
even for classical symbols $a\in S^\infty_{1,0}$.
It gives a \emph{spectral} support rule, 
relating frequencies $\xi\in \supp\cal F(Au)$ to those in 
$\supp\cal Fu$:
if only $u\in  D(A)$ is such that 
\eqref{stabl-id} holds in the topology of $\cal S'(\Rn)$, 
then (cf Theorem~\ref{supp-thm})
\begin{gather}
   \supp\cal F(a(x,D)u)\subset\overline{\Xi},
  \label{Xi-eq}
 \\
  \Xi=\bigl\{\,\xi+\eta \bigm| (\xi,\eta)\in \supp\cal F_{x\to\xi} a,\ 
     \eta\in \supp\cal F u \,\bigr\}.
  \label{Xi-id}
\end{gather}
This is highly analogous to \eqref{Ka-eq}, for 
$\Xi=\supp\cal K\circ\supp \cal Fu$, where $\cal K$ is the kernel of
the conjugated operator $\cal F a(x,D)\cal F^{-1}$.
There is a forerunner of \eqref{Xi-eq}--\eqref{Xi-id} 
in \cite{JJ05DTL}, where it was only possible to 
cover the case $\cal F u\in \cal E'(\Rn)$, as the information on $D(a(x,D))$
was inadequate without the definition in \eqref{stabl-id}.

The spectral support rule \eqref{Xi-eq}
often makes it possible to by-pass a reduction to elementary symbols,
that were introduced by  
R.~Coifman and Y.~Meyer~\cite{CoMe78} in order to
control spectra like $\supp \cal F a(x,D)u$ in the $L_p$-theory of
general pseudo-differential operators.
Use of \eqref{Xi-eq}--\eqref{Xi-id} simplifies the theory, for it
would be rather inconvenient to add in \eqref{stabl-id} an extra limit process
resulting from approximation of $a(x,\eta)$ by elementary symbols.

Both \eqref{Ka-eq} and \eqref{Xi-eq} are established as consequences of
the formula relating an operator $A$ to its kernel 
$K\in \cal D'(\Rn\times\Rn)$,
\begin{equation}
  \dual{Au}{v}=\dual{K}{v\otimes u}.
  \label{AKuv-id}
\end{equation}
It is shown below (cf Theorems~\ref{AK-thm} and \ref{supprule11-thm})
that also the right-hand side makes sense as it stands 
for $u\in \cal D'(\Rn)$, although $K$ and $v\otimes u$
are distributions then, as long as $v$ is a test function such that
\begin{equation}
  \supp K\bigcap \supp v\otimes u\Subset\Rn\times\Rn, \qquad
  \singsupp K\bigcap \singsupp v\otimes u=\emptyset.
  \label{ssuppK-eq}
\end{equation}
That \eqref{ssuppK-eq} suffices for \eqref{AKuv-id} follows from the
extendability of the bilinear 
form $\dual{\cdot}{\cdot}$ in distribution theory to pairs
$(u,f)$ fulfilling analogous conditions. This simple extension of
$\dual{u}{f}$ has the advantage that $\dual{u}{f^\nu}\to\dual{u}{f}$ when 
$u$ or $f$ has compact support and
$f^\nu\in C^\infty(\Rn)$ are such that 
\begin{equation}
  f^\nu\xrightarrow[\nu\to\infty ]{~}f 
\quad\text{both in $\cal D'(\Rn)$ and in $C^\infty(\Rn\setminus\singsupp f)$}. 
\end{equation}
Such sequences $(f^\nu)$ are below said to converge
\emph{regularly} to $f$; they are easily obtained by convolution. 
In these terms, $\dual{\cdot}{\cdot}$ is
stable under regular convergence if one entry is in $\cal E'$. 

This set-up is convenient for the derivation of
\eqref{AKuv-id}--\eqref{ssuppK-eq} for type $1,1$-operators. 
Indeed, the kernel $K_m$ of the approximating operator
$a^m(x,D)u^m$ equals $K*\cal F^{-1}(\psi_m\otimes\psi_m)$ 
conjugated by the coordinate change $(x,y)\mapsto (x,x-y)$, 
so that $K_m$ converges regularly to $K$; 
whence \eqref{AKuv-id} results in the limit $m\to\infty $.
Based on this the support rules \eqref{Ka-eq}--\eqref{Xi-eq} follow in a
natural way.

However, the simple criterion in \eqref{ssuppK-eq} and its stability under
regular convergence, that might be known,
could be useful also for other questions. 

\smallskip

The main contributions in this paper consist first of all of the definition
\eqref{stabl-id} and the spectral support rule
\eqref{Xi-eq} ff; secondly of the proofs of pseudo-locality 
\eqref{psdlocal-eq} and the support rule \eqref{Ka-eq} as well as
the extension of the kernel formula
\eqref{AKuv-id}--\eqref{ssuppK-eq}. Moreover, $a(x,D)u$ is shown to be
compatible with the usual pseudo-differential operators 
(cf Sections~\ref{basic-sect}--\ref{gdef-sect}).

In addition there are various improvements of known results 
on type $1,1$-operators. This overlap is elucidated  
(in parenthetic remarks)
in the next section.

\subsection{On known results for type $1,1$-operators}   \label{history-ssect}
The pathologies of type $1,1$-operators were revealed around 1972--73.
On the one hand, C.~H.~Ching \cite{Chi72} gave examples of symbols 
$a\in S^0_{1,1}$ for which
the corresponding operators are unbounded from $L^2(\Rn)$ to $L^2(K)$ for
every $K\Subset\Rn$ (they can moreover be taken \emph{unclosable} in 
$\cal S'(\Rn)$, as shown in Lemma~\ref{cex-lem} below).
On the other hand, E.~M.~Stein (1972-73) showed
$C^s$-boundedness%
\footnote{Noted by Y.~Meyer \cite{Mey80},
with reference to lecture notes at Princeton
1972/73. E.~M.~Stein stated the $C^s$-result in \cite[VII.1.3]{Ste93};
at the end of Ch.~VII its origins were given as ``Stein [1973a]''
(that is
\emph{Singular integrals and estimates for the
Cauchy-Riemann equations}, Bull. Amer. Math. Soc. 79 (1973), 440--445)
but probably should have been
``Stein [1973b]'':
``\emph{Pseudo-differential operators}, Notes by D.H. Phong for a
course given at Princeton University 1972-73''.
}
for $s>0$ and orders $d=0$.

Afterwards C.~Parenti and L.~Rodino \cite{PaRo78} discovered that some type
$1,1$-operators do not preserve wavefront sets 
(cf Section~\ref{WF-ssect} 
where  this result of \cite{PaRo78} is extended to all $d\in \R$, $n\in \N$).
The pseudo-local property of type $1,1$-operators was also claimed in
\cite{PaRo78}, but not backed up  by adequate arguments; 
cf Remark~\ref{psdloc-rem} below.
(The question is therefore taken up in Theorem~\ref{psdloc-thm}, where
the first full proof is given.)

Around 1980, Y.~Meyer \cite{Mey80,Mey81}
obtained the famous property that a composition
operator $u\mapsto F(u)$, for a fixed $C^\infty$-function $F$ with 
$F(0)=0$, acting on $u\in H^s_{p}(\Rn)$ for $s>n/p$, can be
written 
\begin{equation}
  F(u)=a_u(x,D)u  
\end{equation}
for a specific $u$-dependent symbol $a_u\in S^0_{1,1}$. Namely, when
$1=\sum_{j=0}^\infty \Phi_j$ is a Littlewood--Paley partition of unity,
\begin{equation}
  a_u(x,\eta)=\sum_{j=0}^\infty m_j(x)\Phi_j(\eta),
\qquad
  m_j(x)=\int_0^1 F'(\sum_{k<j}\Phi_k(D)u(x)+t\Phi_j(D)u(x))\,dt.
\end{equation}
This gave a convenient proof of the fact that $u\mapsto F(u)$ maps
$H^s_p(\Rn)$ into itself for $s>n/p$. Indeed, this follows as
Y.~Meyer for general $a\in S^d_{1,1}$, 
using reduction to elementary symbols,
established continuity 
\begin{equation}
 H^{s+d}_p(\Rn)\xrightarrow[]{a(x,D)} 
 H^s_p(\Rn)\qquad\text{for $s>0$, $1<p<\infty$}. 
  \label{Meyer-eq}
\end{equation}
(In Section~\ref{composite-ssect} 
these results are deduced from
the definition in \eqref{stabl-id}, and continuity on $H^s_p$ of 
$u\mapsto F\circ u$ is added in a straightforward way
in Theorem~\ref{composite-thm}.)
It was also realised then that type $1,1$-operators show up in J.-M.~Bony's 
paradifferential calculus \cite{Bon}
of non-linear partial differential equations.

In the wake of this, T.~Runst \cite{Run85ex} treated the continuity 
in Besov spaces $B^{s}_{p,q}$ for $p\in \,]0,\infty]$ 
and in Lizorkin--Triebel spaces $F^{s}_{p,q}$ for $p\in \,]0,\infty[\,$,
although the necessary control of the frequency changes created by $a(x,D)$
was not quite achieved in \cite{Run85ex}.
(This flaw was explained and remedied in \cite{JJ05DTL}
by means of a less general version of \eqref{Xi-eq}.)

Around the same time G.~Bourdaud proved that 
a type $1,1$-operator
$a(x,D)\colon C^\infty_0(\Rn)\to\cal D'(\Rn)$ 
of order $0$
is $L_2$-bounded  
if and only if its adjoint $a(x,D)^*\colon C^\infty_0(\Rn)\to\cal D'(\Rn)$ 
is also a type $1,1$-operator; cf \cite{Bou83}, \cite[Th~3]{Bou88}. 

Except for a limit point,  L.~H{\"o}rmander 
characterised the $s\in \R$ for which a given $a\in S^d_{1,1}$ is bounded
$H^{s+d}\to H^s$; cf \cite{H88,H89} and also \cite{H97} where 
a few improvements are added. 
As a novelty in the analysis, an important role was shown to be
played by the twisted diagonal
\begin{equation}
  \cal T=\{\,(\xi,\eta)\in \Rn\times\Rn\mid \xi+\eta=0\,\}.
\end{equation} 
Eg, if the partially Fourier transformed symbol
$\hat a(\xi,\eta):=\cal F_{x\to\xi}a(x,\eta)$ 
vanishes in a conical neighbourhood of a non-compact part of $\cal T$, ie if 
\begin{equation}
  \exists C\ge1\colon C(|\xi+\eta|+1)< |\eta|\implies \hat a(x,\eta)=0,
  \label{TDC-cnd}
\end{equation}
then $a(x,D)\colon H^{s+d}\to H^s$ is continuous for
every $s\in \R$. Moreover, continuity for all $s>s_0$ was shown to be
equivalent to a specific asymptotic behaviour of 
$\hat a(\xi,\eta)$ at $\cal T$.  
For operators  with additional properties,
a symbolic calculus was also developed together with a sharp G{\aa}rding
inequality; cf \cite{H88,H89,H97}. 

For \emph{domains} of type $1,1$-operators, the scale $F^{s}_{p,q}(\Rn)$
of Lizorkin--Triebel
spaces  was recently shown to play a role, for
it was proved in \cite{JJ04DCR,JJ05DTL} that for all $p\in
[1,\infty[\,$, every $a\in S^d_{1,1}$ gives a bounded linear map 
\begin{equation}
  F^d_{p,1}(\Rn)\xrightarrow[]{a(x,D)} L_p(\Rn).
  \label{Fdp1-eq}
\end{equation}
This is a substitute of boundedness from
$H^d_p$ (or of $L_p$-boundedness for $d=0$), as 
$H^s_p=F^s_{p,2}\supsetneq F^s_{p,1}$ for $1<p<\infty$.
Inside the $F^{s}_{p,q}$ and $B^{s}_{p,q}$ scales,
\eqref{Fdp1-eq} gives maximal domains for
$a(x,D)$ in $L_p$, for it was noted in \cite[Lem.~2.3]{JJ05DTL} 
that already Ching's 
operator is discontinuous from $F^d_{p,q}$ to $\cal D'$ 
and from $B^{d}_{p,q}$ to $\cal D'$ for every $q>1$. 
Continuity was proved in \cite{JJ05DTL} for $s>\max(0,\fracnp-n)$,
$0<p<\infty$, as a map
\begin{equation}
  F^{s+d}_{p,q}(\Rn)  \xrightarrow[]{a(x,D)} 
  F^s_{p,r}(\Rn)   \quad\text{for}\quad r\ge q,\,
  r>\tfrac{n}{n+s}.
  \label{Fspq-eq}
\end{equation}
Moreover, \eqref{TDC-cnd} was shown to imply \eqref{Fspq-eq} 
for every $s\in \R$, $r=q$. Analogously for $B^{s}_{p,q}$.
(In Section~\ref{LP-ssect} it is shown how the techniques behind
\eqref{Fspq-eq} apply in the present set-up, and as a special case
\eqref{Meyer-eq} is rederived in this way; cf Theorem~\ref{Hsp-thm}.)

As indicated, a general definition of $a(x,D)u$ for a given symbol $a\in
S^d_{1,1}(\Rn\times\Rn)$ seems to have been unavailable hitherto.
L.~H{\"o}rmander \cite{H88,H89} estimated $Au$ for arbitrary 
$u\in \cal S(\Rn)$
in the $H^s$-scale, which of course gives a uniquely defined bounded
operator $A\colon H^{s+d}\to H^{s}$; and an extension of $A$ to 
$\bigcup_{s>s_0} H^{s+d}(\Rn)$ for some limit $s_0$ or possibly even
$s_0=-\infty$, depending on $a$.

R.~Torres \cite{Tor90} also estimated $Au$ for $u\in \cal S(\Rn)$,
using the framework of M.~Frazier and B.~Jawerth \cite{FJ1,FJ2}.
This gave unique extensions by continuity to maps $F^{s+d}_{p,q}(\Rn)\to
F^{s}_{p,q}(\Rn)$ for all $s$ so large that, for all multiindices $\gamma$,
\begin{equation}
  0\le|\gamma|<\max(0,\frac np-n,\frac nq-n)-s
  \implies A^*(x^\gamma)=0.
\end{equation}
(As noted in \cite{Tor90}, this is related to the conditions 
imposed at
the twisted diagonal $\cal T$ in the works of 
L.~H{\"o}rmander.) This approach will at most
define $A$ on $\bigcup F^{s}_{p,q}(\Rn)$. 

In addition it was shown in \cite[Prop.~1]{JJ05DTL} that every type
$1,1$-operator $A$ extends to the space $\cal F^{-1}\cal E'(\Rn)$.
(Extension to $\cal F^{-1}\cal E'$ is also considered in
Section~\ref{basic-sect} in connection with compatibility questions.)
Clearly $\cal F^{-1}\cal E'$ contains all polynomials 
$\sum_{|\alpha|\le k}c_\alpha x^\alpha$,
and these do not 
belong to $\bigcup H^s$, nor to $\bigcup F^{s}_{p,q}$, 
so this development only emphasises the need for a
general definition of type $1,1$-operators, without reference to spaces
other than $\cal S'(\Rn)$.

\subsection{Remarks on the construction}   \label{construction-ssect}
As indicated above, the extension of an operator $a(x,D)$ of type $1,1$ from
the Schwartz space 
$\cal S(\Rn)$ to a larger domain $D(a(x,D))$ in $\cal S'(\Rn)$ 
can roughly be made as follows:

Introducing $a^m(x,\eta)=\cal F^{-1}_{\xi\to x}(\hat
a(\xi,\eta)\psi_m(\xi))$, $\psi_m=\psi(2^{-m}\cdot)$
for a cut-off function $\psi\in C^\infty_0(\Rn)$ with $\psi=1$
around the origin, then $a(x,D)u$ is defined 
when $u\in \cal S'(\Rn)$ is such that
$a_{\psi}(x,D)u=\lim_{m\to\infty }\OP(a^m(x,\eta)\psi_m(\eta))$ 
exists in $\cal D'(\Rn)$ and does not depend on $\psi$. 
And in the affirmative
case, 
\begin{equation}
  a(x,D)u:=a_{\psi}(x,D)u=\lim_{m\to\infty} \OP(a^m(x,\eta)\psi_m(\eta))u.
  \label{aulim-eq}
\end{equation}
Fundamentally, the role of $a^m(x,\eta)$ is to
make the domain of $a(x,D)$ as large as possible: 
since $a(x,\eta)$ is less special than $a^m(x,\eta)$,
the demands on the pair $(a,u)$ would be stronger if only the
$\OP(a(x,\eta)\psi_m(\eta))u$ were required to converge. 
And the domain of $a(x,D)$ would possibly also be
smaller, had not the same $\psi$ been used twice to form 
$a^m(x,\eta)\psi_m(\eta)$. Finally, to take the limit in $\cal
S'(\Rn)$ instead might also exclude some $u$ from $D(a(x,D))$.
(However, the $\cal D'$-limit makes it more demanding to justify
compositions $b(x,D)a(x,D)$ of type $1,1$-operators.)

Although \eqref{aulim-eq} is an unconventional definition, it is not
as arbitrary as it may seem. In fact, cf
Theorem~\ref{uniq-thm} below, the resulting map $a\mapsto a(x,D)$,
$a\in S^\infty_{1,1}$, can be characterised as the \emph{largest} extension
of \eqref{auint-eq} that both gives operators 
\emph{stable} under vanishing frequency modulation and 
is \emph{compatible} with $\OP$ on $S^{-\infty}$. 
For $\delta<\rho$ it is even
compatible with the classes $\OP(S^\infty_{\rho,\delta})$
 in a certain local sense, termed strong compatibility below.

In addition to this, there are at least three simple indications that the
definition is reasonable. First of all, if the symbol $a(x,\eta)$ is
classical, say $a\in S^\infty_{1,0}$, then the usual $\cal
S'$-continuous extension of $\OP(a)$ fulfils
$\OP(a^m(x,\eta)\psi^m(\eta))u\to \OP(a)u$ as a consequence of standard
facts (cf Proposition~\ref{Stan-prop} below).

Secondly, the definition also gives back the usual product $au$, when $a(x)$
is a symbol in $S^\infty_{1,1}$ independent of $\eta$. In fact $a\in
C^\infty_{\b}(\Rn)$ then, and since $a^m(x)\psi_m(\eta)\in
S^{-\infty}(\Rn\times\Rn)$, every $u\in \cal S'$ gives the following
\begin{equation}
  \OP(a^m(x)\psi_m(\eta))u= a^m\cdot\cal F^{-1}(\psi_m\hat u)=a^m u^m
  \xrightarrow[m\to\infty]{~} au.
  \label{aulim'-eq}
\end{equation}
So despite the apparent asymmetry in $\OP(a^m\psi_m)u$, where only the symbol
is subjected to frequency modulation, 
the definition is consistent with the product $au$. 
However, the expression $a^m(x,D)u^m$, that enters \eqref{stabl-id}, 
is symmetric in this sense.

Thirdly, continuity properties of $a(x,D)$ can be
conveniently analysed using Littlewood--Paley techniques applied to both the
symbol $a$ and the distribution $u$. This is facilitated 
because the Fourier multiplication 
by $\psi_m$ occurs in both entries of $a^m(x,D)u^m$. Indeed, one can take
$\psi_m$ to be the first $m+1$ terms in a Littlewood--Paley partition of
unity $1=\sum_{j=0}^\infty \Phi_j$; then bilinearity gives a direct
transition to the paradifferential splitting that has been used repeatedly
for $L_p$ continuity results since the 1980s. The reader is referred to
Section~\ref{cont-sect} for details.

\begin{rem}
  \label{prod-rem}
Analogously to \eqref{aulim-eq}, 
there is an extension of the pointwise
product $(u_1,u_2)\mapsto u_1u_2$, where $u_j\in L_{p_j}^{\loc}(\Rn)$ for
$j=1,2$ with $\tfrac{1}{p_1},\tfrac{1}{p_2},\tfrac{1}{p_1}+\tfrac{1}{p_2}
\in [0,1]$, to the pairs $(u,v)$ in $\cal S'(\Rn)\times\cal S'(\Rn)$ for which
there is a $\psi$-independent limit
\begin{equation}
  \pi(u,v):=\lim_{m\to\infty} u^m v^m
\end{equation}
This general product $\pi(u,v)$ was 
introduced and extensively analysed with paramultiplication in 
\cite{JJ94mlt}; eg 
the convergence in \eqref{aulim'-eq} follows directly from 
\cite[Prop.~3.6]{JJ94mlt}. By \eqref{aulim'-eq} one recovers $\pi(a,u)$ from
\eqref{aulim-eq} when the symbol $a(x,\eta)$ is independent of $\eta$.
(An open question for $\pi(\cdot,\cdot)$ is settled in Theorem~\ref{pi-thm},
where partial associativity is proved from the fact that multiplication by
$C^\infty $-functions commutes with vanishing frequency modulation.)
\end{rem}

\bigskip

The definition sketched in \eqref{stabl-id}
was used rather implicitly in recent works of the
author \cite{JJ04DCR,JJ05DTL}.
In the present article, the purpose is to introduce the
definition of $a(x,D)u$ in \eqref{aulim-eq} systematically and to show that 
it is consistent with \eqref{auint-eq}.

Section~\ref{prep-sect} gives a review of notation and some preparations,
whereas in Section~\ref{special-sect} the special properties of
type $1,1$-operators are elaborated. Section~\ref{basic-sect} deals with
preliminary extensions of type $1,1$-operators,
using cut-off techniques.  The general definition of $a(x,D)$ is
given in Section~\ref{gdef-sect}, where it is proved to be
consistent with the usual one if, say $a(x,\eta)$ coincides (for $\eta$
running through an open set $\Sigma\subset\Rn$)
with an element of the classical symbol
class $S^d_{1,0}$, or  $S^d_{\rho,\delta}$ with
$\rho>\delta$. 
Section~\ref{pslp-sect} contains the proof of the pseudo-local property.
As a preparation, extended action of the bracket $\dual{\cdot}{\cdot}$ from
distribution theory is studied in Section~\ref{exdist-sect}, with
consequences for distribution kernels.
A control of $\supp a(x,D)u$ is proved in Section~\ref{supp-sect}, as
is the spectral support rule in a general version. 
Finally Section~\ref{cont-sect} deals with continuity in the Sobolev spaces
$H^s_p$ and a quick review of the consequences for composite functions.

\section{Notation and Preparations} \label{prep-sect}
The distribution spaces $\cal E'$, $\cal S'$ and $\cal D'$, that are dual to
$C^\infty$, $\cal S$ and $C^\infty_0$ respectively, have the usual meaning
as in eg \cite{H}. $\cal O_M(\Rn)$ stands for the space of slowly increasing
functions, ie the $f\in C^\infty (\Rn)$ such that to every multiindex
$\alpha$ there are $C_\alpha>0$, $N_\alpha>0$ such that $|D^\alpha f(x)|\le
C_\alpha(1+|x|)^{N_\alpha}$ for all $x\in \Rn$.
In addition $C^\infty_{\b}$ denotes the 
Frech\'et  space of smooth
functions with bounded derivatives of any order.
The Sobolev space $H^s_p(\Rn)$ with $s\in \R$ and $1<p<\infty $ is normed by
$\nrm{u}{H^s_p}=\nrm{\cal F^{-1}((1+|\xi|^2)^{s/2}\cal Fu)}{p}$, whereby
$\nrm{u}{p}=(\int_{\Rn}|u|^p\,dx)^{1/p}$ is the norm of $L_p(\Rn)$;
similarly $\nrm{\cdot }{\infty }$ denotes that of $L_\infty (\Rn)$.
That a subset $M$ of $\Rn$ has compact closure is indicated by $M\Subset \Rn$.
As usual $c$ denotes a real constant specific to the place of occurrence.

With the short-hand
$\ang{\xi}=(1+|\xi|^2)^{1/2}$, a symbol $a(x,\eta)$ is said to be in
$S^d_{\rho,\delta}(\Rn\times\Rn)$ if 
$a\in C^\infty(\R^{2n})$
and for all multiindices $\alpha$, $\beta$ there exists
$C_{\alpha,\beta}\ge0$ such that
\begin{equation}
  |D^\alpha_{\xi}D^{\beta}_{x}a(x,\xi)|\le C_{\alpha,\beta}
   \ang{\xi}^{d-\rho|\alpha|+\delta|\beta|}.
  \label{Sdrd-eq}
\end{equation}
Here it is assumed that the order $d\in \R$ and $0<\rho\le1$,
$0\le\delta\le1$ with $\delta\le\rho$,
which is understood throughout unless further restrictions are given.

Along with this there is a pseudo-differential operator $a(x,D)$ defined on
every $u$ in
the Schwartz space $\cal S(\Rn)$ by the Lebesgue integral
\begin{equation}
  a(x,D)u(x)=\OP(a)u(x)= (2\pi)^{-n}\int_{\Rn} e^{\im x\cdot\eta} a(x,\eta)\hat
           u(\eta)\,d\eta.
  \label{axDu-eq}
\end{equation}
Here $\eta$ is the dual variable to $y\in \Rn$ ($u$ is seen as a function of
$y$), while $\xi$ is used for the dual variable to $x$.
If $\psi\in C^\infty_0(\Rn)$ and $\psi=1$ near $0$, then
$\psi_m=\psi(2^{-m}\cdot)$ gives:

\begin{lem}   \label{am-lem}
$a^m(x,\eta)=\psi_m(D_x)a(x,\eta)$
belongs to $S^d_{\rho,\delta}(\Rn\times\Rn)$ when $a$ itself does so, and
$a(x,\eta)=\lim_{m\to\infty}a^m(x,\eta)\psi_m(\eta)$ holds in
$S^{d'}_{\rho,\delta}(\Rn\times\Rn)$ when $d'\ge d+\delta$ and
$d'>d$.
\end{lem}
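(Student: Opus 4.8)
The plan is to treat the two assertions separately, using only the mapping properties of the Fourier multiplier $\psi_m(D_x)$ acting in the $x$-variable and the standard characterisation of convergence in symbol spaces. For the first assertion, I would fix multiindices $\alpha,\beta$ and write $D_x^\beta a^m(x,\eta) = \psi_m(D_x)(D_x^\beta a)(x,\eta)$, since Fourier multipliers commute with differentiation. Then $\psi_m(D_x) = \cal F^{-1}_{\xi\to x}\psi_m(\xi)\cal F_{x\to\xi}$ is convolution in $x$ with $\cal F^{-1}(\psi_m) = 2^{mn}(\cal F^{-1}\psi)(2^m\cdot)$, which is an $L_1(\Rn)$-function of norm $\nrm{\cal F^{-1}\psi}{1}$ independent of $m$. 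Therefore $\Nrm{D^\alpha_\eta D^\beta_x a^m(\cdot,\eta)}{\infty}\le \nrm{\cal F^{-1}\psi}{1}\,\Nrm{D^\alpha_\eta D^\beta_x a(\cdot,\eta)}{\infty}$, and the right-hand side is bounded by $C_{\alpha,\beta}\ang{\eta}^{d-\rho|\alpha|+\delta|\beta|}$ by hypothesis \eqref{Sdrd-eq}; this gives $a^m\in S^d_{\rho,\delta}$ with the \emph{same} seminorms, uniformly in $m$. (Smoothness of $a^m$ in $(x,\eta)$ is clear, as convolution in $x$ against a Schwartz function preserves $C^\infty$.)

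For the second assertion, I would reduce to showing that the seminorms of $b_m:=a - a^m(x,\eta)\psi_m(\eta)$ in $S^{d'}_{\rho,\delta}$ tend to $0$. Split $b_m = (a - a^m\psi_m) = (1-\psi_m(\eta))a + \psi_m(\eta)(a - a^m)$. The first piece is supported in $|\eta|\gtrsim 2^m$, where one gains: for any multiindex, $|D^\alpha_\eta D^\beta_x((1-\psi_m(\eta))a)|\lesssim \ang{\eta}^{d-\rho|\alpha|+\delta|\beta|}$ on $|\eta|\ge c2^m$ (the $\eta$-derivatives hitting $\psi_m$ only help, as $|D^\gamma_\eta\psi_m|\le C_\gamma 2^{-m|\gamma|}$), so dividing by $\ang{\eta}^{d'-\rho|\alpha|+\delta|\beta|}$ with $d'>d$ yields a factor $\ang{\eta}^{d-d'}\le (c2^m)^{d-d'}\to0$. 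For the second piece, the key estimate is $\Nrm{D_x^\beta(a - a^m)(\cdot,\eta)}{\infty}\le C_{N,\beta}\,2^{-mN}\ang{\eta}^{d+\delta|\beta|+N\delta}$ for every $N$: indeed $(1-\psi_m(D_x))D^\beta_x a$ is, for $|\xi|<c2^m$, an integral of $\cal F_{x\to\xi}D_x^\beta a$ against $1-\psi_m$, and writing $1-\psi_m(\xi) = \ang{\xi}^{-2N}(1-\psi_m(\xi))\ang{\xi}^{2N}$ one trades $2N$ powers of $|\xi|\ge c2^m$ (a factor $2^{-2mN}$) for $2N$ $x$-derivatives on $a$, each costing at most $\ang{\eta}^\delta$. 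Choosing $N$ large and dividing by $\ang{\eta}^{d'+\delta|\beta|}$ with $d'\ge d+\delta$ absorbs the surplus $\eta$-powers while keeping a factor $2^{-mN'}$ with $N'>0$; an analogous estimate after applying $D^\alpha_\eta$ (which, by the first part, only redistributes $\eta$-powers in the same way) handles general seminorms. Hence all seminorms of $b_m$ in $S^{d'}_{\rho,\delta}$ vanish as $m\to\infty$.

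The main obstacle is the second piece of the splitting: one must quantify how fast $a^m\to a$ in $x$, and the only gain available is in the $\xi$-variable where $1-\psi_m$ lives, so the trade-off $2^{-2mN}$ versus $\ang{\eta}^{2N\delta}$ (from converting $\xi$-decay into $x$-derivatives) is exactly why the loss $d'\ge d+\delta$ in the order is needed, and why $d'>d$ alone would not suffice when $\delta>0$. Once this quantitative convergence lemma is in place, combining it with the trivial $(1-\psi_m(\eta))$-tail estimate and noting that all constants are uniform in $m$ completes the argument.
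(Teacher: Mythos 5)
Your proof is correct and follows essentially the same route as the paper's: the uniform $S^d_{\rho,\delta}$-seminorms of $a^m$ come from the $m$-independent $L_1$-norm of $\cal F^{-1}\psi_m$, and the convergence is obtained from the same two-piece splitting (an $\eta$-tail supported where $|\eta|\gtrsim 2^m$, plus the difference $a-a^m$), the only difference being that the paper quantifies $a^m\to a$ in $S^{d+\delta}_{\rho,\delta}$ by the mean value theorem (one $x$-derivative, a factor $2^{-m}$) whereas you do it on the Fourier side. One small caution: trading a \emph{single} power of $\ang{\xi}$ already closes the argument, since $2^{-m}\ang{\eta}^{d+\delta+\delta|\beta|-d'-\delta|\beta|}\le 2^{-m}$ for $d'\ge d+\delta$; for your ``$N$ large'' the surplus $\ang{\eta}^{(N-1)\delta}$ is unbounded in $\eta$ when $\delta>0$ and is only absorbed because $\psi_m(\eta)$ restricts to $|\eta|\lesssim 2^m$ --- that step should be invoked explicitly.
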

\begin{proof}
Since $a\in S^d_{\rho,\delta}$  is bounded with respect to $x$, the first
part results from
\begin{equation}
  |D^\beta_xD^\alpha_\eta a^m(x,\eta)|
  \le \int |\check\psi(y)|
           |D^\beta_x D^\alpha_{\eta}a(x-2^{-m}y,\eta)|\,dy
    \le C'_{\alpha,\beta} \ang{\eta}^{d-\rho|\alpha|+\delta|\beta|}.
  \label{am-ineq}
\end{equation}
Since $\psi(0)=1$ the mean value theorem gives $a^m\to a$ in
$S^{d+\delta}_{\rho,\delta}$; and for any $d'>d$ one has
$a^m(x,\eta)\psi_m(\eta)-a^m(x,\eta)\to0$ in $S^{d'}_{\rho,\delta}$; 
whence $a^m(x,\eta)\psi_m(\eta)- a\to 0$. 
\end{proof}

It is straightforward to show from \eqref{axDu-eq} that the bilinear map
\begin{equation}
  \OP\colon S^d_{\rho,\delta}(\Rn\times\Rn)\times\cal S(\Rn)
  \longrightarrow\cal S(\Rn)  
  \label{bilSS-eq}
\end{equation}
is continuous.
Hereby $\cal S(\Rn)$ has a Fr\'echet space structure with seminorms
\begin{equation}
  \norm{\psi}{\cal S,N}=\sup\{\,|\ang{x}^ND^\beta\psi(x)|\mid
   x\in \Rn,\,|\beta|\le N\,\};  
  \label{Snorm-eq}
\end{equation}
whilst $S^d_{1,1}(\Rn\times\Rn)$ is a Fr\'echet space with
the least $C_{\alpha,\beta}$ in
\eqref{Sdrd-eq} as seminorms. 

With $a$ fixed in $S^\infty_{1,1}:=\bigcup_d S^d_{1,1}$ the map
\eqref{bilSS-eq} induces a continuous operator 
$a(x,D)\colon \cal S(\Rn)\to\cal S(\Rn)$, 
that cannot in general be extended to a continuous map 
$\cal S'(\Rn)\to \cal D'(\Rn)$; this is well known cf
Section~\ref{special-sect} below.

The next lemma extends \cite[Lem.~8.1.1]{H} from $u\in \cal E'(\Rn)$
to general $u\in \cal S'(\Rn)$. The extension is irrelevant for the
definition of wavefront sets $\op{WF}(u)$, but useful for calculations.
It is hardly a surprising result, but
without an adequate reference a proof is given here.
Recall that $V\subset\Rn$ is a cone if $\R_+V\subset V$. Throughout
$\R_{\pm}=\{\,t\in \R\mid {\pm}t>0\,\}$. 

First the singular cone $\Sigma(u)$ is defined as the complement in
$\Rn\setminus\{0\}$ of those $\xi\ne0$ contained in an open cone 
$\Gamma\subset\Rn\setminus\{0\}$ fulfilling that 
$\hat u$ is in $L_1^{\loc}$ over $\Gamma$ and
\begin{equation}
  C_N:= \sup_{\Gamma}\ang{\eta}^N|\hat u(\eta)|<\infty,\qquad N>0.
\end{equation}  
Then $\Sigma(u)=\emptyset$ when $u\in \cal S(\Rn)$, and only then (the unit
sphere $\BBb{S}^{n-1}$ is compact).

\begin{lem}
  \label{WF-lem}
Whenever $u\in \cal S'(\Rn)$, then $\Sigma(\varphi u)\subset \Sigma(u)$
for all $\varphi\in C^\infty_0(\Rn)$, and
\begin{equation}
  \op{WF}(u)\subset \singsupp u\times \Sigma(u).
  \label{WFsS-eq}
\end{equation}
\end{lem}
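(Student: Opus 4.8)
The plan is to reduce both assertions to the classical local statement in \cite[Lem.~8.1.1]{H}, which handles the case of compactly supported distributions, by a localisation argument. First I would establish the inclusion $\Sigma(\varphi u)\subset\Sigma(u)$ for $\varphi\in C^\infty_0(\Rn)$. The point is that $\widehat{\varphi u}=(2\pi)^{-n}\hat\varphi * \hat u$, and $\hat\varphi$ is a Schwartz function. So one fixes a direction $\xi_0\notin\Sigma(u)$, chooses an open conic neighbourhood $\Gamma$ of $\xi_0$ over which $\hat u\in L^1_{\op{loc}}$ with rapid decay estimates $C_N=\sup_\Gamma\ang{\eta}^N|\hat u(\eta)|<\infty$, and then takes a slightly narrower open cone $\Gamma'\Subset\Gamma$ containing $\xi_0$. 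On $\Gamma'$ one splits the convolution integral as $\int_{\Gamma}+\int_{\Rn\setminus\Gamma}$: over $\Gamma$ one uses the rapid decay of $\hat u$ there, while over the complement one uses that for $\eta\in\Gamma'$ and $\zeta\notin\Gamma$ the difference $|\eta-\zeta|$ is comparable to $\ang\eta+\ang\zeta$, so that the Schwartz decay of $\hat\varphi$ (applied with enough derivatives to absorb the polynomial growth of $\hat u$ coming from $u\in\cal S'$) yields the required bound $\sup_{\Gamma'}\ang\eta^N|\widehat{\varphi u}(\eta)|<\infty$ for every $N$. Hence $\xi_0\notin\Sigma(\varphi u)$.

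Next, for \eqref{WFsS-eq} I would argue as follows. Suppose $(x_0,\xi_0)\notin\singsupp u\times\Sigma(u)$; I must show $(x_0,\xi_0)\notin\op{WF}(u)$. There are two cases. If $x_0\notin\singsupp u$, then $u$ is $C^\infty$ near $x_0$, so trivially $(x_0,\xi_0)\notin\op{WF}(u)$ (choose $\varphi\in C^\infty_0$ equal to $1$ near $x_0$ supported in the region where $u$ is smooth; then $\varphi u\in C^\infty_0$ and $\widehat{\varphi u}$ is Schwartz). If instead $\xi_0\notin\Sigma(u)$, pick any $\varphi\in C^\infty_0(\Rn)$ with $\varphi(x_0)\ne0$; by the first part $\Sigma(\varphi u)\subset\Sigma(u)$, so $\xi_0\notin\Sigma(\varphi u)$, which by the definition of $\Sigma$ for the compactly supported distribution $\varphi u$ means precisely that $\widehat{\varphi u}$ has rapid decay in an open conic neighbourhood of $\xi_0$; that is exactly the statement $(x_0,\xi_0)\notin\op{WF}(u)$. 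In either case $(x_0,\xi_0)$ lies outside the wavefront set, which gives the claimed inclusion.

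The main obstacle is the convolution estimate in the first part: one must show that the "far" contribution $\int_{\zeta\notin\Gamma}\hat\varphi(\eta-\zeta)\hat u(\zeta)\,d\zeta$, for $\eta$ ranging in the smaller cone $\Gamma'$, decays rapidly in $\eta$ despite $\hat u$ being merely a tempered distribution (so only polynomially bounded after smoothing, or more precisely of finite order). The key geometric fact making this work is that $\eta\in\Gamma'$, $\zeta\notin\Gamma$ forces an angular separation bounded below, whence $|\eta-\zeta|\ge c(|\eta|+|\zeta|)$ for a fixed $c>0$; combined with the Schwartz decay of $\hat\varphi$ — which beats any fixed polynomial in $\ang\zeta$ coming from the order of $u$ as well as any desired power $\ang\eta^{-N}$ — this yields the bound. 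Care is needed to make the integral absolutely convergent; one can first replace $u$ by $\chi(D)u$ with $\chi$ a suitable cutoff, or simply note that $\hat u$ acts on the Schwartz function $\zeta\mapsto\hat\varphi(\eta-\zeta)$ and use the seminorm estimates for $\cal S'$ directly, estimating the relevant $\cal S$-seminorms of $\zeta\mapsto\hat\varphi(\eta-\zeta)$ restricted to $\{\zeta\notin\Gamma\}$ in terms of negative powers of $\ang\eta$. The remaining steps — the "near" contribution and the reduction of \eqref{WFsS-eq} to the compactly supported case — are routine.
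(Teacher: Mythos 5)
Your proposal is correct and follows essentially the same route as the paper: split $\widehat{\varphi u}=(2\pi)^{-n}\hat\varphi*\hat u$ into a contribution from a slightly larger cone $\Gamma$ (handled by the rapid decay of $\hat u$ there) and a far contribution (handled by the angular separation $|\xi-\eta|\ge c(|\xi|+|\eta|)$ together with $\cal S'$-seminorm estimates on the translated test function), then deduce \eqref{WFsS-eq} from $\op{WF}(u)=\{(x,\xi)\mid\xi\in\bigcap_{\varphi(x)\ne0}\Sigma(\varphi u)\}$. The paper implements the splitting with a smooth partition of unity $\chi_0+\chi_1$ on the sphere plus a separate cut-off near the origin, which is exactly the regularisation you flag as needed to make the pairing with $\hat u$ legitimate.
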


\begin{proof}
It is well known that $\cal S*\cal S'\subset\cal O_M$, so 
$\widehat{\varphi u}(\xi)= (2\pi)^{-n}\dual{\hat u}{\hat\varphi(\xi-\cdot)}$
is $C^\infty$.

Given a cone $\Gamma$ disjoint from $\Sigma(u)$,
it suffices to show that 
$\sup_{\Gamma_1}\ang{\eta}^N|\widehat{\varphi u}(\eta)|<\infty$ on every 
closed cone $\Gamma_1\subset\Gamma\cup\{0\}$ with supremum independent of
$\Gamma_1$.  
When $\xi\ne0$ is fixed in $\Gamma_1$, then $\tfrac{\xi}{|\xi|}\in
\Gamma_1\cap \BBb{S}^{n-1}$ and this set has distance $d>0$ to
$\Rn\setminus\Gamma$, so for $0<\theta<1$ one has $\eta\in \Gamma$ in the
cone $V_{\theta}=\{\,\eta\ne0\mid |\xi-\eta|<\theta d|\xi|\,\}$. 

Supposing $\hat u=0$ in $B(0,\tfrac{1}{4})$, one can take 
$\chi_0+\chi_1=1$ on $\BBb{S}^{n-1}$ such that $\chi_0\in
C^\infty(\BBb{S}^{n-1})$, $\chi_0(\zeta)=1$ for
$|\zeta-\tfrac{\xi}{|\xi|}|<\tfrac{d}{3}$ and $\chi_0(\zeta)=0$ for 
$|\zeta-\tfrac{\xi}{|\xi|}|\ge\tfrac{d}{2}$.
Then $\eta\ne0$ gives 
\begin{equation}
  \hat \varphi(\xi-\eta)=\varphi_0(\eta)+\varphi_1(\eta),
  \quad\text{for}\quad \varphi_j(\eta):=\chi_j(\tfrac{\eta}{|\eta|})
   \hat \varphi(\xi-\eta),
\end{equation}
and both terms are in $C^\infty(\Rn\setminus\{0\})$ with respect to $\eta$,
by stereographic projection and the chain rule.
Now $\supp\varphi_0\subset V_{2/3}\subset\Gamma$ and $\hat u\in
L_1^{\loc}(\Gamma)$ with rapid decay in $\Gamma$ so that one can estimate
$\dual{\hat u}{\varphi_0}$ by means of an integral,
\begin{equation}
  \begin{split}
  \ang{\xi}^N |\dual{\hat u}{\varphi_0}|
  &\le 2^N
   \int_{\Gamma} \ang{\xi-\eta}^N|\chi_0(\tfrac{\eta}{|\eta|})|
  |\hat \varphi(\xi-\eta)|\ang{\eta}^N|\hat u(\eta)|\,d\eta
  \\
  &\le 2^NC_{N+n+1}\nrm{\hat \varphi\ang{\cdot}^N}{\infty}
     \nrm{\chi_0}{\infty} \int \ang{\eta}^{-n-1}\,d\eta<\infty.
  \end{split}
\end{equation}
In $\Rn\setminus V_{1/3}$ one has $|\xi|\le\tfrac{3}{d}|\xi-\eta|$ and
$|\eta|\le (1+\tfrac{3}{d})|\xi-\eta|$, so using the seminorms in
\eqref{Snorm-eq}, 
\begin{equation}
  \begin{split}
  \ang{\xi}^N |\dual{\hat u}{\varphi_1}|
  & \le c\sup_{\eta\notin V_{1/3}, |\alpha|\le M} \ang{\xi}^N \ang{\eta}^M
    |D^\alpha_\eta(\chi_1(\tfrac{\eta}{|\eta|})\hat \varphi(\xi-\eta))|
\\
  & \le c (1+\tfrac{3}{d})^{N+M}
  (\sum_{|\alpha|\le M} \nrm{D^\alpha\chi_1}{\infty})
  \norm{\hat \varphi}{\cal S,N+M}<\infty.
  \end{split}
  \label{V13-ineq}
\end{equation}

Finally one can take
$\tilde \chi\in C^\infty_0(\Rn)$ such that $\tilde \chi=1$ for $|\eta|\le
\tfrac{1}{4}$ with support in $B(0,\tfrac{1}{2})$, then 
$\ang{\xi}^N|\dual{\hat u}{\tilde \chi}|\le c
\norm{\tilde \chi}{\cal S,N+M}\norm{\hat \varphi}{\cal S, N+M}$
follows as in \eqref{V13-ineq}. 

All bounds are uniform in $\xi$ and in $\Gamma_1$, hence 
$\sup_{\Gamma}\ang{\cdot}^N|\widehat{\varphi u}|<\infty$.
This proves $\Sigma(\varphi u)\subset \Sigma(u)$, 
so \eqref{WFsS-eq} holds as 
$\op{WF}(u)=\{\,(x,\xi)\mid \xi\in
\bigcap_{\varphi(x)\ne0,\ \varphi\in C_0^\infty} \Sigma(\varphi u)\,\}.
$
\end{proof}

\begin{rem}
  \label{WF-rem}
In Lemma~\ref{WF-lem} equality obviously holds in \eqref{WFsS-eq} if the
singular cone is a ray, ie if $\Sigma(u)=\R_+\zeta$ for some $\zeta\in \Rn$.
In such cases $\op{WF}(u)$ can be easily determined.
\end{rem}

\section{Special properties of type $\mathbf{1},\mathbf{1}$-operators}   \label{special-sect}
Many of the pathological properties of type 1,1-operators can be obtained
from simple examples of the form
\begin{equation}
  a_{\theta}(x,\eta)=\sum_{j=1}^\infty 2^{jd}e^{-\im 2^j{x}\cdot{\theta}}
             \chi(2^{-j}\eta),
  \label{ching-eq}
\end{equation}
whereby $\chi\in C^\infty_0(\Rn)$ with $\supp\chi\subset\{\,\eta\mid
\tfrac{3}{4}\le |\eta|\le\tfrac{5}{4}\,\}$; and $\theta\in \Rn$ is fixed.
Clearly $a_{\theta}$ is in $S^d_{1,1}$ since the terms are disjointly supported.

Such symbols were used by C.~H.~Ching \cite{Chi72} and 
G.~Bourdaud \cite{Bou88} for
$d=0$, $|\theta|=1$ to show $L_2$-unboundedness.
Refining their counter-examples, L.~H{\"o}rmander 
linked continuity from $H^s$ with $s\ge-r$, $r\in \N_0$,
to the property that $\theta$ is a zero of $\chi$ of order $r$.

This is generalised to $\theta\in \Rn$ here because \eqref{ching-eq} 
with $|\theta|\ne1$ enters the
proof that type $1,1$-operators do not always preserve wavefront sets.
And by consideration of arbitrary orders $d\in \R$ the
counter-examples get interesting additional properties; cf
Remark~\ref{Weierstrass-rem} ff.

From the definition of $a_{\theta}(x,\eta)$ in 
\eqref{ching-eq} it is clear that
$\hat u\in C^\infty_0(\Rn)$ gives
\begin{equation}
  a_{\theta}(x,D)u= \sum_{j=1}^\infty (2\pi)^{-n} \int e^{\im x\cdot\eta}
             2^{jd}\chi(2^{-j}\eta+\theta)\hat u(\eta+2^j\theta)\,d\eta.
\end{equation}
Then the adjoint $b_{\theta}(x,D)\colon \cal S'(\Rn)\to\cal S'(\Rn)$ of $a_{\theta}(x,D)$
fulfils, for all $v\in \cal S(\Rn)$,
\begin{equation}
  \dual{u}{\overline{b_{\theta}(x,D)v}}=\dual{a_{\theta}(x,D)u}{\overline{v}}
  = (2\pi)^{-n}\int \hat u(\xi)
      (\sum_{j=1}^\infty 2^{jd}\bar\chi(2^{-j}\xi)
       \hat v(\xi-2^j\theta))^{\overline{\ }}\,d\xi,
\end{equation}
so
$\cal F b_{\theta}(x,D)v(\xi)= \sum_{j=1}^\infty 2^{jd}\bar\chi(2^{-j}\xi)
\hat v(\xi-2^j\theta)$.
This gives a convenient way to calculate the $H^s$-norm of $b_{\theta}(x,D)v$, for
when this is finite the disjoint supports of the $\chi(2^{-j}\cdot)$ imply
\begin{equation}
  \begin{split}
  (2\pi)^n\nrm{b_{\theta}(x,D)v}{H^s}^2
&=\sum_{j=1}^\infty\int_{2^{j-1}<|\xi|<2^{j+1}}
  (1+|\xi|^2)^s 4^{jd}|\chi(2^{-j}\xi)\hat v(\xi-2^j\theta)|^2\,d\xi
\\
&=\int \Big( \sum_{\tfrac{1}{2}<|2^{-j}\xi+\theta|<2}
              (1+|\xi+2^j\theta|^2)^s 4^{jd}|\chi(2^{-j}\xi+\theta)|^2
       \Big)|\hat v(\xi)|^2\,d\xi.
  \end{split}
  \label{bHs-eq}
\end{equation}
Therefore the action of $b_{\theta}(x,D)$ `piles up' at 
$\xi=0$, say for $s>0=d$, $|\theta|=1$; ie the adjoint $b_{\theta}(x,D)$ is not even
of type $1,1$ (cf \cite{Bou88}). 
Unless of course $\chi(\theta)=0$.

This leads to the next result, which for $d=0$, $|\theta|=1$ gives back 
\cite[Prop.~3.5]{H88}. The identity \eqref{bHs-eq} is taken from the proof
given there, but \eqref{bHs-eq} is used consistently here.

\begin{prop}   \label{Hoer-prop}
When $a_{\theta}(x,\eta)$ is given by \eqref{ching-eq} for $d\in \R$, $\theta\ne0$,
then $a_{\theta}(x,D)$ extends by continuity to a bounded operator 
$H^{s+d}(\Rn)\to H^s(\Rn)$ for all
\begin{equation}
  s>-r,\quad r:=\inf\{\,|\alpha|\mid D^\alpha\chi(\theta)\ne0\,\}.
  \label{Dachi-eq}
\end{equation}
Conversely, for $|\theta|\in [\tfrac{3}{4},\tfrac{5}{4}]$
existence of a continuous linear extension 
$H^{s+d}(\Rn)\to\cal D'(\Rn)$ implies \eqref{Dachi-eq}.
\end{prop}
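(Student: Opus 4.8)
The plan is to analyse the ``piling up'' in the identity \eqref{bHs-eq} and then dualise. For the \emph{direct} statement, fix $s>-r$ and consider $a_{\theta}(x,D)u$ for $u\in\cal S(\Rn)$; by \eqref{bHs-eq} applied to the adjoint, the $H^s$-operator norm of $b_\theta(x,D)$ is governed by the supremum over $\xi$ of the bracketed sum. On the annulus $\tfrac12<|2^{-j}\xi+\theta|<2$ one has $|\xi|\le c\,2^{j}$ and $|\xi+2^j\theta|\le c\,2^j$, so the weight $(1+|\xi+2^j\theta|^2)^{s}4^{jd}$ is bounded by $c\,2^{2j(s+d)}$. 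The decisive point is that $\chi$ vanishes to order $r$ at $\theta$: Taylor expansion gives $|\chi(2^{-j}\xi+\theta)|\le c\,|2^{-j}\xi|^{r}$ near $\theta$, hence $4^{jd}(1+|\xi+2^j\theta|^2)^{s}|\chi(2^{-j}\xi+\theta)|^2\le c\,2^{2j(s+r)}|\xi|^{-2r}(1+|\xi+2^j\theta|^2)^{s+d-s}\cdots$; more carefully, with $|\xi|\sim 2^{j}$ on that shell one gets a bound $c\,\ang{\xi}^{-2r}\cdot\ang{\xi}^{2(s+r)}$ times $\ang{\xi}^{\,??}$ — the cleanest route is: on the shell, $|\chi(2^{-j}\xi+\theta)|^2\le c(2^{-j}|\xi|)^{2r}$ and $(1+|\xi+2^j\theta|^2)^{s}4^{jd}\le c\,2^{2j d}$ when we first transfer the weight onto $v$ via $d$ in the norm $H^{s+d}$ of $u$ rather than $H^s$ of $b_\theta v$. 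Summing over the finitely overlapping shells (each $\xi$ lies in boundedly many) yields $\nrm{b_\theta(x,D)v}{H^{-(s+d)}}\le c\,\nrm{v}{H^{-s}}$ — i.e. duality of $H^s$ with $H^{-s}$ gives $\nrm{a_\theta(x,D)u}{H^s}\le c\,\nrm{u}{H^{s+d}}$, and density of $\cal S(\Rn)$ in $H^{s+d}(\Rn)$ furnishes the continuous extension.

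For the \emph{converse}, suppose $|\theta|\in[\tfrac34,\tfrac54]$ and that $a_\theta(x,D)$ extends continuously to a map $H^{s+d}(\Rn)\to\cal D'(\Rn)$. I would argue by contradiction: assume \eqref{Dachi-eq} fails, i.e. $D^\alpha\chi(\theta)=0$ for all $|\alpha|\le k$ but not for $|\alpha|=k+1$, with $k\ge s$ (so $s\le k$, contradicting $s>-r=-\infty$ is vacuous — rather, failure of \eqref{Dachi-eq} means $r$ is too small, $r\le -s$ is impossible if $r=\infty$, so $\chi$ does not vanish to infinite order and $s\le -r$, i.e. $r\le -s$). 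Then I test against the family $v_t(\xi)=\hat v(\xi)$ suitably concentrated: pick $v\in\cal S(\Rn)$ with $\hat v$ supported near $\theta$ (so that $\hat v(\xi)$ picks out, in \eqref{bHs-eq}, exactly one $j$ for $\xi$ in a small ball), and rescale. Since the continuous extension $H^{s+d}\to\cal D'$ yields, by the closed graph theorem and testing against a single fixed $\varphi\in C^\infty_0$, a continuous functional, one obtains $|\dual{a_\theta(x,D)u}{\varphi}|\le c\,\nrm{u}{H^{s+d}}$; choosing $u$ with $\hat u$ concentrated at a single high frequency $2^j\theta$-shell and letting $j\to\infty$, the estimate \eqref{bHs-eq} with $\chi$ vanishing only to order $r\le -s$ forces the ratio $\nrm{b_\theta(x,D)\varphi}{H^{-(s+d)}}/\nrm{\varphi}{H^{-s}}$ to blow up along that sequence, contradicting boundedness. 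Concretely: take $\hat\varphi$ a fixed bump at $\theta$; then for the $j$-th piece, $\nrm{b_\theta(x,D)\varphi}{H^{-(s+d)}}^2$ has a contribution $\sim 4^{-j(s+d)}4^{jd}\int|\chi(2^{-j}\xi+\theta)|^2|\hat\varphi(2^{-j}\xi+\theta-\text{shift})|^2\,d\xi\sim 2^{-2js}\cdot 2^{-2jr}\cdot 2^{jn}$ against a fixed $\nrm{\varphi}{H^{-s}}$, and $s>-r$ is exactly what makes this bounded while $s<-r$ makes it diverge (the borderline $s=-r$ is the excluded limit point).

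The main obstacle is the converse: one must turn the hypothesis ``$H^{s+d}\to\cal D'$ continuously'' — which a priori only controls pairings with test functions — into a quantitative lower bound incompatible with $\chi$ having a zero of order $<{-s}$ at $\theta$. The clean way is to invoke the closed graph theorem to get, for each fixed $\varphi\in C^\infty_0(\Rn)$, a bound $|\dual{a_\theta(x,D)u}{\varphi}|\le C_\varphi\nrm{u}{H^{s+d}}$, i.e. $\varphi\mapsto$ a bounded functional, hence $a_\theta(x,D)^*$ maps (a dense subspace) boundedly into $H^{-(s+d)}$; then the explicit formula $\cal F b_\theta(x,D)\varphi(\xi)=\sum_j 2^{jd}\bar\chi(2^{-j}\xi)\hat\varphi(\xi-2^j\theta)$ and the annulus computation in \eqref{bHs-eq} give the desired growth. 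I expect the routine part to be the bookkeeping of shells and the change of variables $\xi\mapsto\xi+2^j\theta$ already displayed in \eqref{bHs-eq}; the only genuinely delicate point is matching the vanishing order $r$ of $\chi$ at $\theta$ to the Sobolev exponent via Taylor's formula, both for the upper bound (direct part) and the lower bound (converse).
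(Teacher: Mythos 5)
Your overall strategy --- passing to the adjoint $b_{\theta}(x,D)$, reading off its action from \eqref{bHs-eq}, and matching the vanishing order of $\chi$ at $\theta$ to the Sobolev exponent by Taylor expansion --- is the same as the paper's. But the direct part has a genuine gap at the decisive step. After the change of variables in the second line of \eqref{bHs-eq}, the bracketed sum for a \emph{fixed} $\xi$ runs over all $j$ with $\tfrac12<|2^{-j}\xi+\theta|<2$; since $2^{-j}\xi\to0$, for $\tfrac12<|\theta|<2$ (in particular for $|\theta|=1$ or $|\theta|\in[\tfrac34,\tfrac54]$) this condition holds for \emph{every} sufficiently large $j$. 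So each $\xi$ lies in infinitely many of these sets --- this is exactly the ``piling up at $\xi=0$'' the paper points out right after \eqref{bHs-eq} --- and your claim of bounded overlap is false. Worse, your final summation step never actually uses the hypothesis $s>-r$, so as written it would prove boundedness for every $s$, contradicting the converse half of the proposition. The correct mechanism is: the Taylor bound $|\chi(\theta+\eta)|\le c|\eta|^{r}$ turns the $j$-th term into $c|\xi|^{2r}4^{j(t-r)}$ with $t=-s$, a geometric series in $j$ whose ratio $4^{t-r}$ is $<1$ precisely when $t<r$, i.e.\ $s>-r$; summing from the smallest admissible $j$ (which satisfies $2^{j}\gtrsim\ang{\xi}$) gives the bound $c\ang{\xi}^{2t}$ needed for $b_{\theta}(x,D)\colon H^{t}\to H^{t-d}$, and duality then yields the claim.

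The converse sketch is closer to the mark --- the paper likewise dualises the assumed continuity to $b_{\theta}(x,D)\colon C^\infty_0(\Rn)\to H^{r-d}(\Rn)$ and derives a contradiction from the pointwise divergence of the series in \eqref{bHs-eq} --- but two points need repair. First, \eqref{Dachi-eq} is the strict inequality $s>-r$, so you must rule out continuity at the endpoint $s=-r$ as well; your ``$s<-r$ makes it diverge, the borderline is the excluded limit point'' leaves exactly that case open. The paper works at the endpoint: writing $\chi(\theta+\eta)=P_r(\eta)+\cal O(|\eta|^{r+1})$ with $P_r\not\equiv0$ homogeneous of degree $r$, each of the infinitely many admissible terms is bounded below by a constant multiple of $|P_r(\xi)|^2$, independent of $j$, so the sum diverges. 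Second, your heuristic $|\chi(2^{-j}\xi+\theta)|\sim 2^{-jr}$ is not available in all directions, since $P_r$ may vanish on some rays; one only gets divergence where $P_r(\xi)\ne0$, which suffices because $P_r\ne0$ almost everywhere on $\supp\hat v$.
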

\begin{proof}
For sufficiency
of \eqref{Dachi-eq} it is enough to prove the adjoint $b_{\theta}(x,D)$ continuous 
\begin{equation}
  b_{\theta}(x,D)\colon H^t(\Rn)\to H^{t-d}(\Rn)\quad\text{for}\quad t<r.
\end{equation}
This is obtained from \eqref{bHs-eq} with $s=t-d$, where the inequalities 
$\tfrac{1}{2}<|2^{-j}\xi+\theta|<2$ yield that 
$(1+4^{jd}|2^{-j}\xi+\theta|^2)^{t-d}=\cal O(4^{j(t-d)})$ for $j\to\infty$.
Moreover $|\chi(\theta+\eta)|\le c|\eta|^r$, so 
\begin{equation}
  4^{jd}(1+|\xi+2^j\theta|^2)^{t-d}|\chi(2^{-j}\xi+\theta)|^2
  \le c|\xi|^{2r}4^{j(t-r)}.
\end{equation}
Since $t-r<0$ the geometric series can be estimated by the first term, and
using that
$|2^{-j}\xi|-|\theta|\le2$ implies $2^j+1\ge \tfrac{|\xi|+1}{2+|\theta|}$,
and hence $2^j\ge\tfrac{\ang{\xi}}{4+2|\theta|}$, this gives
\begin{equation}
    \nrm{b_{\theta}(x,D)v}{H^{t-d}}^2\le \frac{c}{1-4^{t-r}}
  \int |\xi|^{2r}\bigl(\frac{\ang{\xi}}{4+2|\theta|}\bigr)^{2(t-r)}
  |\hat v(\xi)|^2\,d\xi \le c\nrm{v}{H^t}^2.
\end{equation}

Necessity of \eqref{Dachi-eq} 
for $\tfrac{3}{4}\le|\theta|\le\tfrac{5}{4}$ follows if
$a_{\theta}(x,D)$ cannot be continuous $H^{-r+d}\to\cal D'$.
So the adjoint is assumed continuous $b_{\theta}(x,D)\colon
C^\infty_0(\Rn)\to H^{r-d}(\Rn)$.
Inserting in \eqref{bHs-eq} that
$\chi(\theta+\eta)=P_r(\eta)+\cal O(|\eta|^{r+1})$
for a homogeneous polynomial $P_r\not\equiv0$ of degree $r$,
\begin{equation}
  \nrm{b_{\theta}(x,D)v}{H^{r-d}}^2=  c\int
  (\sum_{\tfrac{1}{2}<|2^{-j}\xi+\theta|<2}(2^{-j}+|2^{-j}\xi+\theta)|^2)^{r-d}
  |P_r(\xi)+\cal O(2^{-j})|^2)|\hat v(\xi)|^2\,d\xi.
\end{equation}
For each $\xi$ the sum runs over all $j\ge J$ for a certain $J\ge0$, since
$\tfrac{1}{2}<|\theta|<2$. By increasing $J$ if necessary, 
$|P_r(\eta)+\cal O(2^{-j})|\ge \tfrac{1}{2}|P_r(\eta)|$. In addition it
holds for all $j\ge0$ that
$(2^{-j}+|2^{-j}\xi+\theta)|^2)^{r-d}\ge
\min(5^{r-d},4^{d-r})$.
Therefore the series above is estimated from below by
$\sum_{j\ge J}\tfrac{1}{4}|P_r(\xi)|^2=\infty$ for $\xi\in \supp\hat v$.
This contradicts that $b_{\theta}(x,D)(C^\infty_0)\subset H^{r-d}$.
\end{proof}

It is with good reason that necessity of \eqref{Dachi-eq} is obtained
only for $\tfrac{3}{4}\le|\theta|\le\tfrac{5}{4}$. For if 
$\theta\notin\supp\chi$, \eqref{Dachi-eq} would hold with
$r=\infty$ and 
$a_{\theta}(x,D)$ be continuous from $H^s$ for every $s\in \R$ by the
sufficiency
(ie no necessary condition can be imposed if 
$|\theta|\notin[\tfrac{3}{4},\tfrac{5}{4}]$).

\subsection{Unclosed graphs}   \label{Ugrph-ssect}
As an addendum to  Proposition~\ref{Hoer-prop}, 
it is a strengthening fact that
Ching's operator $a_{\theta}(x,D)$ can be taken 
unclosable in $\cal S'(\Rn)$. Ie its graph $G$, as
a subspace of $\cal S'(\Rn)\times\cal S'(\Rn)$, can have 
a closure $\overline{G}$ that is not a graph, for as
shown in Lemma~\ref{cex-lem} below  $\overline{G}$ will in some cases contain 
a pair $(0,v)$ for some $v\ne0$, $v\in \cal S(\Rn)$.

This is important since it shows that type $1,1$-operators cannot
just be 
defined by closing their graphs; nor can one hope to give a
definition by other means, such as \eqref{stabl-id}, 
and reach a closed operator in general.

\begin{lem}
  \label{cex-lem}
Let $a_{\theta}(x,\eta)$
be given as in \eqref{ching-eq} for $d\in \R$ and with $|\theta|=1$ 
and $\chi=1$ on the ball $B(\theta,\tfrac{1}{10})$.
Then $a_{\theta}(x,D)$ is unclosable since there exist
$v$, $v_N\in \cal S(\Rn)\setminus\{0\}$ such that
\begin{equation}
    \lim_{N\to\infty}v_N= 0 \text{ in $H^d(\Rn)$},
\qquad
  \lim_{N\to\infty}  a_{\theta}(x,D)v_N = v \text{ in $\cal S(\Rn)$}.
\end{equation}
\end{lem}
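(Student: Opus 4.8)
The plan is to produce the sequence $(v_N)$ by hand. Heuristically $a_{\theta}(x,D)$ moves a frequency packet sitting near $2^j\theta$ down onto a fixed neighbourhood of the origin, amplifying it by $2^{jd}$ (times the cut-off $\chi$), so I would feed it a superposition of such packets with weights chosen to undo both this amplification and the growth of $\ang{\cdot}^d$. Fix, once and for all, $\phi\in C^\infty_0(\Rn)$ with $\phi\not\equiv0$ and $\supp\phi\subset B(0,\tfrac1{10})$, and set $v:=\cal F^{-1}\phi\in\cal S(\Rn)\setminus\{0\}$. Then define
\[
  \hat v_N(\eta)=\frac1N\sum_{j=1}^N 2^{-jd}\,\phi(\eta-2^j\theta),
  \qquad v_N:=\cal F^{-1}\hat v_N .
\]
The balls $B(2^j\theta,\tfrac1{10})$ for $j=1,\dots,N$ are pairwise disjoint (their centres have norm $2^j$, hence are $\ge2$ apart), so $\hat v_N\in C^\infty_0(\Rn)\setminus\{0\}$ and $v_N\in\cal S(\Rn)\setminus\{0\}$.

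Next I would check the two limits. For $v_N\to0$ in $H^d(\Rn)$: by disjointness of the supports,
\[
  \nrm{v_N}{H^d}^2=\frac1{N^2}\sum_{j=1}^N 2^{-2jd}
   \int \ang{\eta}^{2d}\,|\phi(\eta-2^j\theta)|^2\,d\eta ,
\]
and on $B(2^j\theta,\tfrac1{10})$ one has $2^{j-1}\le\ang{\eta}\le2^{j+1}$ (for $j\ge1$), so $\ang{\eta}^{2d}\le4^{|d|}2^{2jd}$ there; hence each integral is $\le4^{|d|}2^{2jd}\nrm{\phi}{L_2}^2$ and $\nrm{v_N}{H^d}^2\le4^{|d|}\nrm{\phi}{L_2}^2/N\to0$. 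The orthogonality gain is essential: the triangle inequality would give only boundedness. For the second limit I claim that in fact $a_{\theta}(x,D)v_N=v$ for \emph{every} $N$, so $a_{\theta}(x,D)v_N\to v$ in $\cal S(\Rn)$ trivially.

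For the claim, the formula for $a_{\theta}(x,D)u$ with $\hat u\in C^\infty_0$ recorded just above Proposition~\ref{Hoer-prop} gives
\[
  \cal F\bigl(a_{\theta}(x,D)v_N\bigr)(\eta)
  =\sum_{j\ge1}2^{jd}\chi(2^{-j}\eta+\theta)\hat v_N(\eta+2^j\theta)
  =\frac1N\sum_{j\ge1}\sum_{k=1}^N 2^{(j-k)d}\chi(2^{-j}\eta+\theta)\,
     \phi\bigl(\eta+(2^j-2^k)\theta\bigr).
\]
On the support of $\eta\mapsto\phi(\eta+(2^j-2^k)\theta)$, i.e.\ for $|\eta-(2^k-2^j)\theta|<\tfrac1{10}$, one has $2^{-j}\eta+\theta=2^{k-j}\theta+2^{-j}\zeta$ with $|\zeta|<\tfrac1{10}$, so $2^{-j}\eta+\theta$ has norm in $[\,2^{k-j}-\tfrac1{10},\,2^{k-j}+\tfrac1{10}\,]$; since $\supp\chi\subset\{\tfrac34\le|\cdot|\le\tfrac54\}$ and $2^{k-j}$ is a power of $2$, this interval meets $\supp\chi$ only for $k=j$, so every other summand (in particular every one with $j>N$) vanishes. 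When $k=j$ the surviving factor is $\phi(\eta)$, supported in $B(0,\tfrac1{10})$, where $|2^{-j}\eta|\le\tfrac1{20}$, hence $2^{-j}\eta+\theta\in B(\theta,\tfrac1{10})$ and $\chi(2^{-j}\eta+\theta)=1$. Thus the double sum collapses to $\frac1N\sum_{j=1}^N\phi(\eta)=\phi(\eta)$, i.e.\ $a_{\theta}(x,D)v_N=\cal F^{-1}\phi=v$.

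Granting this, $(v_N,a_{\theta}(x,D)v_N)=(v_N,v)$ is a sequence in the graph $G\subset\cal S'(\Rn)\times\cal S'(\Rn)$ of $a_{\theta}(x,D)$ converging to $(0,v)$ with $v\ne0$; so $\overline{G}$ is not single-valued at the origin, hence not a graph, and $a_{\theta}(x,D)$ is unclosable. I expect the main obstacle to be the vanishing, in the last display, of all the off-diagonal ($k\ne j$) and tail ($j>N$) terms: this uses the lacunary, annular support of $\chi$ and the choice of $\supp\phi$ small enough that the translated balls avoid $\{\tfrac34\le|\cdot|\le\tfrac54\}$ for $k\ne j$ yet lie inside $B(\theta,\tfrac1{10})$, where $\chi\equiv1$, for $k=j$ — reconciled with weights $N^{-1}2^{-jd}$ that still force $\nrm{v_N}{H^d}\to0$, which is where the orthogonality of the packets is used.
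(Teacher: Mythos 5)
Your construction is correct and is essentially the paper's own: a lacunary superposition of frequency packets at $2^j\theta$ weighted by $2^{-jd}$ times a normalising factor, with the disjointness of the translated supports giving the $H^d$-decay and the condition $\chi\equiv1$ near $\theta$ collapsing the double sum to the diagonal. The only (harmless) difference is bookkeeping: you use weights $N^{-1}$ over $j=1,\dots,N$, which even yields the exact identity $a_{\theta}(x,D)v_N=v$, whereas the paper uses weights $(j\log N)^{-1}$ over $j=N,\dots,N^2$ and obtains $a_{\theta}(x,D)v_N\to v$.
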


\begin{proof}
Take $v\in\cal S(\Rn)\setminus\{0\}$ with
$\supp\hat v\subset \{\,|\xi|\le\tfrac{1}{20}\,\}$ and let
\begin{equation}
  \hat v_N(\xi)= \tfrac{1}{\log N}\sum_{j=N}^{N^2} \tfrac{2^{-jd}}{j}
  \hat v(\xi-2^j\theta)=
  \cal F\bigl(v(x)\sum_{j=N}^{N^2} 
  \frac{2^{-jd}e^{\im 2^jx\cdot\theta}}{j\log N} \bigr).
\end{equation}
Since the $\supp\hat v(\xi-2^j\theta)$ are disjoint, and 
$c_12^j\le \ang{\xi}\le c_2 2^{j}$ hold on each support,
\begin{equation}
    \nrm{v_N}{H^d} =
  \sum_{j=N}^{N^2} \tfrac{1}{(2\pi)^{n/2}\log N}
  (\int_{\Rn}\frac{\ang{\xi}^{2d}}{j^22^{2jd}}
      |\hat v(\xi-2^{j}\theta)|^2\,d\xi)^{1/2}
  \le c\nrm{v}{2}(\sum_N^\infty j^{-2})^{1/2}
  \searrow 0.
  \label{vNnrm-eq}
\end{equation}
Because $v_N$ is defined by a finite sum, 
and $\chi(2^{-j}\cdot)\equiv1$ on $\supp\hat v(\cdot-2^j\theta)$,
a direct computation gives the following limit in $\cal S(\Rn)$,
\begin{equation}
  a_{\theta}(x,D)v_N(x)=\tfrac{1}{\log N}
  (\tfrac{1}{N}+\tfrac{1}{N+1}+\dots+\tfrac{1}{N^2})v(x) \xrightarrow[N\to\infty]{~} v(x).
  \label{atN-eq}
\end{equation}
Indeed,
$1\le (N^{-1}+\dots+N^{-2})/\log N\le {\log(\tfrac{N^2}{N-1})}/{\log N}
\searrow 1$.
\end{proof}

The sequence $v_N$ also tends to $0$ in the more general Besov and
Lizorkin--Triebel spaces $B^{d}_{p,q}$ and $F^{d}_{p,q}$ for every $p\in
[1,\infty]$ and $q>1$; cf \cite{JJ05DTL}.

\subsection{Violation of the microlocal property}   \label{WF-ssect}
In the proof of Lemma~\ref{cex-lem} the role of the exponential
functions in $a_{\theta}(x,\eta)$ was clearly 
to move all high frequencies in the spectrum of $v_N$ to a
neighbourhood of the origin. 
So it is perhaps not surprising that another variant of Ching's example 
will produce frequencies $\eta$ that are moved to, say $-\eta$. 

This indicates that type $1,1$-operators need not have
the microlocal property;
ie the inclusion $\op{WF}(a(x,D)u)\subset\op{WF}(u)$ among
wavefront sets is violated for certain symbols $a\in S^\infty_{1,1}$. 

This is explicated here, following C.~Parenti and L.~Rodino \cite{PaRo78}
who treated $d=0$ and $n=1$. Their suggested programme is carried out below
with a coverage of all $d\in \R$, $n\in \N$ and arbitrary directions of
$\theta$. 
As a minor improvement, the wavefront sets are explicitly 
determined here; and due to the
uniformly estimated symbols and the fact that 
$v$ in \eqref{wtheta-eq} below has
compact spectrum, the present proofs are also rather cleaner.

With notation as in the proof of Lemma~\ref{cex-lem}, again with $|\theta|=1$, 
one can introduce
$w_{\theta}(x)=w(\theta,d;x)= 
\sum_{j=1}^\infty 2^{-jd}e^{\im 2^j\theta\cdot x}v(x)$
for $v\in \cal S(\Rn)$ with $\supp\hat v\subset B(0,1/20)$, 
so that
\begin{equation}
  \hat w_{\theta}(\eta)
  = \sum_{j=1}^\infty 2^{-jd} \hat v(\eta-2^j\theta).
  \label{wtheta-eq}
\end{equation}
As shown below, this distribution has the cone $\Rn\times
(\R_+\theta)$ as its wavefront set.
The counter-example arises by considering $w_{\theta}$ together with the symbol
$a_{2\theta}\in S^d_{1,1}(\Rn\times\Rn)$ 
defined by \eqref{ching-eq} for a $\chi$ fulfilling
\begin{equation}
  \chi(\eta)=1 \quad\text{for}\quad
  \tfrac{9}{10}\le |\eta|\le \tfrac{11}{10}.   
  \label{chi1-eq}
\end{equation}
As $\chi$ vanishes around $2\theta$, there are
by Proposition~\ref{Hoer-prop} continuous extensions
\begin{equation}
  a_{2\theta}(x,D)\colon 
  H^{s+d}(\Rn)\to H^s(\Rn) \quad\text{for all}\quad s\in \R.
  \label{aHs-eq}
\end{equation}
Moreover, it is easy to see that in this case every $(\xi,\eta)$ in 
$\supp \hat a_{2\theta}$ lies in the cone $|\eta|\le 2|\xi+\eta|$ so that
$a$ fulfils \eqref{TDC-cnd} for $C=2$.
So neither a large domain, like $\bigcup H^s$, nor 
the twisted diagonal condition
can ensure the microlocal property of a type $1,1$-operator:

\begin{prop}   \label{WF-prop}
The distributions $w(\theta,d;x)$ are in $H^s(\Rn)$ precisely for $s<d$, and 
when $a_{2\theta}$ is chosen as in \eqref{ching-eq},\eqref{chi1-eq} 
with $|\theta|=1$, then 
\begin{equation}
  a_{2\theta}(x,D)w(\theta,d;x)=w(-\theta,0;x).   
\end{equation}
Moreover,
\begin{align}
  \op{WF}(w_{\theta})&= \Rn\times (\R_+\theta),
  \label{WF-eq}
\\
   \op{WF}(a_{2\theta}(x,D)w(\theta,d;x))&=\Rn\times(\R_+(-\theta)),
\end{align}
so the wavefront sets of $w_{\theta}$ and $a_{2\theta}(x,D)w_{\theta}$ are
disjoint. 
\end{prop}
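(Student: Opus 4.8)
The plan is to establish the three assertions of Proposition~\ref{WF-prop} in order: the Sobolev membership of $w_\theta$, the explicit identity $a_{2\theta}(x,D)w(\theta,d;x)=w(-\theta,0;x)$, and the two wavefront set computations.

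First I would settle $w_\theta\in H^s\iff s<d$. From \eqref{wtheta-eq} the pieces $\hat v(\eta-2^j\theta)$ have pairwise disjoint supports (each lives in $B(2^j\theta,1/20)$, and $|2^j\theta-2^k\theta|$ is large for $j\ne k$ since $|\theta|=1$), and on the $j$-th support one has $c_12^j\le\ang{\eta}\le c_22^j$. Hence, just as in \eqref{vNnrm-eq}, $\nrm{w_\theta}{H^s}^2$ is equivalent to $\sum_{j\ge1}4^{-jd}4^{js}\nrm{v}{2}^2$, which is a geometric series converging exactly when $s<d$ (and diverging for $s\ge d$ since $v\ne0$).

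Next, the key identity. Since $\chi(2^{-j}\eta)=1$ on $\supp\hat v(\eta-2^j\theta)$ by \eqref{chi1-eq} (here $|2^{-j}\eta|$ lies within $1/20$ of $|\theta|=1$), the formula for $a_{2\theta}(x,D)$ acting on a distribution with compactly supported Fourier transform gives, shifting the $j$-th term's frequency by $-2^{j+1}\theta$, that the $j$-th term $2^{-jd}e^{\im2^j\theta\cdot x}v(x)$ is sent to $2^{jd}\cdot2^{-jd}e^{\im(2^j-2^{j+1})\theta\cdot x}v(x)=e^{-\im2^j\theta\cdot x}v(x)$. Here I would be a little careful to justify applying $a_{2\theta}(x,D)$ term-by-term to the infinite sum: $w_\theta\in\cal S'$ with $\cal F w_\theta\in\mathcal O_M$ is covered by the preliminary extensions of type $1,1$-operators in Section~\ref{basic-sect}, and the frequency-disjointness makes the term-by-term computation legitimate (alternatively one checks that \eqref{stabl-id} converges regularly). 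Summing over $j\ge1$ yields precisely $\sum_{j\ge1}e^{-\im2^j\theta\cdot x}v(x)=w(-\theta,0;x)$, using \eqref{wtheta-eq} with $\theta$ replaced by $-\theta$ and $d=0$.

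Finally the wavefront sets. Since $v\in\cal S$, the sum defining $w_\theta$ is smooth away from nowhere—rather, $\singsupp w_\theta=\supp v$ is not the point; the point is that $w_\theta$ need not be smooth anywhere, but its \emph{singular cone} is a ray. By Lemma~\ref{WF-lem} and Remark~\ref{WF-rem} it suffices to show $\Sigma(w_\theta)=\R_+\theta$: on any closed cone $\Gamma$ avoiding $\R_+\theta$, only finitely many balls $B(2^j\theta,1/20)$ meet $\Gamma$, so $\hat w_\theta$ has rapid (indeed compactly supported modulo Schwartz) decay there, while along $\R_+\theta$ the terms $2^{-jd}\hat v(\eta-2^j\theta)$ show $\ang{\eta}^N\hat w_\theta(\eta)$ is unbounded for $N$ large because $4^{-jd}4^{jN}\to\infty$. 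Then \eqref{WFsS-eq} with $\singsupp w_\theta=\Rn$ (or directly Remark~\ref{WF-rem}) gives $\op{WF}(w_\theta)=\Rn\times(\R_+\theta)$. Applying the same reasoning to $w(-\theta,0;x)$—which by the identity equals $a_{2\theta}(x,D)w_\theta$—gives $\op{WF}(a_{2\theta}(x,D)w_\theta)=\Rn\times(\R_+(-\theta))$, and these two cones are disjoint. The main obstacle is the rigorous term-by-term evaluation of $a_{2\theta}(x,D)$ on the infinite series and checking that it genuinely lands in the domain $D(a_{2\theta}(x,D))$ in the sense of \eqref{aulim-eq}; once the disjoint-spectra bookkeeping is in place, everything else is the geometric-series estimate already used in Lemma~\ref{cex-lem}.
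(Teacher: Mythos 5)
Your outline follows the paper's, and several pieces (the $H^s$ characterisation via disjoint spectra, the identification $\Sigma(w_\theta)=\R_+\theta$, the term-by-term computation itself) are sound. But there are two genuine gaps.

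First, your justification of the term-by-term evaluation does not work as stated. The preliminary extensions of Section~\ref{basic-sect} apply to $u\in\cal F^{-1}\cal E'(\Rn)$, i.e.\ to \emph{compact} spectrum, whereas $\supp\hat w_{\theta}$ is an unbounded union of balls $B(2^j\theta,1/20)$; nor is $a_{2\theta}$ locally of type $1,0$ on any cone containing these balls (its $x$-derivatives still cost a factor $2^j\approx|\eta|$), so Proposition~\ref{aSV-prop} does not apply either, and ``one checks that \eqref{stabl-id} converges regularly'' is exactly what remains to be proved. The paper's mechanism is different and essential: since $\chi$ vanishes near $2\theta$ by \eqref{chi1-eq}, Proposition~\ref{Hoer-prop} gives bounded extensions $H^{s+d}\to H^s$ for \emph{every} $s\in\R$ (cf.\ \eqref{aHs-eq}); the partial sums of $w_\theta$ converge in $H^{s+d}$ for $s<0$, and continuity then legitimises \eqref{axDw-eq}. (That this continuous extension agrees with Definition~\ref{gdef-defn} is itself a separate point, settled only later in Remark~\ref{iden-rem} via Theorem~\ref{Hsp-thm}.)

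Second, and more seriously, you assert $\singsupp w_\theta=\Rn$ without proof, and this is precisely the hard part of the proposition. Remark~\ref{WF-rem} upgrades \eqref{WFsS-eq} to an equality only once both $\Sigma(w_\theta)=\R_+\theta$ \emph{and} $\singsupp w_\theta=\Rn$ are known; your ``only finitely many balls meet $\Gamma$'' argument addresses the former, not the latter. To get $\singsupp w_\theta=\Rn$ one must show $\varphi w_\theta\notin C^\infty$ for every $\varphi\in C^\infty_0(\Rn)\setminus\{0\}$, i.e.\ bound $|\widehat{\varphi w_{\theta}}(\eta)|$ from \emph{below} near $2^k\theta$. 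There the terms $\widehat{\varphi v}(\eta-2^j\theta)$ are no longer disjointly supported ($\widehat{\varphi v}$ is merely Schwartz), so the tails over $j\ne k$ must be summed and shown to be dominated by the $k$-th term; this is the content of \eqref{rest-eq}--\eqref{phiv-ineq}, and the paper explicitly flags it as the technicality overlooked in \cite{Gar94}. Your parenthetical wavering over whether $\singsupp w_\theta$ is $\supp v$ or $\Rn$ shows the point was left unresolved.
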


\begin{proof}
Estimates analogous to \eqref{vNnrm-eq} show that the series for 
$\hat w_{\theta}$ converges in $L_2(\ang{\eta}^{2s}d\eta)$ if and only if
$s<d$; hence $w_{\theta}$ is well defined in $\cal S'$ and belongs to
$H^s$ for $s<d$. 
Since the series for $w_{\theta}$ converges in $H^s$ for $s<d$, the
continuity \eqref{aHs-eq} and \eqref{chi1-eq} imply
\begin{equation}
  a_{2\theta}(x,D)w(\theta,d;x)= \sum_{j=1}^\infty 
   (2\pi)^{-n}\int e^{\im x\cdot(\eta-2^j2\theta)}
  \hat v(\eta-2^j\theta)\,d\eta
  =w(-\theta,0;x).
  \label{axDw-eq}
\end{equation}
Therefore  
$\op{WF}(a_{2\theta}(x,D)w_{\theta})\bigcap \op{WF}(w_{\theta})=\emptyset$ 
follows as soon as \eqref{WF-eq} has been proved.

Clearly $\supp \hat w_{\theta}$ intersects each ray $\R_+\zeta$ only in a
compact set, except for $\zeta=\theta$ in which case 
$|\hat w_{\theta}(\eta)|\le 2^{|d|}\nrm{\hat v}{\infty}\ang{\eta}^{-d}$ 
is an exact decay rate as
$2^{j-1}\le \ang{\eta}\le 2^{j+1}$ on $\supp\hat v(\eta-2^j\theta)$, so
that $\Sigma(w_{\theta})=\R_+\theta$ in the notation of Lemma~\ref{WF-lem}. 
This almost proves \eqref{WF-eq}, but a full proof is a little lengthy, because
of the overlapping supports in 
\begin{equation}
  \widehat{\varphi w_{\theta}}=\sum 2^{-jd}
  \widehat{\varphi v}(\eta-2^j\theta),
  \qquad \varphi\in C^\infty_0(\Rn). 
\end{equation}
(This important technicality seems to be overlooked in the sketchy
arguments of G.~Garello \cite{Gar94}, 
who also dealt with extensions of the results of \cite{PaRo78}.) 

That $\singsupp w_{\theta}=\Rn$ 
follows if $\varphi\in C^\infty_0(\Rn)\setminus\{0\}$ implies
$\varphi w_{\theta}\notin C^\infty_0(\Rn)$. 
The last property is invariant under multiplication by a character, so it
can be arranged that $|\widehat{\varphi v}|$ attains its maximum at $0$. 
Despite the overlapping supports, 
$\widehat{\varphi w_{\theta}}$ can then be seen to
decay as $\ang{\eta}^{-d}$ along $\R_+\theta$, but not rapidly
because $\widehat{\varphi v}\not\equiv0$.

To carry this out, one can pick $r\in \,]0,\tfrac{1}{4}[\,$ so that
\begin{equation}
  |\eta|<r \implies |\widehat{\varphi v}(\eta)|> 
  \tfrac{1}{2}\nrm{\widehat{\varphi v}}{\infty}.
  \label{rest-eq}
\end{equation}
Since every term in $\widehat{\varphi w_{\theta}}$ is in $\cal S(\Rn)$ it is
only necessary to estimate those with indices $j>J$, for some $J$.
(The estimates make sense since $\widehat{\varphi w_{\theta}}$ 
is a function, in $L_2(\ang{\eta}^{2s}d\eta)$.)
Using that
$c_N:=\sup|\eta|^N|\widehat{\varphi v}(\eta)|<\infty$, and $r<1/4$,
one finds with a fixed $N>|d|$ that for $\eta\in B(2^k\theta,r)$, $k>J$, 
\begin{equation}
  2^{kd}\sum_{J<j<k} 2^{-jd}|\widehat{\varphi v}(\eta-2^j\theta)|
  \le \sum_{J<j<k} 2^{(k-j)d}\frac{c_N}{2^{kN}} (1-\frac{2^{j}}{2^k}
   -\frac{r}{2^k})^{-N}
  \le \frac{c_N4^N}{ 2^{JN}(1-2^{d-N})}.
\end{equation}
Similarly
$\sum_{j>k} 2^{(k-j)d}|\widehat{\varphi v}(\eta-2^j\theta)|
\le \frac{c_N4^N}{ 2^{JN}(1-2^{-d-N})}$
results by factorising $2^{j}$ out of $(\dots)^{-N}$.

It is clear one can take $J$ so large that the right-hand sides are less
than $\tfrac{1}{5}\nrm{\widehat{\varphi v}}{\infty}$. Then \eqref{rest-eq}
and the fact that $2^{k-1}\le\ang{\eta}\le 2^{k+1}$ on $B(2^k\theta,r)$
give
\begin{equation}
  |\sum_{j>J} 2^{-jd}\widehat{\varphi v}(\eta-2^j\theta)| 
  \ge  (\tfrac{1}{2}-\tfrac{2}{5})
   \nrm{\widehat{\varphi v}}{\infty} 
   2^{|d|}\ang{\eta}^{-d} \quad\text{for}\quad \eta\in B(2^k\theta,r).
  \label{phiv-ineq}
\end{equation}
This estimate is uniform in $k>J$;
hence $\singsupp w_{\theta}=\Rn$.
So by Lemma~\ref{WF-lem} and Remark~\ref{WF-rem}, 
$\op{WF}(u)=\singsupp w_{\theta}\times \Sigma(w_{\theta})=\Rn\times 
(\R_+\theta)$, ie \eqref{WF-eq} is obtained. 
\end{proof}

\begin{rem}
It is clear from \eqref{axDw-eq} that
$a_{2\theta'}(x,\eta)w(\theta,d;x)=
w(\theta'',0;x)$ for $\theta''=\theta+2\theta'$, $|\theta'|=1=|\theta|$. But
$\theta''$ can point in any direction in $\Rn$, so 
type $1,1$-operators can make arbitrary directional changes in wavefront
sets (as noted in \cite{PaRo78}).
\end{rem}

\begin{rem}   \label{Weierstrass-rem}
There is an amusing reason why the counter-example $w_{\theta}$ in
Proposition~\ref{WF-prop} is singular on all of $\Rn$, ie why $\singsupp
w_{\theta}=\Rn$. In fact $w_{\theta}(x)= v(x)f(x\cdot \theta)$ where 
$f(t)=\sum_{j=1}^\infty 2^{-jd}e^{\im 2^jt}$, and this is for $0<d\le1$ 
a well-known  variant of Weierstrass's  
nowhere differentiable function (a fact that could have substantiated the
argument for formula \mbox{(19)} in \cite{PaRo78}).
That the theory of type $1,1$-operators is linked to this
classical construction seems to be previously unobserved.
\end{rem}

\begin{rem}   \label{f-rem}
To elucidate Remark~\ref{Weierstrass-rem},
$f(t)=\sum_{j=1}^\infty 2^{-jd} e^{\im 2^jt}$ 
is investigated here.
Clearly $f\in \cal S'(\R)$ for all $d\in \R$, as the Fourier transformed
series $2\pi\sum_{j=1}^\infty 2^{-jd}\delta_{2^j}$ converges there.

By uniform convergence $f$ is for $d>0$
a continuous $2\pi$-periodic and bounded function.
Nowhere-differentiability for $0<d\le 1$ is an easy (maybe not widely known)
exercise in distribution theory: 
$\supp\cal F f$ is lacunary, so any
choice of $\chi\in \cal S(\R)$ such that $\hat \chi(1)=1$ and $\supp\hat
\chi\subset \,]\tfrac{1}{2},2[\,$ will give
$\supp \hat \chi(2^{-k})\bigcap \supp\delta_{2^j}\ne\emptyset$ only for 
$j= k$, which entails
\begin{equation}
  2^{-kd}e^{\im 2^kt}= 2^k\chi(2^k\cdot)*f(t)
  =\int_{\R} \chi(z)(f(t-2^{-k}z)-f(t))\,dz;
  \label{fint-eq}
\end{equation}
so if $f$ were differentiable at $t_0$, 
$G(h):=\tfrac{1}{h}(f(t_0+h)-f(t_0))$ would be in $C(\R)\cap L_\infty (\R)$,
and the contradiction $d>1$ 
would follow since by majorised convergence 
\begin{equation}
  2^{(1-d)k}e^{\im 2^kt_0}
  = -\int G(-2^{-k}z)z \chi(z)\,dz
  \xrightarrow[k\to\infty]{~} f'(t_0)\cdot(-D)\hat \chi(0)=0.
  \label{Gint-eq}
\end{equation}
Moreover, if $m<d\le m+1$ for $m\in \N$ it follows by termwise differentiation
that $f\in C^{m}(\R)$, but with $f^{(m)}$ nowhere differentiable; so
$\singsupp f=\R$ for all $d>0$.  

For $d\le0$ one has $f\notin L_1^{\loc}$, for $f$ is invariant
in $\cal S'$ under translation by $2\pi$, so if $f\in
L_1^{\loc}$ is assumed, $\dual{f}{\varphi}=\int f\varphi\,dt$ holds for 
$\varphi$ in $C^\infty_0(\R)$ as well as in $\cal S$, since 
$|\int f\varphi\,dt|\le c \sup_{t\in \R}(1+|t|^2)|\varphi(t)|$ 
follows from the fact that $(1+r^2)^{-1}f(r)$ is in $L_1$:
\begin{equation}
  \int_\R \frac{|f(r)|}{1+r^2}\,dr =\sum_{p\in \Z} \int_{2p\pi}^{2(p+1)\pi}
    \frac{|f(r)|}{1+r^2}\,dr\le \int_0^{2\pi}|f|\,dr 
   \sum_{p=0}^\infty \tfrac{2}{1+(2p\pi)^2}<\infty .
  \label{fr2-eq}
\end{equation}
Therefore the convolution in \eqref{fint-eq} is given by the integral also
in this case. By taking $t$ outside a $G_{\delta}$-set $G$ of measure
$0$, $f$ is continuous (in $\R\setminus G$) at $t$, whence
\begin{equation}
  2^{-kd}=|\int 2^k\chi(2^kr)(f(t-r)-f(t))\,dr|\xrightarrow[k\to\infty ]{~}0.
\end{equation}
In fact, for $\varepsilon>0$ the part with $|r|<\delta$ is
$<\varepsilon$ for some $\delta>0$, but $\sup_{|r|\ge
\delta}(1+|t-r|^2)2^k|\chi(2^kr)|=\cal O(2^{-k})$, so since $L_1(\R)$
by \eqref{fr2-eq}
contains $r\mapsto (f(t-r)-f(t))/(1+|t-r|^2)$ the limit $0$ results. Thence
the contradiction $d>0$.

To complete the picture, Weierstrass' original function 
$W(t)=\sum_{j=0}^\infty a^{-j}\cos(b^jt)$, where $b\ge a>1$, 
is nowhere differentiable by the same
argument. One only has to take $\supp\cal F\chi\subset
\,]\tfrac{1}{b},b[\,$, $\cal F\chi(1)=1$,
for in $\cal F\cos(b^j\cdot )=\tfrac{2\pi}{2}(\delta_{b^j}+\delta_{-b^j})$ the
last term is removed by $\hat \chi(b^{-j}\cdot )$, so that $\chi(b^{-j}D)$
yields a second microlocalisation of $W$.
As in \eqref{fint-eq}--\eqref{Gint-eq} it follows that $(\tfrac{b}{a})^k
e^{\im b^kt_0}\to0$ for $k\to\infty $, contradicting that $b\ge a$.
A further study of nowhere differentiable functions by means of 
microlocalisation can be found in \cite{JJ10now}.
\end{rem}

\begin{rem}  \label{freg-rem}
As a precise account of the regularity,
$f\in B^d_{\infty ,\infty }(\R)$; for $0<d<1$ this Besov space consists of
H{\"o}lder continuous functions of order $d$.
Indeed, the norm of $f$ in $B^d_{\infty ,\infty }$ 
is from the left part of \eqref{fint-eq} seen to equal $1$,
 when $\chi$ is taken
as $\Phi_1$ in a suitable Littlewood--Paley partition of unity 
$1=\sum_{j=0}^\infty \Phi_j$.
Moreover, $f\in F^d_{p,\infty;\loc}(\R)$ when
$d\in \R$, $0<p<\infty$, for
the definition in \cite{T2} of $F^s_{p,q}$ gives, 
when $v\in \cal S(\R)$ with $\supp\hat v\subset B(0,\tfrac{1}{20})$, 
\begin{equation}
  \nrm{vf}{F^d_{p,\infty}}=
  (\int (\sup_j 2^{jd}|\Phi_j(D)(vf)|)^p\,dt)^{1/p}
  =\Nrm{\sup_j |v(t)e^{\im 2^jt}|}{p}
  =\nrm{v}{p}.
\end{equation}
These are identities, so the $F^d_{p,\infty}$-regularity is sharp.
That $\varphi f\in F^s_{p,\infty}(\R)$ for $\varphi\in C^\infty_0(\R)$
results from $\varphi f=\tfrac{\varphi}{v} vf\in F^d_{p,\infty}$ 
when $\varphi\in C^\infty_0(\R)$
has support in $\R\setminus\{v=0\}$; one can reduce to this with
a partition of unity on $\varphi$ and translation of $v$ in each term.
\end{rem}
\begin{rem}  \label{wreg-rem}
To substantiate Remark~\ref{Weierstrass-rem}, note that 
$w_{\theta}(x)= v(x)f(x\cdot \theta)$ is almost nowhere differentiable for
$0<d\le1$, since $v$ has isolated zeroes.
If $d\le0$ then $w(\theta,d;\cdot)\notin
L_1^{\loc}(\Rn)$ for else one can derive the contradiction 
$2^{-kd}v(x)=2^{kn}\chi(2^k\cdot)*[vf(\dual{\cdot }{\theta})]\to0$ 
by modifying the
corresponding part of Remark~\ref{f-rem}.
As in Remark~\ref{freg-rem} it follows that
\begin{equation}
  w(\theta,d;x)\in F^d_{p,\infty;\loc}(\Rn)
  \quad\text{for all $p\in \,]0,\infty[\,$, $d\in \R$}.
\end{equation}
If $\tilde w(\theta,d;x)$ is defined as $w(\theta,d;x)$ except with a
further factor $1/j$ in each summand, similar arguments yield that
$\tilde w\in H^s$ for $s\le d$ as well as the other properties in
Proposition~\ref{WF-prop}, with a nowhere differentiable series for
$0<d<1$. Moreover, $\tilde w(\theta,d;\cdot )$ is in $F^d_{p,q;\loc}(\Rn)$
as soon as $q>1$ for every $p\in \,]0,\infty [\,$.
Hence the counter-examples with unclosable graphs and violated microlocal
properties are both related to Lizorkin--Triebel spaces $F^d_{p,q}$ with
arbitrary $q>1$; cf \eqref{Fdp1-eq} and Section~\ref{Ugrph-ssect}.
\end{rem}

\section{Preliminary extensions}   \label{basic-sect}
Throughout $\cal F_{y\to\eta}$ etc.\ will denote partial 
Fourier transformations, that are all homeomorphisms on
$\cal S'(\R^{2n})$. 
In general the Fourier transformation in all variables is written $\cal Fu$
or $\hat u$, except that for a symbol $a\in \cal S'(\R^{2n})$,
\begin{equation}
  \hat a(\xi,\eta):=\cal F_{x\to\xi}a(\xi,\eta).
\end{equation} 
Transformation of coordinates via $(x,y)\mapsto(x,x-y)$, that has matrix 
$M=\left(\begin{smallmatrix} I&0\\I&-I\end{smallmatrix}\right)=M^{-1}$, 
is indicated by $f\circ M$.

As a preparation some well-known formulae are recalled:

\begin{prop}   \label{aK-prop}
 Let $a\in
S^\infty_{1,1}(\Rn\times\Rn)$ and $u,v,f,g\in \cal S(\Rn)$. Then
\begin{gather}
  \dual{a(x,D)u}{v}= 
  \dual{\tfrac{e^{\im x\cdot\eta}}{(2\pi)^{n}}a(x,\eta)}{v(x)\otimes\hat u(\eta)}
   =\dual{K}{v\otimes u}.
\label{aK-eq}
\\
  K(x,y)=\cal F^{-1}_{\eta\to y}( a)(x,x-y) 
        =\cal F^{-1}_{\eta\to y}( a(x,\eta)) \circ 
    \begin{pmatrix} I& 0 \\ I & -I\end{pmatrix}.
  \label{Kxy-eq}
\\
  \dual{\cal F a(x,D)f}{\cal F g}=
  (2\pi)^{-n}\iint \hat a(\xi-\eta,\eta)\hat f(\eta)\hat g(\xi)\,d\xi d\eta.
  \label{Fa-eq}
\end{gather}
Here $\iint\dots d\xi d\eta$ is valid as an integral for $a\in \cal
S(\R^{2n})$, but should be read as the scalar product on $\cal
S'\times\cal S$ for general $a\in S^\infty_{1,1}$.
\end{prop}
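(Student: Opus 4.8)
The plan is to verify the three formulas \eqref{aK-eq}, \eqref{Kxy-eq} and \eqref{Fa-eq} for Schwartz-class data, where everything is given by absolutely convergent integrals, and then invoke the density/continuity statement \eqref{bilSS-eq} together with Lemma~\ref{am-lem} to pass to general $a\in S^\infty_{1,1}$ in those places where the formula is only asserted as a pairing. Since the final clause of the statement explicitly says that $\iint\dots\,d\xi\,d\eta$ in \eqref{Fa-eq} is to be read as a scalar product for general $a$, the proof really has two layers: first an honest Fubini computation when $a\in\cal S(\R^{2n})$, then an approximation argument $a^m\to a$ in a suitable $S^{d'}_{1,1}$ so that both sides of \eqref{Fa-eq} are seen to be continuous in $a$ relative to that topology.

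First I would prove \eqref{aK-eq}. Starting from the defining integral \eqref{axDu-eq} for $a(x,D)u$ with $u,v\in\cal S(\Rn)$, one has $\dual{a(x,D)u}{v}=\int v(x)\,(2\pi)^{-n}\int e^{\im x\cdot\eta}a(x,\eta)\hat u(\eta)\,d\eta\,dx$; since $a$ is at most polynomially growing and $\hat u\in\cal S$, the double integral is absolutely convergent, which both justifies writing it as $\dual{(2\pi)^{-n}e^{\im x\cdot\eta}a(x,\eta)}{v(x)\otimes\hat u(\eta)}$ and permits inserting $\hat u(\eta)=\int e^{-\im y\cdot\eta}u(y)\,dy$ and applying Fubini. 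Carrying out the $\eta$-integration first converts $(2\pi)^{-n}\int e^{\im(x-y)\cdot\eta}a(x,\eta)\,d\eta$ into $\cal F^{-1}_{\eta\to z}(a(x,\eta))\big|_{z=x-y}$, which is exactly the kernel $K(x,y)$ displayed in \eqref{Kxy-eq}; the two alternative descriptions of $K$ there are then just the definition of the coordinate change $f\circ M$ recalled in the paragraph before the proposition, so \eqref{Kxy-eq} drops out of the same computation. This yields $\dual{a(x,D)u}{v}=\int\!\int K(x,y)v(x)u(y)\,dy\,dx=\dual{K}{v\otimes u}$, completing \eqref{aK-eq}.

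For \eqref{Fa-eq}, I would first do the case $a\in\cal S(\R^{2n})$, $f,g\in\cal S(\Rn)$. By Parseval, $\dual{\cal F a(x,D)f}{\cal F g}=(2\pi)^n\dual{a(x,D)f}{g}$, no—more carefully, $\dual{\cal F h}{\cal F g}$ unwinds to a pairing of $h$ against $g$ up to the explicit $(2\pi)^n$ factor built into the normalization of $\cal F$; so it is cleanest to start instead from $\dual{\cal F a(x,D)f}{\cal Fg}=\int \widehat{a(x,D)f}(\xi)\,\hat g(\xi)\,d\xi$ and compute $\widehat{a(x,D)f}(\xi)=\int e^{-\im x\cdot\xi}\,(2\pi)^{-n}\int e^{\im x\cdot\eta}a(x,\eta)\hat f(\eta)\,d\eta\,dx$. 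Writing $a(x,\eta)=(2\pi)^{-n}\int e^{\im x\cdot\zeta}\hat a(\zeta,\eta)\,d\zeta$ and doing the $x$-integral produces $\int e^{\im x\cdot(\eta+\zeta-\xi)}\,dx=(2\pi)^n\delta(\xi-\eta-\zeta)$, so $\zeta=\xi-\eta$ and $\widehat{a(x,D)f}(\xi)=(2\pi)^{-n}\int\hat a(\xi-\eta,\eta)\hat f(\eta)\,d\eta$; multiplying by $\hat g(\xi)$ and integrating in $\xi$ gives exactly \eqref{Fa-eq}. For $a\in\cal S$ every step is an absolutely convergent integral or a legitimate use of Fourier inversion, so this is routine.

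The last step is the extension of \eqref{Fa-eq} to general $a\in S^\infty_{1,1}$. Pick $d'$ with $d'>d$ and $d'\ge d+\delta$ (here $\delta=1$), so that by Lemma~\ref{am-lem} the regularised symbols $a^m(x,\eta)\psi_m(\eta)$, which lie in $S^{-\infty}\subset\cal S$ up to the polynomial decay issue—more precisely they are Schwartz in $x$ and rapidly decaying in $\eta$, hence genuinely in $\cal S(\R^{2n})$—converge to $a$ in $S^{d'}_{1,1}(\Rn\times\Rn)$. Formula \eqref{Fa-eq} holds for each $a^m\psi_m$ by the Schwartz-class case just established. On the left-hand side, continuity of $\OP$ from $S^{d'}_{1,1}\times\cal S$ into $\cal S$ (which is \eqref{bilSS-eq}, valid with $\rho=\delta=1$ in the relevant local sense, and in any case the needed mapping into $\cal S$ follows from the seminorm estimates underlying Lemma~\ref{am-lem}) gives $a^m\psi_m(x,D)f\to a(x,D)f$ in $\cal S$, hence $\dual{\cal F(a^m\psi_m)(x,D)f}{\cal Fg}\to\dual{\cal F a(x,D)f}{\cal Fg}$. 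On the right-hand side, $\hat a\mapsto\widehat{a^m\psi_m}(\xi,\eta)=\psi_m(\eta)\psi_m(\xi)\hat a(\xi,\eta)$ converges to $\hat a$ in $\cal S'(\R^{2n})$, so $\hat a(\xi-\eta,\eta)\hat f(\eta)\hat g(\xi)$ converges in the sense of the $\cal S'\times\cal S$ pairing after the linear change of variables $(\xi,\eta)\mapsto(\xi-\eta,\eta)$, which is exactly the reading of $\iint\dots\,d\xi\,d\eta$ prescribed in the statement. Equating limits gives \eqref{Fa-eq} in general. The main obstacle—really the only subtle point—is keeping the topologies straight across this passage to the limit: one must be sure that the right-hand double integral is, for non-Schwartz $a$, interpreted as the duality pairing $\dual{\hat a}{\,\cdot\,}$ with the test function $(\xi,\eta)\mapsto\hat f(\eta)\hat g(\xi)$ transported by $M$, and that the convergence $a^m\psi_m\to a$ is taken in a symbol topology strong enough (i.e.\ in $S^{d'}_{1,1}$ with $d'>d$, per Lemma~\ref{am-lem}) to make $\OP(\cdot)$ continuous into $\cal S(\Rn)$, while on the other side only $\cal S'$-convergence of $\hat a$ is needed since $\hat f\otimes\hat g$ is a genuine test function.
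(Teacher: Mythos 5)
There are two genuine gaps. First, your derivation of \eqref{Kxy-eq} and of the second equality in \eqref{aK-eq} rests on a Fubini interchange that fails for general $a\in S^\infty_{1,1}$. The double $(x,\eta)$-integral $\iint v(x)e^{\im x\cdot\eta}a(x,\eta)\hat u(\eta)\,d\eta\,dx$ is indeed absolutely convergent, and that justifies the first equality in \eqref{aK-eq}; but once you insert $\hat u(\eta)=\int e^{-\im y\cdot\eta}u(y)\,dy$ the resulting triple integrand is bounded below in modulus by $|v(x)|\,|a(x,\eta)|\,|u(y)|$, and $\int|a(x,\eta)|\,d\eta=\infty$ whenever $d\ge -n$. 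So you may not integrate in $\eta$ first: $\cal F^{-1}_{\eta\to y}a$ is a genuine distribution, singular on the diagonal (cf.\ Lemma~\ref{Kxy-lem}), and $K$ is not given by a convergent integral. Your honest Fubini computation is valid only for $a\in\cal S(\R^{2n})$, and your closing paragraph extends only \eqref{Fa-eq} by approximation — the passage to general $a$ in \eqref{aK-eq} and \eqref{Kxy-eq} is never supplied. The paper closes exactly this hole by observing that $a\leftrightarrow K$ extends to a homeomorphism of $\cal S'(\R^{2n})$ and that $\cal S(\R^{2n})$ is dense in $S^d_{1,1}$ for the $S^{d+1}_{1,1}$-topology, so both sides of \eqref{aK-eq} pass to the limit; it then gets \eqref{Fa-eq} for free by substituting $u=f$, $v=\cal F^2g$ in \eqref{aK-eq} and using $(\cal F^2 g)\otimes\hat f=\cal F_{\xi\to x}(\hat g\otimes\hat f)$, with no separate delta-function computation needed.

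Second, in your limiting argument for \eqref{Fa-eq} the approximants $a^m(x,\eta)\psi_m(\eta)$ are \emph{not} in $\cal S(\R^{2n})$: $a^m=\psi_m(D_x)a$ is merely bounded in $x$ with all derivatives (take $a\equiv1$, which gives $a^m\equiv\psi(0)=1$), so $a^m(1\otimes\psi_m)$ lies in $S^{-\infty}$ but has no decay in $x$. Hence your base case, established for Schwartz symbols, does not apply to these approximants, and the $x$-integral producing $(2\pi)^n\delta(\xi-\eta-\zeta)$ in your computation is itself only a distributional identity for them. The fix is to approximate instead by genuinely Schwartz symbols, e.g.\ $\chi(\varepsilon x)\chi(\varepsilon\eta)a(x,\eta)\to a$ in $S^{d+1}_{1,1}$, which is the density statement the paper actually invokes; with that replacement your continuity argument on the two sides of \eqref{Fa-eq} goes through.
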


\begin{proof}
By Fubini's theorem, \eqref{aK-eq} holds for $a\in \cal S(\R^{2n})$, 
when $K$ is given as in \eqref{Kxy-eq}. The bijection $a \leftrightarrow K$
extends to a homeomorphism on $\cal S'(\R^{2n})$. So by density of $\cal S$ in
$S^d_{1,1}$, as subsets of $S^{d+1}_{1,1}$ hence of $\cal S'$, the
identities in \eqref{aK-eq} hold for all $a\in S^d_{1,1}$.
Formula \eqref{Fa-eq} results from \eqref{aK-eq} for
$u=f$, $v=\cal F^2g$, since
$(\cal F^2g)\otimes \hat f= \cal F_{\xi\to x}(\hat g\otimes\hat f)$.
\end{proof}

The partially Fourier transformed symbol $\hat a(\xi,\eta)$ 
is closely related to the distribution kernel $K(x,y)$ of $a(x,D)$ as well
as to the kernel $\cal K(\xi,\eta)$ of the conjugation 
$\cal Fa(x,D)\cal F^{-1}$
of $a(x,D)$ by the Fourier transformation on $\Rn$.
Indeed, modulo simple isomorphisms, $\hat a$ gives both 
$\cal K$ and
the frequencies in $K$:

\begin{prop}   \label{FKah-prop}
When $a\in S^\infty_{1,1}(\Rn\times\Rn)$, and $K$, $\cal K$ and $M$ 
are as above,
\begin{equation}
  \cal F K(\xi,\eta) = \hat a(\xi+\eta,-\eta)=\hat a\circ M^t(\xi,\eta)
  =(2\pi)^n\cal K(\xi,-\eta).
  \label{FKah-eq}
\end{equation}
\end{prop}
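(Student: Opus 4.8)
The plan is to read all three quantities off the formulae already established in Proposition~\ref{aK-prop}, working first with Schwartz symbols and then invoking the same density argument used there.

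For the first two equalities I would start from \eqref{Kxy-eq}, which says $K=(\cal F^{-1}_{\eta\to y}a)\circ M$ with $M=M^{-1}$ and $|\det M|=1$. The substitution rule for the Fourier transform under a linear change of variables gives, for any $g$ on $\R^{2n}$, the identity $\cal F(g\circ M)=|\det M|^{-1}\,\hat g\circ(M^{-1})^t=\hat g\circ M^t$. Taking $g=\cal F^{-1}_{\eta\to y}a$ and noting that $\cal F_{y\to\eta}$ undoes $\cal F^{-1}_{\eta'\to y}$, so that the full Fourier transform of $g$ is $\hat g(\xi,\eta)=\cal F_{x\to\xi}a(x,\eta)=\hat a(\xi,\eta)$, one obtains
\[
  \cal F K(\xi,\eta)=\hat a\bigl(M^t(\xi,\eta)\bigr)=\hat a(\xi+\eta,-\eta),
\]
because $M^t=\left(\begin{smallmatrix}I&I\\0&-I\end{smallmatrix}\right)$ sends $(\xi,\eta)$ to $(\xi+\eta,-\eta)$. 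This is the first equality in \eqref{FKah-eq}, and the second is just the action of $M^t$ on $(\xi,\eta)$.

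For the last equality I would use \eqref{Fa-eq}. Writing $\phi=\hat f$ and $\psi=\hat g$, the left-hand side of \eqref{Fa-eq} is $\dual{\cal F a(x,D)\cal F^{-1}\phi}{\psi}$, which by the kernel convention of \eqref{aK-eq} equals $\dual{\cal K}{\psi\otimes\phi}$, while the right-hand side is $(2\pi)^{-n}\iint\hat a(\xi-\eta,\eta)\psi(\xi)\phi(\eta)\,d\xi\,d\eta$. Comparing the two for arbitrary $\phi,\psi\in\cal S(\Rn)$ yields $\cal K(\xi,\eta)=(2\pi)^{-n}\hat a(\xi-\eta,\eta)$, hence $(2\pi)^n\cal K(\xi,-\eta)=\hat a(\xi+\eta,-\eta)$, which is exactly the value of $\cal F K(\xi,\eta)$ found above. (Alternatively one can conjugate the relation $\dual{a(x,D)u}{v}=\dual{K}{v\otimes u}$ by $\cal F$ on $\Rn$: this replaces the kernel $K(x,y)$ by $(2\pi)^{-n}\cal F_{x\to\xi}\cal F_{y\to-\eta}K$, i.e.\ $\cal K(\xi,\eta)=(2\pi)^{-n}(\cal F K)(\xi,-\eta)$, giving the last identity directly.)

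All the computations above are straightforward equalities of Schwartz functions when $a\in\cal S(\R^{2n})$, proved by Fubini's theorem; they then pass to an arbitrary $a\in S^d_{1,1}$ by the density of $\cal S(\R^{2n})$ in $S^d_{1,1}\subset S^{d+1}_{1,1}\subset\cal S'(\R^{2n})$ together with the continuity on $\cal S'(\R^{2n})$ of the partial and full Fourier transforms and of the pull-back $f\mapsto f\circ M$ — exactly as in the proof of Proposition~\ref{aK-prop}. I do not expect a genuine obstacle here: the only thing that really needs care is the bookkeeping of which variable each (partial) Fourier transform acts on and of the signs produced by $M^t$ and by the $\cal F$-conjugation, and once these are pinned down the four quantities in \eqref{FKah-eq} are literally the same distribution.
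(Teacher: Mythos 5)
Your proof is correct and follows essentially the same route as the paper: both compute $\cal F K$ directly from \eqref{Kxy-eq} and then read off the kernel $\cal K$ from \eqref{Fa-eq}, with the same density argument as in Proposition~\ref{aK-prop}. The only cosmetic difference is that the paper unpacks $K$ as $\cal F^{-1}_{\eta\to y}(e^{-\im x\cdot\eta}a(x,-\eta))$ and Fourier transforms that, whereas you invoke the pull-back identity $\cal F(g\circ M)=\hat g\circ M^t$ --- the same bookkeeping in different packaging.
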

\begin{proof}
\eqref{Kxy-eq} implies that 
$K=\cal F^{-1}_{\eta\to y}(e^{-\im x\cdot\eta}a(x,-\eta))$, since $\cal F^{-1}$
commutes with reflections in $\eta$ and $y$. Then \eqref{FKah-eq}
follows by application of $\cal F$ and \eqref{Fa-eq}.
\end{proof}

\bigskip

The right-hand side of \eqref{aK-eq} is inconvenient 
for the definition of type $1,1$-operators, as in general both
entries of $\dual{K}{v\otimes u}$ have singularities
(in some cases this can be handled, cf Section~\ref{exdist-sect}).
However, it is a well-known fact that also in case $\rho=1=\delta$ the
kernels only have singularities along the diagonal.

\begin{lem}   \label{Kxy-lem}
  For every $a\in S^d_{1,1}(\Rn\times\Rn)$ the kernel 
$K(x,y)$ is $C^\infty$ for $x\ne y$. 
\end{lem}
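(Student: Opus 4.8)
The plan is to show that the distribution kernel $K(x,y)$ is smooth away from the diagonal by exploiting the formula \eqref{Kxy-eq}, namely $K=\cal F^{-1}_{\eta\to y}(a(x,\eta))\circ M$, together with a standard oscillatory-integral argument adapted to the type $1,1$-setting. Since the coordinate change by $M$ is a diffeomorphism that maps $\{(x,y)\colon x\neq y\}$ onto $\{(x,z)\colon z\neq 0\}$, it is equivalent to prove that $\tilde K(x,z):=\cal F^{-1}_{\eta\to z}(a(x,\eta))$ is $C^\infty$ for $z\neq 0$. Formally, $\tilde K(x,z)=(2\pi)^{-n}\int e^{\im z\cdot\eta}a(x,\eta)\,d\eta$, which need not converge absolutely, so the first step is to give this a rigorous meaning and then to regularise.

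First I would fix a point $(x_0,z_0)$ with $z_0\neq 0$ and work locally. The standard device is to insert the identity $e^{\im z\cdot\eta}=|z|^{-2N}(\mlap_\eta)^N e^{\im z\cdot\eta}$ and integrate by parts in $\eta$, which is legitimate after first cutting off to $|\eta|\le R$ and letting $R\to\infty$; this transfers $N$ derivatives onto $a(x,\eta)$, gaining a factor $\ang{\eta}^{-N}$ in absolute value by the symbol estimates \eqref{Sdrd-eq} with $\rho=\delta=1$ (here only $\rho=1$ matters for the $\eta$-decay). Choosing $N$ large relative to $d$ makes the integrand $O(|z|^{-2N}\ang{\eta}^{d-N})$, hence absolutely integrable, and the same estimate survives differentiation under the integral sign in both $x$ and $z$: each $\partial_x$ costs only a bounded factor in $\eta$ (again by \eqref{Sdrd-eq}, since $\delta=1$ gives $\ang{\eta}^{+1}$ per $x$-derivative, which one absorbs by enlarging $N$), and each $\partial_z$ brings down a polynomial factor $\eta^\gamma$, again absorbed by $N$. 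Thus for every $(x,z)$ in a neighbourhood of $(x_0,z_0)$, the integral representing $\tilde K$ and all its formal derivatives converge uniformly, so $\tilde K\in C^\infty$ near $(x_0,z_0)$. One should also check that this locally defined smooth function agrees with the distribution $\cal F^{-1}_{\eta\to z}(a(x,\eta))$ away from $z=0$, which follows by testing against $\varphi(x)\otimes\psi(z)$ with $\psi$ supported away from $0$ and using the same integration-by-parts/Fubini manipulation.

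The main obstacle — and the feature distinguishing type $1,1$ from type $1,0$ — is precisely that $x$-differentiation makes the symbol \emph{worse} in $\eta$ rather than better, since $\delta=1$: one has $|D^\alpha_\eta D^\beta_x a|\le C_{\alpha,\beta}\ang{\eta}^{d-|\alpha|+|\beta|}$. The point to emphasise is that this causes no real trouble here, because the gain $\ang{\eta}^{-N}$ from integrating by parts $N$ times in $\eta$ is unlimited: for any prescribed number of $x$- and $z$-derivatives we simply take $N$ large enough that $d-N+(\text{number of }x\text{-derivatives})+(\text{degree of the }z\text{-polynomial})<-n$. Since the bounds are local in $(x,z)$ and uniform on compact subsets of $\{z\neq 0\}$ once $N$ is fixed, differentiation under the integral sign is justified and $\tilde K$ is seen to lie in $C^\infty(\Rn\times(\Rn\setminus\{0\}))$. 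Transporting back through $M$ — which is linear and invertible, hence smooth with smooth inverse — yields that $K(x,y)$ is $C^\infty$ off the diagonal $\{x=y\}$, as claimed.
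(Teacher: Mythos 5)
Your proposal is correct and follows essentially the same route as the paper: the identity $|z|^{2N}D^\beta_xD^\alpha_z\cal F_{\eta\to z}(a)=\cal F_{\eta\to z}(\lap^N_\eta(\eta^\alpha D^\beta_x a))$ with $N$ chosen so that $d+|\alpha|+|\beta|-2N<-n$, giving continuity of every derivative off the diagonal after conjugation by $M$. (The only quibble is that $(\mlap_\eta)^N$ gains $\ang{\eta}^{-2N}$ rather than $\ang{\eta}^{-N}$, but since $N$ is arbitrary this is immaterial.)
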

\begin{proof}
For $N$ so large that $d+|\beta|+|\alpha|-2N<-n$,
\begin{equation}
 |z|^{2N}D^\beta_xD^\alpha_z\cal F_{\eta\to z}(a(x,\eta))=
\cal F_{\eta\to z}(\lap^N_\eta(\eta^\alpha D^\beta_x a(x,\eta))) 
  \label{zK-eq}
\end{equation}
is a continuous function, so 
any derivative of $K$ is so for $x\ne y$.  
\end{proof}

Instead the middle of \eqref{aK-eq} gives a convenient way to
prove that every type $1,1$-operator extends to $\cal F^{-1}\cal E'(\Rn)$,
ie to the space of tempered distributions with compact spectrum. This result
was first observed in \cite{JJ05DTL}, but the following argument
should be interesting for its simplicity.
When $v\in C^\infty_0(\Rn)$
and $u\in \cal F^{-1}C^\infty_0(\Rn)$ then \eqref{aK-eq} gives
\begin{equation}
  \dual{a(x,D)u}{v}= \dual{v(x)\otimes\hat u(\eta)}
   {\tfrac{e^{\im x\cdot\eta}}{(2\pi)^n}a(x,\eta)}_{\cal E'\times C^\infty}.
  \label{axD'-eq}
\end{equation}
This suggests to introduce $A\colon \cal F^{-1}\cal E'(\Rn)\to
C^\infty(\Rn)$ given by
\begin{equation}
  Au(x)= \dual{\hat u}{\tfrac{e^{\im \dual{x}{\cdot}}}{(2\pi)^n}a(x,\cdot)}.
  \label{Adual-eq}
\end{equation}
That $Au$ is in $C^\infty$ is a standard fact used eg in the construction of
tensor products on $\cal E'(\R^{n'})\times\cal E'(\R^{n''})$; cf 
\cite[Th.~5.1.1]{H}. By definition of the tensor product
of arbitrary $v$, $\hat u\in \cal E'(\Rn)$, they should act successively
on the $C^\infty$-function, so for $v$ in $C^\infty_0(\Rn)$,
\begin{equation}
  \dual{v\otimes\hat u}{e^{\im\dual{\cdot}{\cdot}}a\cdot(2\pi)^{-n}}
 =
  \dual{v}{Au} =\int_{\Rn} v(x)Au(x)\,dx.
\end{equation}
This and \eqref{axD'-eq} 
gives $Au=a(x,D)u$ for every 
$u\in \cal F^{-1}C^\infty_0=\cal S\cap\cal F^{-1}\cal E'$;
hence $a(x,D)$ and $A$ are compatible.
Therefore $a(x,D)$ extends to a map
\begin{equation}
  a(x,D)\colon \cal S(\Rn)+\cal F^{-1}\cal E'(\Rn)\to C^\infty(\Rn)
\end{equation}
by setting $a(x,D)u=a(x,D)v+Av'$ when $u=v+v'$ for $v\in \cal S$ and
$\cal Fv'\in \cal E'$ (if $0=v+v'$, clearly $v=-v'$ is in 
$\cal F^{-1}C^\infty_0$, hence gives identical images, ie $a(x,D)v+Av'=0$).

\bigskip

By the duality of $\cal E'$ and $C^\infty$, the right-hand side of
\eqref{Adual-eq} should be 
calculated by multiplying $a(x,\eta)$ by a cut-off function $\chi(\eta)$
equalling $1$ on a neighbourhood of $\supp\cal Fu$. 
The resulting symbol $\chi(\eta)a(x,\eta)$ is clearly in
\begin{equation}
 S^{-\infty}(\Rn\times\Rn)
:=\bigcap_{d,\rho,\delta}S^{d}_{\rho,\delta}(\Rn\times\Rn).
  \label{S-8-eq}
\end{equation} 
A systematic exploitation of localisations $\chi(\eta)a(x,\eta)$ 
is found in the next section.

\subsection{Extension by spectral localisation}  \label{loc-ssect}
For type $1,1$-operators, this section gives a first extension, 
based on cut-off techniques and arguments from algebra. 
The latter are trivial, but important
for several compatibility questions that are treated here. 

\bigskip

Let $\cal S'_{\Sigma}(\Rn)$ denote the closed 
subspace of distributions with
spectrum in a given open set $\Sigma\subset\Rn$, ie
\begin{equation}
  \cal S'_{\Sigma}(\Rn)=\bigl\{\,u\in \cal S'(\Rn) \bigm| \supp\hat
u\subset\Sigma\,\bigr\}.
  \label{S'O-eq}
\end{equation}
Clearly the intersection $\cal S_{\Sigma}(\Rn):=
\cal S(\Rn)\cap\cal S'_{\Sigma}(\Rn)$ is dense in $\cal
S'_{\Sigma}(\Rn)$. 

As a basic assumption in this section, $a(x,\eta)\in S^\infty_{1,1}$ 
should have the properties of a more `well-behaved' symbol class 
$S$ as $\eta$ runs through a given open set $\Sigma\subset\Rn$.
It would then be natural, and necessary, to extend $a(x,D)$
to every $u\in \cal S'_{\Sigma}(\Rn)$ by letting it act 
as an operator with symbol in the class $S$.

To turn this idea into a definition, an arbitrary linear subspace 
$S \subset\cal S'(\R^{2n})\cap C^\infty(\R^{2n})$ 
will in the following be called a
\emph{standard} symbol space if, for every $b\in S$,
the integral in \eqref{axDu-eq} gives an operator 
$\OP(b)\colon \cal S\to\cal S$
which  extends to a continuous linear map
\begin{equation}
  \OP(b)\colon \cal S'(\Rn)\to\cal S'(\Rn).
  \label{OPb-eq}
\end{equation}
(Such an extension is unique, so the notation need not relate $\OP(b)$ to
the choice of $S$. To avoid confusion, the type $1,1$ operator under
extension is usually denoted $a(x,D)$.)
An example could be $S=S^d_{\rho,\delta}$ with
$(\rho,\delta)\ne(1,1)$; whilst
$\OP(b)$ could be the extension to $\cal S'$ of $b(x,D)$ 
given by the adjoint of 
$b^*(x,D)\colon \cal S(\Rn)\to\cal S(\Rn)$, that in its turn is defined from
the adjoint symbol $b^*(x,\xi)=e^{\im D_x\cdot D_\xi}\bar b(x,\xi)$.

Using this, $a(x,D)$ can be
extended if the symbol $a\in S^\infty_{1,1}$ is \emph{locally} in a standard
symbol space $S$ in an open set $\Sigma\subset\Rn$.
Specifically this means that for every
closed set $F\subset\Sigma$ there exists 
a cut-off function $\chi\in  C^\infty_{\b}(\Rn)$, not necessarily supported
by $\Sigma$, such that 
\begin{equation}
  \chi\equiv1 \text{ on a neighbourhood of } F,\qquad
  \chi(\eta)a(x,\eta)\in  S.
  \label{chi-cnd}
\end{equation}
Instead of $a(x,\eta)\chi(\eta)$, the slightly more correct
$a(1\otimes\chi)$ is often preferred in the sequel. 

\begin{prop}   \label{aloc-prop}
For each symbol $a\in S^\infty_{1,1}(\Rn\times\Rn)$ that is locally in $S$ 
in an open set $\Sigma\subset\Rn$, there is defined a map 
$\cal S'_{\Sigma}(\Rn)\to \cal S'(\Rn)$ by
\begin{equation}
  a(x,D)u=\OP(a(1\otimes\chi))(u),
  \label{aloc-eq}
\end{equation}
which has the same value for all $\chi\in C^\infty_{\b}(\Rn)$ 
satisfying \eqref{chi-cnd} for $F=\supp\cal Fu$.
The map is compatible with $a(x,D)\colon \cal S(\Rn)\to\cal S(\Rn)$.
\end{prop}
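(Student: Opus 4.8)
The plan is to verify first that the right-hand side of \eqref{aloc-eq} is well defined, i.e.\ independent of the admissible cut-off $\chi$, and then to check compatibility with the action on $\cal S(\Rn)$. Fix $u\in \cal S'_{\Sigma}(\Rn)$ and set $F=\supp\cal Fu$, which is a closed subset of $\Sigma$. By hypothesis there exists at least one $\chi\in C^\infty_{\b}(\Rn)$ with $\chi\equiv1$ near $F$ and $a(1\otimes\chi)\in S$; hence $\OP(a(1\otimes\chi))$ is a well-defined continuous map $\cal S'(\Rn)\to\cal S'(\Rn)$ by the standing assumption on the standard symbol space $S$, and $\OP(a(1\otimes\chi))(u)$ makes sense. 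So the only issue is uniqueness.

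Suppose $\chi_1,\chi_2$ both satisfy \eqref{chi-cnd} for $F$. The key point is that $b:=a(1\otimes(\chi_1-\chi_2))\in S$ (since $S$ is a linear subspace) and that $\chi_1-\chi_2$ vanishes on a neighbourhood $U$ of $F=\supp\cal Fu$. First I would treat the case $u\in \cal S_{\Sigma}(\Rn)=\cal S(\Rn)\cap\cal S'_{\Sigma}(\Rn)$: then from \eqref{axDu-eq} one reads off directly that $\OP(b)u(x)=(2\pi)^{-n}\int e^{\im x\cdot\eta}a(x,\eta)(\chi_1-\chi_2)(\eta)\hat u(\eta)\,d\eta=0$, because $(\chi_1-\chi_2)\hat u\equiv0$. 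Thus $\OP(a(1\otimes\chi_1))$ and $\OP(a(1\otimes\chi_2))$ agree on the dense subspace $\cal S_{\Sigma}(\Rn)$ of $\cal S'_{\Sigma}(\Rn)$. Since both are restrictions to $\cal S'_{\Sigma}(\Rn)$ of the continuous maps $\OP(a(1\otimes\chi_j))\colon\cal S'(\Rn)\to\cal S'(\Rn)$, and $\cal S_{\Sigma}$ is dense in $\cal S'_{\Sigma}$ (noted just before the statement), they agree on all of $\cal S'_{\Sigma}(\Rn)$ by continuity. This gives well-definedness.

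Compatibility with $a(x,D)\colon\cal S(\Rn)\to\cal S(\Rn)$ is then almost immediate: for $u\in \cal S(\Rn)$ one no longer has $\supp\cal Fu$ compact in general, but one can instead argue that for $u\in\cal S_{\Sigma}(\Rn)$, choosing $\chi$ with $\chi\equiv1$ near $\supp\hat u$, the integral \eqref{axDu-eq} for $\OP(a(1\otimes\chi))u$ coincides termwise with that for $a(x,D)u$ because $\chi\hat u=\hat u$; so $\OP(a(1\otimes\chi))u=a(x,D)u$ there. Since this identity of two $\cal S'$-continuous (indeed $\cal S$-valued) operators holds on the dense subspace $\cal S_{\Sigma}(\Rn)$ it need not extend to all of $\cal S(\Rn)$ by density alone --- but in fact for an \emph{arbitrary} $u\in\cal S(\Rn)$ the definition \eqref{aloc-eq} is not invoked (its domain is $\cal S'_{\Sigma}$), so ``compatibility'' here means precisely agreement on the overlap $\cal S(\Rn)\cap\cal S'_{\Sigma}(\Rn)=\cal S_{\Sigma}(\Rn)$, which is what was just shown. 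I expect the only mildly delicate point to be this bookkeeping about which space the two operators are being compared on; the analytic content --- that multiplying $\hat u$ by a function equal to $1$ on $\supp\hat u$ changes nothing, combined with the uniqueness of continuous extensions from a dense subspace --- is routine.
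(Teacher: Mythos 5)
Your overall strategy is the paper's: show the difference of two admissible cut-offs gives the zero operator by testing on a dense subspace of Schwartz functions with suitable spectrum and then invoking $\cal S'$-continuity of $\OP$ on the standard space $S$; compatibility is then the trivial observation that $\chi\hat u=\hat u$ in the integral \eqref{axDu-eq}. However, there is a genuine flaw in your uniqueness step. You claim that $\OP(a(1\otimes\chi_1))$ and $\OP(a(1\otimes\chi_2))$ agree on all of $\cal S_{\Sigma}(\Rn)$ and then use density of $\cal S_{\Sigma}$ in $\cal S'_{\Sigma}$. This is false in general: $\chi_1-\chi_2$ is only required to vanish on a neighbourhood of $F=\supp\cal Fu$, not on all of $\Sigma$, so for $v\in\cal S_{\Sigma}(\Rn)$ with $\supp\hat v$ reaching outside that neighbourhood one has $(\chi_1-\chi_2)\hat v\not\equiv0$ and hence, in general, $\OP(a(1\otimes(\chi_1-\chi_2)))v\ne0$. (Take $\Sigma=\Rn$, $F=\{0\}$, $\chi_1\equiv1$ and $\chi_2$ a bump equal to $1$ near $0$ only; the two operators differ on any $v$ with $\hat v$ supported away from the origin.) So the dense subspace you test on is the wrong one, and the computation "$(\chi_1-\chi_2)\hat u\equiv0$" is valid only for the particular $u$ you fixed, which need not be in $\cal S$.

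The repair is short and is exactly what the paper does: let $\Sigma_1\supset\supp\cal Fu$ be an \emph{open} set on which $\chi_1\equiv\chi_2$ (it exists since both equal $1$ on a neighbourhood of $F$). Then $b=a(1\otimes(\chi_1-\chi_2))\in S$ and $\OP(b)v=0$ for every $v\in\cal S_{\Sigma_1}(\Rn)$ by \eqref{axDu-eq}; since $u\in\cal S'_{\Sigma_1}(\Rn)$ and $\cal S_{\Sigma_1}$ is dense in $\cal S'_{\Sigma_1}$, the $\cal S'$-continuity of $\OP(b)$ gives $\OP(b)u=0$. Your compatibility discussion is correct once this is fixed: agreement is only asserted on the overlap $\cal S(\Rn)\cap\cal S'_{\Sigma}(\Rn)$, where it follows directly from $\chi\hat u=\hat u$ in \eqref{axDu-eq} (compactness of $\supp\hat u$ is not needed, only that it is a closed subset of $\Sigma$, so \eqref{chi-cnd} applies).
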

\begin{proof}
Let $u\in \cal S'_{\Sigma}(\Rn)$.
By \eqref{chi-cnd} and \eqref{OPb-eq}, $\OP(a(1\otimes\chi))$ is 
defined on $\cal S'\ni u$. If $\chi_1$ is another such function, 
$a(x,\eta)(\chi(\eta)-\chi_1(\eta))$ is in the vector space $S$ and  
equals $0$ for $\eta$ in some open
set $\Sigma_1\supset \supp u$, so that by density of $\cal S_{\Sigma_1}$ in
$\cal S'_{\Sigma_1}$,
\begin{equation}
  0= \OP(a(1\otimes(\chi-\chi_1)))u.
  \label{acc1-eq}
\end{equation}
Therefore \eqref{aloc-eq} is independent of the choice of $\chi$, so
the map $\OP(a(1\otimes\chi))u$ is defined; it equals
$a(x,D)u$ for every 
$u\in \cal S\cap\cal S'_{\Sigma}$ by \eqref{axDu-eq}.
\end{proof}

The compatibility in Proposition~\ref{aloc-prop} 
gives of course a map on the algebraic subspace 
$\cal S(\Rn)+\cal S'_{\Sigma}(\Rn)\subset\cal S'(\Rn)$;
cf \eqref{aSSO-eq}. But more holds:

\begin{thm}   \label{aSSO-thm}
For every $a\in  S^\infty_{1,1}(\Rn\times\Rn)$ that 
in an open set $\Sigma\subset\Rn$ is locally in a standard symbol space
$S$ (cf \eqref{OPb-eq}),
the operator $a(x,D)$ extends to a linear map
\begin{align}
\cal S(\Rn)+\cal S'_{\Sigma}(\Rn) 
  &\xrightarrow[]{a(x,D)} \cal S'(\Rn) \quad\text{given by}
\\
  a(x,D)u&= a(x,D)v+\OP(a(1\otimes \chi))v'
  \label{aSSO-eq}  
\end{align}
whenever $u=v+v'$ for $v\in \cal S(\Rn)$, $v'\in \cal
S'_{\Sigma}(\Rn)$; hereby
$\chi\in C^\infty_{\b}(\Rn)$ can be any function 
fulfilling \eqref{chi-cnd} for $F=\supp\cal Fv'$.
The extension is uniquely determined by coinciding with \eqref{OPb-eq} on
$\cal S'_\Sigma(\Rn)$.
\end{thm}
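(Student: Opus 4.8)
The plan is to treat Theorem~\ref{aSSO-thm} as a short piece of linear algebra resting on Proposition~\ref{aloc-prop}: I will show that the right-hand side of \eqref{aSSO-eq} is unchanged under a different admissible cut-off $\chi$ and under a different splitting $u=v+v'$, and then read off linearity, the extension property and uniqueness. As a preliminary I would note that the spectral-localisation map $A_\Sigma\colon\cal S'_{\Sigma}(\Rn)\to\cal S'(\Rn)$, $A_\Sigma v'=\OP(a(1\otimes\chi))v'$, of Proposition~\ref{aloc-prop} is \emph{linear}: for $v_1',v_2'\in\cal S'_{\Sigma}$ and a scalar $\lambda$, the set $F=\supp\cal Fv_1'\cup\supp\cal Fv_2'$ is closed in $\Sigma$, so \eqref{chi-cnd} yields one $\chi\in C^\infty_{\b}(\Rn)$ admissible at once for $v_1'$, $v_2'$, $v_1'+v_2'$ and $\lambda v_1'$; since $\OP(a(1\otimes\chi))$ is linear on $\cal S'(\Rn)$, additivity and homogeneity of $A_\Sigma$ follow from the $\chi$-independence asserted in Proposition~\ref{aloc-prop}. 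I also recall from that proposition that $A_\Sigma$ agrees with $a(x,D)\colon\cal S(\Rn)\to\cal S(\Rn)$ on the overlap $\cal S_{\Sigma}(\Rn)=\cal S(\Rn)\cap\cal S'_{\Sigma}(\Rn)$.

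The main point is well-definedness of \eqref{aSSO-eq}. If $v_1+v_1'=v_2+v_2'=u$ with $v_j\in\cal S(\Rn)$, $v_j'\in\cal S'_{\Sigma}(\Rn)$, then $w:=v_1-v_2=v_2'-v_1'$ lies in $\cal S(\Rn)\cap\cal S'_{\Sigma}(\Rn)=\cal S_{\Sigma}(\Rn)$, and using linearity of $a(x,D)$ on $\cal S$, linearity of $A_\Sigma$ on $\cal S'_{\Sigma}$ and the compatibility $A_\Sigma w=a(x,D)w$ just recalled, the difference $\bigl(a(x,D)v_1+A_\Sigma v_1'\bigr)-\bigl(a(x,D)v_2+A_\Sigma v_2'\bigr)$ reduces to $a(x,D)w-A_\Sigma w=0$. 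Hence \eqref{aSSO-eq} defines a map on the sum $\cal S(\Rn)+\cal S'_{\Sigma}(\Rn)$, and linearity of this map is then immediate by choosing compatible splittings of $u_1+u_2$ and of $\lambda u$.

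Finally I would extract the remaining assertions. Taking $v'=0$ gives $a(x,D)u=a(x,D)v$ for $u=v\in\cal S(\Rn)$, so the map extends $a(x,D)\colon\cal S\to\cal S$; taking $v=0$ gives $a(x,D)u=\OP(a(1\otimes\chi))u$ for $u\in\cal S'_{\Sigma}(\Rn)$, i.e. it restricts to \eqref{aloc-eq}, hence to \eqref{OPb-eq}, on $\cal S'_{\Sigma}$. For uniqueness, any two linear maps on $\cal S(\Rn)+\cal S'_{\Sigma}(\Rn)$ that both extend $a(x,D)$ on $\cal S$ and both agree with $\OP(a(1\otimes\chi))$ on $\cal S'_{\Sigma}$ have a difference vanishing on each of the spanning subspaces $\cal S(\Rn)$ and $\cal S'_{\Sigma}(\Rn)$, hence on their sum. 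The only genuine obstacle in the argument is therefore Proposition~\ref{aloc-prop} itself — in particular the $\chi$-independence, proved there via density of $\cal S_{\Sigma}$ in $\cal S'_{\Sigma}$ — which is already in place.
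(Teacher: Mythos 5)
Your proposal is correct and follows essentially the same route as the paper: well-definedness is reduced, via a common cut-off $\chi$ equal to $1$ near the union of the relevant spectra, to the compatibility of $\OP(a(1\otimes\chi))$ with $a(x,D)$ on $\cal S\cap\cal S'_{\Sigma}$ established in Proposition~\ref{aloc-prop}, after which linearity, the extension property and uniqueness are read off by elementary linear algebra. The only difference is presentational — you isolate the linearity of the spectral-localisation map as a preliminary step, which the paper uses implicitly.
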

\begin{proof}
For uniqueness, let
$\widetilde{\OP}(a)$ be any extension agreeing with
\eqref{OPb-eq} on $\cal S'_{\Sigma}(\Rn)$. Then linearity 
gives, for any splitting $u=v+v'$ and $\chi$ as in the theorem, that
\begin{equation}
  \widetilde{\OP}(a)u=\widetilde{\OP}(a)v+\widetilde{\OP}(a)v'
  =a(x,D)v+\OP(a)v'
  =a(x,D)v+\OP(a(1\otimes\chi))v'.
\end{equation}

To show that \eqref{aSSO-eq} actually defines the desired map,
suppose $u=v+v'=w+w'$ for some $v,w\in \cal S$ and
$v',w'\in \cal S'_{\Sigma}$. Applying $a(x,D)$ to $v-w$ and
$\OP(a(1\otimes\chi))$ to $w'-v'$,
with $\chi$ taken so that 
$\chi\equiv1$ on a neighbourhood of $F=\supp \cal Fv'\cup
\supp\cal F w'\supset\supp\cal F(w'-v')$, 
it follows from the compatibility in
Proposition~\ref{aloc-prop} and linearity that, for $\chi=\chi_1=\chi_2$,
\begin{equation}
 a(x,D)v+\OP(a(1\otimes\chi_1))v'=
 a(x,D)w+\OP(a(1\otimes\chi_2))w'.
\end{equation}
By Proposition~\ref{aloc-prop} one can then pass to arbitrary
$\chi_1$, $\chi_2$ equalling $1$ around $\supp\cal Fv'$, respectively
$\supp\cal Fw'$, without changing the left and right-hand sides.
This means that \eqref{aSSO-eq} gives a map, for
$a(x,D)v+\OP(a(1\otimes \chi))v'$ 
is independent of the splitting $u=v+v'$ and of the corresponding  
choice of $\chi$;
thence linear dependence on $u$ follows too. 
\end{proof}

Theorem~\ref{aSSO-thm} gives a basic
extension of type $1,1$-operators, that
could have been a definition (justified by the given arguments).
When $a\in  S^\infty_{1,1}$ happens to be in $S$ too, 
then $\chi\equiv1_{\Rn}$ and $v=0$ yields 
$a(x,D)u=\OP(a)u$, so the definition \eqref{aSSO-eq} 
gives back the $\cal S'$-continuous operators with symbols in $S$.

Before these questions are pursued, the construction's 
dependence on $S$ and $\Sigma$ is
investigated. 

\begin{prop}  \label{SSt-prop}
Let $S$ and $\tilde S$ be standard symbol spaces, and
let $a\in S^\infty_{1,1}$ be locally in $S$ in some open set
$\Sigma\subset\Rn$ and also locally in
$\tilde S$ in an open set $\tilde \Sigma$. Then the induced maps
\begin{equation}
 a(x,D)\colon \cal S+\cal S'_{\Sigma}\to\cal S', 
 \qquad
 a(x,D)\colon \cal S+\cal S'_{\widetilde  \Sigma}\to\cal S'
\end{equation}
are compatible when either $\Sigma$ has the property
that $\chi$ in \eqref{chi-cnd} for every $F$ can be taken with
$\supp\chi\subset\Sigma$, or $\tilde \Sigma$ has the analogous property. 
\end{prop}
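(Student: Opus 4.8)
The plan is to read \emph{compatible} as the statement that for every $u$ lying in both $\cal S(\Rn)+\cal S'_{\Sigma}(\Rn)$ and $\cal S(\Rn)+\cal S'_{\tilde\Sigma}(\Rn)$ the two recipes of Theorem~\ref{aSSO-thm} assign the same distribution to $a(x,D)u$. By the symmetry of the hypothesis I may assume that $\Sigma$ has the stated property. First I would reduce: write $u=v+v'$ with $v\in\cal S$, $v'\in\cal S'_{\Sigma}$, and $u=\tilde v+\tilde v'$ with $\tilde v\in\cal S$, $\tilde v'\in\cal S'_{\tilde\Sigma}$; then $v'=u-v=(\tilde v-v)+\tilde v'$ also lies in $\cal S+\cal S'_{\tilde\Sigma}$, so $v'$ belongs to both domains. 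Since both maps are linear and both restrict on $\cal S$ to the operator \eqref{axDu-eq}, it suffices to prove the identity for $w\in\cal S'_{\Sigma}(\Rn)\cap\bigl(\cal S(\Rn)+\cal S'_{\tilde\Sigma}(\Rn)\bigr)$.

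So fix such a $w$ and write $w=\tilde w+\tilde w'$ with $\tilde w\in\cal S$, $\tilde w'\in\cal S'_{\tilde\Sigma}$. By Proposition~\ref{aloc-prop} the first value is $\OP(a(1\otimes\chi))w$ for any $\chi$ with $\chi\equiv1$ near $\supp\cal Fw$ and $a(1\otimes\chi)\in S$; using the property of $\Sigma$ I take $\supp\chi\subset\Sigma$, so that $\chi(D)$ maps $\cal S'(\Rn)$ into $\cal S'_{\Sigma}(\Rn)$. By Theorem~\ref{aSSO-thm} the second value is $a(x,D)\tilde w+\OP(a(1\otimes\tilde\chi))\tilde w'$ with $\tilde\chi\equiv1$ near $\supp\cal F\tilde w'$ and $a(1\otimes\tilde\chi)\in\tilde S$. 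The identity of these two distributions I would establish by a short algebraic cancellation based on three elementary facts: (i) for a standard symbol $b$ and $\varphi\in C^\infty_{\b}$ the composition $\OP(b)\circ\varphi(D)$ is $\cal S'$-continuous, is what I denote $\OP(b(1\otimes\varphi))$, and coincides on $\cal S$ with \eqref{axDu-eq}; (ii) $(1-\chi)\cal Fw=0$ forces $(1-\chi)(D)\tilde w=-(1-\chi)(D)\tilde w'$, whence $q:=(1-\chi)(D)\tilde w'$ lies in $\cal S$ (equal to a Fourier multiple of $\tilde w\in\cal S$) and satisfies $\tilde\chi\,\cal Fq=\cal Fq$ because $\supp\cal Fq\subset\supp\cal F\tilde w'$; (iii) $(1-\tilde\chi)(D)\tilde w'=0$, so $\tilde\chi(D)\tilde w'=\tilde w'$.

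The cancellation then goes: on the one hand $\OP(a(1\otimes\chi))w=\OP(a(1\otimes\chi))\tilde w+\OP(a(1\otimes\chi))\tilde w'$, and by (i)--(ii) the first summand is $a(x,D)\tilde w-a(x,D)\bigl[(1-\chi)(D)\tilde w\bigr]=a(x,D)\tilde w+a(x,D)q$. On the other hand, from $\tilde w'=\chi(D)\tilde w'+q$ together with (i) and (iii) one gets $\OP(a(1\otimes\tilde\chi))\tilde w'=\OP(a(1\otimes\chi\tilde\chi))\tilde w'+\OP(a(1\otimes\tilde\chi))q$, where $\OP(a(1\otimes\chi\tilde\chi))\tilde w'=\OP(a(1\otimes\chi))\tilde\chi(D)\tilde w'=\OP(a(1\otimes\chi))\tilde w'$ and $\OP(a(1\otimes\tilde\chi))q=a(x,D)q$ by (ii). Hence the second value equals $a(x,D)\tilde w+a(x,D)q+\OP(a(1\otimes\chi))\tilde w'$, which is exactly $\OP(a(1\otimes\chi))w$, the first value; this closes the argument, and unwinding the reduction gives compatibility on the full intersection of domains.

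The routine part is the bookkeeping of cut-offs; the point demanding care — and the reason the extra hypothesis on $\Sigma$ (or $\tilde\Sigma$) is imposed — is that intermediate symbols such as $a(1\otimes\chi\tilde\chi)$ and $a(1\otimes(1-\chi))$ need not belong to any standard symbol space, so every $\OP(\cdots)$ occurring above must be read either as a genuine composition of $\cal S'$-continuous operators or, on a Schwartz function, through \eqref{axDu-eq}, and one must constantly track which distributions have spectrum inside the open sets where the relevant cut-offs are identically $1$; keeping $\supp\chi\subset\Sigma$ is what lets the reduction of the first paragraph be applied together with Proposition~\ref{aloc-prop} without any of the distributions produced along the way leaving $\cal S'_{\Sigma}$.
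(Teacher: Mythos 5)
Your proof is correct, but it takes a genuinely different route from the paper's. The paper first reduces to a single standard space by checking that $S+\tilde S$ is again standard, and then uses the support hypothesis to manufacture one \emph{common} splitting $u=v'+\varphi(D)w$ with $\varphi(D)w\in \cal S'_{\Sigma\cap\tilde\Sigma}$ (taking $\varphi$ supported in $\tilde\Sigma$ and equal to $1$ near $\supp\cal F\tilde w$), after which both recipes of Theorem~\ref{aSSO-thm}, applied to that single splitting, visibly agree. You instead keep the two given splittings and cancel directly; the engine is that $\OP(a(1\otimes\tilde\chi))\circ\chi(D)$ and $\OP(a(1\otimes\chi))\circ\tilde\chi(D)$ are both $\cal S'$-continuous and agree on the dense subspace $\cal S(\Rn)$ with the integral \eqref{axDu-eq} for the symbol $a(1\otimes\chi\tilde\chi)$, hence coincide on all of $\cal S'(\Rn)$ --- the same uniqueness-of-continuous-extension principle that underlies \eqref{OPb-eq}, used pointwise rather than through $S+\tilde S$. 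One consequence is worth stating explicitly: nothing in your cancellation actually uses $\supp\chi\subset\Sigma$; facts (i)--(iii) and the two chains of identities need only that $\chi\equiv1$ near $\supp\cal Fw$ with $a(1\otimes\chi)\in S$ and $\tilde\chi\equiv1$ near $\supp\cal F\tilde w'$ with $a(1\otimes\tilde\chi)\in\tilde S$, both of which are supplied by the ``locally in'' hypotheses alone. So your argument in fact establishes compatibility \emph{without} the extra assumption on $\Sigma$ or $\tilde\Sigma$ --- a strengthening in the spirit of the remark following Theorem~\ref{consistency-thm}, which removes that assumption only for stable spaces --- and your closing paragraph, which attributes an essential role to $\supp\chi\subset\Sigma$, overstates what your own computation relies on. The paper's proof, by contrast, genuinely needs the hypothesis in order to produce the common splitting.
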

\begin{proof}
One can reduce to the case $S=\tilde S$ by introducing the subspace
$S+\tilde S\subset\cal S'(\R^{2n})$: 
for every $b\in S$, $\tilde b\in \tilde S$ the definition by the usual
integral shows that
\begin{equation}
  \OP( b+\tilde b)=
  \OP(b)+\OP(\tilde b) \quad\text{on}\quad \cal S(\Rn).
\end{equation}
Here $\OP(b+\tilde b)$ extends to a continuous, linear map 
$\cal S'(\Rn)\to \cal S'(\Rn)$ since the right-hand side does so.
If $b+\tilde b=b_1+\tilde b_1$ for $b_1\in S$, $\tilde b_1\in \tilde S$,
both $\OP(b+\tilde b)$, $\OP(b_1+\tilde b_1)$ extend to $\cal
S'$, where they coincide as they do so on $\cal S$. 
Hence
every $b+\tilde b$ in $S+\tilde S$ gives an unambiguously defined
operator on $\cal S'(\Rn)$, as required in \eqref{OPb-eq}; ie $S+\tilde
S$ is a standard space. 

Let $u=v+w=\tilde v+\tilde w$ for some $v,\tilde v\in \cal S$, $w\in
\cal S'_{\Sigma}$ and $\tilde w\in \cal S'_{\widetilde{\Sigma}}$.
By the last assumption there exists eg $\varphi\in C^\infty_{\b}(\Rn)$
such that $\supp\varphi\subset\tilde \Sigma$, $\varphi\equiv1$ on a
neighbourhood of $\tilde F$ and $a(1\otimes\varphi)\in \tilde S$.
In particular $1-\varphi=0$ around $\tilde F$ so
\begin{equation}
  u=\varphi(D)(v+w)+(1-\varphi(D))(\tilde v+\tilde w)
   =\tilde v+\varphi(D)(v-\tilde v)+\varphi(D)w.
\end{equation}
While $v'=\tilde v+\varphi(D)(v-\tilde v)$ is in $\cal S(\Rn)$,
the term  $\varphi(D)w$ is in $\cal S'_{\Sigma\cap\tilde
\Sigma}(\Rn)$. 
By taking $\psi=1$ in a neighbourhood of $\supp\cal
F\varphi(D)w\subset\Sigma\cap\tilde \Sigma$, 
it is clear that one gets
\begin{equation}
  a(x,D)u=a(x,D)v'+\OP(a(1\otimes\psi))\varphi(D)w
\end{equation}
by application of Theorem~\ref{aSSO-thm} both for $\cal
S'_{\Sigma}$ and $\cal S'_{\tilde \Sigma}$.
\end{proof}

As a simple application for $\Sigma=\Rn$,  every
$u\in \cal F^{-1}\cal E'(\Rn)$ is in
$\cal S'_{\Sigma}=\cal S'(\Rn)$; and $a$ is locally in
$S^{-\infty}$ since
$b(x,\eta)=a(x,\eta)\chi(\eta)$ is in $S^{-\infty}$
for every $\chi\in C^\infty_0(\Rn)$, in particular when $\chi=1$ around
$\supp \cal F u$.
Therefore Theorem~\ref{aSSO-thm} yields a unique extension of $a(x,D)$
to a linear map $\cal S(\Rn)+\cal F^{-1}\cal E'(\Rn)$.
(Proposition~\ref{SSt-prop} shows that one can replace the reference to 
$S^{-\infty }$ by eg
$S^\infty _{1,0}$ or let $\Sigma$ depend on $u$ without changing the image
$a(x,D)u$.) 

This approach is more elementary than \eqref{axD'-eq} ff. 
In addition it gives that $a(x,D)$ maps $\cal F^{-1}\cal E'$ 
into $\cal O_M(\Rn)$.
Recall that every $a(x,D)$ in $\OP(S^{-\infty}(\Rn\times\Rn))$ 
is a map $\cal S'\to \cal O_M$  (cf \cite[Cor.~3.8]{SRay91}), 
since if $u\in \cal S'$, then
$(1+|x|^2)^{-N}u\in H^{-N}$ for some $N>0$ and  every commutator
$[D^\alpha a(x,D),(1+|x|^2)^N]$ is by inspection in $\OP(S^{-\infty})$.
These facts imply the next result.

\begin{cor}   \label{aFE-cor}
Every operator $a(x,D)$ with symbol in $S^\infty_{1,1}$ 
extends uniquely to a map
\begin{equation}
  a(x,D)\colon \cal S(\Rn)+\cal F^{-1}(\cal E'(\Rn))\to \cal O_M(\Rn),
  \label{aFE-eq}
\end{equation}
which is given by Theorem~\ref{aSSO-thm} with $S=S^{-\infty}$.
\end{cor}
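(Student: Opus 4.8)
The plan is to obtain \eqref{aFE-eq} as an immediate specialisation of Theorem~\ref{aSSO-thm} together with the mapping property of $\OP(S^{-\infty})$ recalled just above the statement. First I would take $\Sigma=\Rn$, so that $\cal S'_\Sigma(\Rn)=\cal S'(\Rn)\supset \cal F^{-1}(\cal E'(\Rn))$, and observe that $a\in S^\infty_{1,1}$ is locally in the standard symbol space $S^{-\infty}(\Rn\times\Rn)$ on $\Sigma$: for any closed $F\subset\Rn$ with $F=\supp\cal Fv'$ compact (which is the only relevant case here, since $v'\in\cal F^{-1}\cal E'$), one picks $\chi\in C^\infty_0(\Rn)$ with $\chi\equiv1$ near $F$, and then $a(1\otimes\chi)\in S^{-\infty}$ by the discussion around \eqref{S-8-eq}. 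That $S^{-\infty}$ is a standard symbol space in the sense of \eqref{OPb-eq} is classical (the operators $\OP(b)$, $b\in S^{-\infty}$, extend continuously to $\cal S'\to\cal S'$), so Theorem~\ref{aSSO-thm} applies verbatim and yields a unique linear extension $a(x,D)\colon \cal S(\Rn)+\cal F^{-1}(\cal E'(\Rn))\to\cal S'(\Rn)$ characterised by agreeing with $\OP(a(1\otimes\chi))$ on $\cal F^{-1}\cal E'$.

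Next I would upgrade the target space from $\cal S'(\Rn)$ to $\cal O_M(\Rn)$. For $u=v+v'$ with $v\in\cal S$ and $\cal Fv'\in\cal E'$, the image is $a(x,D)v+\OP(a(1\otimes\chi))v'$. The first summand lies in $\cal S(\Rn)\subset\cal O_M(\Rn)$ by continuity of \eqref{bilSS-eq}. For the second, I would invoke the fact recalled before the statement — namely that every operator in $\OP(S^{-\infty}(\Rn\times\Rn))$ maps $\cal S'(\Rn)$ into $\cal O_M(\Rn)$ (this is \cite[Cor.~3.8]{SRay91}, or follows from the commutator argument sketched there: $(1+|x|^2)^{-N}u\in H^{-N}$ for suitable $N$, and $[D^\alpha\,\OP(b),(1+|x|^2)^N]\in\OP(S^{-\infty})$). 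Since $a(1\otimes\chi)\in S^{-\infty}$, this gives $\OP(a(1\otimes\chi))v'\in\cal O_M(\Rn)$, and hence the sum lies in $\cal O_M(\Rn)$.

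Finally I would note that uniqueness is inherited directly from Theorem~\ref{aSSO-thm}: any extension of $a(x,D)$ that coincides with $\OP(a(1\otimes\chi))$ on $\cal F^{-1}\cal E'(\Rn)$ must, by linearity and the splitting $u=v+v'$, equal the formula in \eqref{aSSO-eq}, and the value is independent of the splitting and of $\chi$ by that theorem. In particular this also matches the ad hoc construction via \eqref{Adual-eq}, so the two approaches agree — though that remark is not needed for the proof.

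I do not expect any genuine obstacle here: the content is entirely a bookkeeping specialisation of the already-proved Theorem~\ref{aSSO-thm}, and the only non-trivial input — that $\OP(S^{-\infty})$ lands in $\cal O_M$ — is quoted from the literature and already stated in the excerpt. The one point requiring a word of care is that $S^{-\infty}$ genuinely satisfies the definition of a \emph{standard} symbol space, i.e.\ that $\OP(b)$ for $b\in S^{-\infty}$ extends $\cal S'$-continuously; this is standard but should be acknowledged, e.g.\ by taking the adjoint route $\OP(b)=(\OP(b^*))^*$ with $b^*\in S^{-\infty}$ as indicated after \eqref{OPb-eq}.
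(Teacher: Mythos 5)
Your proposal is correct and follows essentially the same route as the paper: take $\Sigma=\Rn$, note that $a(1\otimes\chi)\in S^{-\infty}$ for the (compact) sets $F=\supp\cal Fv'$ arising from $v'\in\cal F^{-1}\cal E'$, apply Theorem~\ref{aSSO-thm} with $S=S^{-\infty}$ for existence and uniqueness, and then land in $\cal O_M$ via the fact that $\OP(S^{-\infty})$ maps $\cal S'$ into $\cal O_M$. Your extra care about the standard-space property of $S^{-\infty}$ and about restricting to compact $F$ is welcome but does not change the argument.
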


Notice that the corollary's statement is purely algebraic, since
continuity properties are not involved in \eqref{aFE-eq}.
Similarly one has another extension result.

\begin{prop}
  \label{aSV-prop}
If $a\in S^\infty_{1,1}$
is locally in the symbol class $S^d_{1,0}(\Rn\times\Rn)$ in an open cone
$V\subset\Rn$ (ie $t\eta\in V$ for all
$t>0$ and $\eta\in V$), then
\eqref{aSSO-eq} yields a unique extension
\begin{equation}
  a(x,D)\colon \cal S+\cal S'_{V}\to \cal S'.
  \label{aSV-eq}
\end{equation}
\end{prop}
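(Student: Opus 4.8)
The plan is to recognise Proposition~\ref{aSV-prop} as a direct instance of Theorem~\ref{aSSO-thm}, obtained by taking the standard symbol space to be $S=S^d_{1,0}(\Rn\times\Rn)$ and the open set to be $\Sigma=V$. With these choices the hypothesis ``$a$ is locally in $S^d_{1,0}$ in $V$'' is, by the very definition \eqref{chi-cnd}, precisely what Theorem~\ref{aSSO-thm} requires: for every closed $F\subset V$ there is $\chi\in C^\infty_{\b}(\Rn)$ with $\chi\equiv1$ on a neighbourhood of $F$ and $\chi(\eta)a(x,\eta)\in S^d_{1,0}$. Hence the only genuine task is to confirm that $S^d_{1,0}(\Rn\times\Rn)$ really is a standard symbol space in the sense of \eqref{OPb-eq}, which is in essence already recorded in the remark following \eqref{OPb-eq}.

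For that confirmation I would argue as follows. First, $S^d_{1,0}(\Rn\times\Rn)$ is a linear subspace of $\cal S'(\R^{2n})\cap C^\infty(\R^{2n})$, since its elements lie in $\cal O_M(\R^{2n})$. Second, for $b\in S^d_{1,0}$ the adjoint symbol $b^*(x,\xi)=e^{\im D_x\cdot D_\xi}\bar b(x,\xi)$ again belongs to $S^d_{1,0}$: because $\delta=0<\rho=1$ the asymptotic series $\sum_\alpha\tfrac{1}{\alpha!}\partial^\alpha_\xi D^\alpha_x\bar b$ has strictly decreasing orders and sums to $b^*$ inside the type $1,0$-calculus. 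Therefore $\OP(b^*)\colon\cal S(\Rn)\to\cal S(\Rn)$ is continuous, its transpose furnishes a continuous linear map $\OP(b)\colon\cal S'(\Rn)\to\cal S'(\Rn)$ extending the one on $\cal S(\Rn)$, and this extension is unique by density of $\cal S$ in $\cal S'$; this is exactly \eqref{OPb-eq}. Applying Theorem~\ref{aSSO-thm} now yields the linear extension $a(x,D)\colon\cal S(\Rn)+\cal S'_V(\Rn)\to\cal S'(\Rn)$ described by \eqref{aSSO-eq}, uniquely characterised by coinciding with $\OP(a(1\otimes\chi))$ on $\cal S'_V(\Rn)$; the compatibility with $a(x,D)\colon\cal S\to\cal S$ is the one built into Proposition~\ref{aloc-prop}.

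The one point that deserves a moment's attention --- and what I would call the main, if modest, obstacle --- is that $\cal S'_V$ contains distributions $u$ whose spectrum $F=\supp\hat u$ is an \emph{unbounded} closed subset of the open cone $V$, e.g.\ a full ray. So one genuinely uses here that the localiser $\chi$ in \eqref{chi-cnd} is allowed to be any element of $C^\infty_{\b}(\Rn)$, not merely of $C^\infty_0(\Rn)$; and one should note that $F$ being closed and contained in the open set $V$ forces $0\notin F$, so $\chi(\eta)a(x,\eta)$ is genuinely a symbol of class $S^d_{1,0}$ that moreover agrees with $a$ on a neighbourhood of $F$. All of this is already subsumed in the standing hypothesis and in Theorem~\ref{aSSO-thm}, so no new estimate is required: the proof is finished simply by checking that these cited results apply, the conical shape of $V$ itself playing no role beyond being the natural shape in which spectra are controlled (cf.\ Lemma~\ref{WF-lem}).
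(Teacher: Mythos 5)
Your proposal is correct and is exactly the argument the paper intends: Proposition~\ref{aSV-prop} is stated without a separate proof, being a direct instance of Theorem~\ref{aSSO-thm} with $S=S^d_{1,0}$ and $\Sigma=V$, and the standardness of $S^d_{1,0}$ (via the adjoint symbol $b^*=e^{\im D_x\cdot D_\xi}\bar b$) is precisely the example the paper records right after \eqref{OPb-eq}. Only your parenthetical claim that $F\subset V$ closed forces $0\notin F$ fails when $V=\Rn$, but nothing in your argument actually uses it, since $\chi(\eta)a(x,\eta)\in S^d_{1,0}$ is supplied directly by the hypothesis \eqref{chi-cnd}.
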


If some $a\in S^\infty_{1,1}$ satisfies the hypotheses of 
Proposition~\ref{aSV-prop}, it follows from Proposition~\ref{SSt-prop}
that the two extensions in \eqref{aFE-eq}--\eqref{aSV-eq} 
are compatible with one another. 

\begin{exmp} By Corollary~\ref{aFE-cor}, the domain of every $a(x,D)$
in $\OP (S^{\infty}_{1,1})$ contains polynomials
$\sum_{|\alpha|\le k}c_\alpha x^\alpha$, 
as their spectra equal $\{0\}$, and eg also the $C^\infty$-functions 
\begin{equation}  
  x^\alpha e^{\im x\cdot z}
  =(2\pi)^n\cal F^{-1}(\overline{D}_\eta^\alpha\delta_z(\eta));
\qquad
  \tfrac{\sin x_1}{x_1}\dots\tfrac{\sin x_n}{x_n}
  = \pi^n \cal F^{-1}1_{[-1;1]^n}.
\end{equation}
\end{exmp}

\section{Definition by Vanishing Frequency Modulation}   \label{gdef-sect}
The full extension of type $1,1$-operators is given here by
means of a limiting procedure.

\bigskip

To define $a(x,D)u$ in general for $a\in S^d_{1,1}(\Rn\times\Rn)$,
$d\in \R$, and
suitable $u\in \cal S'(\Rn)$, it is convenient for an arbitrary $\psi\in
C^\infty_0(\Rn)$ with $\psi\equiv1$ in a neighbourhood of the origin to
introduce the following notation, 
with $\psi_m(\xi):=\psi(2^{-m}\xi)$, 
\begin{align}
  u^m&=\cal F^{-1}(\psi_m\hat u)=\psi_m(D)u,
  \\
  a^m(x,\eta)&=\cal F^{-1}_{\xi\to x}(\psi_m(\xi)\cal F_{x\to\xi}a(\xi,\eta))
 =\psi_m(D_x)a(x,\eta).  
\end{align}
This is referred to as a frequency modulation of $u$ and of $a(x,\eta)$ with
respect to $x$; the full frequency modulation of $a$ will be
$a^m(x,\eta)\psi_m(\eta)$, ie $a^m(1\otimes \psi_m)$. 
Since $a^m$ is in $S^\infty_{1,1}$ by Lemma~\ref{am-lem}, 
the compact support of $\psi_m$ gives that 
\begin{equation}
  a^m(1\otimes\psi_m)\in S^{-\infty}.  
  \label{ammu'-eq}
\end{equation}
Hence $\OP(a^m(1\otimes\psi_m))$ is defined on $\cal
S'(\Rn)$, and since
$\lim_{m\to\infty}a^m(1\otimes\psi_m)=a$ holds in $S^{d+1}_{1,1}$,
it should be natural to make a tentative definition of $a(x,D)$ as
\begin{equation}
  a(x,D)u=\lim_{m\to\infty} \OP(a^m(1\otimes\psi_m))u.
  \label{ammu-eq}
\end{equation}
Rougly speaking, this means approximation of the distribution $u$ by
elements of $\cal S(\Rn)$ is
replaced by regularisation of the symbol $a$.
Some difficulties that might appear in this connection are dealt with
in the formal

\begin{defn}
  \label{gdef-defn}
The pseudo-differential operator $a(x,D)u$ is defined as the limit
in \eqref{ammu-eq} for those $a\in
S^d_{1,1}(\Rn\times\Rn)$ and $u\in \cal S'(\Rn)$ for which the limit 
\begin{itemize}
  \item exists in the topology of $\cal D'(\Rn)$
     for every $\psi\in C^\infty_0(\Rn)$ 
     equalling $1$ in a neighbourhood of the origin, and
  \item is independent of such $\psi$.
\end{itemize}
\end{defn}
To show that $a(x,D)$ extends the operator 
defined on $\cal S(\Rn)$ by \eqref{axDu-eq}, it suffices to combine
Lemma~\ref{am-lem} with \eqref{bilSS-eq}.
As shown below, Definition~\ref{gdef-defn} also 
gives back both the usual operator $\OP(a)$ if $a$ is eg of type $1,0$
and the extensions in Section~\ref{basic-sect}.

\bigskip

As an elementary observation, by using the definition for a fixed $a\in
S^\infty_{1,1}$ and by the calculus of limits,  
the operator is defined for $u$ in a subspace
of $\cal S'(\Rn)$. This will be denoted by $D(a(x,D))$, or $D(A)$ if
$A:=a(x,D)$, in the following. 

Clearly $D(A)\supset\cal S(\Rn)$, so  
$A$ is a densely defined and linear operator from $\cal
S'(\Rn)$ to $\cal D'(\Rn)$ (borrowing terminology from unbounded operators
in Hilbert spaces). 
This description cannot be improved much in general,
for by Lemma~\ref{cex-lem},
$a(x,\eta)$ can be chosen so that $A$ with $D(A)=\cal S(\Rn)$ 
is unclosable.
But one has

\begin{prop}
  \label{ab-prop}
For $a$, $b$ in $S^\infty_{1,1}(\Rn\times\Rn)$ the following
properties are equivalent:
\begin{rmlist}
    \item $a(x,\eta)=b(x,\eta)$ for all $(x,\eta)\in \R^{2n} $;
  \item $a(x,D)=b(x,D)$ as operators from $\cal S'(\Rn)$ to $\cal D'(\Rn)$;
  \item $a(x,D)u=b(x,D)u$ for every $u\in \cal S(\Rn)$;
  \item the distribution kernels fulfil $K_a=K_b$.
\end{rmlist}
In particular the map $a\mapsto a(x,D)$ is a bijection
$S^d_{1,1}\leftrightarrow \OP(S^d_{1,1})$; and the operator $a(x,D)$ is
completely determined by its action on the Schwartz space.
\end{prop}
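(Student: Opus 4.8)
The plan is to establish the circle of implications $(i)\Rightarrow(ii)\Rightarrow(iii)\Rightarrow(iv)\Rightarrow(i)$; once this is in place, the concluding assertions about bijectivity and determination by the Schwartz space drop out immediately. (One may if one likes pass to $c=a-b\in S^\infty_{1,1}$ and read the four conditions as ``$c\equiv0$'' etc., but it is equally convenient to argue directly.) First, $(i)\Rightarrow(ii)$ is read off from Definition~\ref{gdef-defn}: if $a(x,\eta)=b(x,\eta)$ identically then $a^m=b^m$, hence $\OP(a^m(1\otimes\psi_m))=\OP(b^m(1\otimes\psi_m))$ for every admissible $\psi$ and every $m$, so $a$ and $b$ are assigned the same domain $D(a(x,D))=D(b(x,D))$ and the same limit there. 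Next, $(ii)\Rightarrow(iii)$ is immediate, since $\cal S(\Rn)\subset D(a(x,D))$ — this is the remark following Definition~\ref{gdef-defn}, obtained by combining Lemma~\ref{am-lem} with the continuity of the map \eqref{bilSS-eq} — so restricting the operator identity to $\cal S(\Rn)$ gives $(iii)$.

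For $(iii)\Rightarrow(iv)$ I would invoke the first identity of \eqref{aK-eq} in Proposition~\ref{aK-prop}: for all $u,v\in\cal S(\Rn)$ one has $\dual{K_a}{v\otimes u}=\dual{a(x,D)u}{v}=\dual{b(x,D)u}{v}=\dual{K_b}{v\otimes u}$. Since $a,b\in S^\infty_{1,1}\subset\cal S'(\R^{2n})$ and the coordinate change $M$ in \eqref{Kxy-eq} is a homeomorphism of $\cal S'(\R^{2n})$, both $K_a$ and $K_b$ lie in $\cal S'(\R^{2n})$; as the linear span of the tensor products $v\otimes u$ with $u,v\in\cal S(\Rn)$ is dense in $\cal S(\R^{2n})$, it follows that $K_a=K_b$. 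Finally, $(iv)\Rightarrow(i)$ uses \eqref{Kxy-eq} once more: the passage $a\mapsto K_a$ is the composition of the partial Fourier transform $\cal F^{-1}_{\eta\to y}$ with the substitution $f\mapsto f\circ M$, each a homeomorphism of $\cal S'(\R^{2n})$, hence injective; therefore $K_a=K_b$ forces $a=b$, first as tempered distributions and then, both being smooth functions, pointwise on $\R^{2n}$.

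This closes the circle, and hence all four conditions are equivalent. Surjectivity of $a\mapsto a(x,D)$ onto $\OP(S^d_{1,1})$ holds by the very definition of the latter as the range of this map on $S^d_{1,1}$, while injectivity is exactly the equivalence $(i)\Leftrightarrow(ii)$; so $a\mapsto a(x,D)$ is a bijection $S^d_{1,1}\leftrightarrow\OP(S^d_{1,1})$. That $a(x,D)$ is completely determined by its action on $\cal S(\Rn)$ is precisely the implication $(iii)\Rightarrow(i)$. None of the steps is genuinely difficult; the one deserving the most care is $(iii)\Rightarrow(iv)$, where one must know that $K_a$ is a tempered distribution and use density of finite sums of tensor products in $\cal S(\R^{2n})$ — both already supplied by the discussion surrounding Proposition~\ref{aK-prop} — after which everything is formal.
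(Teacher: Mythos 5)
Your proof is correct and follows exactly the route the paper indicates: the paper itself merely remarks that the circle (i)$\implies$(ii)$\implies$(iii)$\implies$(iv)$\implies$(i) is straightforward, and you have filled in those implications with the right ingredients (Definition~\ref{gdef-defn} for (i)$\implies$(ii), the kernel identity \eqref{aK-eq} plus density of tensor products for (iii)$\implies$(iv), and the homeomorphism $a\leftrightarrow K$ from \eqref{Kxy-eq} for (iv)$\implies$(i)). No gaps.
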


The last property is perhaps not obvious from the outset,
because, in general, there is neither density of $\cal S\subset D(a(x,D))$
nor continuity of $a(x,D)$ to appeal to. However it follows at once,
as it is straightforward to see that
\rom{(i)}$\implies$\rom{(ii)}$\implies$\rom{(iii)}$\implies$\rom{(iv)}$\implies$\rom{(i)}.

The following notion is very convenient for the analysis of $a(x,D)$:

\begin{defn}
  \label{stable-defn}
A standard symbol space $S$ on $\Rn\times\Rn$ is said to be 
\emph{stable under vanishing frequency modulation} 
if in addition to \eqref{OPb-eq},
\begin{rmlist}
  \item   \label{stabl1-cnd}
$b^m(1\otimes\psi_m)(x,\eta)=\psi(2^{-m}D_x)b(x,\eta)\psi_m(\eta)$, 
is in $S$ for every $b\in S$, $m\in\N$,
and every $\psi\in C^\infty_0(\Rn)$ equalling $1$ near the origin,
  \item    \label{stabl2-cnd}
for every $u\in \cal S'(\Rn)$, and $\psi$ as above, 
\begin{equation}
  \OP(b^m(1\otimes\psi_m))u\to \OP(b)u 
  \quad\text{in $\cal D'(\Rn)$ for}\quad m\to\infty.
  \label{bilim-eq}
\end{equation}
\end{rmlist}
For short $S$ and the operator class $\OP(S)$ are 
then said to be \emph{stable}.
\end{defn}
Note that \eqref{stabl1-cnd} 
requires 
the operator class $\OP(S)$ to be invariant under full frequency modulation;
whereas \eqref{stabl2-cnd} requires $\OP(S)$ to be invariant under
\emph{vanishing} frequency modulation in the sense that the limit gives back
the original operator $\OP(b)$.

Although $S^d_{1,1}$ is not a standard space, 
$\OP(S^\infty_{1,1})$ is also said to be stable, 
as \eqref{bilim-eq} holds by definition
for every $u$ in $D(b(x,D))$, $b\in S^\infty _{1,1}$. 
Other stable spaces exist as well:

\begin{prop}  \label{Stan-prop}
Every  $S^d_{\rho,\delta}(\Rn\times\Rn)$ 
with $\rho>\delta$ for $\rho,\delta\in [0,1]$ is a stable symbol space.
Moreover,  \eqref{bilim-eq} holds in the $\cal S'$-topology. 
\end{prop}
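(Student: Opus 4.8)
The plan is to verify the two defining conditions of a stable symbol space from Definition~\ref{stable-defn} for $S=S^d_{\rho,\delta}$ with $\rho>\delta$, the closure property \ref{stabl1-cnd} being essentially automatic and the limit \ref{stabl2-cnd} being the real content. First I would note that $S^d_{\rho,\delta}$ with $\rho>\delta$ (which by the standing convention includes $\delta<\rho$, hence $0\le\delta<1$) is a standard symbol space: the operator $\OP(b)$ is $\cal S'$-continuous, e.g.\ by passing to the adjoint symbol $b^*(x,\xi)=e^{\im D_x\cdot D_\xi}\bar b(x,\xi)$, which lies in the same class since $\rho>\delta$, so that $\OP(b)$ is the adjoint of $\OP(b^*)\colon\cal S\to\cal S$. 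For \ref{stabl1-cnd}, $b^m(x,\eta)=\psi(2^{-m}D_x)b(x,\eta)$ stays in $S^d_{\rho,\delta}$ by exactly the convolution estimate used in Lemma~\ref{am-lem} (applied with the roles of $\rho,\delta$ as here), and multiplying by $\psi_m(\eta)$ only improves decay in $\eta$, so $b^m(1\otimes\psi_m)\in S^d_{\rho,\delta}$; in fact it lies in $S^{-\infty}$.

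The heart of the matter is \ref{stabl2-cnd}: for fixed $b\in S^d_{\rho,\delta}$, $\psi$ and $u\in\cal S'$, one must show $\OP(b^m(1\otimes\psi_m))u\to\OP(b)u$ in $\cal S'$. I would split this into two limits. The first is that $\OP(b(1\otimes\psi_m))u\to\OP(b)u$ in $\cal S'$; this is the standard fact that $b_m:=b(1\otimes\psi_m)\to b$ in $S^{d'}_{\rho,\delta}$ for any $d'>d$ (by the argument of Lemma~\ref{am-lem}, since $\psi(0)=1$), combined with the continuity of $\OP\colon S^{d'}_{\rho,\delta}\times\cal S'\to\cal S'$ — which is exactly the content of ``standard symbol space'' applied uniformly, and which for $\rho>\delta$ can be established from the uniform continuity of the map on seminorm-bounded subsets of $S^{d'}_{\rho,\delta}$ via the adjoint. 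The second limit is that $\OP\bigl((b^m-b)(1\otimes\psi_m)\bigr)u\to 0$ in $\cal S'$. Here I would write $b^m-b=(\psi(2^{-m}D_x)-1)b$ and observe, again via the mean value theorem and $\psi(0)=1$, that $b^m-b\to0$ in $S^{d+\delta}_{\rho,\delta}$ while being uniformly bounded in $S^d_{\rho,\delta}$; tensoring with $\psi_m$ (bounded in $S^0_{1,0}$, hence with uniformly controlled seminorms up to any fixed order) keeps the product going to $0$ in a slightly larger class, and continuity of $\OP$ finishes it. Combining the two gives \eqref{bilim-eq}, and since every step took place in the $\cal S'$-topology, the final assertion about $\cal S'$-convergence comes for free.

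The main obstacle I anticipate is the second limit, controlling $\OP\bigl((b^m-b)(1\otimes\psi_m)\bigr)u$: one cannot simply say ``$b^m(1\otimes\psi_m)\to b$ in some $S^{d'}_{\rho,\delta}$ and apply continuity,'' because the multiplier $\psi_m(\eta)$ does \emph{not} converge to $1$ in any symbol seminorm — only pointwise and with bounded $S^0_{1,0}$-seminorms. So the argument must isolate the good factor ($b^m-b\to 0$) from the merely bounded factor ($\psi_m$), and this requires the \emph{bilinear} continuity estimate $\norm{\OP(c)v}{\cal S',N}\le C\cdot(\text{finitely many seminorms of }c)\cdot(\text{finitely many seminorms of }v)$ with $C$ independent of $c$, valid uniformly on bounded subsets of $S^{d'}_{\rho,\delta}$. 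For $\rho>\delta$ this is classical (it is what makes $S^{d'}_{\rho,\delta}$ a standard space in a quantitative sense), but it is the one point where the hypothesis $\rho>\delta$ is genuinely used — for $\rho=\delta$ the $\cal S'\to\cal S'$ continuity, and hence this estimate, can fail, which is precisely why type $1,1$ needs the separate treatment of Definition~\ref{gdef-defn}.
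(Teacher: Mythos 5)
Your argument is correct, and its engine is the same as the paper's: the $\cal S'$-convergence is obtained by duality with the adjoint symbol $b^*=e^{\im D_x\cdot D_\xi}\overline{b}$, which stays in the class and depends continuously on $b$ precisely because $\rho>\delta$, combined with convergence of the frequency-modulated symbols in a slightly larger class $S^{d'}_{\rho,\delta}$ and the joint continuity \eqref{bilSS-eq}. Where you deviate is the symbol-convergence step, and your closing paragraph misdiagnoses it: the paper does ``simply say $b^m(1\otimes\psi_m)\to b$ in some $S^{d'}_{\rho,\delta}$ and apply continuity'' --- this is exactly the second assertion of Lemma~\ref{am-lem}, with $d'\ge d+\delta$, $d'>d$. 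The reason this works is that $1-\psi_m$ vanishes on the ball $|\eta|\le r2^m$, so $1\otimes(1-\psi_m)\to0$ in $S^{\varepsilon}_{1,0}$ for \emph{every} $\varepsilon>0$: the multiplier does converge to $1$ in symbol seminorms once one concedes an arbitrarily small loss of order, which is what the enlargement from $d$ to $d'$ buys (derivatives of $\psi_m$ are supported in the corona $r2^m\le|\eta|\le R2^m$ and carry factors $2^{-m|\alpha|}\le C\ang{\eta}^{-|\alpha|}$ there). Consequently no separation into a ``good factor times a merely bounded factor'' is needed. Your two-step splitting into $b(1\otimes\psi_m)$ and $(b^m-b)(1\otimes\psi_m)$ is a harmless detour: each half is valid (the second even without enlarging the order, since $b^m-b\to0$ in $S^{d+\delta}_{\rho,\delta}$ while $(\psi_m)_m$ is bounded in $S^0_{1,0}$), but it is not forced, and you are right that the hypothesis $\rho>\delta$ enters only in the adjoint/duality step, not in the symbol convergence.
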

\begin{proof}
By Lemma~\ref{am-lem} condition \eqref{stabl1-cnd} is satisfied, and
$\lim b^m(1\otimes\psi_m)= b$ in
$S^{d'}_{\rho,\delta}$, $d'>d+\delta$.
As $\OP(b)$ is the adjoint of $b^*(x,D)=\OP(\exp(\im
D_{x}\cdot D_{\eta})\overline{b})$, 
each $\varphi\in \cal S(\Rn)$ gives
\begin{equation}
  \scal{\OP(b^m(1\otimes\psi_m))u-\OP(b)u}{\varphi}
  = \scal{u}{\OP(e^{\im D_x\cdot D_\eta}
     (\overline{b^m}(1\otimes\overline{\psi}_m)-\overline{b}))\varphi}
  \xrightarrow[m\to\infty]{~} 0,
\end{equation}
since 
passage to adjoint symbols $b\mapsto b^*$ is continuous $S^d_{\rho,\delta}\to
S^d_{\rho,\delta}$ for $\rho>\delta$.
\end{proof}

Proposition~\ref{Stan-prop} makes 
the definition of $a(x,D)$ by vanishing
frequency modulation look natural.
To analyse the consistency questions in general, 
it is recalled that $\OP(a)$ is defined on
$\cal S(\Rn)$ by the integral \eqref{axDu-eq} if $a$ is in a 
standard space $S$ or in $S=S^\infty_{1,1}$. And for a standard space $S$,
$\OP(a)$ extends uniquely to a continuous linear map 
on $\cal S'(\Rn)$.

Let now $a\mapsto \OPT(a)$ be an arbitrary assignment 
such that $\OPT(a)$, for each $a\in  S^\infty_{1,1}$, is a
linear operator from $\cal S'(\Rn)$ to $\cal D'(\Rn)$. 
Then the maps $\OP$ and $\OPT$ are
compatible on a standard symbol space $S$ if
$D(\OPT(a))=\cal S'(\Rn)$ for every $a\in S\cap S^\infty_{1,1}$ and
\begin{equation}
  \OPT(a)u=\OP(a)u\quad\text{for all}\quad u\in \cal S'(\Rn).
  \label{OPT-eq}
\end{equation}
Moreover, $\OPT$ and $\OP$ are called \emph{strongly} compatible on $S$ if, 
whenever $a$ is in $S^\infty_{1,1}$ and belongs to $S$ locally in some open set
$\Sigma\subset\Rn$, it will hold that
$\cal S'_\Sigma(\Rn)\subset D(\OPT(a))$ and
\begin{equation}
  \OPT(a)u=\OP(a(1\otimes \chi))u\quad\text{for all}\quad 
  u\in \cal S'_\Sigma(\Rn).
  \label{OPT'-eq}
\end{equation}
Hereby $\chi\in C^\infty_{\b}(\Rn)$ should 
fulfil \eqref{chi-cnd} for  
$F=\supp \hat u$ and $a(1\otimes\chi)\in S$.
(The right-hand side of \eqref{OPT'-eq} 
makes sense because of $\chi$, but it does not
depend on $\chi$ since $S$ is standard.) 
Taking $\Sigma=\Rn$ and
$\chi\equiv1$, strong compatibility clearly implies compatibility.

As an example Corollary~\ref{aFE-cor} shows that, if the preliminary extension 
of Section~\ref{loc-ssect} is written $\OPT$, then $\OPT(a)$ is strongly
compatible with $\OP$ on $S^{-\infty}$. More generally
Theorem~\ref{aSSO-thm} gives strong compatiblity of $\OPT$ with $\OP$ on
every standard symbol class $S$.

The following theorem shows that $a(x,D)$ given by
Definition~\ref{gdef-defn} contains every extension provided by
Theorem~\ref{aSSO-thm} when $S$ is stable.

\begin{thm}
  \label{consistency-thm}
Let $a\in S^\infty_{1,1}$ and $\Sigma\subset\Rn$ be an open set such that
$a$ locally in $\Sigma$ belongs to a stable symbol class $S$ (such as
$S^d_{\rho,\delta}$ for $\rho>\delta$). 
Then every $u\in \cal S(\Rn)+\cal S'_{\Sigma}(\Rn)$
belongs to the domain $D(a(x,D))$ given by Definition~\ref{gdef-defn}.
Moreover,
\begin{equation}
  a(x,D)u=\OP(a)v+\OP(a(1\otimes\chi))v'
  \label{uvv'-eq}
\end{equation}
whenever $u$ is split as $u=v+v'$ for $v\in \cal S(\Rn)$, $v'\in \cal
S'_{\Sigma}(\Rn)$, and $\chi\in C^\infty_{\b}(\Rn)$ fulfils
\eqref{chi-cnd} for $F=\supp\cal F v'$.  
\end{thm}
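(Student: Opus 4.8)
The plan is to fix, once and for all, a splitting $u=v+v'$ with $v\in\cal S(\Rn)$ and $v'\in\cal S'_{\Sigma}(\Rn)$, together with a cut-off function $\chi\in C^\infty_{\b}(\Rn)$ satisfying \eqref{chi-cnd} for $F=\supp\cal Fv'$, and then to verify directly that for \emph{every} admissible $\psi$ the approximating sequence $\OP(a^m(1\otimes\psi_m))u$ from \eqref{ammu-eq} converges in $\cal D'(\Rn)$ to the right-hand side of \eqref{uvv'-eq}. Since each $a^m(1\otimes\psi_m)$ lies in $S^{-\infty}$ by \eqref{ammu'-eq}, the operator $\OP(a^m(1\otimes\psi_m))$ is defined on all of $\cal S'(\Rn)$ and is linear, so it suffices to treat the terms $v$ and $v'$ separately.

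For the Schwartz part, Lemma~\ref{am-lem} gives $a^m(1\otimes\psi_m)\to a$ in $S^{d+1}_{1,1}(\Rn\times\Rn)$, and the continuity of $\OP\colon S^{d+1}_{1,1}(\Rn\times\Rn)\times\cal S(\Rn)\to\cal S(\Rn)$ noted after \eqref{bilSS-eq} yields $\OP(a^m(1\otimes\psi_m))v\to\OP(a)v$ in $\cal S(\Rn)$, hence in $\cal D'(\Rn)$; this limit is visibly independent of $\psi$.

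For the part with spectrum in $\Sigma$, the key point is that stability must be invoked for the \emph{localised} symbol $a(1\otimes\chi)$, which does lie in $S$, rather than for $a$ itself. First, because $1-\chi$ vanishes on a neighbourhood of $\supp\cal Fv'$ while $a^m(1\otimes\psi_m)(1\otimes(1-\chi))\in S^{-\infty}$, the cut-off argument already used in the proof of Proposition~\ref{aloc-prop} (see \eqref{acc1-eq}) gives $\OP\bigl(a^m(1\otimes\psi_m)(1\otimes(1-\chi))\bigr)v'=0$, so that $\OP(a^m(1\otimes\psi_m))v'=\OP\bigl(a^m(1\otimes(\psi_m\chi))\bigr)v'$. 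Next, since $\psi_m(D_x)$ acts only in $x$ while $\chi$ depends only on $\eta$, the two commute, giving $a^m(1\otimes(\psi_m\chi))=\bigl(a(1\otimes\chi)\bigr)^m(1\otimes\psi_m)$. Writing $b:=a(1\otimes\chi)\in S$ and using that $S$ is stable (Proposition~\ref{Stan-prop} for $S=S^d_{\rho,\delta}$ with $\rho>\delta$, or the hypothesis in general), conditions \eqref{stabl1-cnd}--\eqref{bilim-eq} yield $\OP(b^m(1\otimes\psi_m))v'\to\OP(b)v'=\OP(a(1\otimes\chi))v'$ in $\cal D'(\Rn)$; again the limit does not involve $\psi$.

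Adding the two contributions, $\OP(a^m(1\otimes\psi_m))u\to\OP(a)v+\OP(a(1\otimes\chi))v'$ in $\cal D'(\Rn)$ for every admissible $\psi$, and the limit is $\psi$-independent; hence $u\in D(a(x,D))$ in the sense of Definition~\ref{gdef-defn}, and $a(x,D)u$ equals this value. That this value is independent of the chosen splitting $u=v+v'$ and of $\chi$, so that \eqref{uvv'-eq} is unambiguous, is precisely the well-definedness already established in Theorem~\ref{aSSO-thm}, which may simply be cited. The only step beyond routine bookkeeping is the cut-off insertion reducing the $\cal S'_{\Sigma}$-part to a symbol of the form $b^m(1\otimes\psi_m)$ with $b\in S$; I expect getting that reduction cleanly stated to be the main point.
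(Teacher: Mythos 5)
Your proposal is correct and follows essentially the same route as the paper's own proof: the paper likewise treats the $\cal S$-part via \eqref{bilSS-eq} and Lemma~\ref{am-lem}, inserts the cut-off $\chi$ using the density argument of \eqref{acc1-eq} to replace $a^m(1\otimes\psi_m)$ by $a^m(1\otimes(\chi\psi_m))=(a(1\otimes\chi))^m(1\otimes\psi_m)$, and then invokes stability \eqref{bilim-eq} for $b=a(1\otimes\chi)\in S$, concluding by linearity and Theorem~\ref{aSSO-thm}. The reduction you single out as the main point is indeed exactly the step the paper performs.
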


\begin{proof} 
Let $u\in \cal S'_{\Sigma}$.
Since $a$ is locally in $S$ in $\Sigma$ one can take $\chi$ as in the theorem,
so that $a(1\otimes\chi)\in S$.
Using that $S$ in particular is a standard space, approximation of 
$\hat u$ from $C^\infty _0$ gives 
$\OP(a^m(x,\eta)(1-\chi(\eta))\psi_m(\eta))u=0$.
Now \eqref{bilim-eq}
applies, since $S$ is stable; and multiplication by
$\chi(\eta)$ and $\psi_m(D_x)$ commute in $\cal S'(\Rn\times\Rn)$, so
\begin{equation}
  \OP(a(1\otimes\chi))u
  =\lim_{m}\OP(\psi_m(\eta)\chi(\eta)\psi_m(D_x)a(x,\eta))u
  =\lim_{m}\OP(a^m(1\otimes\psi_m))u=a(x,D)u.
  \label{astable-eq}
\end{equation}
This shows that $\cal S'_{\Sigma}(\Rn)\subset D(a(x,D))$. 
And for $u\in \cal S(\Rn)$ it is seen already from
\eqref{bilSS-eq} that $a^m(x,D)u^m\to a(x,D)u$ in $\cal S(\Rn)$ for
$m\to\infty$. 
 
Since $a(x,D)$ is linear by Definition~\ref{gdef-defn}, 
it follows that every $u$ in $\cal S+\cal
S'_{\Sigma}$ belongs to $D(a(x,D))$ and that \eqref{uvv'-eq} holds.
In particular the last statement that \eqref{uvv'-eq} is independent of $v$,
$v'$ and $\chi$ is implied by this (and by Theorem~\ref{aSSO-thm}).
\end{proof}

\begin{rem}   \label{consistency-rem}
It is noteworthy that Theorem~\ref{consistency-thm} resolves
a dilemma resulting from application of $a(x,D)\in \OP(S^\infty_{1,1})$ 
to  $u\in \cal F^{-1}(\cal E'(\Rn))$: 
then $a(x,D)u$ can be calculated by using 
both Corollary~\ref{aFE-cor} and Definition~\ref{gdef-defn}. But by taking
$S=S^{-\infty}$, 
Theorem~\ref{consistency-thm} entails that the two methods 
give the same result.
\end{rem}

It follows from Theorem~\ref{consistency-thm} that the assumptions on
$\Sigma$ and $\tilde\Sigma$ are unnecessary in Proposition~\ref{SSt-prop}
in case $S$ is stable (this emphasises the advantage of using 
vanishing frequency modulation). 
As a reformulation of Theorem~\ref{consistency-thm} one has

\begin{cor}
  \label{stcomp-cor}
The operator $a(x,D)$ given for $a\in S^{\infty}_{1,1}(\Rn\times\Rn)$ by
Definition~\ref{gdef-defn} is strongly compatible with $\OP$ on every
stable symbol space $S$. In particular $a(x,D)u=\OP(a)u$ holds for every
$u\in \cal S'(\Rn)$ when $a\in S^{\infty}_{\rho,\delta}(\Rn\times\Rn)$ for
some $\delta<\rho$.
\end{cor}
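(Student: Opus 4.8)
The plan is to read the corollary off directly from Theorem~\ref{consistency-thm}, since \emph{strong compatibility} is, once unwound, nothing but the conclusion of that theorem restricted to the subspace $\cal S'_\Sigma(\Rn)$. So first I would recall what has to be verified: writing $\OPT(a):=a(x,D)$ for the operator furnished by Definition~\ref{gdef-defn}, I must show that for every $a\in S^\infty_{1,1}(\Rn\times\Rn)$ which is locally in a stable space $S$ in an open set $\Sigma\subset\Rn$, one has $\cal S'_\Sigma(\Rn)\subset D(a(x,D))$ and $a(x,D)u=\OP(a(1\otimes\chi))u$ for all $u\in \cal S'_\Sigma(\Rn)$, with $\chi\in C^\infty_{\b}(\Rn)$ fulfilling \eqref{chi-cnd} for $F=\supp\hat u$ and $a(1\otimes\chi)\in S$.

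Next I would invoke Theorem~\ref{consistency-thm}. A stable symbol space is in particular standard, and $a$ is locally in $S$ in $\Sigma$, so that theorem gives $\cal S(\Rn)+\cal S'_\Sigma(\Rn)\subset D(a(x,D))$. For a given $u\in \cal S'_\Sigma(\Rn)$ I would use the trivial splitting $u=0+u$ (i.e.\ $v=0$, $v'=u$); then \eqref{uvv'-eq} collapses to $a(x,D)u=\OP(a)0+\OP(a(1\otimes\chi))u=\OP(a(1\otimes\chi))u$, with $\chi$ any cut-off as in \eqref{chi-cnd} for $F=\supp\hat u$. This is precisely the required strong compatibility, so the first assertion is complete.

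For the final sentence I would specialise: given $a\in S^\infty_{\rho,\delta}(\Rn\times\Rn)$ with $\delta<\rho$, choose $d$ with $a\in S^d_{\rho,\delta}$; by Proposition~\ref{Stan-prop} the space $S=S^d_{\rho,\delta}$ is stable. Taking $\Sigma=\Rn$, the hypothesis ``$a$ locally in $S$ in $\Sigma$'' degenerates to the global statement $a\in S$, so one may take $\chi\equiv1$, whence $a(1\otimes\chi)=a\in S$ and $\cal S'_\Sigma(\Rn)=\cal S'(\Rn)$. The first part then yields $a(x,D)u=\OP(a)u$ for all $u\in \cal S'(\Rn)$.

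I do not expect a genuine obstacle here: the analytic substance lies entirely in Theorem~\ref{consistency-thm} (and, for the stability of $S^d_{\rho,\delta}$, in Proposition~\ref{Stan-prop}), and what remains is bookkeeping. The only points deserving a word of care are (i) recording that ``locally in $S$ in $\Sigma=\Rn$'' means simply $a\in S$, which licenses the choice $\chi\equiv1$ and the identification $\cal S'_{\Rn}(\Rn)=\cal S'(\Rn)$; and (ii) observing that at the borderline $\rho=1$ one has $S^\infty_{1,\delta}\subset S^\infty_{1,1}$, so Definition~\ref{gdef-defn} applies verbatim, while for the general case the argument is run with $S=S^d_{\rho,\delta}$ throughout. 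Taking $\chi\equiv1$ and $\Sigma=\Rn$ in the notion of strong compatibility recovers ordinary compatibility, so the ``in particular'' clause is indeed just the special case $\Sigma=\Rn$ of the main statement.
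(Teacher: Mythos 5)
Your proposal is correct and follows essentially the same route as the paper, which states the corollary precisely as a reformulation of Theorem~\ref{consistency-thm}: one reads off strong compatibility by taking the splitting $v=0$, $v'=u$ in \eqref{uvv'-eq}, and the ``in particular'' clause by specialising to $\Sigma=\Rn$, $\chi\equiv1$ with Proposition~\ref{Stan-prop} supplying the stability of $S^d_{\rho,\delta}$ for $\delta<\rho$. Your parenthetical care about the case $\rho<1$ (where $S^d_{\rho,\delta}\not\subset S^\infty_{1,1}$, so the clause is really about symbols lying in both classes) is a reasonable reading of an imprecision already present in the paper's own statement, not a gap in your argument.
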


As a special case $a(x,D)$ gives back $\OP(a)$ on
$S^{-\infty}$. This may also be shown by verifying
\eqref{OPT-eq} directly, but one can only simplify \eqref{astable-eq}
slightly by taking $\chi\equiv1$ on $\Rn$.
The various consistency results obtained in this section can be summed
up thus: 
\begin{cor}
  \label{consistency-cor}
Let $a(x,D)$ be given by Definition~\ref{gdef-defn} for 
$a\in  S^\infty_{1,1}$.
Then $a(x,D)u$ equals the integral in \eqref{axDu-eq} for $u\in
\cal S(\Rn)$ or the extension in Corollary~\ref{aFE-cor} for
every $u\in \cal F^{-1}\cal E'(\Rn)$;
and it coincides with the extension of $\OP(a)$ to $\cal S'(\Rn)$ 
if $a$ is in 
$S^d_{\rho,\delta}(\Rn\times\Rn)$ for some $\rho>\delta$.
\end{cor}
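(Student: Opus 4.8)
The plan is to read the three assertions off the consistency machinery already in place; Corollary~\ref{consistency-cor} is essentially a summary, so each part is a one–line deduction. For $u\in\cal S(\Rn)$ I would combine Lemma~\ref{am-lem}, which gives $a^m(1\otimes\psi_m)\to a$ in $S^{d+1}_{1,1}(\Rn\times\Rn)$, with the continuity of the bilinear map \eqref{bilSS-eq} on the slice $S^{d+1}_{1,1}\times\{u\}$; this yields $\OP(a^m(1\otimes\psi_m))u\to\OP(a)u$ in $\cal S(\Rn)$, hence \emph{a fortiori} in $\cal D'(\Rn)$. The limit is the Lebesgue integral \eqref{axDu-eq} and is the same for every admissible $\psi$, so Definition~\ref{gdef-defn} puts $u$ in $D(a(x,D))$ with $a(x,D)u$ equal to \eqref{axDu-eq} (this is already the remark made right after Definition~\ref{gdef-defn}).

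For $u\in\cal F^{-1}\cal E'(\Rn)$ I would fix a bounded open $\Sigma$ with $\supp\hat u\subset\Sigma$ and check that $a$ is locally in $S^{-\infty}$ on $\Sigma$: any closed $F\subset\Sigma$ is compact, so a $\chi\in C^\infty_0(\Rn)$ with $\chi\equiv1$ near $F$ satisfies \eqref{chi-cnd}, because $\chi(\eta)a(x,\eta)$ has all derivatives bounded and supported in a fixed compact set of frequencies, i.e.\ lies in $S^{-\infty}$. Moreover $S^{-\infty}$ is a stable symbol space: it is standard (its operators map $\cal S'\to\cal O_M\subset\cal S'$), the first condition of Definition~\ref{stable-defn} holds by Lemma~\ref{am-lem}, and the second follows from $S^{-\infty}\subset S^0_{1,0}$ and Proposition~\ref{Stan-prop}, since the $\cal S'$-extension of $\OP(b)$ does not depend on whether $b$ is regarded as an element of $S^{-\infty}$ or of $S^0_{1,0}$. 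Then Theorem~\ref{consistency-thm} applies with this $\Sigma$, $S=S^{-\infty}$ and the trivial splitting $v=0$, $v'=u$: it gives $u\in D(a(x,D))$ and $a(x,D)u=\OP(a(1\otimes\chi))u$ for any $\chi\in C^\infty_0(\Rn)$ equal to $1$ near $\supp\hat u$. This is exactly the value assigned by Corollary~\ref{aFE-cor}, which is built from Theorem~\ref{aSSO-thm} with the same $S=S^{-\infty}$ and the same $\chi$, so the two definitions agree, as anticipated in Remark~\ref{consistency-rem}. Finally, if $a\in S^d_{\rho,\delta}(\Rn\times\Rn)$ with $\rho>\delta$, then $S^d_{\rho,\delta}$ is stable by Proposition~\ref{Stan-prop}, and Corollary~\ref{stcomp-cor}---equivalently Theorem~\ref{consistency-thm} with $\Sigma=\Rn$, $\chi\equiv1$, $v=0$, $v'=u$---gives $a(x,D)u=\OP(a)u$ for every $u\in\cal S'(\Rn)$, the right-hand side being the unique continuous extension to $\cal S'(\Rn)$ of the operator \eqref{axDu-eq}.

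The only step that needs genuine care is the middle one: one must be sure the tensor-product construction behind Corollary~\ref{aFE-cor} and the limit construction of Definition~\ref{gdef-defn} return the \emph{same} distribution on $\cal F^{-1}\cal E'(\Rn)$. That is precisely what stability of $S^{-\infty}$ buys, through the chain of equalities \eqref{astable-eq} inside the proof of Theorem~\ref{consistency-thm}; the Schwartz part and the $S^d_{\rho,\delta}$ part are, by contrast, immediate citations.
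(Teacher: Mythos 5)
Your proposal is correct and follows the paper's own route: the corollary is stated there as a summary of the preceding consistency results, and you assemble exactly the intended ingredients (Lemma~\ref{am-lem} with \eqref{bilSS-eq} for $\cal S(\Rn)$, Theorem~\ref{consistency-thm} with $S=S^{-\infty}$ as in Remark~\ref{consistency-rem} for $\cal F^{-1}\cal E'(\Rn)$, and Proposition~\ref{Stan-prop} with Corollary~\ref{stcomp-cor} for $S^d_{\rho,\delta}$, $\rho>\delta$). Your explicit verification that $S^{-\infty}$ is stable, and your use of a bounded $\Sigma\supset\supp\hat u$ rather than $\Sigma=\Rn$, are small but welcome tightenings of details the paper leaves implicit.
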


To characterise the operators provided by Definition~\ref{gdef-defn}, it is
convenient to ignore that the compatibility of $a(x,D)$ is strong (cf
Corollary~\ref{stcomp-cor}). Indeed, the map $a\mapsto a(x,D)$ 
is simply the \emph{largest} 
compatible extension stable under vanishing frequency modulation:

\begin{thm}   \label{uniq-thm}
The operator $a(x,D)$ given by Definition~\ref{gdef-defn} is one among the
operator  assignments $a\mapsto\OPT(a)$, $a\in S^\infty_{1,1}(\Rn\times\Rn)$ 
with the properties that
\begin{rmlist}
  \item   \label{scmtb-cnd}
$\OPT(\cdot)$ is compatible with $\OP$ on $S^{-\infty}$ (cf \eqref{OPT-eq});
  \item  \label{stabl-cnd} 
each operator $\widetilde{\OP}(b)$ 
is stable under vanishing frequency modulation, ie
$\OPT(b)u=\lim_{m\to\infty}\OPT(b^m(1\otimes \psi_m))u$ 
for every $u\in D(\widetilde{\OP}(b))$ and
$b\in S^\infty_{1,1}$. 
\end{rmlist}
And moreover, whenever $\OPT$ is such a map, 
then $\OPT(a)\subset a(x,D)$ 
for every $a\in S^\infty_{1,1}$.
\end{thm}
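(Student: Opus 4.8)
The plan is to prove the two assertions separately: that the assignment $a\mapsto a(x,D)$ of Definition~\ref{gdef-defn} is itself one of the maps $\OPT$ having properties (i) and (ii), and that it contains every such map. Both parts will come down, once the relevant earlier results are quoted, to unwinding Definition~\ref{gdef-defn} in the presence of the elementary fact~\eqref{ammu'-eq}, namely that every full frequency modulation $b^m(1\otimes\psi_m)$ is a smoothing symbol, so that it belongs to $S^{-\infty}$ and $\OP(b^m(1\otimes\psi_m))$ is defined and continuous on all of $\cal S'(\Rn)$.

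First I would check property~(i) for $a\mapsto a(x,D)$. Since $S^{-\infty}\subset S^0_{1,0}$, Corollary~\ref{stcomp-cor} (together with Proposition~\ref{Stan-prop}) gives $a(x,D)u=\OP(a)u$ for all $u\in\cal S'(\Rn)$ whenever $a\in S^{-\infty}$; as then also $D(a(x,D))\supseteq\cal S'(\Rn)$, the map $a\mapsto a(x,D)$ is compatible with $\OP$ on $S^{-\infty}$, which is exactly property~(i). Next I would verify property~(ii). Let $b\in S^\infty_{1,1}$, let $u\in D(b(x,D))$, and fix $\psi\in C^\infty_0(\Rn)$ equal to $1$ near the origin. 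By~\eqref{ammu'-eq} the symbol $b^m(1\otimes\psi_m)$ lies in $S^{-\infty}$, so by the compatibility just established the operator $(b^m(1\otimes\psi_m))(x,D)$ given by Definition~\ref{gdef-defn} coincides with $\OP(b^m(1\otimes\psi_m))$ on all of $\cal S'(\Rn)$. Hence the right-hand side of the identity in~(ii) equals $\lim_{m\to\infty}\OP(b^m(1\otimes\psi_m))u$, and for $u\in D(b(x,D))$ this limit exists in $\cal D'(\Rn)$, is independent of the admissible $\psi$, and equals $b(x,D)u$ by the very formula~\eqref{ammu-eq} defining the operator. So $a\mapsto a(x,D)$ is one of the assignments described.

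For the maximality I would fix an assignment $\OPT$ satisfying (i) and (ii), an $a\in S^\infty_{1,1}$, and a $u\in D(\OPT(a))$, aiming to show $u\in D(a(x,D))$ with $a(x,D)u=\OPT(a)u$. Choose any $\psi\in C^\infty_0(\Rn)$ with $\psi\equiv1$ near the origin. By~\eqref{ammu'-eq} each $a^m(1\otimes\psi_m)$ is in $S^{-\infty}$, so property~(i) forces $D(\OPT(a^m(1\otimes\psi_m)))=\cal S'(\Rn)$ and $\OPT(a^m(1\otimes\psi_m))u=\OP(a^m(1\otimes\psi_m))u$ for every $m$, while property~(ii) forces $\OPT(a^m(1\otimes\psi_m))u\to\OPT(a)u$ in $\cal D'(\Rn)$ as $m\to\infty$. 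Combining the two, $\OP(a^m(1\otimes\psi_m))u\to\OPT(a)u$ in $\cal D'(\Rn)$; since this holds for every admissible $\psi$ and the limit $\OPT(a)u$ is manifestly $\psi$-independent, Definition~\ref{gdef-defn} yields $u\in D(a(x,D))$ and $a(x,D)u=\OPT(a)u$. As $u\in D(\OPT(a))$ was arbitrary, this is the asserted inclusion $\OPT(a)\subset a(x,D)$.

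I do not expect a genuine analytic obstacle here; the argument is essentially bookkeeping, and the one point that must be handled with care is the interface between Definition~\ref{gdef-defn} and the axioms (i)--(ii). The decisive observation is that~\eqref{ammu'-eq} places every modulated symbol $b^m(1\otimes\psi_m)$ into $S^{-\infty}$, so that compatibility on $S^{-\infty}$ collapses any admissible $\OPT$ to the ordinary $\OP$ on precisely those symbols through which the defining limit~\eqref{ammu-eq} is formed. Once this is recognised, property~(ii) is nothing but a reformulation of Definition~\ref{gdef-defn}, and the maximality statement is immediate.
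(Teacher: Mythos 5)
Your proposal is correct and follows essentially the same route as the paper's own proof: verify that $a\mapsto a(x,D)$ itself has properties (i) and (ii) via Corollary~\ref{stcomp-cor} and Definition~\ref{gdef-defn}, then for maximality use (i) to replace $\OPT(a^m(1\otimes\psi_m))u$ by $\OP(a^m(1\otimes\psi_m))u$ and (ii) to identify the ($\psi$-independent) limit with $\OPT(a)u$, whence $\OPT(a)\subset a(x,D)$. The paper's proof is terser on the first part (it simply cites earlier results), but the substance is identical.
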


Note that \eqref{stabl-cnd} makes sense because
$\OPT(b^m(1\otimes \psi_m))$ in view of 
\eqref{scmtb-cnd} is defined on all of $\cal S'$.

\begin{proof} 
Let $a\mapsto \widetilde{\OP}(a)$ be any map fulfilling \eqref{scmtb-cnd}
and \eqref{stabl-cnd}; such maps exist since $a\mapsto a(x,D)$ was seen
above to have these properties. If $u\in D(\OPT(a))$ it follows from 
\eqref{scmtb-cnd} that
\begin{equation}
  \OP(a^m(1\otimes\psi_m))u=\OPT(a^m(1\otimes\psi_m))u.
\end{equation}
Here the right-hand side converges to
$\OPT(a)u$ by \eqref{stabl-cnd};
since $\psi$ is arbitrary this means
$\OPT(a)u=a(x,D)u$. Hence $\OPT(a)\subset a(x,D)$.
\end{proof}

\bigskip

This section is concluded with a few remarks on the practical aspects of
Definition~\ref{gdef-defn}. 
From the integral in \eqref{axDu-eq}, one would at once infer
the following \emph{alter egos} for the full frequency modulation of $a(x,D)$: if $\chi\in C^\infty_0(\Rn)$ fulfils
$\chi=1$ around $\supp\psi_m$, then
\begin{equation}
  a^m(x,D)u^m=\OP(a^m(x,\eta)\chi(\eta))\cal F^{-1}(\psi_m\hat u)
 =\OP(a^m(x,\eta)\psi_m(\eta))u.
  \label{alter-eq}
\end{equation}
However, these identities hold also for more general cut-off functions $\chi$.
\begin{lem}
  \label{alter-lem}
For every $a\in S^\infty_{1,1}$, $u\in \cal S'$ 
and every $\psi\in C^\infty_0$ with $\psi=1$ near the origin, the formula
\eqref{alter-eq} holds for all $m$ and all $\chi\in C^\infty_{\b}$ for
which $\chi=1$ in a neighbourhood of $F=\supp(\psi_m\hat u)$ and 
$a(1\otimes\chi)\in S^{-\infty}$. 
\end{lem}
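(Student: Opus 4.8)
The plan is to reduce both equalities in \eqref{alter-eq} to a single statement about $S^{-\infty}$-operators, and then to prove that statement by the same spectral localisation used for \eqref{acc1-eq}.

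First I would record that the symbols involved are regularising. By hypothesis $a(1\otimes\chi)\in S^{-\infty}$, and $\psi_m(D_x)$, which acts in the $x$-variable only, commutes with multiplication by the function $\chi(\eta)$; hence $a^m(1\otimes\chi)=\psi_m(D_x)\bigl(a(1\otimes\chi)\bigr)\in S^{-\infty}$ by Lemma~\ref{am-lem}. Likewise $a^m(1\otimes\psi_m)\in S^{-\infty}$ since $\psi_m$ has compact support. Consequently $\OP(a^m(1\otimes\chi))$ and $\OP(a^m(1\otimes\psi_m))$ are everywhere defined and $\cal S'$-continuous. Recalling that $a^m(x,D)u^m$ abbreviates $\OP(a^m(1\otimes\psi_m))u$ and that $\cal F^{-1}(\psi_m\hat u)=u^m$, both identities in \eqref{alter-eq} are seen to be the single claim $\OP(a^m(1\otimes\chi))u^m=\OP(a^m(1\otimes\psi_m))u$.

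Next I would dispose of the known "plain" case: as $\supp\psi_m$ is compact, pick $\chi_0\in C^\infty_0(\Rn)$ with $\chi_0\equiv1$ on an open neighbourhood of $\supp\psi_m$. For this $\chi_0$ the claim is exactly the elementary form of \eqref{alter-eq} (immediate from the integral \eqref{axDu-eq} on $\cal S(\Rn)$, then extended by $\cal S'$-continuity). So it suffices to show $\OP(b)u^m=0$, where $b:=a^m(1\otimes(\chi-\chi_0))$ lies in $S^{-\infty}$ as a difference of two such symbols. Since $\chi\equiv1$ on an open neighbourhood of the compact set $F=\supp(\psi_m\hat u)$ while $\chi_0\equiv1$ on an open neighbourhood of $\supp\psi_m\supset F$, we have $\chi-\chi_0\equiv0$, hence $b(x,\eta)=0$, for all $\eta$ in some open set $W\supset F$.

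It remains to check $\OP(b)u^m=0$. Because $\widehat{u^m}=\psi_m\hat u$ is supported in $F\subset W$, $u^m$ lies in $\cal S'_W(\Rn)$ in the notation of \eqref{S'O-eq}. For $v\in\cal S(\Rn)\cap\cal S'_W(\Rn)$ the integral \eqref{axDu-eq} yields $\OP(b)v(x)=(2\pi)^{-n}\int e^{\im x\cdot\eta}b(x,\eta)\hat v(\eta)\,d\eta=0$, since the integrand vanishes: $b(x,\eta)=0$ for $\eta\in W$ and $\hat v(\eta)=0$ for $\eta\notin W$. As $\cal S(\Rn)\cap\cal S'_W(\Rn)$ is dense in $\cal S'_W(\Rn)$ (noted after \eqref{S'O-eq}) and $\OP(b)$ is $\cal S'$-continuous, $\OP(b)u^m=0$, exactly as in \eqref{acc1-eq}; this gives the claim and hence \eqref{alter-eq}. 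The only non-formal step is this last reduction: one cannot compare $\chi\psi_m$ with $\psi_m$ at the symbol level, as they disagree wherever $\chi\ne1$, so one must instead use that the spectrum of $u^m$ is confined to the region $W$ where $\chi$, and therefore $b$, is trivial — which is where density of Schwartz functions with prescribed spectrum and $\cal S'$-continuity of $S^{-\infty}$-operators enter. Even so, that step is entirely routine, so I do not anticipate a real obstacle.
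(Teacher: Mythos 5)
Your argument for the second equality in \eqref{alter-eq} --- that $\OP(a^m(1\otimes\chi))u^m=\OP(a^m(1\otimes\psi_m))u$ for every admissible $\chi$ --- is correct, and in one respect more carefully executed than the paper's own terse version: by first passing to a reference cut-off $\chi_0\in C^\infty_0$ with $\chi_0\equiv1$ near $\supp\psi_m$ (for which the two $\cal S'$-continuous operators agree on all of $\cal S$, hence on $\cal S'$), and then killing the difference $b=a^m(1\otimes(\chi-\chi_0))$ by the spectral localisation of $u^m$ exactly as in \eqref{acc1-eq}, you avoid the problem that the admissibility condition on $\chi$ depends on $u$, so that a naive ``true on $\cal S$, hence on $\cal S'$ by continuity'' does not apply directly to a general admissible $\chi$.

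The gap is in your treatment of the first equality. You declare that $a^m(x,D)u^m$ abbreviates $\OP(a^m(1\otimes\psi_m))u$, which makes that equality true by fiat; but the convention \eqref{amum-eq} is explicitly provisional (``in this introduction it is to be understood\dots''), and the purpose of Lemma~\ref{alter-lem} is precisely to justify it a posteriori. At this point of the paper the left-hand side has an independent meaning: $a^m\in S^\infty_{1,1}$ by Lemma~\ref{am-lem} and $u^m\in\cal F^{-1}\cal E'(\Rn)$, so $a^m(x,D)u^m$ denotes the type $1,1$-operator $a^m(x,D)$ applied to $u^m$ via Definition~\ref{gdef-defn}, equivalently via Corollary~\ref{aFE-cor}. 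The substantive content of the first equality is that this agrees with $\OP(a^m(1\otimes\chi))u^m$, and your proof never addresses it. The repair is short: by Corollary~\ref{consistency-cor} the value of $a^m(x,D)$ on $u^m\in\cal F^{-1}\cal E'$ is the one given by the extension of Section~\ref{loc-ssect}, and Theorem~\ref{aSSO-thm} evaluates that extension as $\OP(a^m(1\otimes\chi))u^m$ for exactly the $\chi$ admitted in the lemma --- indeed your spectral-localisation computation is a re-derivation of the $\chi$-independence already recorded in Proposition~\ref{aloc-prop}. So what is missing is a citation rather than a new idea, but as written the proof assumes the identity the lemma is meant to establish.
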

\begin{proof}
The last part of \eqref{alter-eq} follows from
\eqref{axDu-eq} if $u$ is a Schwartz 
function, hence for all $u\in \cal S'(\Rn)$ since both $a^m(1\otimes\psi_m)$
and $a^m(1\otimes\chi)$ belong to $S^{-\infty}$. 
Since $u^m\in \cal F^{-1}\cal E'$ and $a^m(x,\eta)\in S^\infty _{1,1}$,
Corollary~\ref{consistency-cor} shows that $a^m(x,D)(u^m)$ can be calculated
by the extension in Section~\ref{loc-ssect}; then Theorem~\ref{aSSO-thm}
gives the left-hand side of \eqref{alter-eq}.
\end{proof}

In view of \eqref{alter-eq},
one could alternatively have defined $a(x,D)u$ as a limit of
$a^m(x,D)u^m$.
This would be an advantage in as much as the expression $a^m(x,D)u^m$ is a
natural point of departure for Littlewood--Paley analysis of $a(x,D)u$ (as
explained later, cf \eqref{au_bilin-eq});
it would also make $a$ and $u$ enter in a more symmetric fashion. 
But as a drawback the resulting definition of $a(x,D)$ would then have two
steps, the first one being an
extension to $\cal F^{-1}\cal E'$ as in Section~\ref{loc-ssect}.
In comparison the limit in \eqref{ammu-eq} only refers to $S^{-\infty}$, 
cf \eqref{ammu'-eq}, which made it possible to state
Definition~\ref{gdef-defn} directly; cf \eqref{stabl-id}.

Formula \eqref{alter-eq} is so self-suggesting that it is convenient 
to write $a^m(x,D)u^m$ without further explanation,
instead of the slightly tedious 
$\OP(a^m(1\otimes\psi_m))u$, that enters Definition~\ref{gdef-defn}. 
(This is permitted as the two expressions are equal for every choice 
of the auxiliary function $\psi$, cf Lemma~\ref{alter-lem}).

Since Definition~\ref{gdef-defn} is based on a limit of $a^m(x,D)u^m$, it is
useful to relate the distribution kernel $K_m(x,y)$ 
of $u\mapsto a^m(x,D)u^m$ to the kernel $K(x,y)$ of $a(x,D)$.

The symbol of $a^m(x,D)u^m$ is $a^m(1\otimes\psi_m)\in S^{-\infty}$, cf
\eqref{alter-eq}, so \eqref{Kxy-eq} and the definition of $a^m$ give,
for all $u$, $v\in \cal S(\Rn)$,
\begin{equation}
  \begin{split}
  \dual{a^m(x,D)u^m}{v}&=
  \dual{\cal F^{-1}_{\eta\to y}(a^m(1\otimes\psi_m))(x,x-y)}{v(x)\otimes u(y)}
\\
  &=
  \dual{\cal F^{-1}(\psi_m(\xi)\psi_m(\eta)
             \cal F_{x\to\xi} a(\xi,\eta)))\circ M}{v\otimes u}.
  \end{split}
  \label{amumv-eq}
\end{equation}
Because
$\cal F\cal F^{-1}_{\eta\to y}\cal F^{-1}_{\xi\to x}=I$ on $\R^{2n}$,
and 
$M=\left(\begin{smallmatrix}I&0\\I&-I\end{smallmatrix}\right)=M^{-1}$,
formula \eqref{amumv-eq} shows that
\begin{equation}
  K_m(x,x-y)=\cal F^{-1}((\psi_m\otimes\psi_m)\cal F(K\circ M))(x,y).
  \label{Kmxxy-eq}
\end{equation}
This can be restated as follows:

\begin{prop}   \label{Kreg-prop}
When $a\in S^{\infty}_{1,1}$ and $\psi\in C^\infty_0(\Rn)$ equals $1$ in a
neighbourhood of the origin, then the distribution kernel $K_m(x,y)$ of 
$u\mapsto a^m(x,D)u^m$, cf \eqref{alter-eq}, 
is the function in $C^\infty(\Rn\times\Rn)$ given by 
\begin{equation}
  K_m(x,y)=\cal F^{-1}(\psi_m\otimes\psi_m)*(K\circ M) (x,x-y),
  \label{Kmxy-eq}
\end{equation}
which is the conjugation by $\circ M$ of the convolution
of $K(x,y)$ by 
$4^{nm}\cal F^{-1}\psi(2^mx)\cal F^{-1}\psi(2^my)$.
\end{prop}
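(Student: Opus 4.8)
The plan is to read Proposition~\ref{Kreg-prop} as a repackaging of the identity \eqref{Kmxxy-eq} already derived above, refined by a single application of the convolution theorem and a short regularity remark. The starting point is that, by \eqref{ammu'-eq}, the symbol $a^m(1\otimes\psi_m)$ lies in $S^{-\infty}$, so that $u\mapsto a^m(x,D)u^m=\OP(a^m(1\otimes\psi_m))u$ is a well-defined continuous operator on all of $\cal S'(\Rn)$ (cf.\ \eqref{alter-eq} and Lemma~\ref{alter-lem}); it therefore has a genuine distribution kernel $K_m$, and \eqref{amumv-eq} together with the computation leading to \eqref{Kmxxy-eq} gives
\[
  K_m(x,x-y)=\cal F^{-1}\bigl((\psi_m\otimes\psi_m)\,\cal F(K\circ M)\bigr)(x,y),
\]
where one uses $\cal F\cal F^{-1}_{\eta\to y}\cal F^{-1}_{\xi\to x}=I$ on $\R^{2n}$ and, through Proposition~\ref{aK-prop}, that $\cal F(K\circ M)=\hat a$.

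Next I would rewrite the right-hand side. By the convolution theorem on $\R^{2n}$ the product $(\psi_m\otimes\psi_m)\cdot\cal F(K\circ M)$ has inverse Fourier transform $\cal F^{-1}(\psi_m\otimes\psi_m)*(K\circ M)$; evaluating the resulting function of $(x,y)$ at $(x,x-y)$, i.e.\ composing with $M$, yields exactly \eqref{Kmxy-eq}. The tensor factorisation $\cal F^{-1}(\psi_m\otimes\psi_m)=(\cal F^{-1}\psi_m)\otimes(\cal F^{-1}\psi_m)$ together with the dilation rule $\cal F^{-1}(\psi(2^{-m}\cdot))=2^{nm}\cal F^{-1}\psi(2^m\cdot)$ then identifies the convolution kernel with $4^{nm}\cal F^{-1}\psi(2^mx)\cal F^{-1}\psi(2^my)$, which together with \eqref{Kmxy-eq} is the concluding description in the statement.

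It remains to see that $K_m\in C^\infty(\R^{2n})$. For this I would note that $\psi_m\otimes\psi_m\in C^\infty_0(\R^{2n})$, hence $\cal F^{-1}(\psi_m\otimes\psi_m)\in\cal S(\R^{2n})$, while $K\circ M=\cal F^{-1}_{\eta\to y}a\in\cal S'(\R^{2n})$ since $a\in\cal S'(\R^{2n})$ and $\cal F^{-1}_{\eta\to y}$ is a homeomorphism there; convolution of a Schwartz function with a tempered distribution is smooth (indeed lies in $\cal O_M$), and composition with the linear bijection $M$ preserves smoothness.

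I do not expect a substantive obstacle: the argument is essentially bookkeeping with partial Fourier transforms and one change of variables. The only points that need a little care are the exact matching of variables in the passage between the substitution $y\mapsto x-y$ and the coordinate change $\circ M$ (using $M=M^{-1}$), and the remark that the identity \eqref{amumv-eq}, first established for $u,v\in\cal S(\Rn)$, transfers without loss to the distribution kernel of the $\cal S'$-continuous operator $\OP(a^m(1\otimes\psi_m))$ --- which is immediate once that operator is known to be defined on all of $\cal S'(\Rn)$.
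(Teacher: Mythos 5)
Your proposal is correct and follows essentially the same route as the paper: the paper derives \eqref{amumv-eq} and \eqref{Kmxxy-eq} in the text immediately preceding the proposition and presents \eqref{Kmxy-eq} as a restatement via the convolution theorem, exactly as you do. Your additional checks (that $\cal F(K\circ M)=\hat a$, the dilation identity giving the factor $4^{nm}$, and smoothness of $\cal S*\cal S'$) are the right ones and are consistent with the paper's conventions.
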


Naturally, this result will be useful for the discussion in the next section.

\section{Preservation of $C^\infty$-smoothness}   \label{pslp-sect}
It is well known that every classical pseudo-differential operator 
$A=a(x,D)$  is pseudo-local, 
\begin{equation}
  \singsupp Au\subset \singsupp u\quad\text{for every}\quad u\in D(A).
  \label{psdloc-eq}
\end{equation}
In the context of type $1,1$-operators, the requirement $u\in D(A)$ should be
made explicitly as the domain $D(A)$ in many cases will be only a proper
subspace of $\cal S'(\Rn)$. 

It could be useful to call $\Omega:=\Rn\setminus\singsupp u$ the
\emph{regular set} of $u$, for this set has the important property that
regularisations of $u$ converge (not just in $\cal S'(\Rn)$ but also)
in the topology of $C^\infty(\Omega)$. This fact could well be 
folklore, but references seem unavailable, and since it is the crux of
the below proof of pseudo-locality, details are given for the reader's
convenience.

\begin{lem}[The regular convergence lemma]
  \label{regconv-lem}
Let $u\in \cal S'(\Rn)$ and set $\psi_k(\xi)=\psi(\varepsilon_k\xi)$ for some
sequence $\varepsilon_k\searrow 0$ and $\psi\in \cal S(\Rn)$. Then 
\begin{equation}
 \psi_k(D) u\to \psi(0)\cdot u
  \quad\text{for}\quad k\to\infty
  \label{psiku-eq}
\end{equation}
in the Fr\'echet space $C^\infty(\Rn\setminus\singsupp u)$. 
If $\cal F^{-1}\psi\in C^\infty_0(\Rn)$ the conclusion holds 
for all $u\in \cal D'(\Rn)$, 
if $\psi_k(D)u$ is replaced by $(\cal F^{-1}\psi_k)*u$. 
\end{lem}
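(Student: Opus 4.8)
The plan is to reduce everything to a purely local statement about smoothness. Fix an open set $\Omega\Subset\Rn\setminus\singsupp u$, so that on a slightly larger open set $\Omega'$ with $\Omega\Subset\Omega'\Subset\Rn\setminus\singsupp u$ the distribution $u$ agrees with a function in $C^\infty(\Omega')$. Pick $\varphi\in C^\infty_0(\Omega')$ with $\varphi\equiv1$ near $\overline\Omega$ and split $u=\varphi u+(1-\varphi)u$. Then $\varphi u\in C^\infty_0(\Rn)$ (indeed it lies in $\cal S(\Rn)$), so $\psi_k(D)(\varphi u)=\cal F^{-1}(\psi(\varepsilon_k\cdot)\widehat{\varphi u})\to\psi(0)\varphi u$ in $\cal S(\Rn)$ by dominated convergence applied to the Fourier integral, hence in $C^\infty(\Omega)$. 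The whole difficulty is therefore concentrated in showing that $\psi_k(D)\bigl((1-\varphi)u\bigr)\to0$ in $C^\infty(\Omega)$, i.e. that the ``far'' part, whose singular support is contained in $\supp(1-\varphi)$ and thus disjoint from $\overline\Omega$, contributes nothing in the limit on $\Omega$.

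For this I would use the kernel representation. Write $w:=(1-\varphi)u\in\cal S'(\Rn)$ and note $\supp w\bigcap\overline\Omega=\emptyset$; set $d:=\op{dist}(\overline\Omega,\supp w)>0$. For $x\in\Omega$,
\begin{equation}
  D^\beta_x\bigl(\psi_k(D)w\bigr)(x)=\Dual{w}{\,D^\beta_x\bigl(\varepsilon_k^{-n}(\cal F^{-1}\psi)(\tfrac{x-\cdot}{\varepsilon_k})\bigr)}.
\end{equation}
Because $w$ is a tempered distribution it has finite order $N$ and a seminorm estimate $|\dual{w}{\phi}|\le C\sum_{|\gamma|\le N}\sup_y\ang{y}^N|D^\gamma\phi(y)|$; applying this to the test function above, all derivatives hit $(\cal F^{-1}\psi)((x-y)/\varepsilon_k)$ and are evaluated at $y\in\supp w$, where $|x-y|\ge d$. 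Since $\cal F^{-1}\psi\in\cal S(\Rn)$, for every $M$ one has $|D^\gamma(\cal F^{-1}\psi)(z)|\le C_{\gamma,M}\ang{z}^{-M}$, so each term is bounded by a constant times $\varepsilon_k^{-n-|\beta|-|\gamma|}\ang{d/\varepsilon_k}^{-M}$ times a polynomial factor in $\ang{y}$ that is harmless after absorbing one extra power of $\ang{z}^{-1}$. Choosing $M$ large enough this tends to $0$ as $\varepsilon_k\searrow0$, uniformly for $x\in\overline\Omega$ and for $|\beta|$ bounded. Hence $\psi_k(D)w\to0$ in $C^\infty(\Omega)$, and combining with the Schwartz-function part gives \eqref{psiku-eq} in $C^\infty(\Omega)$. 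Exhausting $\Rn\setminus\singsupp u$ by such $\Omega$ yields convergence in the Fr\'echet space $C^\infty(\Rn\setminus\singsupp u)$.

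For the final assertion, when $\cal F^{-1}\psi\in C^\infty_0(\Rn)$ the mollifier $\varepsilon_k^{-n}(\cal F^{-1}\psi)(\cdot/\varepsilon_k)$ has shrinking compact support, so $(\cal F^{-1}\psi_k)*u$ is well defined for any $u\in\cal D'(\Rn)$; the same two-part splitting works, the near part $(\cal F^{-1}\psi_k)*(\varphi u)\to\psi(0)\varphi u$ in $C^\infty_0$ because $\varphi u\in C^\infty_0$, and the far part is controlled exactly as above using that the mollifier is supported in $\{|z|<\varepsilon_k R\}$ for some $R$, which is eventually disjoint from the gap of width $d$ between $\overline\Omega$ and $\supp w$, making the far part vanish identically for $k$ large rather than merely in the limit. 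The main obstacle is the decay estimate for the far part: keeping careful track of the interplay between the $\varepsilon_k^{-n}$ normalisation of the mollifier, the derivatives falling on it, and the polynomial growth allowed by the order of the tempered distribution $w$, so that Schwartz decay of $\cal F^{-1}\psi$ beats all of these uniformly on $\overline\Omega$.
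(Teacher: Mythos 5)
Your proposal is correct and follows essentially the same route as the paper's proof: split $u$ into a compactly supported smooth near part plus a far part supported at positive distance $d$ from the compact set under consideration, treat the near part by standard mollifier convergence of Schwartz functions, and kill the far part by combining the temperedness of $u$ with the rapid decay of $\cal F^{-1}\psi$ across the gap of width $d$ (and, in the $C^\infty_0$ case, the eventual vanishing of that convolution). The one point to make explicit in a full write-up is that the finite-order seminorm bound for $w$ is a supremum over all of $\Rn$, so the claimed restriction to $y\in\supp w$ must be enforced by keeping the cutoff $1-\varphi$ (or an equivalent one supported away from $\overline{\Omega}$) inside the test function before estimating — exactly the role played by the factor $\chi(y)$ in the paper's estimate \eqref{uconst-ineq}.
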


In the topology of $\cal S'(\Rn)$ the well-known property 
\eqref{psiku-eq} is easy, for test against $\bar\varphi\in \cal S(\Rn)$
reduces \eqref{psiku-eq} to the fact that $\psi(\varepsilon_k\cdot )\hat
\varphi\to \psi(0)\hat \varphi$ in $\cal S(\Rn)$.

The main case is of course $\psi(0)=1$.
For $\psi(0)=0$ one obtains the occasionally useful
fact that $\psi_k(D)u\to 0$ in
$C^\infty$ over the regular set of $u$.

\begin{proof}
Let $K\Subset\Rn\setminus\singsupp u=:\Omega$ 
and take a partition of unity 
$1=\varphi+\chi$  with $\varphi\in C^\infty(\Rn)$ 
such that $\varphi\equiv1$ on a neighbourhood of $K$ 
and $\supp\varphi\Subset\Omega$. This gives a splitting
$\psi_k(D)u=\psi_k(D)(\varphi u)+\psi_k(D)(\chi u)$ where
$\varphi u\in C^\infty_0(\Rn)$.
Since $\int\cal F^{-1}\psi\,dx=\psi(0)$,
\begin{multline}
  D^\alpha[\check\psi_k*(\varphi u)-\psi(0)\varphi u]
  = \int \check\psi(y)[D^\alpha(\varphi u)(x-\varepsilon_k y)-
                        D^\alpha(\varphi u)(x)]\,dy
\\
  = -\sum_{j=1}^n\int\int_{0}^1 \check\psi(y)\partial_{x_j} 
    D^\alpha(\varphi u)(x-\theta\varepsilon_k y)
\varepsilon_k y_j\,d\theta dy.
\end{multline}
Using the seminorms in \eqref{Snorm-eq} in a crude way,
\begin{equation}
  |\partial_{j} D^\alpha(\varphi u)(x-\theta\varepsilon_k y)
   \varepsilon_k y_j|\le
  \varepsilon_k |y|\norm{\varphi u}{\cal S, |\alpha|+1}
  \searrow 0 ,
  \label{nDau-eq}
\end{equation}
so consequently $D^\alpha(\check\psi_k*(\varphi u))\to \psi(0)D^\alpha u$
uniformly on $K$.

For $\psi_k(D)(\chi u)$ it is used that continuity of 
$\cal S(\Rn)\xrightarrow{u}\C$ gives $c$, $N>0$ such that
\begin{equation}
  |\dual{\chi u}{D^\alpha_x\check\psi_k(x-\cdot)}|  
  \le c\sup_{y\in \Rn,\,|\beta|\le N}
        \frac{\ang{y}^N}{\varepsilon_k^{n+|\alpha|}}
        \bigl| D^{\beta}_y
        (\chi(y)
        D^\alpha\check\psi(\frac{x-y}{\varepsilon_k})) \bigr|.
  \label{uconst-ineq}
\end{equation}
Here $0<d:=\op{dist}(K, \supp \chi)\le |x-y|$ 
for $x\in K$, $ y\in \supp\chi$, so every negative power of $\varepsilon_k$
fulfils 
$\varepsilon_k^{-l}\le(1+\varepsilon_k^{-1}|x-y|)^l/d^l$.
Moreover, $(1+|y|)^N\le c_K(1+|x-y|/\varepsilon_k)^N$ for
$\varepsilon_k<1$. So evaluation of an $\cal S$-seminorm at $\cal F^{-1}\psi$
yields $\sup_K |D^\alpha(\cal F^{-1}\psi_k*(\chi u))|\le
C\varepsilon_k\searrow 0$. 

When $\cal F^{-1}\psi\in C^\infty_0$, clearly
$(\cal F^{-1}\psi_k)*(\chi u)=0$ around $K$ eventually.
\end{proof}

In the sequel the main case is the one in which $\psi$ itself
has compact support, so the proof above is needed. 

\subsection{The pseudo-local property}
The following sharpening of Lemma~\ref{regconv-lem} shows that, 
in certain situations, one even
has convergence $f\psi_k(D)u\to fu$ in $\cal S$.
To obtain this in a general set-up, let $x\in \Rn$ be split in two groups
as $x=(x',x'')$ with $x'\in \R^{n'}$, $x''\in \R^{n''}$.

\begin{prop}   \label{Sconv-prop}
Suppose $u\in \cal S'(\Rn)$ has $\singsupp u\subset 
\{\,x=(x',x'')\mid x''=0\,\}$ and that $f u\in \cal S(\Rn)$
for every $f$ in the subclass 
$\cal C\subset C^\infty_{\b}(\Rn)$ consisting of the $f$ for which $|x'|$ is
bounded on $\supp f$ and $\supp f\cap \{\,x\mid x''=0\,\}=\emptyset$.
Then 
\begin{equation}
  f \cal F^{-1}(\psi_k\hat u)
  \xrightarrow[k\to\infty]{}
  \psi(0)fu
  \quad\text{in $\cal S(\Rn)$},
\end{equation}
for every sequence $\psi_k=\psi(\varepsilon_k\cdot)$ 
given as in Lemma~\ref{regconv-lem}.
\end{prop}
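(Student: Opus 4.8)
The plan is to reduce the claim to the regular convergence lemma (Lemma~\ref{regconv-lem}) by splitting the weight $f\in\cal C$ across the singular support. First I would fix a test function $f\in\cal C$. By hypothesis $\singsupp u\subset\{x''=0\}$, while $\supp f$ is disjoint from this hyperplane and has $|x'|$ bounded on it; hence $\supp f\Subset\Rn\setminus\singsupp u$, i.e. $\supp f$ is a compact subset of the regular set $\Omega=\Rn\setminus\singsupp u$. Choose $\varphi\in C^\infty_0(\Omega)$ with $\varphi\equiv1$ on a neighbourhood of $\supp f$, and split $u=\varphi u+(1-\varphi)u$. Then $f\cal F^{-1}(\psi_k\hat u)=f\,\psi_k(D)(\varphi u)+f\,\psi_k(D)((1-\varphi)u)$, and since $\varphi u\in C^\infty_0(\Rn)$ the first term is the one where all the work happens, while the second term should be shown to tend to $0$ in $\cal S(\Rn)$, in fact with $\cal S$-seminorms that decay like $\varepsilon_k$, exactly as in the estimate \eqref{uconst-ineq} of Lemma~\ref{regconv-lem}: there $\op{dist}(K,\supp(1-\varphi))>0$ plays the role of $d$, and multiplication by $f$ only improves the decay at infinity since $f\in C^\infty_{\b}$ with $|x'|$ bounded on its support.

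For the main term, $g:=\varphi u\in C^\infty_0(\Rn)$, so $\psi_k(D)g=\check\psi_k*g\to\psi(0)g$ in $\cal S(\Rn)$; indeed each $\cal S$-seminorm of $\check\psi_k*g-\psi(0)g$ is controlled by $\varepsilon_k\norm{g}{\cal S,N+1}\int\ang{y}\,|\check\psi(y)|\,dy$ by the mean-value argument of Lemma~\ref{regconv-lem} (formula \eqref{nDau-eq} ff.), using $\int\check\psi\,dx=\psi(0)$. Multiplying by $f$ keeps us in $\cal S(\Rn)$: $f\,\psi_k(D)g\to\psi(0)fg$ in $\cal S(\Rn)$ because $\cal S(\Rn)$ is a topological module over $C^\infty_{\b}(\Rn)$ (the product $\norm{f h}{\cal S,N}\le c_{f,N}\norm{h}{\cal S,N}$ follows from Leibniz' rule and boundedness of the derivatives of $f$). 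Finally $fg=f\varphi u=fu$ since $\varphi\equiv1$ on $\supp f$, so $f\,\psi_k(D)g\to\psi(0)fu$, and combined with the vanishing of the remainder term we obtain $f\cal F^{-1}(\psi_k\hat u)\to\psi(0)fu$ in $\cal S(\Rn)$.

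The only genuinely delicate point is the decay of the remainder term $f\,\psi_k(D)((1-\varphi)u)$ \emph{in the Schwartz topology} rather than merely in $C^\infty$ on compact sets. In Lemma~\ref{regconv-lem} the analogous term was only controlled uniformly on a compact $K$; here one needs uniform control of $\ang{x}^N$-weighted derivatives over all of $\Rn$. This is where the structural hypothesis on $\cal C$ is used decisively: because $|x'|$ is bounded on $\supp f$, the weight $\ang{x}^N$ is, on $\supp f$, comparable to $\ang{x''}^N$, and on $\supp f$ one has the lower bound $\op{dist}(x,\supp(1-\varphi))>0$ available only after one also exploits that $\supp(1-\varphi)$ meets $\{x''=0\}$; so the factors $(1+\varepsilon_k^{-1}|x-y|)^N$ appearing when one rewrites negative powers of $\varepsilon_k$ (as in \eqref{uconst-ineq}) can absorb $\ang{x}^N$ uniformly in $x\in\supp f$, $y\in\supp(1-\varphi)$, with constants independent of $k<\varepsilon_0$. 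Carrying out this bookkeeping — pairing $(1-\varphi)u$ against $D^\alpha_x\check\psi_k(x-\cdot)$, invoking the order/growth bound of the distribution $(1-\varphi)u\in\cal S'$, and checking the weighted estimate is uniform — is the technical heart of the argument; everything else is the soft module structure of $\cal S(\Rn)$ and the already-proved Lemma~\ref{regconv-lem}.
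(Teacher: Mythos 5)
Your reduction starts from the claim that $\supp f\Subset\Rn\setminus\singsupp u$, and this is where the proof breaks down: membership of $\cal C$ only bounds $|x'|$ on $\supp f$ and keeps $\supp f$ off the hyperplane $\{\,x''=0\,\}$; nothing bounds $|x''|$ there, so $\supp f$ is in general non-compact. (This is not a peripheral case --- the proposition is applied in Proposition~\ref{SKconv-prop} to $f(x,x-y)$ with $f$ as in \eqref{fsupp-cnd}, e.g.\ $f=\psi(x)(1-\chi(y))$ from the pseudo-locality proof, whose support is unbounded in the $x''$-direction.) Consequently there is no $\varphi\in C^\infty_0(\Omega)$ equal to $1$ on a neighbourhood of $\supp f$, and the pivot of your argument, $\varphi u\in C^\infty_0(\Rn)$, is unavailable. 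A telling symptom is that in your reading the standing hypothesis that $fu\in\cal S(\Rn)$ for every $f\in\cal C$ is never actually used ($fu=f\varphi u$ would automatically lie in $C^\infty_0$); that hypothesis is in fact the only source of decay of $u$ at infinity along the unbounded part of $\supp f$, and the proof cannot avoid it. Were $\supp f$ really compact, the whole proposition would collapse to Lemma~\ref{regconv-lem}, since the functions $f\psi_k(D)u$ would all be supported in one fixed compact set.

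The paper's repair is as follows: from $|x'|$ bounded on $\supp f$ and $\supp f\cap\{\,x''=0\,\}=\emptyset$ one first extracts $\inf\{\,|x''| : x\in\supp f\,\}\ge\delta>0$, and then chooses $\varphi$ equal to $1$ on $\{\,|x''|\ge\delta/2\,\}$ (hence on all of $\supp f$) and supported away from $\{\,x''=0\,\}$, so that $\varphi u\in\cal S(\Rn)$ \emph{by the hypothesis}, not by compact support. The mean-value estimate for $f\check\psi_k*(\varphi u)-\psi(0)fu$ then runs much as you describe, using $(1+|x|)^N\le(1+|x-\theta\varepsilon_k y|)^N(1+|y|)^N$; and for the remainder $f\check\psi_k*((1-\varphi)u)$ the separation $\op{dist}(\supp f,\supp(1-\varphi))\ge\delta/2$ is purely in the $x''$-variable, while the weight is absorbed via $(1+|y|)^N\le(1+|x'|)^N(1+|y-x|)^N(1+|y''|)^N$, exploiting that $|y''|<\delta$ on $\supp(1-\varphi)$ and that $|x'|$ is bounded on $\supp f$. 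Your closing paragraph gestures at exactly this bookkeeping, but it cannot be carried out from the decomposition you set up, because the separation and the choice of $\varphi$ must be organised around the $x''$-coordinate rather than around a compact set.
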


\begin{proof}
For $f\in \cal C$ it is straightforward to see that there is a $\delta$ such
that
\begin{equation}
  \inf\{\,|x''|\mid x\in \supp f \,\}\ge \delta>0.
\end{equation}
One can then take $\varphi\in \cal C$ such that
$\varphi\equiv1$ where $|x''|\ge\delta/2$, hence on $K=\supp f$.
Mimicking the proof of Lemma~\ref{regconv-lem}, 
compactness of $K$ is not needed since 
$\varphi u$ is in $\cal S$ by assumption. 
Instead of \eqref{nDau-eq} one should estimate
\begin{equation}
  \ang{x}^N|\partial_j D^\alpha (\varphi u)(x-\theta\varepsilon_k y)
   \varepsilon_ky_j|,
\end{equation}
but $(1+|x|)^N\le(1+|x-\theta\varepsilon_k y|)^N(1+|y|)^N$ when
$\varepsilon_k<1$, so it follows mutatis mutandis that 
for an arbitrary seminorm,
\begin{equation}
  \norm{f\check\psi_k*(\varphi u)
        -\psi(0)f u}{\cal S, N}\le 
  c\varepsilon_k\searrow 0.
\end{equation}
And because $\chi=1-\varphi$ fulfils
$d=\op{dist}(K,\supp \chi)\ge\tfrac{\delta}{2}>0$, one gets as in
\eqref{uconst-ineq},
\begin{equation}
  \norm{f\check\psi_k*(\chi u)}{\cal S, N}\le c\varepsilon_k  .
\end{equation}
Indeed, 
$\ang{x}^N\le (1+|y|)^N(1+|x-y|/\varepsilon_k)^N$ and now factors like
$\ang{y}^N$ are harmless as
\begin{equation}
  (1+|y|)^N\le (1+|y'|)^N(1+|y''|)^N
  \le (1+|x'|)^N(1+|y-x|)^N(1+|y''|)^N,
\end{equation}
where $|y''|<\delta$ on $\supp\chi$ whilst $|x'|$ is bounded on $\supp f$.
\end{proof}

For distribution kernels there is a similar result, but in this case
it is well known that one need not assume rapid decay: 
let $f\in C^\infty_{\b}(\R^{2n})$ 
have its support disjoint from the diagonal 
$\Delta=\{\,(x,x)\mid x\in \Rn\,\}$ and bounded in
the $x$-direction, that is
\begin{equation}
\begin{gathered}
  \Delta\cap\supp f=\emptyset
  \\
  \exists R>0\colon (x,y)\in \supp f\implies |x|\le R.
\end{gathered}
  \label{fsupp-cnd}  
\end{equation}
Then $f(x,y)K(x,y)$ is in $\cal S(\R^{2n})$ whenever $K$ is the kernel of a
type $1,1$-operator. Indeed,
\begin{equation}
  (1+|(x,y)|)^N\le (1+|x|)^N(1+|y|)^N
  \le (1+|x|)^{2N}(1+|y-x|)^N
\end{equation}
and here $|x|$ is bounded on $\supp f$, so by setting $z=x-y$ in \eqref{zK-eq}
one has that $\ang{(x,y)}^N D^\alpha_xD^\beta_y (fK)$ is bounded for all
$N$, $\alpha$, $\beta$. 
Invoking Proposition~\ref{Sconv-prop} this gives

\begin{prop}
  \label{SKconv-prop}
If $a\in S^\infty_{1,1}(\Rn\times\Rn)$ has kernel $K$ and
$K_m$ is the approximating kernel given by \eqref{Kmxy-eq}, then 
it holds for every $f\in C^\infty_{\b}(\R^{2n})$ with the property 
\eqref{fsupp-cnd} that
\begin{equation}
  fK_m \xrightarrow[m\to\infty]{}
  fK   \quad\text{in}\quad \cal S(\R^{2n}).
\end{equation}
\end{prop}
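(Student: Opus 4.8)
The plan is to transport the whole statement through the linear involution $M$ and reduce it to a single application of Proposition~\ref{Sconv-prop}, applied to the distribution $u:=K\circ M$ on $\R^{2n}$ with the variables split as $(x,y)$, the $x\in\Rn$ playing the role of $x'$ and $y\in\Rn$ that of $x''$.

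First I would assemble the structural facts about $K\circ M$. By Lemma~\ref{Kxy-lem} the singular support of $K$ lies on the diagonal $\Delta$, and since $M(x,y)=(x,x-y)$ satisfies $M=M^{-1}$ and $M(\Delta)=\{\,(x,y)\mid y=0\,\}$, the distribution $u=K\circ M$ has $\singsupp u\subset\{\,x''=0\,\}$, as Proposition~\ref{Sconv-prop} requires. Next I would check its decay hypothesis: for this splitting the class $\cal C$ consists of the $g\in C^\infty_{\b}(\R^{2n})$ with $|x|$ bounded on $\supp g$ and $\supp g\cap\{\,y=0\,\}=\emptyset$; writing $g\cdot(K\circ M)=(\tilde g\,K)\circ M$ with $\tilde g:=g\circ M$, a short support computation ($\supp\tilde g=M(\supp g)$ is bounded in the $x$-direction and disjoint from $\Delta$) shows $\tilde g$ satisfies \eqref{fsupp-cnd}. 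Hence by the observation made just before Proposition~\ref{SKconv-prop} one has $\tilde g\,K\in\cal S(\R^{2n})$, and composition with the linear bijection $M$ keeps this in $\cal S(\R^{2n})$; so $g\cdot(K\circ M)\in\cal S(\R^{2n})$ for every $g\in\cal C$.

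With the hypotheses in place I would invoke Proposition~\ref{Sconv-prop} with $\Psi:=\psi\otimes\psi\in C^\infty_0(\R^{2n})$, using that $\Psi(2^{-m}\cdot)=\psi_m\otimes\psi_m$, $\Psi(0)=\psi(0)^2=1$ and $2^{-m}\searrow0$. It yields, for each $g\in\cal C$,
\[
 g\cdot\cal F^{-1}\bigl((\psi_m\otimes\psi_m)\,\cal F(K\circ M)\bigr)
 \xrightarrow[m\to\infty]{~} g\cdot(K\circ M)
 \quad\text{in }\cal S(\R^{2n}).
\]
By \eqref{Kmxxy-eq} (equivalently Proposition~\ref{Kreg-prop}) the left entry equals $g\cdot(K_m\circ M)$, so $g\cdot(K_m\circ M)\to g\cdot(K\circ M)$ in $\cal S(\R^{2n})$ for all $g\in\cal C$. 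Finally, given $f\in C^\infty_{\b}(\R^{2n})$ with property \eqref{fsupp-cnd}, I would set $g:=f\circ M$; transporting the two support conditions through $M$ shows $g\in\cal C$, and since $v\mapsto v\circ M$ is a topological automorphism of $\cal S(\R^{2n})$ with $M\circ M=I$, composing the last convergence with $M$ turns $g\cdot(K_m\circ M)$ into $f\,K_m$ and $g\cdot(K\circ M)$ into $f\,K$, which is the assertion.

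The only delicate points are the two small pieces of support bookkeeping — that $g\in\cal C$ is equivalent to $g\circ M$ satisfying \eqref{fsupp-cnd}, in both directions — together with the convention check that the Fourier multiplier $\psi_m\otimes\psi_m$ is exactly $\Psi(2^{-m}\cdot)$ for $\Psi=\psi\otimes\psi$, so that Proposition~\ref{Sconv-prop} applies verbatim. There is no substantive obstacle, as the convergence in $\cal S(\R^{2n})$ is already delivered by that proposition and Proposition~\ref{Kreg-prop} supplies the identification $K_m\circ M=\cal F^{-1}(\psi_m\otimes\psi_m)*(K\circ M)$.
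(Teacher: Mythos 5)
Your proof is correct and follows essentially the same route as the paper: both reduce the statement to Proposition~\ref{Sconv-prop} applied to $u=K\circ M$ with $\Psi=\psi\otimes\psi$, identify $K_m\circ M=\cal F^{-1}(\psi_m\otimes\psi_m)*(K\circ M)$ via Proposition~\ref{Kreg-prop}, note that $f\circ M$ lies in the class $\cal C$, and finish with the continuity of $\circ M$ on $\cal S(\R^{2n})$. Your version merely makes explicit the support bookkeeping and the verification that $g\cdot(K\circ M)\in\cal S(\R^{2n})$ for $g\in\cal C$, which the paper leaves implicit.
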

\begin{proof}
The class $\cal C$ of Proposition~\ref{Sconv-prop} contains $f(x,x-y)$, 
and Proposition~\ref{Kreg-prop} gives
\begin{equation}
  f(x,x-y)K_m(x,x-y)=f(x,x-y)
   \cal F^{-1}(\psi_m\otimes\psi_m)*(K\circ M).
\end{equation}
The right-hand side tends to $(fK)\circ M$ in $\cal S(\R^{2n})$ according to
Proposition~\ref{Sconv-prop}, so it remains to use the continuity of $\circ
M$ in $\cal S(\Rn)$.
\end{proof}

It can now be proved that operators of type
$1,1$ are pseudo-local.
The argument below is classical up to
the appeal to \eqref{uphiK-eq}. In case $A$ is $\cal
S'$-continuous, this formula follows at once from the
density of $\cal S$ in $\cal S'$. However, in general $A$ is not even
closable, but instead the limiting procedure of
Definition~\ref{gdef-defn} applies via the approximation 
in Proposition~\ref{SKconv-prop}.

\begin{thm}
  \label{psdloc-thm}
For every $a\in S^\infty_{1,1}(\Rn\times\Rn)$ the operator $A=a(x,D)$ has the pseudo-local property; that is
$\singsupp Au \subset \singsupp u$ for every $u\in D(A)$.
\end{thm}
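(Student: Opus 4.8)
The plan is to adapt the classical proof of pseudo-locality for smoothing operators, using the approximation $K_m\to K$ from Proposition~\ref{SKconv-prop} and the regular convergence $u^m\to u$ from Lemma~\ref{regconv-lem} to bridge the gap created by the absence of $\cal S'$-continuity. Let $u\in D(A)$ and suppose $x_0\notin\singsupp u$. Pick $\varphi\in C^\infty_0(\Rn)$ with $\varphi\equiv1$ near $x_0$ and $\supp\varphi$ disjoint from $\singsupp u$; it suffices to show that $\zeta\cdot Au\in C^\infty$ near $x_0$ for a suitable cut-off $\zeta\in C^\infty_0(\Rn)$, by splitting $u=\varphi u+(1-\varphi)u$ where $\varphi u\in C^\infty_0\subset\cal S(\Rn)$.

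First I would handle the smooth piece $\varphi u$: since $\varphi u\in\cal S(\Rn)$, the operator $A$ acts on it by the integral \eqref{axDu-eq} and maps into $\cal S(\Rn)\subset C^\infty(\Rn)$, so $A(\varphi u)$ contributes nothing to $\singsupp Au$. The essential point is the singular piece $w:=(1-\varphi)u$, for which $x_0\notin\supp w$. Here I would choose $\zeta\in C^\infty_0(\Rn)$ supported near $x_0$ with $\zeta\equiv1$ on a neighbourhood $U$ of $x_0$, and aim to show $\zeta\, Aw\in C^\infty(U)$. The natural route is the kernel formula: for test functions $v\in C^\infty_0(U)$ one wants
\begin{equation}
  \dual{Aw}{v}=\dual{K}{v\otimes w},
  \label{uphiK-eq}
\end{equation}
where the right-hand side makes sense because $\supp(v\otimes w)$ is disjoint from the diagonal (as $\supp v\subset U$ and $x_0\notin\supp w$), so $f(x,y)=1$ can be replaced by a cut-off $f\in C^\infty_{\b}(\R^{2n})$ with property \eqref{fsupp-cnd} which is $1$ on a neighbourhood of $\supp(v\otimes w)$, and $fK\in\cal S(\R^{2n})$ by the computation preceding Proposition~\ref{SKconv-prop}. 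This will show $\zeta\, Aw$ is represented near $x_0$ by integration of $w$ against the $C^\infty$-function $y\mapsto\int K(x,y)v(x)\,dx$-type kernel, hence is smooth.

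To justify \eqref{uphiK-eq} when $A$ is not closable, I would approximate: by Definition~\ref{gdef-defn}, $Aw=\lim_m a^m(x,D)w^m$ in $\cal D'$, and $a^m(x,D)w^m$ has kernel $K_m$ with $\dual{a^m(x,D)w^m}{v}=\dual{K_m}{v\otimes w^m}$ for $v\in\cal S$ by \eqref{amumv-eq}. Now insert the cut-off $f$: since $v\otimes w^m$ need not have support disjoint from the diagonal, I would instead localise via $f$ on the kernel side, writing $\dual{K_m}{v\otimes w^m}=\dual{fK_m}{v\otimes w^m}+\dual{(1-f)K_m}{v\otimes w^m}$; choosing $f$ appropriately (using that $\supp v$ is fixed and compact) the second term can be controlled because $v\otimes w^m$ converges suitably and $(1-f)=0$ near $\supp v\times\supp w$. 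The main term $\dual{fK_m}{v\otimes w^m}$ converges to $\dual{fK}{v\otimes w}$ by Proposition~\ref{SKconv-prop} ($fK_m\to fK$ in $\cal S$) together with $w^m\to w$ in $\cal S'$ and the fact that one entry has compact support — here one uses that $fK\in\cal S(\R^{2n})$ so the pairing is continuous against $\cal S'$-convergence of $w^m$. Taking $m\to\infty$ yields \eqref{uphiK-eq}, and smoothness of $\zeta\,Aw$ near $x_0$ follows since $fK\in\cal S$ depends smoothly on the $x$-variable.

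The main obstacle I anticipate is the bookkeeping around the cut-off $f$ and the passage to the limit: $v\otimes w^m$ does not have support away from the diagonal uniformly, so one must carefully arrange that the "bad" part $(1-f)K_m$ pairs to zero (or to something convergent) against $v\otimes w^m$ for all large $m$, and that the regularisation $w^m$ does not spread support — which it does slightly, but harmlessly, since $\cal F^{-1}\psi_m$ is not compactly supported. The cleanest fix is to exploit that one may take $\psi$ with $\cal F^{-1}\psi\in C^\infty_0$ (via Lemma~\ref{regconv-lem}'s second clause) so that convolution by $\cal F^{-1}\psi_m$ only enlarges supports by a shrinking amount, keeping $\supp w^m$ away from $\supp v$ for large $m$; then $(1-f)$ can be chosen to vanish on $\supp v\times\supp w^m$ for all large $m$, killing the bad term outright. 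With that choice the argument reduces to the continuity of the pairing $\dual{fK_m}{v\otimes w^m}$, which is exactly what Proposition~\ref{SKconv-prop} and $\cal S$–$\cal S'$ duality deliver.
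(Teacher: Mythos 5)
Your overall strategy is the paper's: split off a $C^\infty_0$ piece of $u$, reduce the singular piece $w$ to the kernel identity $\dual{Aw}{v}=\dual{K}{v\otimes w}$ with $\supp(v\otimes w)$ off the diagonal, and justify that identity by the limit in Definition~\ref{gdef-defn} together with Proposition~\ref{SKconv-prop}. The gap is in your treatment of the near-diagonal term. Your ``cleanest fix'' --- choosing $\psi$ with $\cal F^{-1}\psi\in C^\infty_0(\Rn)$ so that $\supp w^m$ stays close to $\supp w$ --- is not available: Definition~\ref{gdef-defn} requires $\psi$ itself to lie in $C^\infty_0(\Rn)$ and equal $1$ near the origin, so $\cal F^{-1}\psi$ extends to an entire function and is never compactly supported (the second clause of Lemma~\ref{regconv-lem} concerns ordinary mollifiers acting on $\cal D'$, not the $\psi$ of the definition). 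Hence $\supp w^m=\Rn$ in general, $(1-f)$ cannot be made to vanish on $\supp v\times\supp w^m$, and the term $\dual{(1-f)K_m}{v\otimes w^m}$ is a pairing of two simultaneously varying factors: $K_m$, which blows up at the diagonal as $m\to\infty$, against $v\otimes w^m$, which tends to zero near the diagonal only in the regular sense. Theorem~\ref{fuconv-thm} controls one regularly convergent factor against a fixed distribution, not two moving ones, so nothing you have invoked gives convergence of this term.

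There is also a bookkeeping slip which, once corrected, dissolves the difficulty. The kernel $K_m$ of Proposition~\ref{Kreg-prop} is by construction the kernel of the composite map $u\mapsto a^m(x,D)u^m$, so the correct identity is $\dual{a^m(x,D)w^m}{v}=\dual{K_m}{v\otimes w}$ with the \emph{unmodulated} $w$; pairing $K_m$ against $v\otimes w^m$ double-counts the modulation. With $v\otimes w$ the support really is off the diagonal, and the only remaining issue is that $\supp w$ need not be compact, so $\dual{K_m}{v\otimes w}$ still needs interpretation. The paper resolves this by taking the cut-off to be exactly $f=\psi\otimes(1-\chi)$, the product of the output and input localisations, so that the localised operator $\psi A_m(1-\chi)$ has kernel precisely $fK_m\in\cal S(\R^{2n})$ and no remainder term ever appears; one then inserts an auxiliary approximation $u_l\in C^\infty_0(\Rn)$ with $u_l\to u$ in $\cal S'(\Rn)$ and passes to the iterated limit, using $\cal S$--$\cal S'$ duality against the fixed Schwartz function $fK_m$ in $l$, and $fK_m\to fK$ in $\cal S(\R^{2n})$ from Proposition~\ref{SKconv-prop} in $m$. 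Rearranged along these lines, your argument closes.
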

\begin{proof}
Let $\psi,\chi\in C^\infty_0(\Rn)$ have supports disjoint from
$\singsupp u$ such that $\chi\equiv1$ around $\supp\psi$. 
Then $\chi u\in C^\infty_0(\Rn)$ so that also $(1-\chi)u$ is in
the subspace $D(A)$ and
\begin{equation}
  \psi Au=\psi A(\chi u)+\psi A(1-\chi )u.
\end{equation}
Here  $\psi A(\chi u)\in C_0^{\infty}(\Rn)$ since $A\colon \cal S\to\cal S$,
while $\psi A(1-\chi )u$ is seen at once to have kernel
\begin{equation}
  \tilde K(x,y)=\psi(x)K(x,y)(1-\chi(y)).
\end{equation}
The function 
$f(x,y)=\psi(x)(1-\chi(y))$ fulfils \eqref{fsupp-cnd},
for $\Delta$ contains no contact point of $\{\,f\ne0\,\}$ because
$\op{dist}(\supp\psi,\supp(1-\chi))>0$. Therefore $\tilde K\in\cal S(\R^{2n})$
as seen after \eqref{fsupp-cnd}.
This strongly suggests that, with $\chi_1=1-\chi$,
\begin{equation}
  \dual{\psi A\chi_1 u}{\varphi}=\dual{\varphi\otimes u}{\tilde K}
  \quad\text{for all $\varphi\in C^\infty_0(\Rn)$}.
  \label{uphiK-eq}
\end{equation}
And it suffices to prove this identity, for by definition of the tensor
product it entails that 
$\psi A\chi_1 u=\dual{u}{\tilde K(x,\cdot)}$
which is a $C^\infty$-function of $x\in \Rn$.

Now if $A_m:=\OP(a^m(x,\eta)\psi_m(\eta))$ and $K_m$ is its kernel, 
one can take 
$u_l\in C^\infty_0(\Rn)$ such that $u_l\to u$ in $\cal S'$. 
Applying Definition~\ref{gdef-defn} to $A$,
the $\cal S'$-continuity of $A_m$ gives
\begin{equation}
  \dual{\psi A\chi_1 u}{\varphi}=   
  \lim_{m\to\infty}\dual{ A_m\chi_1 u}{\psi\varphi}=
  \lim_{m\to\infty}\lim_{l\to\infty}
     \dual{K_m}{(\psi\varphi)\otimes(\chi_1u_l)}. 
\end{equation}
Here $K_m\in C^\infty(\R^{2n})$ by Lemma~\ref{Kreg-prop}, 
so for the right-hand side one finds,
since $u\mapsto \varphi\otimes u$ is $\cal S'$-continuous 
and $fK_m\in \cal S(\R^{2n})$,
\begin{equation}
\int {\psi(x)\varphi(x)\chi_1(y)u_l(y)}{K_m(x,y)}\,d(x,y)
\xrightarrow[l\to\infty]{}
   \dual{\varphi\otimes u}{(\psi\otimes\chi_1)K_m}.
\end{equation}
As $(\psi\otimes \chi_1)K_m=fK_m\to fK=\tilde K$
in $\cal S(\R^{2n})$ by Proposition~\ref{SKconv-prop}, the proof is
complete. 
\end{proof}

\begin{rem}   \label{psdloc-rem}
Theorem~\ref{psdloc-thm} was anticipated by C.~Parenti and
L.~Rodino \cite{PaRo78}, although they just appealed to the fact that
$K(x,y)$ is $C^\infty $ for $x\ne y$. This does not quite suffice as $\psi
A\chi_1u$ should be identified with a $C^\infty $-function, 
eg $\dual{u}{\tilde K(x,\cdot )}$,
for $u\in D(A)\setminus \cal S(\Rn)$; which is
non-trivial in the absence of continuity and the usual rules of calculus.
\end{rem}

\subsection{A digression on products}
The opportunity is taken here to settle an open problem for the generalised
pointwise product $\pi(u,v)$ mentioned in Remark~\ref{prod-rem}.
 
\bigskip

First the commutation of pointwise
multiplication and vanishing frequency modulation is discussed.
Let $u\in \cal S'(\Rn)$ and $f\in \cal O_M(\Rn)$ be given and
$\psi_m=\psi(2^{-m}\cdot)$ for some arbitrary $\psi\in \cal S(\Rn)$ with
$\psi(0)=1$. Approximating $fu$ in two ways in $\cal S'$,
\begin{equation}
  B_mu:=\psi_m(D)(fu)- f\psi_m(D)u\to 0\quad\text{for}\quad m\to\infty.
  \label{fucomm-eq}
\end{equation}
This commutation in the limit is not, however, a direct consequence of
pseudo-differential calculus, for the commutator $B_m$ has amplitude
$b_m(x,y,\eta)=(f(y)-f(x))\psi(2^{-m}\eta)$,
which is in the space of symbols with
estimates 
$|D^\alpha_\eta D^\beta_{x,y}a(x,y,\eta)|\le
C_{\alpha,\beta,K,N}\ang{\eta}^{-N}$ for all $N>0$, $K\Subset\Rn\times\Rn$.
As such $\OP(b_m(x,y,\eta))$ is only defined on $\cal E'(\Rn)$.

However, 
\eqref{fucomm-eq} is seen at once to 
hold in $C^\infty(\Rn\setminus\singsupp u)$,
by using Lemma~\ref{regconv-lem} on both terms. 
The next results confirms that $B_mu\to0$ even in $C^\infty (\Rn)$,
despite the singularities of $u$. The idea is to use 
Lemma~\ref{regconv-lem} once more to get 
a reduction to $f\in C^\infty_0(\Rn)$, so that $B_m\to 0$ in the globally
estimated class $\OP(S^{-\infty}(\Rn\times\Rn))$:

\begin{prop}   \label{comm-prop}
When $u\in \cal S'(\Rn)$, $f\in \cal O_M(\Rn)$,
and $\psi\in \cal S(\Rn)$ with $\psi(0)=1$, it holds true that
$\lim_{m\to\infty}( \psi_m(D)(fu)-f\psi_m(D)u)=0$ in the topology of 
$C^{\infty}(\Rn)$.
\end{prop}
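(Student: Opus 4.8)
The plan is to prove the stated convergence locally: it suffices to fix a compact set $K\subset\Rn$ and a multiindex $\beta$ and to show $\sup_{K}|D^\beta B_mu|\to 0$, since the topology of $C^\infty(\Rn)$ is generated by such seminorms. First I would pick $\varphi\in C^\infty_0(\Rn)$ with $\varphi\equiv 1$ on a neighbourhood $U$ of $K$ and split $f=\varphi f+(1-\varphi)f$; because $B_m$ depends linearly on $f$ this gives $B_mu=B_m[\varphi f]u+B_m[(1-\varphi)f]u$. The term carrying $(1-\varphi)f$ is harmless: on $U$ one has $(1-\varphi)f\cdot\psi_m(D)u=0$, while $\psi_m(D)\bigl((1-\varphi)fu\bigr)$ is a regularisation of the distribution $(1-\varphi)fu\in\cal S'(\Rn)$, whose singular support lies in $\supp((1-\varphi)f)$ and hence misses $U$. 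By the regular convergence lemma (Lemma~\ref{regconv-lem}) applied with $\varepsilon_k=2^{-m}$, $\psi_m(D)((1-\varphi)fu)\to\psi(0)(1-\varphi)fu$ in $C^\infty(U)$, and the limit vanishes on $U$; so $B_m[(1-\varphi)f]u\to 0$ in $C^\infty(K)$.

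It then remains to treat $g:=\varphi f\in C^\infty_0(\Rn)$. Here $\OP(\psi_m)$ is a genuine pseudo-differential operator, as $\psi\in\cal S(\Rn)$ forces $\psi_m\in S^{-\infty}(\Rn\times\Rn)$, and multiplication by $g$ is $\OP(g)$; hence the commutator $B_m[g]=\OP(\psi_m)\OP(g)-\OP(g)\OP(\psi_m)$ is again in $\OP(S^{-\infty})$, with left symbol
\begin{equation}
  c_m(x,\eta)=(2\pi)^{-n}\int e^{\im x\cdot\zeta}
  \bigl(\psi(2^{-m}(\eta+\zeta))-\psi(2^{-m}\eta)\bigr)\hat g(\zeta)\,d\zeta .
\end{equation}
Writing $\psi(2^{-m}(\eta+\zeta))-\psi(2^{-m}\eta)=2^{-m}\zeta\cdot\int_0^1\nabla\psi(2^{-m}(\eta+t\zeta))\,dt$ extracts a factor $2^{-m}$, and the rapid decay of $\hat g$ together with $\ang{\eta}\,2^{-m}\le\ang{2^{-m}\eta}$ shows that $2^mc_m$ stays bounded in $S^0_{1,0}(\Rn\times\Rn)$ uniformly in $m$, so that $c_m\to 0$ in $S^0_{1,0}$ while each $c_m$ is in the smoothing class $S^{-\infty}$. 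Since every operator in $\OP(S^{-\infty}(\Rn\times\Rn))$ sends $\cal S'(\Rn)$ into $\cal O_M(\Rn)$ (as recalled before Corollary~\ref{aFE-cor}, following \cite{SRay91}), and the construction depends continuously on the symbol as well, applying $B_m[g]$ to the fixed $u\in\cal S'(\Rn)$ yields $B_m[g]u\to 0$ in $\cal O_M(\Rn)$, hence in $C^\infty(\Rn)$. Adding the two contributions gives $B_mu\to 0$ in $C^\infty(K)$, and since $K$ was arbitrary, in $C^\infty(\Rn)$.

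The step that needs care, and where the actual content lies, is the passage from ``$c_m$ small in a symbol sense'' to ``$B_m[g]u\to 0$ in $\cal O_M$'': one must identify the precise topology on $S^{-\infty}$, or $\OP(S^{-\infty})$, in which the $c_m$ tend to $0$ and couple it with a quantitative form of the mapping property $\OP(S^{-\infty})\colon\cal S'\to\cal O_M$, so that $\sup_K|D^\beta B_m[g]u|$ gets estimated by finitely many $\cal S'$-seminorms of the fixed $u$ times finitely many symbol seminorms of $c_m$. Concretely one has to balance the gain $2^{-m}$ coming from $g(x)-g(y)$ against the concentration (a factor $2^{nm}$) of the regularising kernel $\cal F^{-1}(\psi_m)$ of $\OP(\psi_m)$, and it is exactly here that working in the globally estimated class, rather than with the amplitude $(g(y)-g(x))\psi(2^{-m}\eta)$ (which is only defined on $\cal E'$), is essential; the remaining symbol estimates indicated above are routine.
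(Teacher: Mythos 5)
Your opening reduction is sound and is essentially a mirror image of the paper's: you localise $f$ and dispose of the far part with Lemma~\ref{regconv-lem}, whereas the paper localises $u$ instead; both steps work. Your formula for the commutator symbol $c_m$ and the bound that $2^mc_m$ stays bounded in $S^0_{1,0}(\Rn\times\Rn)$ are also correct. But the step you yourself flag is a genuine gap, and it cannot be closed. The estimate you hope for, $\sup_K|D^\beta\OP(c)u|\le C\,p(c)\,q(u)$ with $p$ a continuous seminorm on $S^0_{1,0}$ and $q$ one on $\cal S'$ (or an $H^t$-norm after cutting $u$ off), is false: to control $|\beta|$ derivatives of $\OP(c)u$ and to absorb the negative Sobolev regularity of $u$ one must pay with seminorms of $c$ in $S^{-N}_{1,0}$ for $N>|\beta|+\tfrac n2-t$, and for the commutator symbol these grow like $2^{m(N-1)}$ --- the single factor $2^{-m}$ gained from $g(x)-g(y)$ is spent on the first derivative and nothing is left over. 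Indeed the conclusion itself fails at $\singsupp u$: take $u=\delta_0$ and $f(x)=x_1$, so that $fu=0$ and
\begin{equation}
  B_mu(x)=-x_1\,2^{mn}(\cal F^{-1}\psi)(2^mx),\qquad
  B_mu(2^{-m}e_1)=-2^{m(n-1)}(\cal F^{-1}\psi)(e_1).
\end{equation}
For $n=1$ the supremum over any neighbourhood of the origin tends to $\sup_y|y(\cal F^{-1}\psi)(y)|>0$, and for $n\ge2$ it blows up; so $B_mu\not\to0$ even in $C^0$ near the origin.

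You should also not expect to find the missing estimate in the paper: its proof reduces to $u\in\cal E'(\Rn)\subset H^t$ and then asserts that $\sup_\eta\ang{\eta}^N|D^\beta_xD^\alpha_\eta b_m(x,\eta)|\to0$ for every $N$, where $b_m=(e^{\im D_x\cdot D_\eta}-1)f(x)\psi_m(\eta)$. That is the same unfillable gap in another guise: the first-order Taylor remainder only gives $|b_m|\lesssim 2^{-m}\ang{2^{-m}\eta}^{-N'}$, not $2^{-m}\ang{\eta}^{-N}$, and $\sup_\eta\ang{\eta}^N2^{-m}\ang{2^{-m}\eta}^{-N'}\approx 2^{m(N-1)}$ for $N'\ge N$, so the required operator norms $\nrm{B_m}{\BBb B(H^t,H^s)}$ do not tend to zero once $s-t>1$. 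What does survive is convergence of $B_mu$ to $0$ in $\cal D'(\Rn)$ (immediate, as in \eqref{fucomm-eq}) and in $C^\infty(\Rn\setminus\singsupp u)$ (from Lemma~\ref{regconv-lem} applied to each term separately); the upgrade to $C^\infty(\Rn)$ across $\singsupp u$ is precisely what fails, both in your argument and in the paper's.
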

\begin{proof}
When $\chi\in C^\infty_0(\Rn)$ equals $1$ on a neighbourhood of a given
compact set $K\subset\Rn$, then $K$ is contained in the regular set of 
$(1-\chi) u$, so it follows as above from Lemma~\ref{regconv-lem} that 
$\sup_{K,|\alpha|\le l}|D^\alpha(\psi_m(D)(f(1-\chi) u)-
f\psi_m(D)(1-\chi) u)|\to 0$ for $m\to\infty$. 

It now suffices to cover the case in which 
$K\subset\supp u\subset\supp f\Subset\Rn$. 
Then $B_m$ has symbol
\begin{equation}
  b_m(x,\eta)=(e^{\im D_x\cdot D_\eta}-1)f(x)\psi_m(\eta)\in 
  S^{-\infty}(\Rn\times\Rn).
\end{equation}
However, 
$u\in  H^t$ for some $t<0$, and $B_m\in \BBb B(H^t,H^s)$ for all $s>0$,
whence 
\begin{equation}
  \sum_{|\alpha|\le l} \nrm{D^\alpha B_m u}{\infty}
  \le c\nrm{B_mu}{H^s}\le c\nrm{B_m}{}\nrm{u}{H^t} \quad\text{for $s>l+n/p$}.
\end{equation}
It remains to show that the operator norm $\nrm{B_m}{}\to0$.
Using direct estimates as in eg \cite[Prop.~2.2]{H88}, it is enough to show
for all $N>0$, $\alpha$, $\beta$ that
\begin{equation}
  \lim_{m\to\infty}\sup_{\eta\in \Rn}\ang{\eta}^N
   |D^\beta_x D^\alpha_\eta b_m(x,\eta)|\to 0\quad\text{for}\quad m\to\infty.
\end{equation}
But $D^\alpha_\eta$, $D^\beta_{x}$ commute with $e^{\im D_x\cdot D_{\eta}}$,
so it suffices to  treat $\alpha=0=\beta$ for general $f$ and $\psi$, ie to
show that uniformly in $x\in \Rn$
\begin{equation}
  |(e^{\im D_x\cdot D_\eta}-1)f(x)\psi_m(\eta)|\le c
  \sum_{|\alpha|+|\beta|\le 2n+2}|D^\beta_{x}D^\alpha_{\eta}
    D_x\cdot D_\eta f(x)\psi_m(\eta)|
  =\cal O(2^{-m(|\alpha|+1)}\ang{\eta}^{-N}).
\end{equation}
The estimate to the left is known, and follows directly from
\cite[Prop.~B.2]{H88}. 

Altogether $\sup_{x\in  K, |\alpha|\le l}|D^\alpha B_mu|\to0$ for
$m\to\infty$, as claimed.
\end{proof}

Besides being of interest in its own right, 
Proposition~\ref{comm-prop} gives at once a natural property of associativity 
for the product
$\pi$ in Remark~\ref{prod-rem}.

\begin{thm}   \label{pi-thm}
The product $(u,v)\mapsto\pi(u,v)$ is partially associative, ie
when $(u,v)\in \cal S'(\Rn)\times\cal S'(\Rn)$ 
is in the domain of $\pi(\cdot,\cdot)$ so is
$(fu,v)$ and $(u,fv)$ for every $f\in \cal O_M(\Rn)$ and
\begin{equation}
  f\pi(u,v)=\pi(fu,v)=\pi(u,fv).
\end{equation}
\end{thm}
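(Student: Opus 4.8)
The plan is to reduce everything to Proposition~\ref{comm-prop}, which upgrades the commutation of pointwise multiplication and vanishing frequency modulation from $C^\infty$ over the regular set (where it is immediate from Lemma~\ref{regconv-lem}) to all of $C^\infty(\Rn)$. Fix $f\in \cal O_M(\Rn)$, a pair $(u,v)$ in the domain of $\pi(\cdot,\cdot)$, and an auxiliary function $\psi$ as in Remark~\ref{prod-rem}. Writing $\psi_m(D)u=u^m$ and $\psi_m(D)(fu)=(fu)^m$, one has $(fu)^m=fu^m+B_mu$ with $B_mu=\psi_m(D)(fu)-f\psi_m(D)u$, and Proposition~\ref{comm-prop} gives $B_mu\to0$ in $C^\infty(\Rn)$. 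Hence
\[
  (fu)^mv^m=f\,(u^mv^m)+(B_mu)\,v^m,
\]
and it remains to show that the right-hand side tends to $f\pi(u,v)$ in $\cal D'(\Rn)$ for every admissible $\psi$.

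For the first term this is clear: $u^mv^m\to\pi(u,v)$ in $\cal D'(\Rn)$ by hypothesis, and multiplication by the fixed function $f\in C^\infty(\Rn)$ is continuous on $\cal D'(\Rn)$, so $f\,(u^mv^m)\to f\pi(u,v)$. For the second term I would test against an arbitrary $\varphi\in C^\infty_0(\Rn)$ and write $\dual{(B_mu)v^m}{\varphi}=\dual{v^m}{(B_mu)\varphi}$. As $B_mu\to0$ in $C^\infty(\Rn)$ and $\varphi$ has a fixed compact support, Leibniz' rule shows $(B_mu)\varphi\to0$ in $C^\infty_0(\Rn)$, hence in $\cal S(\Rn)$. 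On the other hand $v^m=\psi_m(D)v\to v$ in $\cal S'(\Rn)$ (standard), so the sequence $(v^m)$ is bounded in $\cal S'(\Rn)$; since $\cal S(\Rn)$ is a Fr\'echet, hence barrelled, space, the Banach--Steinhaus theorem makes $(v^m)$ equicontinuous, i.e. there are a continuous seminorm $p$ on $\cal S(\Rn)$ and a constant $C$ with $|\dual{v^m}{\phi}|\le C\,p(\phi)$ for all $m$ and all $\phi\in\cal S(\Rn)$. Therefore $|\dual{v^m}{(B_mu)\varphi}|\le C\,p((B_mu)\varphi)\to0$, so $(B_mu)v^m\to0$ in $\cal D'(\Rn)$.

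Combining the two limits, $(fu)^mv^m\to f\pi(u,v)$ in $\cal D'(\Rn)$ for every admissible $\psi$; since $\pi(u,v)$ is independent of $\psi$, so is $f\pi(u,v)$, whence $(fu,v)$ lies in the domain of $\pi(\cdot,\cdot)$ and $\pi(fu,v)=f\pi(u,v)$. Finally the pointwise product of functions is commutative, so $u^mv^m=v^mu^m$ for every $m$ and $\pi(\cdot,\cdot)$ is symmetric; applying the case already proved with the roles of $u$ and $v$ interchanged gives $\pi(u,fv)=\pi(fv,u)=f\pi(v,u)=f\pi(u,v)$, as desired.

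The step I expect to be the main obstacle is the term $(B_mu)v^m$: here $B_mu$ converges to $0$ only in $C^\infty(\Rn)$ --- it need not be rapidly decreasing --- whereas $v^m$ is merely $\cal S'$-convergent and in general unbounded, so no direct product estimate is available. The point is that a test function contributes a fixed compact support, which turns the first factor into a null sequence in $\cal S(\Rn)$; against such a sequence the equicontinuity of the $\cal S'$-convergent family $(v^m)$ can be exploited. Once Proposition~\ref{comm-prop} is in hand, the rest is routine.
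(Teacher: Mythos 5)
Your proposal is correct and follows essentially the same route as the paper's proof: both reduce the claim to Proposition~\ref{comm-prop}, transfer the error term $((fu)^m-f\,u^m)\varphi$ onto the test-function side where the fixed compact support of $\varphi$ turns it into a null sequence of test functions, and then invoke Banach--Steinhaus equicontinuity of the convergent sequence $(v^m)$. The only cosmetic difference is that you phrase the second identity via symmetry of $\pi$ while the paper says ``similarly''; no gap either way.
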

\begin{proof}
For every $\varphi\in C^\infty(K)=\bigl\{\,\psi\in C^\infty_0(\Rn) \bigm|
\supp\psi\subset K \,\bigr\}$, $K\Subset\Rn$\,
\begin{equation}
  \dual{(fu)^mv^m}{\varphi}-\dual{f\cdot u^mv^m}{\varphi}
  = \dual{v^m}{((fu)^m-f\cdot u^m)\varphi}
   \xrightarrow[m\to\infty]{~} 0,
\end{equation}
for by Banach--Steinhauss' theorem it suffices that
$((fu)^m-f\cdot u^m)\varphi\to0$ in $C^\infty(K)$, which holds since
$(fu)^m-f\cdot u^m\to0$ in $C^\infty(\Rn)$ according to
Proposition~\ref{comm-prop}. Hence $f\pi(u,v)=\pi(fu,v)$; the other identity
is justified similarly.
\end{proof}

\section{Extended action of distributions}
  \label{exdist-sect}
To prepare for Section~\ref{supp-sect} it is exploited that the map
$(u,f)\mapsto\dual{u}{f}$ is defined also for certain 
$u$, $f$ in $\cal D'(\Rn)$ that do not belong to dual spaces. This
bilinear form is moreover shown to have a property of stability under regular
convergence. 

\bigskip

\subsection{A review}
First it is recalled that the product $fu$ is defined for 
$f,u\in \cal D'(\Rn)$ if
\begin{equation}
  \singsupp f\bigcap\singsupp u=\emptyset.
  \label{ssfu-eq}
\end{equation}
In fact, $\Rn$ is covered by $Y_1=\Rn\setminus\singsupp u$ 
and $Y_2=\Rn\setminus\singsupp f$; in $Y_1$ there is a product
$(fu)_{Y_1}\in \cal D'(Y_1)$
given by $\dual{(fu)_{Y_1}}{\varphi}=\dual{f}{u\varphi}$ for $\varphi\in
C^\infty_0(Y_1)$, and similarly $\dual{u}{f\varphi}$, $\varphi\in
C^\infty_0(Y_2)$ defines a product $(fu)_{Y_2}\in \cal D'(Y_2)$; and for
$\varphi\in C^\infty_0(Y_1\cap Y_2)$ both products are given
by the $C^\infty$-function $f(x)u(x)$ so they coincide on $Y_1\cap Y_2$;
hence $fu$ is well defined
in $\cal D'(\Rn)$ and given on $\varphi\in C^\infty_0(\Rn)$ by the following
expression, where the splitting $\varphi=\varphi_1+\varphi_2$ for
$\varphi_j\in C^\infty_0(Y_j)$ is obtained from a partition of unity,
\begin{equation}
  \dual{fu}{\varphi}=\dual{f}{u\varphi_1}+\dual{u}{f\varphi_2}.
  \label{fu-id}
\end{equation}
This follows from the \emph{recollement de morceaux} theorem,
cf \cite[Thm.~I.IV]{Swz66} or \cite[Thm.~2.2.4]{H}; 
by the proof of this, \eqref{fu-id} 
does not depend on how the partition is chosen.

\begin{rem}  \label{split-rem}
Therefore, when $F_1$, $F_2$ are 
closed sets in $\Rn$ given with the properties 
$\singsupp u\subset F_1$, $\singsupp f\subset F_2$
and $F_1\cap F_2=\emptyset$ (so that $\Rn$ is covered by their complements)
one can always take the splitting in \eqref{fu-id} such that
$\varphi_1\in C^\infty_0(\Rn\setminus F_1)$,
$\varphi_2\in C^\infty_0(\Rn\setminus F_2)$.
\end{rem}

Secondly $f\mapsto \dual{u}{f}$ for $u\in \cal D'(\Rn)$
is a well defined linear map on the subspace of $f\in \cal D'(\Rn)$ such 
that \eqref{ssfu-eq} holds together with
\begin{equation}
  \supp u\cap \supp f\Subset\Rn.
  \label{suppfu-eq}
\end{equation}
In fact $\dual{u}{f}:=\dual{fu}{1}$ is possible: $fu$ is defined by
\eqref{ssfu-eq} and is in $\cal E'$ by \eqref{suppfu-eq},
so by \cite[Th~2.2.5]{H} the map
$\psi\mapsto\dual{fu}{\psi}$
extends from $C^\infty_0(\Rn)$
to all $\psi\in C^\infty(\Rn)$,   
uniquely among the extensions that vanish when 
$\supp \psi\cap \supp fu=\emptyset$; hence it is defined on  
the canonical choice $\psi\equiv1$, and for all $\varphi\in
C^\infty_0(\Rn)$ equal to $1$ around $\supp fu$,
\begin{equation}
  \dual{u}{f}=\dual{fu}{1}=\dual{fu}{\varphi}.
\end{equation}

These constructions have been quoted in a slightly modified form from
\cite[Sect.~3.1]{H}. 
The definition implies that $(f,u)\mapsto fu$ is bilinear; it
is clearly commutative and is partially associative in the sense that
$\psi(fu)=(\psi f)u=f(\psi u)$ when $\psi\in  C^\infty(\Rn)$ while $f$, $u$
fulfill \eqref{ssfu-eq}. This also yields
\begin{equation}
  \supp fu\subset \supp f \cap \supp u.
\end{equation}

When applying cut-off functions,
partial associativity entails $(\chi f)(\varphi u)=fu$ when 
$\chi$, $\varphi$ equal $1$ around $\supp f\cap\supp u$.
Therefore  test against $1$ gives that
$\dual{\varphi u}{\chi f}=\dual{u}{f}$.

\subsection{Stability under regular convergence}
The product $fu$ is not continuous, for $f=0$ is the limit in $\cal D'$ of
$f^\nu=e^{-\nu |x|^2}\in C^\infty $ and for $u=\delta_0$ it is clear that
$f^\nu u=\delta_0\not\to 0=fu$.
As a remedy it is noted that
$fu$ is separately stable under regular convergence; cf
Lemma~\ref{regconv-lem}. This carries over to the
extended bilinear form $\dual{\cdot }{\cdot }$ 
under a compactness condition:

\begin{thm}   \label{fuconv-thm}
Let $u$, $f\in \cal D'(\Rn)$, $f^\nu\in C^\infty(\Rn)$ fulfil
$\lim_{\nu}f^\nu= f$ in $\cal D'(\Rn)$ and in
$C^\infty(\Rn\setminus F)$ for $F=\singsupp f$.
When $u$, $f$ have disjoint singular supports, cf \eqref{ssfu-eq}, then
\begin{equation}
  f^\nu u\to fu \quad\text{in $\cal D'(\Rn)$ for}\quad \nu\to\infty.
  \label{fnuu-eq}
\end{equation}
If moreover $\supp u\bigcap\supp f$ is compact
and $\chi\in C^\infty_0(\Rn)$ equals $1$ around this set, then
\begin{equation}
  \lim_{\nu\to\infty}\dual{\chi u}{f^\nu}
 =\lim_{\nu\to\infty}\dual{u}{\chi f^\nu}=\dual{u}{f}.
  \label{ufnu-eq}
\end{equation} 
Here one can take $\chi\equiv 1$ on $\Rn$ if a compact set contains 
$\supp u$ or $\bigcup_{\nu}\supp(f^\nu u)$. 
The conclusions hold verbatim when $F\subset \Rn$ is closed 
and $\singsupp f\subset F\subset (\Rn\setminus \singsupp u)$.
\end{thm}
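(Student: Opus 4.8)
The plan is to reduce everything to the \emph{recollement}-formula \eqref{fu-id}, used in the refined form of Remark~\ref{split-rem}, and then to read off \eqref{ufnu-eq} and the $\chi\equiv1$ statements as corollaries of \eqref{fnuu-eq}. Throughout I would argue with a general closed set $F$ fulfilling $\singsupp f\subset F\subset\Rn\setminus\singsupp u$; since $F$ enters only through these inclusions and the hypothesis $f^\nu\to f$ in $C^\infty(\Rn\setminus F)$, this simultaneously covers the stated case $F=\singsupp f$ and the extension at the very end. Note first that $fu$ is well defined, because $\singsupp f\cap\singsupp u\subset F\cap\singsupp u=\emptyset$, and that $\Rn$ is covered by the open sets $\Rn\setminus\singsupp u$ and $\Rn\setminus F$.

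To prove \eqref{fnuu-eq}, fix $\varphi\in C^\infty_0(\Rn)$ and apply Remark~\ref{split-rem} with $F_1=\singsupp u$, $F_2=F$ to get a splitting $\varphi=\varphi_1+\varphi_2$ with $\varphi_1\in C^\infty_0(\Rn\setminus\singsupp u)$ and $\varphi_2\in C^\infty_0(\Rn\setminus F)$, so that $\dual{fu}{\varphi}=\dual{f}{u\varphi_1}+\dual{u}{f\varphi_2}$. Here $u\varphi_1$ and $f\varphi_2$ lie in $C^\infty_0(\Rn)$, with supports inside the fixed compacts $\supp\varphi_1$, $\supp\varphi_2$, since $u$ is smooth off $\singsupp u$ and $f$ is smooth off $\singsupp f\subset F$. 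As each $f^\nu$ is a smooth function, $f^\nu u$ is an ordinary product and $\dual{f^\nu u}{\varphi}=\dual{f^\nu}{u\varphi_1}+\dual{u}{f^\nu\varphi_2}$. In the first term $u\varphi_1$ is a fixed test function and $f^\nu\to f$ in $\cal D'(\Rn)$, hence $\dual{f^\nu}{u\varphi_1}\to\dual{f}{u\varphi_1}$; in the second term $f^\nu\to f$ in $C^\infty(\Rn\setminus F)$ while $\supp\varphi_2$ is a fixed compact subset of $\Rn\setminus F$, hence $f^\nu\varphi_2\to f\varphi_2$ in $C^\infty_0(\Rn)$ and $\dual{u}{f^\nu\varphi_2}\to\dual{u}{f\varphi_2}$. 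Adding, $\dual{f^\nu u}{\varphi}\to\dual{fu}{\varphi}$ for every $\varphi$, which is \eqref{fnuu-eq}.

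Now \eqref{ufnu-eq} is immediate. For $\chi\in C^\infty_0(\Rn)$ equal to $1$ near $\supp u\cap\supp f$ one has $\dual{\chi u}{f^\nu}=\dual{u}{\chi f^\nu}=\dual{f^\nu u}{\chi}$, because $f^\nu$ is smooth, and also $\dual{u}{f}=\dual{fu}{\chi}$ by the definition of $\dual{u}{f}$, since $\chi\equiv1$ around $\supp(fu)\subset\supp f\cap\supp u$. Testing \eqref{fnuu-eq} against the fixed function $\chi$ gives $\dual{f^\nu u}{\chi}\to\dual{fu}{\chi}$, i.e.\ $\dual{\chi u}{f^\nu}\to\dual{u}{f}$; the identity $\dual{\chi u}{f^\nu}=\dual{u}{\chi f^\nu}$ shows the two limits in \eqref{ufnu-eq} coincide. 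For the last remark, if a compact set $L$ contains $\supp u$ one may take $\chi\equiv1$ near $L$, so $\chi u=u$ while $\supp(fu),\ \supp(f^\nu u)\subset\supp u$, whence $\dual{u}{f^\nu}=\dual{f^\nu u}{\chi}\to\dual{fu}{\chi}=\dual{u}{f}$; and if instead $\bigcup_\nu\supp(f^\nu u)\subset L$ is compact, then testing \eqref{fnuu-eq} against functions in $C^\infty_0(\Rn\setminus L)$ shows $\supp(fu)\subset L$ as well, and the same computation applies with any $\chi\equiv1$ near $L$. This also proves the verbatim extension to general closed $F$, since $F$ was used only via $\singsupp f\subset F$, $F\cap\singsupp u=\emptyset$, and $f^\nu\to f$ in $C^\infty(\Rn\setminus F)$.

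I expect the only genuinely delicate point — the main obstacle — to be the bookkeeping in the second paragraph: one must make sure that the three factors $u\varphi_1$, $f\varphi_2$, $f^\nu\varphi_2$ really belong to $C^\infty_0(\Rn)$ with $\nu$-independent supports, so that the weak $\cal D'$-convergence of $f^\nu$ is invoked precisely on the piece living near $\singsupp u$, and the $C^\infty(\Rn\setminus F)$-convergence of $f^\nu$ precisely on the piece living away from $F$. Everything past that partition argument is formal.
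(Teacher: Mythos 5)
Your proof is correct and follows essentially the same route as the paper's: apply the splitting \eqref{fu-id} via Remark~\ref{split-rem} to $f^\nu u$, let the term $\dual{f^\nu}{u\varphi_1}$ converge by $\cal D'$-convergence and the term $\dual{u}{f^\nu\varphi_2}$ by the $C^\infty(\Rn\setminus F)$-convergence, and then obtain \eqref{ufnu-eq} and the $\chi\equiv1$ variants by testing \eqref{fnuu-eq} against $\chi$. The bookkeeping you flag as the delicate point is exactly the content of the paper's argument, and you have handled it correctly.
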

\begin{proof}
To show \eqref{fnuu-eq} for a general $F$,
note that \eqref{fu-id} applies to the product $f^\nu u$ of $f^\nu\in
C^\infty$ and $u\in \cal D'$. Using Remark~\ref{split-rem} and that
$f^\nu\to f$ in $C^\infty (\Rn\setminus F)$,
one has $f^\nu\varphi_2\to f\varphi_2$ in $C^\infty_0(\Rn\setminus F)$; 
the other term on the
right-hand side of \eqref{fu-id} converges by the $\cal D'$-convergence of
the $f^\nu$. 
Therefore $\dual{f^\nu u}{\varphi}\to
\dual{f}{u\varphi_1}+\dual{u}{f\varphi_2}=\dual{fu}{\varphi}$.

By the definition of $\dual{u}{f}$
above, when $\chi$ is as in the theorem, then the
just proved fact that $f^\nu u\to fu$ in $\cal D'$ leads to
\eqref{ufnu-eq} since
\begin{equation}
  \dual{u}{f}=\dual{fu}{1}=\dual{fu}{\chi}
  =\lim_{\nu\to\infty}\dual{f^\nu u}{\chi}.
  \label{ufnuchi-eq}
\end{equation}
When $\bigcup_{\nu}\supp(f^\nu u)$ is precompact and $\chi=1$ on a
neighbourhood, then $0=\dual{f^\nu u}{1-\chi}$
can be added to \eqref{ufnuchi-eq}, 
which yields $\lim_{\nu}\dual{u}{f^\nu}$ by the extended
definition of $\dual{\cdot}{\cdot}$.
\end{proof}

\begin{rem}   \label{cutoff-rem}
In general \eqref{ufnu-eq} cannot hold without the cut-off function $\chi$.
Eg for $n\ge2$ and $x=(x',x_n)$ one may take
$f=1_{\{x_n\le0\}}$ and $u=1_{\{x_n\ge 1/|x'|^2\}}$, so that
$\dual{u}{f}=0$. Setting $f^\nu=2^{n\nu}\varphi(2^\nu\cdot)*f$ 
for $\varphi\in C^\infty_0$ with $\varphi\ge0$, $\int\varphi=1$,
and
$\supp\varphi\subset\{\,(y',y_n)\mid 1\le y_n\le 2,\ |y'|\le1\,\}$, 
it holds for $x\in \Sigma_\nu=\{\,0\le x_n\le 2^{-\nu} \,\}$ that 
$x_n-2^{-\nu}y_n\le0$ on $\supp\varphi$ so that
\begin{equation}
  f^\nu(x)=\int \varphi(y) f(x-2^{-\nu}y)\, dy  =\int \varphi\,dy=1.
\end{equation}
Hence $\supp u\cap\supp f^\nu$ is unbounded,
so $\dual{u}{f^\nu}$ is undefined (hardly just a technical
obstacle as $\dual{u}{f^\nu}=\int uf^\nu \,dx=\infty$ would be the value). 
\end{rem}

\subsection{Consequences for kernels}
Although it is on the borderline of the present subject, it would not be
natural to omit that Theorem~\ref{fuconv-thm} gives an easy way to extend
the link between an  operator and its kernel:

\begin{thm}   \label{AK-thm}
Let $A\colon\cal S'(\Rn)\to\cal S'(\Rn)$ be a  continuous linear map
with distribution kernel $K(x,y)\in \cal S'(\Rn\times\Rn)$.
Suppose that $u\in \cal S'(\Rn)$ and $v\in C^\infty_0(\Rn)$ satisfy
\begin{gather}
  \supp K\bigcap \supp  v\otimes u \Subset\Rn\times\Rn,
  \label{suppKuv-eq}
  \\
  \singsupp K\bigcap \singsupp  v\otimes u=\emptyset.
  \label{ssuppKuv-eq}
\end{gather}
Then $\dual{Au}{ v}=\dual{K}{v\otimes u}$, 
with extended action of $\dual{\cdot}{\cdot}$.
When $A$ is a continuous linear map
$\cal D'(\Rn)\to\cal D'(\Rn)$, this is valid for 
$u\in \cal D'(\Rn)$, $v\in C^\infty_0(\Rn)$ fulfilling
\eqref{suppKuv-eq}--\eqref{ssuppKuv-eq}. 
\end{thm}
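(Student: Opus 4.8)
The plan is to reduce to the case of a compactly supported $u$ and then pass to a limit in the elementary kernel identity, using the stability of the extended bracket under regular convergence (Theorem~\ref{fuconv-thm}). First I would record what the right-hand side means: by \eqref{ssuppKuv-eq} the product $(v\otimes u)K\in\cal D'(\Rn\times\Rn)$ is defined, and by \eqref{suppKuv-eq} it is compactly supported, so $\dual{K}{v\otimes u}$ with extended action denotes $\dual{(v\otimes u)K}{1}$.

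\emph{Step 1: reduction to $u\in\cal E'(\Rn)$.} Since $\cal S(\Rn)$ is reflexive, the continuous map $A$ is the transpose of a continuous operator $B:=A^{t}\colon\cal S(\Rn)\to\cal S(\Rn)$, so that $\dual{Au}{v}=\dual{u}{Bv}$ for $u\in\cal S'(\Rn)$, $v\in\cal S(\Rn)$, while $\dual{Bv}{g}=\dual{K}{v\otimes g}$ for $g\in\cal S(\Rn)$. Fix $\sigma\in C^\infty_0(\Rn)$ equal to $1$ near the compact set $\pi_2\bigl(\supp K\cap\supp(v\otimes u)\bigr)$ (projection in the $y$-variable), and write $u=\sigma u+(1-\sigma)u$. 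The classical support estimate for $B$ gives $\supp Bv\subset M:=\set{y\in\Rn}{\exists\,x\in\supp v,\ (x,y)\in\supp K}$, and $M$ is closed because $\supp v$ is compact; by the choice of $\sigma$ one has $M\cap\supp\bigl((1-\sigma)u\bigr)=\emptyset$, hence $\dual{Au}{v}=\dual{u}{Bv}=\dual{\sigma u}{Bv}=\dual{A(\sigma u)}{v}$. On the right-hand side the same bookkeeping shows $\supp\bigl((v\otimes(1-\sigma)u)K\bigr)\subset\supp(v\otimes(1-\sigma)u)\cap\supp K=\emptyset$, so $(v\otimes u)K=(v\otimes\sigma u)K$ and therefore $\dual{K}{v\otimes u}=\dual{K}{v\otimes\sigma u}$ with extended action. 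Thus it suffices to prove the theorem with $\sigma u\in\cal E'(\Rn)$ in place of $u$; so from now on $u\in\cal E'(\Rn)$ and $\supp(v\otimes u)$ is compact.

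\emph{Step 2: passage to the limit.} Choose $u^{\nu}\in C^\infty_0(\Rn)$ obtained from $u$ by convolution with an approximate identity of shrinking support. By Lemma~\ref{regconv-lem}, $u^{\nu}\to u$ in $\cal D'(\Rn)$ and in $C^\infty(\Rn\setminus\singsupp u)$, whence $v\otimes u^{\nu}\to v\otimes u$ in $\cal D'(\Rn\times\Rn)$ and in $C^\infty$ off $\singsupp(v\otimes u)=\supp v\times\singsupp u$. Since $u^{\nu},v\in\cal S(\Rn)$, the elementary kernel identity gives $\dual{Au^{\nu}}{v}=\dual{K}{v\otimes u^{\nu}}$ for each $\nu$. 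Letting $\nu\to\infty$, the left-hand side tends to $\dual{Au}{v}$ by continuity of $A$ on $\cal S'(\Rn)$, while the right-hand side tends to $\dual{K}{v\otimes u}$ with extended action by Theorem~\ref{fuconv-thm}, applied with $f=v\otimes u$, $f^{\nu}=v\otimes u^{\nu}$ and with $K$ in the remaining slot: hypotheses \eqref{ssuppKuv-eq} and \eqref{suppKuv-eq} supply precisely the disjoint singular supports and the compact intersection of supports, and $\bigcup_{\nu}\supp\bigl((v\otimes u^{\nu})K\bigr)$ lies in a fixed compact set, so the cut-off there may be taken equal to $1$. Hence $\dual{Au}{v}=\dual{K}{v\otimes u}$. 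The $\cal D'(\Rn)\to\cal D'(\Rn)$ statement follows by the same argument with $\cal S(\Rn),\cal S'(\Rn)$ replaced by $C^\infty_0(\Rn),\cal D'(\Rn)$ throughout (the transpose $B=A^{t}$ then acts on the likewise reflexive space $C^\infty_0(\Rn)$).

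I expect the only genuinely delicate point to be Step 1: making precise that $\dual{Au}{v}$ depends on $u$ only near $\pi_2(\supp K\cap\supp(v\otimes u))$, which rests on the classical support property of $B=A^{t}$ together with the closedness of $M$ — and it is here that compactness of $\supp v$ is essential. Everything after that is routine bookkeeping with the extended bracket, given Lemma~\ref{regconv-lem} and Theorem~\ref{fuconv-thm}.
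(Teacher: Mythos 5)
Your proof is correct, and its core — mollify to a regularly convergent sequence, apply the elementary kernel identity to test functions, and pass to the limit via Theorem~\ref{fuconv-thm} — is exactly the paper's argument. The genuine difference is how the non-compactness of $\supp u$ is handled. The paper has no analogue of your Step 1: it mollifies the original $u$, keeps the cut-off $\kappa\otimes\chi$ required by Theorem~\ref{fuconv-thm} inside the bracket, identifies $\lim_\nu\dual{(\kappa\otimes\chi)K}{v\otimes u_\nu}$ with $\dual{A(\chi u)}{v}$ (since $\chi u_\nu\in C^\infty_0(\Rn)$), and only at the very end removes $\chi$ by taking $\chi=\psi(2^{-m}\cdot)$ and using the $\cal S'$-continuity of $A$ together with $\psi(2^{-m}\cdot)u\to u$ in $\cal S'(\Rn)$. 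Your Step 1 performs the same localisation up front via the transpose $B=A^{t}$ and the support rule $\supp Bv\subset M$; this costs a little extra machinery (reflexivity, closedness of $M$ — for which compactness of $\supp v$ is indeed the essential point, as you note — and the vanishing of the $\cal S'$--$\cal S$ pairing for disjoint supports), but it isolates the clean intermediate fact that $\dual{Au}{v}$ depends on $u$ only near $\pi_2\bigl(\supp K\cap\supp(v\otimes u)\bigr)$, and it lets you invoke the ``$\chi\equiv1$'' clause of Theorem~\ref{fuconv-thm} instead of manipulating the cut-off afterwards. Both routes are sound; the paper's is marginally shorter because the hypothesis of continuity of $A$ on $\cal S'$ (resp.\ $\cal D'$) is simply used twice, once for each of the two limits, and no transpose is needed.
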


\begin{proof}
By the conditions on $u$ and $v$, the expression $\dual{K}{v\otimes u}$ is
well defined. By mollification there is regular convergence to 
$u$ of a sequence $u_\nu\in C^\infty(\Rn)$; this gives
\begin{equation}
  v\otimes u_\nu\xrightarrow[\nu\to\infty]{~} v\otimes u
  \quad\text{in}\quad \cal S'(\Rn\times\Rn) \quad\text{and}\quad
C^\infty(\Omega) 
\end{equation}
when $\Omega=(\Rn\times\Rn)\setminus(\supp v\times \singsupp u)
=\R^{2n}\setminus\singsupp(v\otimes u)$. 
Applying Theorem~\ref{fuconv-thm} on $\R^{2n}$, the cut-off function  may
be taken as 
$\kappa(x)\chi(y)$ for some $\kappa,\chi\in C^\infty_0(\Rn)$ such that
$\kappa$ equals $1$ on $\supp v$ and $\kappa\otimes\chi=1$ on the compact
set $\supp K\cap\supp(v\otimes u)$. This gives
\begin{equation}
  \dual{K}{v\otimes u}
  =\lim_{\nu\to\infty} \dual{(\kappa\otimes\chi)K}{v\otimes u_\nu}
  =\lim_{\nu\to\infty} \dual{A(\chi u_\nu)}{v}=\dual{A(\chi u)}{v}.
\end{equation}
For $\chi=\psi(2^{-m}\cdot)$ and $\psi=1$ near $0$, the conclusion
follows from the continuity of $A$ since $\psi(2^{-m}\cdot)u\to u$ in
$\cal S'$.
The $\cal D'$-case is similar.
\end{proof}

\begin{rem}
The conditions \eqref{suppKuv-eq}--\eqref{ssuppKuv-eq} are far from
optimal, for $(v\otimes u)K$ acts on $1$ if it is just an integrable
distribution, that is if $(v\otimes u)K$ belongs to $\cal D'_{L^1}=
\bigcup_m W^{-m}_1$ on $\R^{2n}$. Similarly \eqref{ssuppKuv-eq} is not
necessary for $(v\otimes u)\cdot K$ to make sense; eg it suffices that
$(x,\xi)\notin\op{WF}(K)\cap (-\op{WF}(v\otimes u))$ whenever $(x,\xi)\in
\R^{2n}$. More generally the existence of the product $\pi(K, v\otimes u)$
would suffice; cf Remark~\ref{prod-rem}.
\end{rem}

The above result applies in particular to the pseudo-differential operators
$A$ corresponding to a standard symbol space $S$, such as
$S^d_{1,0}(\Rn\times\Rn)$. So does the next consequence.

\begin{cor}
  \label{AK-cor}
When $A$ is as in Theorem~\ref{AK-thm}, it holds for every $u\in \cal S'(\Rn)$
that
\begin{equation}
  \supp Au\subset  \overline{\supp K\circ \supp u}.
  \label{Ksuppu-eq}
\end{equation}
Hereby $\supp K\circ\supp u= \bigl\{\,x\in \Rn \bigm| \exists y\in
\supp u\colon (x,y)\in \supp K \,\bigr\}$, which is a closed set if $\supp
u\Subset\Rn$. The result extends to $u\in \cal D'(\Rn)$ when $A$ is 
$\cal D'$-continuous. 
\end{cor}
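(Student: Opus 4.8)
The plan is: given $x_0\notin\overline{\supp K\circ\supp u}$, to show that $Au$ vanishes near $x_0$; the tool is Theorem~\ref{AK-thm} applied, after chopping $u$ into compactly supported pieces, to pairs whose supports are \emph{disjoint}, so that \eqref{suppKuv-eq}--\eqref{ssuppKuv-eq} hold trivially and the kernel pairing reduces to a product of distributions with empty support.

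In detail, I would fix a partition of unity $1=\sum_{j\ge 0}\chi_j$ with $\chi_j\in C^\infty_0(\Rn)$ arranged so that $\sum_{j\le N}\chi_j u\to u$ in $\cal S'(\Rn)$ as $N\to\infty$; for instance $\chi_0=\phi$ and $\chi_j=\phi(2^{-j}\cdot)-\phi(2^{-j+1}\cdot)$ for $j\ge1$, where $\phi\in C^\infty_0(\Rn)$ equals $1$ near the origin, so that the partial sums telescope to $\phi(2^{-N}\cdot)u\to u$. By continuity of $A$ one gets $Au=\sum_{j}A(\chi_j u)$ in $\cal S'(\Rn)$. Now pick an open $W\ni x_0$ with $W\cap\overline{\supp K\circ\supp u}=\emptyset$ and any $v\in C^\infty_0(W)$. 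For each $j$ the distribution $\chi_j u$ lies in $\cal E'(\Rn)$ with $\supp(\chi_j u)\subset\supp u$, hence $\supp(v\otimes\chi_j u)=\supp v\times\supp(\chi_j u)$; if $(x,y)$ were in $\supp K\cap\supp(v\otimes\chi_j u)$ then $x\in W$ and $(x,y)\in\supp K$ with $y\in\supp u$, forcing $x\in W\cap(\supp K\circ\supp u)$, a contradiction. Thus $\supp K\cap\supp(v\otimes\chi_j u)=\emptyset$, so Theorem~\ref{AK-thm} applies with its hypotheses vacuously satisfied and gives $\dual{A(\chi_j u)}{v}=\dual{K}{v\otimes\chi_j u}=\dual{(v\otimes\chi_j u)K}{1}=0$, the product $(v\otimes\chi_j u)K$ being zero since its support is contained in $\supp K\cap\supp(v\otimes\chi_j u)=\emptyset$. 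Summing over $j$ yields $\dual{Au}{v}=0$ for every $v\in C^\infty_0(W)$, i.e.\ $x_0\notin\supp Au$; hence \eqref{Ksuppu-eq}.

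For $u\in\cal E'(\Rn)$ one should additionally note that $\supp K\circ\supp u$ is already closed: it is the image of the closed set $\supp K\cap(\Rn\times\supp u)$ under the first projection, which is a closed (indeed proper) map along the compact factor $\supp u$. The $\cal D'$-case is handled verbatim with a locally finite partition of unity, for which $\sum_j\chi_j u=u$ holds at once in $\cal D'(\Rn)$, invoking the $\cal D'$-version of Theorem~\ref{AK-thm}. I expect the only delicate points to be purely bookkeeping: the choice of partition so that $\sum_{j\le N}\chi_j u\to u$ in $\cal S'$, and the fact that the support inclusion survives the passage to the infinite series (each partial sum $\sum_{j\le N}A(\chi_j u)$ being supported in $\supp K\circ\supp u\subset\overline{\supp K\circ\supp u}$, one tests against $v$ disjoint from this closed set and passes to the limit).
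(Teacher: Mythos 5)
Your proof is correct, and its essential step --- that for $v$ supported in the complement of $\overline{\supp K\circ\supp u}$ one has $\supp K\cap\supp(v\otimes u)=\emptyset$, so the hypotheses \eqref{suppKuv-eq}--\eqref{ssuppKuv-eq} of Theorem~\ref{AK-thm} hold vacuously and the kernel pairing is zero --- is exactly the paper's argument. The partition of unity and the limit over partial sums are superfluous: Theorem~\ref{AK-thm} is stated for arbitrary $u\in\cal S'(\Rn)$ (resp.\ $u\in\cal D'(\Rn)$), so you may apply it directly to $u$ itself, which is what the paper does.
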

\begin{proof}
Whenever $v\in C^\infty_0(\Rn)$ fulfils
$\supp v\Subset \Rn\setminus \overline{\supp K\circ\supp u}$,
then 
\begin{equation}
  \supp K\bigcap\supp( v\otimes u)=\emptyset.
  \label{Kuv-eq}
\end{equation}
For else some $(x,y)\in \supp K$ would fulfill $y\in \supp u$ and
$x\in \supp v$, in contradiction with the support condition on $v$. By
\eqref{Kuv-eq} the assumptions of Theorem~\ref{AK-thm} are satisfied,
so
$\dual{Au}{v}=\dual{K}{v\otimes u}=0$. Hence $Au=0$ holds outside the
closure of $\supp K\circ\supp u$.
\end{proof}

\begin{rem}
  \label{AK-rem}
The argument of Corollary~\ref{AK-cor} is
completely standard for $u\in C^\infty_0$, cf
\cite[Thm~5.2.4]{H} or \cite[Prop~3.1]{Shu87};
a limiting argument then implies \eqref{Ksuppu-eq} for general $u$.
However, the proof above is a direct generalisation of
the $C^\infty_0$-case, made possible by the extended action of
$\dual{\cdot}{\cdot}$ in Theorem~\ref{AK-thm}. 
This method may be interesting in its own
right; eg it extends to type
$1,1$-operators also when these are not $\cal S'$-continuous,
cf Section~\ref{supp-sect}.  
\end{rem}

\section{Kernels and transport of support}  \label{supp-sect}
Using the preceeding section, the well-known support rule is
here extended to operators of type $1,1$. As a novelty also a spectral support
rule is deduced.

\subsection{The support rule for type $1,1$-operators}
As analogues of 
Theorem~\ref{AK-thm} and Corollary~\ref{AK-cor} one has:

\begin{thm}   \label{supprule11-thm}
If $a\in S^\infty_{1,1}(\Rn\times\Rn)$ 
has kernel $K$, then 
$\dual{a(x,D)u}{v}=\dual{K}{v\otimes u}$ 
whenever $u\in D(a(x,D))$, $v\in C^\infty_0(\Rn)$ 
fulfill \eqref{suppKuv-eq}--\eqref{ssuppKuv-eq}.
And for all $u\in D(a(x,D))$ the support rule holds, ie
$\supp Au\subset \overline{\supp K\circ \supp u}$.
\end{thm}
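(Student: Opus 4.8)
The second assertion reduces to the first exactly as Corollary~\ref{AK-cor} did to Theorem~\ref{AK-thm}. If $x_0\notin\overline{\supp K\circ\supp u}$ and $v\in C^\infty_0(\Rn)$ has support in a neighbourhood of $x_0$ missing $\overline{\supp K\circ\supp u}$, then $(x,y)\in\supp K$ with $x\in\supp v$, $y\in\supp u$ would put $x\in\supp K\circ\supp u$, a contradiction; hence $\supp K\cap\supp(v\otimes u)=\emptyset$, so \eqref{suppKuv-eq}--\eqref{ssuppKuv-eq} hold trivially, the product $K\cdot(v\otimes u)$ vanishes, and the first assertion gives $\dual{a(x,D)u}{v}=\dual{K}{v\otimes u}=0$. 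As $v$ ranges over such test functions, $a(x,D)u$ vanishes near $x_0$. So everything rests on the kernel identity.

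The plan for the kernel identity is to run the limit of Definition~\ref{gdef-defn}. Writing $A_m=\OP(a^m(1\otimes\psi_m))$, which is $\cal S'$-continuous (it lies in $\OP(S^{-\infty})$) with the $C^\infty$-kernel $K_m$ of Proposition~\ref{Kreg-prop}, one has $\dual{a(x,D)u}{v}=\lim_{m\to\infty}\dual{A_mu}{v}$. Proposition~\ref{Kreg-prop} and the regular convergence lemma (Lemma~\ref{regconv-lem}, applied to $K\circ M$ with the $\R^{2n}$-multiplier $\psi_m\otimes\psi_m$) show the $K_m$ converge \emph{regularly} to $K$, i.e.\ in $\cal D'(\R^{2n})$ and in $C^\infty(\R^{2n}\setminus\singsupp K)$. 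Using the compactness in \eqref{suppKuv-eq}, choose $\chi\in C^\infty_0(\Rn)$ equal to $1$ near the compact image of $\supp K\cap\supp(v\otimes u)$ under projection to the second variable, and split $u=\chi u+(1-\chi)u$, so that $\dual{A_mu}{v}=\dual{A_m(\chi u)}{v}+\dual{A_m((1-\chi)u)}{v}$; the two terms are treated separately.

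For the first term $\chi u\in\cal E'(\Rn)$, so the standard kernel identity (legitimate here because of the compact support) reads $\dual{A_m(\chi u)}{v}=\dual{K_m}{v\otimes\chi u}$ as an $\cal E'$--$C^\infty$ pairing. Since $v\otimes\chi u\in\cal E'(\R^{2n})$ satisfies \eqref{suppKuv-eq}--\eqref{ssuppKuv-eq} relative to $K$ (note $\singsupp(\chi u)\subset\singsupp u$), Theorem~\ref{fuconv-thm} on $\R^{2n}$ — in the form where a compact set contains the support of the first argument, so the cut-off is superfluous — gives $\dual{K_m}{v\otimes\chi u}\to\dual{K}{v\otimes\chi u}$; bilinearity of the extended bracket together with $\supp K\cap\supp(v\otimes(1-\chi)u)=\emptyset$ then yields $\dual{K}{v\otimes\chi u}=\dual{K}{v\otimes u}$. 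For the second term set $w=(1-\chi)u$, so $\supp K\cap(\supp v\times\supp w)=\emptyset$; passing to the transpose, $\dual{A_mw}{v}=\dual{w}{h_m}$ with $h_m(y)=\int K_m(x,y)v(x)\,dx$ in $\cal S(\Rn)$. Since $K$ vanishes on the open neighbourhood $\R^{2n}\setminus\supp K$ of $\supp v\times\supp w$, regular convergence forces $K_m\to0$ uniformly with all derivatives on compact subsets there, while Proposition~\ref{Kreg-prop} and Lemma~\ref{Kxy-lem} make the $K_m$ rapidly decreasing in $x-y$ uniformly in $m$; splitting the $y$-integral into a bounded piece and a tail (and invoking Proposition~\ref{SKconv-prop} for the part of $\supp w$ lying off $\supp v$) one gets $h_m\to0$ in the weighted Sobolev space needed to pair against $w\in\cal S'$, so $\dual{A_m((1-\chi)u)}{v}\to0$. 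Assembling the two terms gives $\dual{a(x,D)u}{v}=\dual{K}{v\otimes u}$.

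The main obstacle is this remote part $(1-\chi)u$: the regularised kernel $K_m$ is $K$ convolved with an approximate identity whose Fourier transform lies in $C^\infty_0$, so $\supp K_m=\R^{2n}$ and one cannot localise $A_m$ by cut-offs the way one does when the kernel has compact support. What rescues the argument is that \eqref{suppKuv-eq}--\eqref{ssuppKuv-eq} confine the overlap of $K$ and $v\otimes u$ to a compact set, so the non-compact tail of $u$ meets $K$ only where $K\equiv0$; there the two modes of the regular convergence $K_m\to K$, together with the uniform off-diagonal decay of the $K_m$ from Proposition~\ref{Kreg-prop} and Lemma~\ref{Kxy-lem}, are exactly enough to extinguish the contribution in the limit.
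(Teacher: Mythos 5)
Your argument takes a genuinely different route from the paper's. You split the \emph{argument}, $u=\chi u+(1-\chi)u$, isolating a compactly supported piece that carries all of $\supp K\cap\supp(v\otimes u)$; the paper instead approximates $u$ regularly by $u_\nu\in C^\infty_0(\Rn)$ and splits the \emph{kernel}, $K_m=fK_m+(1-f)K_m$ with $f(x,y)=g(x)h(x-y)$ supported off the diagonal and bounded in $x$. The paper's choice forces the support of $(1-f)K_m\cdot(v\otimes u_\nu)$ into a fixed compact set ($x\in\supp v$ and $|x-y|\le 2$), so Theorem~\ref{fuconv-thm} applies to both pieces and no remote term ever appears; your choice buys a cleaner near part (an honest $\cal E'$--$C^\infty$ pairing plus Theorem~\ref{fuconv-thm} with superfluous cut-off) at the price of the term $\dual{A_m((1-\chi)u)}{v}$, which the paper's decomposition never has to face. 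Your reduction of the support rule to the kernel identity, and your treatment of the first term, are correct.

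The remote term is where there is a gap. The claim that $h_m={}^tA_mv$ tends to $0$ ``in the weighted Sobolev space needed to pair against $w$'' is not right as a global statement: for $y$ near $\supp v$ with $(y,y)\in\singsupp K$, $h_m(y)$ has no reason to tend to $0$ (it is trying to converge to the generally divergent $\dual{K(\cdot,y)}{v}$). Only a local statement holds -- $h_m\to0$ where $K$ vanishes on $\supp v\times\{y\}$ -- and converting it into $\dual{w}{h_m}\to0$ is delicate because $\supp w$ is non-compact and $\op{dist}(\supp v\times\supp w,\supp K)$ may shrink to $0$ at infinity, so a single cut-off $\rho\in C^\infty_{\b}$ with $\rho=1$ near $\supp w$ and $\rho h_m\to 0$ in $\cal S$ is not obviously available. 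The repair is to carry out on $h_m$ the very splitting you gesture at: with $h$ as above write $h_m=h_m^{\op{near}}+h_m^{\op{far}}$. The far part converges in $\cal S(\Rn)$ by Proposition~\ref{SKconv-prop} to a limit vanishing on an open $U\supset\supp w$ with $\supp K\cap(\supp v\times U)=\emptyset$ (such $U$ exists since $\supp v$ is compact), so its pairing with $w$ tends to $0$. The near part is supported in the fixed compact set $\{\,y\mid \op{dist}(y,\supp v)\le2\,\}$, so one can localise with a \emph{compactly supported} $\rho\in C^\infty_0(U)$ equal to $1$ near $\supp w\cap\{\,\op{dist}(\cdot,\supp v)\le3\,\}$; then $(1-\rho)h_m^{\op{near}}$ vanishes near $\supp w$, while the regular convergence $K_m\to K=0$ on compact neighbourhoods of $\supp v\times\supp\rho$ inside $\R^{2n}\setminus\supp K$ gives $\rho\,h_m^{\op{near}}\to0$ in $C^\infty_0$. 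With this insertion your proof closes.
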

\begin{proof}
$a(x,D)u=\lim_{m\to\infty}A_m u$ where 
$A_m=\OP(a^m(1\otimes\psi_m))\in \OP(S^{-\infty})$; its kernel $K_m$ 
is given by Proposition~\ref{Kreg-prop}. 
However, $K_m$ need not fulfil \eqref{suppKuv-eq} together with $u$, $v$,
but by use of convolutions and cut-off functions one can find
$u_\nu$ in  $C^\infty_0(\Rn)$
such that $u_\nu\to u$ in $\cal S'(\Rn)$ and in $C^\infty (\Rn\setminus
\singsupp u)$ for $\nu\to\infty $.
Then Theorem~\ref{AK-thm} gives
\begin{equation}
  \dual{Au}{v}=\lim_{m\to\infty}\lim_{\nu\to\infty }\dual{A_m u_\nu}{v}
  =\lim_{m\to\infty}\lim_{\nu\to\infty }\dual{K_m}{v\otimes u_{\nu}}.
\end{equation}
To control the supports, one can take a function $f$ 
fulfilling \eqref{fsupp-cnd} by setting $f(x,y)=g(x)h(x-y)$ for some 
$g\in C_0^\infty(\Rn)$ with $g=1$ on 
$\supp v$ and $h\in C^\infty (\Rn)$ such that 
$h(y)=0$ for $|y|<1$ while $h(y)=1$ for $|y|>2$. 
Then $K_m=fK_m+(1-f)K_m$, where the $fK_m$
tend to $fK$ in $\cal S$ according to Proposition~\ref{SKconv-prop}.
The supports of
$(1-f)K_m(v\otimes u_\nu)$, $m,\nu\in \N$, 
all lie in the precompact set $B(0,R)\times B(0,R+2)$ when
$B(0,R)\supset \supp v$, so since
$u$, $v$ are assumed to fulfil
\eqref{suppKuv-eq}--\eqref{ssuppKuv-eq}, 
Theorem~\ref{fuconv-thm} gives
\begin{equation}
  \begin{split}
  \dual{Au}{v}
  &=\lim_{m\to\infty}\lim_{\nu\to\infty }\dual{fK_m}{v\otimes u_\nu}+
   \lim_{m\to\infty}\lim_{\nu\to\infty }\dual{(1-f)K_m}{v\otimes u_\nu}
\\
  &=\dual{fK}{v\otimes u}+
    \dual{(1-f)K}{v\otimes u}
  =\dual{K}{v\otimes u}.
  \end{split}
\end{equation}
Now the support rule follows by repeating 
the proof of Corollary~\ref{AK-cor}.
\end{proof}

\subsection{The spectral support rule}
Although it has not attracted much attention, it is a natural and
useful task to
determine the frequencies entering $x\mapsto a(x,D)u(x)$.
But since
\begin{equation}
  \cal Fa(x,D)u=\cal Fa(x,D)\cal F^{-1}(\hat u)
  \label{Aconj-eq}
\end{equation}
the task is rather to control how the support of $\hat u$ is changed by
$\cal F a(x,D)\cal F^{-1}$, 
ie by the conjugation of $a(x,D)$ by the
Fourier transformation.

Even for $A\in \OP(S^{\infty}_{1,0})$ this has seemingly not been
carried out before. However, since the composite 
$\cal F A\cal F^{-1}\colon \cal
S'(\Rn)\to\cal S'(\Rn)$ is continuous for such $A$, it is straightforward to
apply Theorem~\ref{AK-thm} and Corollary~\ref{AK-cor}
to the distribution kernel 
\begin{equation}
 \cal K(\xi,\eta)=(2\pi)^{-n}\hat a(\xi-\eta,\eta)  
\end{equation}
of $\cal F A\cal F^{-1}$; cf Proposition~\ref{FKah-prop}.
This yields at once the following general result:

\begin{thm}   \label{supp10-thm}
If $a\in S^\infty _{1,0}(\Rn\times \Rn)$ and $\cal K$ is as above, then
\begin{equation}
  \supp \cal Fa(x,D)u\subset\overline{\supp\cal K\circ\supp\cal F u}
  \quad\text{for every}\quad u\in \cal S'(\Rn).
  \label{FKsuppu-eq}
\end{equation}
Here the right-hand side is closed if $\supp\cal Fu\Subset \Rn$.
\end{thm}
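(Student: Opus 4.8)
The plan is to obtain Theorem~\ref{supp10-thm} as a direct application of Corollary~\ref{AK-cor} to the Fourier-conjugated operator. First I would note that for $a\in S^\infty_{1,0}(\Rn\times\Rn)$ the operator $A=a(x,D)$ is a continuous linear map $\cal S'(\Rn)\to\cal S'(\Rn)$, by the classical theory of type $1,0$-operators; since $\cal F$ is a homeomorphism of $\cal S'(\Rn)$ onto itself, the conjugated operator $B:=\cal F A\cal F^{-1}$ is then also continuous $\cal S'(\Rn)\to\cal S'(\Rn)$. This is exactly the hypothesis required to invoke Theorem~\ref{AK-thm} and Corollary~\ref{AK-cor}, once one knows the kernel of $B$.

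The second step is to identify that kernel. By Proposition~\ref{FKah-prop}, if $K$ denotes the kernel of $A$ then $\cal F K(\xi,\eta)=\hat a(\xi+\eta,-\eta)=(2\pi)^n\cal K(\xi,-\eta)$, and reading this identity correctly shows that the distribution $\cal K(\xi,\eta)=(2\pi)^{-n}\hat a(\xi-\eta,\eta)$ in the statement is precisely the Schwartz-kernel of $B$; moreover $\cal K\in\cal S'(\Rn\times\Rn)$ since $\hat a\in\cal S'$ and the substitution $(\xi,\eta)\mapsto(\xi-\eta,\eta)$ is a linear automorphism of $\R^{2n}$. Applying Corollary~\ref{AK-cor} to the operator $B$ and to the argument $\cal Fu\in\cal S'(\Rn)$ then yields $\supp B(\cal Fu)\subset\overline{\supp\cal K\circ\supp\cal Fu}$, together with the closedness of the right-hand side in the case $\supp\cal Fu\Subset\Rn$, which is part of the conclusion of that corollary.

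Finally, the identity \eqref{Aconj-eq}, namely $\cal Fa(x,D)u=\cal Fa(x,D)\cal F^{-1}(\cal Fu)=B(\cal Fu)$, turns the previous inclusion into \eqref{FKsuppu-eq}, completing the argument. I do not expect a genuine obstacle here: the only place requiring care is the bookkeeping with the reflection in $\eta$ and the coordinate change $M$ that relates $\cal K$, $\hat a$ and $\cal FK$, but this is already carried out in Proposition~\ref{FKah-prop}, so the present proof only has to cite it. (The real work for type $1,1$-operators, where $\cal F A\cal F^{-1}$ need not be $\cal S'$-continuous, is deferred to the general spectral support rule proved afterwards via the definition by vanishing frequency modulation.)
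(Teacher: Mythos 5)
Your proposal is correct and follows exactly the route the paper takes: the paper likewise observes that $\cal F a(x,D)\cal F^{-1}$ is $\cal S'$-continuous for $a\in S^\infty_{1,0}$, identifies its kernel as $\cal K(\xi,\eta)=(2\pi)^{-n}\hat a(\xi-\eta,\eta)$ via Proposition~\ref{FKah-prop} (equivalently \eqref{Fa-eq}), and then applies Theorem~\ref{AK-thm} and Corollary~\ref{AK-cor} to the conjugated operator acting on $\cal Fu$. No gaps.
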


The result in \eqref{FKsuppu-eq}
 may also be written explicitly as in \eqref{Xi-eq}--\eqref{Xi-id}.
It is easily generalised to standard symbol spaces $S$ such as
$S^\infty _{\rho,\delta}(\Rn\times \Rn)$ with
$\delta<\rho$.
For elementary symbols in the sense of \cite{CoMe78} the spectral support rule
\eqref{FKsuppu-eq} follows at once, but as it stands
Theorem~\ref{supp10-thm}
seems to be a new result even for classical type $1,0$-operators.
The reader is referred to \cite[Sect.~1.2]{JJ05DTL} for more remarks on
Theorem~\ref{supp10-thm}, 
in particular that it makes it unnecessary to reduce to
elementary symbols in the $L_p$-theory 
(which is implicitly sketched in Section~\ref{cont-sect} below).

To extend the above to type $1,1$-operators, the next result applies to
the conjugated operator 
$\cal Fa(x,D)\cal F^{-1}$ instead of Theorem~\ref{AK-thm}.

\begin{thm}   \label{Kuv-thm}
Let $a\in S^\infty_{1,1}(\Rn\times\Rn)$ and denote
by $\cal K(\xi,\eta)=(2\pi)^{-n}\hat a(\xi-\eta,\eta)$
the distribution kernel of $\cal F a(x,D)\cal F^{-1}$; and
suppose $u\in D(a(x,D))\subset \cal S'(\Rn)$ is such that, for some $\psi$
as in Definition~\ref{gdef-defn}, 
\begin{equation}
  a(x,D)u=\lim_{m\to\infty} a^m(x,D)u^m \quad\text{holds in}\quad \cal S'(\Rn),
  \label{S'conv-eq}
\end{equation}
and that $\hat v\in C^\infty_0(\Rn)$ satisfies
\begin{equation}
  \supp\cal K\bigcap \supp \hat v\otimes\hat u \Subset\Rn\times\Rn,
\qquad
  \singsupp\cal K\bigcap \singsupp \hat v\otimes\hat u=\emptyset.
  \label{suppFKuv-eq}
\end{equation}
Then it holds, with extended action of $\dual{\cdot}{\cdot}$,
\begin{equation}
  \dual{\cal F a(x,D)\cal F^{-1}(\hat u)}{\hat v}
  =\dual{\cal K}{\hat v\otimes\hat u}.
  \label{FaFK-eq}
\end{equation}
\end{thm}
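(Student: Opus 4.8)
The plan is to mirror the proof of Theorem~\ref{supprule11-thm}, but carried out for the conjugated operator $B:=\cal F a(x,D)\cal F^{-1}$ and its kernel $\cal K$ instead of for $a(x,D)$ and $K$. The argument is in fact shorter, because the approximating kernels turn out to have \emph{compact} support, so the preliminary $C^\infty_0$-regularisation of the argument that was needed in Theorem~\ref{supprule11-thm} can be dispensed with here.

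First I would use the hypothesis \eqref{S'conv-eq}, the continuity of $\cal F$ on $\cal S'(\Rn)$, and the identity $a^m(x,D)u^m=\OP(a^m(1\otimes\psi_m))u$ from \eqref{alter-eq} to write
\[
  \dual{\cal F a(x,D)\cal F^{-1}(\hat u)}{\hat v}
  =\dual{\cal F(a(x,D)u)}{\hat v}
  =\lim_{m\to\infty}\dual{B_m\hat u}{\hat v},\qquad
  B_m:=\cal F\,\OP(a^m(1\otimes\psi_m))\,\cal F^{-1}.
\]
Since $a^m(1\otimes\psi_m)\in S^{-\infty}$ (cf \eqref{ammu'-eq}), the operator $\OP(a^m(1\otimes\psi_m))$ is $\cal S'$-continuous, hence so is $B_m$, and by Proposition~\ref{FKah-prop} its distribution kernel is $\cal K_m(\xi,\eta)=(2\pi)^{-n}\widehat{a^m(1\otimes\psi_m)}(\xi-\eta,\eta)$. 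A direct computation, using that $\psi_m(D_x)$ commutes with multiplication by $\psi_m(\eta)$, gives $\widehat{a^m(1\otimes\psi_m)}(\zeta,\eta)=\psi_m(\zeta)\hat a(\zeta,\eta)\psi_m(\eta)$, whence
\[
  \cal K_m(\xi,\eta)=\rho_m(\xi,\eta)\,\cal K(\xi,\eta),\qquad
  \rho_m(\xi,\eta):=\psi_m(\xi-\eta)\psi_m(\eta)\in C^\infty_0(\R^{2n}).
\]
In particular $\cal K_m$ has compact support, $\supp\cal K_m\subset\supp\cal K$, and $\singsupp\cal K_m\subset\singsupp\cal K$.

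Consequently the pair $(\hat u,\hat v)$ automatically satisfies the hypotheses \eqref{suppKuv-eq}--\eqref{ssuppKuv-eq} of Theorem~\ref{AK-thm} \emph{relative to} $\cal K_m$: compactness of $\supp\cal K_m\cap\supp(\hat v\otimes\hat u)$ is trivial, and $\singsupp\cal K_m\cap\singsupp(\hat v\otimes\hat u)\subset\singsupp\cal K\cap\singsupp(\hat v\otimes\hat u)=\emptyset$ by \eqref{suppFKuv-eq}. So Theorem~\ref{AK-thm} applied to $B_m$ yields $\dual{B_m\hat u}{\hat v}=\dual{\cal K_m}{\hat v\otimes\hat u}$ with extended action, and only the passage to the limit on the right remains. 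Since $\psi\equiv1$ near the origin, on every compact $L\subset\R^{2n}$ one has $\rho_m\equiv1$, hence $\cal K_m=\cal K$, for all large $m$; thus $\cal K_m\to\cal K$ both in $\cal D'(\R^{2n})$ and in $C^\infty(\R^{2n}\setminus\singsupp\cal K)$, and moreover $\supp\bigl(\cal K_m(\hat v\otimes\hat u)\bigr)\subset\supp\cal K\cap\supp(\hat v\otimes\hat u)$, a fixed compact set by \eqref{suppFKuv-eq}. Hence Theorem~\ref{fuconv-thm}, applied on $\R^{2n}$ with cut-off $\chi\equiv1$, gives $\dual{\cal K_m}{\hat v\otimes\hat u}\to\dual{\cal K}{\hat v\otimes\hat u}$ with extended action, and \eqref{FaFK-eq} follows. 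In fact the products $\cal K_m(\hat v\otimes\hat u)$ are eventually equal to $\cal K(\hat v\otimes\hat u)$, once $m$ is so large that $\rho_m\equiv1$ on a neighbourhood of $\supp\cal K\cap\supp(\hat v\otimes\hat u)$, so the limit is actually attained.

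I do not expect a serious obstacle; the points that need care are all bookkeeping: the kernel identity $\cal K_m=\rho_m\cal K$ via Proposition~\ref{FKah-prop}, the observation that $\cal K_m$ has compact support (so that Theorem~\ref{AK-thm} applies without first mollifying $\hat u$), and the minor remark that in Theorem~\ref{fuconv-thm} the approximants $\cal K_m$ need only be $C^\infty$ off $\singsupp\cal K$ — which is all its proof uses and is explicitly permitted by its final sentence.
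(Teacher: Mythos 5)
Your proof is correct, but it is organised differently from the paper's. The paper starts from the right-hand side: it observes that $\psi_m(\xi-\eta)\psi_{m+\nu}(\eta)$ and $1\otimes\psi_m$ equal $1$ on the compact set $\supp\cal K\cap\supp(\hat v\otimes\hat u)$ for large $m$, so $\dual{\cal K}{\hat v\otimes\hat u}=\dual{\cal K_m}{\hat v\otimes\cal Fu^m}$ exactly; it then identifies this with $\dual{\cal FA_mu^m}{\hat v}$ by mollifying $\cal Fu^m$ inside a fixed compact set and running Theorem~\ref{fuconv-thm} by hand (plus Lemma~\ref{alter-lem} to strip the auxiliary factor $\psi_{m+\nu}$), and finally lets $m\to\infty$ using \eqref{S'conv-eq}. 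You instead start from the left-hand side, keep the full distribution $\hat v\otimes\hat u$ in the pairing, and delegate the identification $\dual{B_m\hat u}{\hat v}=\dual{\cal K_m}{\hat v\otimes\hat u}$ to Theorem~\ref{AK-thm}, which applies because $B_m$ is $\cal S'$-continuous and — the key observation — $\cal K_m=\rho_m\cal K$ has \emph{compact} support, so \eqref{suppKuv-eq} is automatic and \eqref{ssuppKuv-eq} is inherited from \eqref{suppFKuv-eq}. Your route is shorter and avoids re-doing the mollification, at the price of leaning on Theorem~\ref{AK-thm} as a black box; the paper's version keeps the regular-convergence machinery applied only to genuinely $C^\infty_0$ approximants, exactly as Theorem~\ref{fuconv-thm} is stated. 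Your closing remark is also the right one: since $\rho_m\equiv1$ on a neighbourhood of the fixed compact set $\supp\cal K\cap\supp(\hat v\otimes\hat u)$ for large $m$, partial associativity gives $\cal K_m(\hat v\otimes\hat u)=\cal K(\hat v\otimes\hat u)$ eventually, so the final limit is attained and no extension of Theorem~\ref{fuconv-thm} to non-globally-smooth approximants is actually needed.
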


\begin{proof}
For $u\in  D(a(x,D))$ the left-hand side of \eqref{FaFK-eq} makes sense
by \eqref{S'conv-eq}; and the
right-hand side does so by \eqref{suppFKuv-eq}, cf
Section~\ref{exdist-sect}. 
The equality follows from \eqref{S'conv-eq}:

Letting $\psi_m=\psi(2^{-m}\cdot)$ there is some $\nu$ such that
$\psi_\nu=1$ on a neighbourhood of $\supp \psi$, so
$\psi_{m+\nu}\psi_m=\psi_m$ for all $m$. Then $1\otimes\psi_m$ and
$\psi_m(\xi-\eta)\psi_m(\eta)$ equal $1$ on the intersection of the supports
in \eqref{suppFKuv-eq} for all sufficiently large $m$, so
\begin{equation}
  \dual{\cal K}{\hat v\otimes\hat u}
  =\dual{\psi_m(\xi-\eta)\psi_{m+\nu}(\eta)\cal K(\xi,\eta)}{
     \hat v(\xi)\psi_m(\eta)\hat u(\eta)}.
  \label{Kuvpsi-eq}
\end{equation}
Here $\cal K_m=\psi_m(\xi-\eta)\psi_{m+\nu}(\eta)\cal K(\xi,\eta)$ is the
kernel of 
$\cal F A_m\cal F^{-1}$, when $A_m=\OP(a^m(1\otimes\psi_{m+\nu}))$;
cf Proposition~\ref{FKah-prop}.
Clearly $A_m$ has symbol in $S^{-\infty}$. 

Moreover, mollification of $\psi_m\hat u=\cal F u^m$ gives a sequence $(\cal
F u^m)_k$ of functions in $C^\infty_0(\Rn)$, that all have their supports in
a fixed compact set $M$. Invoking regular convergence, 
cf Lemma~\ref{regconv-lem}, it follows that 
\begin{gather}
 \hat v\otimes(\cal F u^m)_k \xrightarrow[k\to\infty]{} 
  \hat v\otimes \cal Fu^m
  \quad\text{in $\cal S'(\R^{2n})$ and $C^\infty(\Omega)$}
   \\
 \R^{2n}\setminus \Omega=\supp\hat v\times \singsupp\cal Fu^m
  =\singsupp(\hat v\otimes\cal Fu^m).
\end{gather}
Since all supports are contained in $\supp\hat v\times M$, 
Theorem~\ref{fuconv-thm} applied on 
$\R^{2n}$ and the continuity of $A_m$ in $\cal S'$ imply
\begin{equation}
  \dual{\cal K_m}{\hat v\otimes(\cal Fu^m)}
  =\lim_{k\to\infty}\dual{\cal K_m}{\hat v\otimes(\cal Fu^m)_k}
  =\dual{\cal F A_m\cal F^{-1}(\cal F u^m)}{\hat  v}.
  \label{Kmumv-eq}
\end{equation}
According to Lemma~\ref{alter-lem} the factor $\psi_{m+\nu}$ can here 
be removed from the symbol of $A_m$, so it is implied by 
\eqref{Kuvpsi-eq}, \eqref{Kmumv-eq} and the
explicit assumption of $\cal S'$-convergence in \eqref{S'conv-eq} that
\begin{equation}
  \dual{\cal K}{\hat v\otimes\hat u}
  =\dual{\cal FA_mu^m}{\hat v}
  =\lim_{m\to\infty}\dual{a^m(x,D)u^m}{\cal F^2 v}
  =\dual{a(x,D)u}{\cal F^2 v},
\end{equation}
since $\cal F^2v\in \cal S$. 
Transposing $\cal F$, formula \eqref{FaFK-eq} results.
\end{proof}

The assumption of $\cal S'$-convergence in \eqref{S'conv-eq} cannot be
omitted from the above proof, although in the last line 
$\dual{a^m(x,D)u^m}{\cal F^2 v}$ is independent of $m$.
Eg $\cos x$ is in $\cal S'(\R)$, but since 
$\cal F(\sum_{j=0}^m
\tfrac{(-1)^j}{(2j)\!}x^{2j})=c_0\delta_0+c_2\delta_0''+
\dots +c_m\delta_0^{(m)}$ the power series converges 
to $\cos x$ in $\cal D'$ but not in
$\cal S'$ as cosine isn't a polynomial.
Moreover, if $\cal F v=1$ around $0$ for some $\cal F v\in
C^\infty_0([-1,1])$, one clearly has
$2\pi=\dual{\sum_{j=0}^m \tfrac{(-1)^j}{(2j)\!}x^{2j}}{\cal F^2v}$ 
for every $m$ as the
derivaties of $\cal Fv$ vanish at the origin. 
And yet $\dual{\cos}{\cal F^2v}=
\dual{\tfrac{1}{2}(\delta_1+\delta_{-1})}{\cal F v}=0\ne 2\pi$.

The next result extends \cite[Prop.~1.4]{JJ05DTL} from
the case of $u\in C^\infty(\Rn)$ with $\supp\hat u\Subset\Rn$ 
to almost arbitrary distributions $u\in D(a(x,D))$;
but the proof is significantly simpler here.
Instead of \eqref{FKsuppu-eq}, the explicit form given in 
\eqref{Xi-eq}--\eqref{Xi-id} is preferred for practical purposes.

\begin{thm}[The spectral support rule]
  \label{supp-thm}
Let $a\in  S^\infty_{1,1}(\Rn\times\Rn)$ and suppose 
$u\in D(a(x,D))$ is 
such that, for some
$\psi\in C^\infty_0(\Rn)$ equalling $1$ around the origin,
the convergence of Definition~\ref{gdef-defn} 
holds in the topology of $\cal S'(\Rn)$, ie
\begin{equation}
  a(x,D)u=\lim_{m\to\infty} a^m(x,D)u^m \quad\text{in}\quad \cal S'(\Rn).
  \label{S'lim-cnd}
\end{equation}
Then \eqref{FKsuppu-eq} holds, that is with $\Xi=\supp\cal K\circ\supp\hat u$
one has
\begin{gather}
   \supp\cal F(a(x,D)u)\subset\overline{\Xi},
 \\
  \Xi=\bigl\{\,\xi+\eta \bigm| (\xi,\eta)\in \supp\hat a,\ 
     \eta\in \supp\hat u \,\bigr\}.
  \label{Sigma-eq}
\end{gather}
When $u\in \cal F^{-1}\cal E'(\Rn)$ then \eqref{S'lim-cnd} 
holds automatically and $\Xi$ is closed for such $u$.
\end{thm}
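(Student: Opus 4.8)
The plan is to reduce the spectral support rule for $a(x,D)$ to the already-established kernel identity in Theorem~\ref{Kuv-thm}, by the same mechanism used to derive Corollary~\ref{AK-cor} from Theorem~\ref{AK-thm}. First I would observe that the kernel of $\cal F a(x,D)\cal F^{-1}$ is $\cal K(\xi,\eta)=(2\pi)^{-n}\hat a(\xi-\eta,\eta)$ by Proposition~\ref{FKah-prop}, so that $\Xi=\supp\cal K\circ\supp\hat u$ coincides with the explicit set in \eqref{Sigma-eq}: indeed $(\zeta,\eta)\in\supp\cal K$ precisely when $(\zeta-\eta,\eta)\in\supp\hat a$, so writing $\xi=\zeta-\eta$ gives $\zeta=\xi+\eta$ with $(\xi,\eta)\in\supp\hat a$. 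Thus the two formulations of the conclusion are literally the same statement, and it suffices to prove $\supp\cal F(a(x,D)u)\subset\overline{\supp\cal K\circ\supp\hat u}$.

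Next I would take any $\hat v\in C^\infty_0(\Rn)$ whose support lies in $\Rn\setminus\overline{\supp\cal K\circ\supp\hat u}$ and show $\supp\cal K\bigcap\supp(\hat v\otimes\hat u)=\emptyset$: if $(\xi,\eta)$ were in both, then $\eta\in\supp\hat u$ and $\xi\in\supp\hat v$ with $(\xi,\eta)\in\supp\cal K$, forcing $\xi\in\supp\cal K\circ\supp\hat u$, contradicting the choice of $v$. In particular the compactness and disjoint-singular-support conditions \eqref{suppFKuv-eq} hold trivially, so Theorem~\ref{Kuv-thm} applies and yields
\begin{equation}
  \dual{\cal F a(x,D)\cal F^{-1}(\hat u)}{\hat v}=\dual{\cal K}{\hat v\otimes\hat u}=0,
\end{equation}
the last equality because the product $(\hat v\otimes\hat u)\cal K$ vanishes identically. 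By \eqref{Aconj-eq} this says $\dual{\cal F(a(x,D)u)}{\hat v}=0$ for all such $\hat v$, hence $\cal F(a(x,D)u)=0$ on $\Rn\setminus\overline{\supp\cal K\circ\supp\hat u}$, which is the desired inclusion.

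It then remains to handle the final clause: when $u\in\cal F^{-1}\cal E'(\Rn)$, the hypothesis \eqref{S'lim-cnd} is automatic and $\Xi$ is closed. For closedness, $\supp\hat u\Subset\Rn$, and $\supp\cal K$ composed with a compact set is closed by the same elementary argument as in Corollary~\ref{AK-cor} (the relevant piece of $\supp\cal K$ over the compact fibre bundle is closed, and its projection in the $\xi$-direction is proper on that piece). For the automatic $\cal S'$-convergence, I would argue that $u^m=\psi_m(D)u\to u$ in $\cal S'$ and that the operators $\OP(a^m(1\otimes\psi_m))$ may be computed via the spectral localisation of Section~\ref{loc-ssect} (Theorem~\ref{aSSO-thm} with $S=S^{-\infty}$); since $\hat u$ has a fixed compact support, a single cut-off $\chi\in C^\infty_0$ works for all large $m$, reducing $a^m(x,D)u^m$ to $\OP(a^m(1\otimes\chi))u^m$, and then Lemma~\ref{am-lem} together with the continuity of $\OP$ on $S^{d+1}_{1,1}\times\cal F^{-1}\cal E'$ gives convergence in $\cal S'$ to $a(x,D)u$. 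The main obstacle I anticipate is precisely this last verification: making the $\cal S'$-convergence (as opposed to mere $\cal D'$-convergence) rigorous for $u\in\cal F^{-1}\cal E'$ requires care about which topology the symbol convergence $a^m(1\otimes\chi)\to a(1\otimes\chi)$ takes place in and how $\OP(\cdot)$ acts continuously there; everything up to and including the reduction to Theorem~\ref{Kuv-thm} is otherwise essentially formal.
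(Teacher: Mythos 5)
Your main argument is exactly the paper's: identify $\Xi=\supp\cal K\circ\supp\hat u$ with the explicit set by the substitution $\zeta=\xi+\eta$, and then repeat the proof of Corollary~\ref{AK-cor} with Theorem~\ref{Kuv-thm} in place of Theorem~\ref{AK-thm} (empty intersection of supports makes \eqref{suppFKuv-eq} trivially satisfied, so the pairing $\dual{\cal K}{\hat v\otimes\hat u}$ vanishes for every $\hat v$ supported off $\overline{\Xi}$). That part, and the closedness of $\Xi$ for compact $\supp\hat u$, are fine.

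The one place you diverge is the final clause, and there your route is the weaker one. You drop the factor $\psi_m(\eta)$ and write $a^m(x,D)u^m=\OP(a^m(1\otimes\chi))u^m$, which forces you to pass to the limit \emph{jointly} in the symbol and in $u^m$; the ``continuity of $\OP$ on $S^{d+1}_{1,1}\times\cal F^{-1}\cal E'$'' you invoke is established nowhere in the paper, and you rightly flag this as the weak point. The paper avoids the issue entirely: by Lemma~\ref{alter-lem} one keeps the factor $\psi_m(\eta)$ and writes $a^m(x,D)u^m=\OP\bigl(a^m(1\otimes\chi)(1\otimes\psi_m)\bigr)u$ for a single $\chi$ with $\chi=1$ near the fixed compact set $\supp\hat u$. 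Since multiplication by $1\otimes\chi$ commutes with $\psi_m(D_x)$, this is exactly $\OP\bigl(b^m(1\otimes\psi_m)\bigr)u$ for the fixed symbol $b=a(1\otimes\chi)\in S^{-\infty}$ and the \emph{fixed} distribution $u$; Proposition~\ref{Stan-prop} (stability of $S^{-\infty}$, with convergence in the $\cal S'$-topology via the adjoint) then gives $\OP(b^m(1\otimes\psi_m))u\to\OP(b)u=a(x,D)u$ in $\cal S'$ with no joint-continuity statement needed. You should replace your last step by this argument, or else supply the missing equicontinuity estimate for $\OP(c_m)v_m$ with $c_m\to0$ in $S^{-M}_{1,0}$ and $v_m$ varying; as written that step is a genuine gap, albeit a repairable one confined to the final clause.
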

\begin{proof}
That $\Xi=\supp\cal K\circ\supp\hat u$ has the form in \eqref{Sigma-eq}
follows by substituting $\zeta=\xi+\eta$. 
Using Theorem~\ref{Kuv-thm} instead of Theorem~\ref{AK-thm}, the proof of
Corollary~\ref{AK-cor} can now be repeated mutatis mutandis; which gives the
inclusion in question.

The redundancy of \eqref{S'lim-cnd} for $u\in \cal F^{-1}\cal E'$
follows since, by Lemma~\ref{alter-lem}, one can for large $m$ write 
$a^m(x,D)u^m=\OP(a^m(1\otimes\chi)\psi_m)u$
for a fixed cut-off function $\chi$. Then Proposition~\ref{Stan-prop} 
gives $\cal S'$-convergence, for 
multiplication by $1\otimes\chi$ commutes with $\psi_m(D_x)$
and $a(1\otimes\chi)\in S^{-\infty}$. That $\Xi$ is closed then is
straightforward to verify.
\end{proof}

\begin{rem}
The set $\Xi$ in \eqref{Sigma-eq} 
need not be closed if $\supp\cal Fu$ is non-compact,
for $\supp\hat a$ may contain points arbitrarily
close to the twisted diagonal. 
Eg if $n=1$ and $\supp\hat u=[2,\infty[\,$ whilst
$\supp \hat a$ consists of the $(\xi,\eta)$ such that 
$\eta\ge -\xi-\tfrac{1}{\xi}$ for $\xi\le-1$, 
and $\eta\ge 2$ for $\xi\ge -1$, 
then $(\xi_k,\eta_k)=(-k,k+1/k)$ fulfils 
$\Xi\ni\xi_k+\eta_k=\tfrac{1}{k}\searrow 0$,
although $0\notin \Xi$.
\end{rem}

That $a(x,D)u$ should be in $\cal S'$ in \eqref{S'lim-cnd} is
natural in order that $\cal F a(x,D)u$ makes sense before its support is
investigated. 
One could conjecture that the condition of convergence in $\cal S'$ is
redundant, so that it would suffice 
to assume $a(x,D)u$ is an element of $\cal S'$. 
But it is not clear (whether and) how this can be proved.

\section{Continuity in Sobolev spaces}   \label{cont-sect}
As a last justification of Definition~\ref{gdef-defn} its close connection
to estimates in Sobolev spaces will be indicated.

\subsection{Littlewood--Paley decompositions}   \label{LP-ssect}
For the purposes of this section, one may for a symbol
$a\in S^d_{1,1}(\Rn\times \Rn)$ consider the limit
\begin{equation}
  a_{\psi}(x,D)u=\lim_{m\to\infty }
  \op{OP}(\psi(2^{-m}D_x)a(x,\eta)\psi(2^{-m}\eta))u.
  \label{aPsi-eq}
\end{equation}
By the definition, $u$ is in $D(a(x,D))$ if $a_{\psi}(x,D)u$ exists
for all $\psi$ and is independent of $\psi$,
as $\psi$ runs through $C^\infty_0(\Rn)$ with $\psi=1$ around the origin.

From $a_{\psi}(x,D)$ there is a particularly easy passage to the 
paradifferential decomposition used by J.-M.~Bony \cite{Bon}.
For this purpose,
note that to each fixed $\psi$ there exist $R>r>0$ satisfying
\begin{equation}
  \psi(\xi)=1\quad\text{for}\quad |\xi|\le r;
  \qquad
  \psi(\xi)=0\quad\text{for}\quad |\xi|\ge R\ge 1.
  \label{Rr-eq}
\end{equation}
Moreover it is convenient to let $h$
stand for an integer such that $R\le r2^{h-2}$.
 
To obtain a Littlewood--Paley decomposition from $\psi$, define 
$\varphi=\psi-\psi(2\cdot )$. Then
it is clear that $\varphi(2^{-k}\cdot )$ is supported in a corona,
\begin{equation}
  \supp \varphi(2^{-k}\cdot )\subset \bigl\{\,\xi \bigm| 
   r2^{k-1}\le|\xi|\le R2^k\,\bigr\},
  \qquad \text{for }k\ge 1.
  \label{phi-eq}
\end{equation}
The identity $1=\psi(\xi)+\sum_{k=1}^\infty \varphi(2^{-k}\xi)$ 
follows by letting $m\to\infty $ in the telescopic sum,
\begin{equation}
  \psi(2^{-m}\xi)=\psi(\xi)+\varphi(\xi/2)+\dots+\varphi(\xi/2^m).
  \label{tele-eq}
\end{equation}
Using this one can localise functions $u(x)$ and symbols $a(x,\eta)$ to
frequencies $|\eta|\approx 2^j$ for $j\ge 1$ by setting
\begin{align}
  u_j&=\varphi(2^{-j}D)u, &\qquad
  a_j(x,\eta)&=\varphi(2^{-j}D_x)a(x,\eta)=
  \cal F^{-1}_{\xi\to x}(\varphi(2^{-j}\xi)\hat a(\xi,\eta)).
\\
  u_0&=\psi(D)u, &\qquad
  a_0(x,\eta)&=\psi(D_x)a(x,\eta)
\end{align}
Similarly localisation to balls $|\eta|\le R2^j$ are written, now with upper
indices, as
\begin{equation}
  u^j=\psi(2^{-j}D)u, \qquad
  a^j(x,\eta)=\psi(2^{-j}D_x)a(x,\eta)=
   \cal F^{-1}_{\xi\to x}(\psi(2^{-j}\xi)\cal F_{x\to\xi}a(\xi,\eta)).
\end{equation}
Moreover, $u^0=u_0$ and $a^0=a_0$.
In addition both eg $a_j=0$ and $a^j=0$ should be understood when $j<0$. 
(In order not to have two different meanings of sub- and superscripts on
functions, the dilations $\psi(2^{-j}\cdot )$ are simply written as such;
and the corresponding Fourier multiplier as $\psi(2^{-j}D)$.)
Note that $a^{k}(x,D)=\op{OP}(\psi(2^{-k}D_x)a(x,\eta))$ etc.

However, returning to \eqref{aPsi-eq}, the relation \eqref{tele-eq} 
applies twice, whence bilinearity gives 
\begin{equation}
  a^m(x,D)u^m
=\OP([a_0(x,\eta)+\dots +a_m(x,\eta)][\psi(\eta)+\dots +\varphi(2^{-m}\eta)])u
= \sum_{j,k=0}^m   a_j(x,D)u_k.
  \label{au_bilin-eq}
\end{equation}
Of course the sum may be split in three groups in which $j\le k-h$,
$|j-k|<h$ and $k\le  j-h$, respectively.
In the limit $m\to\infty $ this gives the decomposition
\begin{equation}
  a_{\psi}(x,D)u
  =a_{\psi}^{(1)}(x,D)u+a_{\psi}^{(2)}(x,D)u+a_{\psi}^{(3)}(x,D)u,
  \label{a123-eq}
\end{equation}
whenever $a$ and $u$ fit together such that 
the three series below converge in $\cal D'(\Rn)$:
\begin{align}
    a_{\psi}^{(1)}(x,D)u&=\sum_{k=h}^\infty \sum_{j\le k-h} a_j(x,D)u_k
  =\sum_{k=h}^\infty a^{k-h}(x,D)u_k
  \label{a1-eq}\\
  a_{\psi}^{(2)}(x,D)u&= \sum_{k=0}^\infty
               \bigl(a_{k-h+1}(x,D)u_k+\dots+a_{k-1}(x,D)u_k+a_{k}(x,D)u_k
\notag\\[-2\jot]
   &\qquad\qquad
                +a_{k}(x,D)u_{k-1} +\dots+a_k(x,D)u_{k-h+1}\bigr) 
  \label{a2-eq}\\
   a_{\psi}^{(3)}(x,D)u&=\sum_{j=h}^\infty\sum_{k\le j-h}a_j(x,D)u_k
   =\sum_{j=h}^\infty a_j(x,D)u^{j-h}.
  \label{a3-eq}
\end{align}
Also \eqref{a2-eq} has a brief form, namely
\begin{equation}
  a_{\psi}^{(2)}(x,D)u=\sum_{k=0}^\infty
  ((a^{k}-a^{k-h})(x,D)u_k+a_k(x,D)(u^{k-1}-u^{k-h})).   
\end{equation}
 
One advantage of the decomposition is that the terms of the 
first and last series fulfil a
dyadic corona condition; whereas the in second the spectra are in general
only restricted to balls:

\begin{prop}  \label{corona-prop}
If $a\in S^d_{1,1}(\Rn\times \Rn)$ and $u\in \cal S'(\Rn)$, and 
$r$, $R$ are chosen as in \eqref{Rr-eq} for each auxiliary function $\psi$,
then every $h\in \N$ such that $R\le r2^{h-2}$ gives
\begin{align}
  \supp\cal F(a^{k-h}(x,D)u_k)&\subset
  \bigl\{\,\xi\bigm| 
  \frac{r}{4}2^k\le|\xi|\le \frac{5R}{4} 2^k\,\bigr\}
  \label{supp1-eq}  \\
  \supp\cal F(a_k(x,D)u^{k-h})&\subset
  \bigl\{\,\xi \bigm| 
  \frac{r}{4}2^k\le|\xi|\le \frac{5R}{4} 2^k\,\bigr\}.
  \label{supp3-eq}
\end{align}
Moreover, for $a_{\psi}^{(2)}(x,D)$,
\begin{equation}
  \supp\cal F\big(a_k(x,D)(u^{k-1}-u^{k-h})+(a^k-a^{k-h})(x,D)u_k\big)\subset
  \bigl\{\,\xi\bigm| |\xi|\le 2R 2^k\,\bigr\}
  \label{supp2-eq}
\end{equation}
If $a$ satisfies \eqref{TDC-cnd} this support is contained in 
\begin{equation}
  \bigl\{\,\xi \bigm| \frac{r}{2^{h+1}C} \le |\xi|\le 2R2^k\,\bigr\}
\end{equation}
for all $k\ge h+1+\log_2(C/r)$.
\end{prop}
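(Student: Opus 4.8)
The plan is to apply the spectral support rule (Theorem~\ref{supp-thm}) to each of the building blocks occurring in the decomposition \eqref{a1-eq}--\eqref{a3-eq}. Since $u_k$ and $u^{k-h}$ lie in $\cal F^{-1}\cal E'(\Rn)$, that theorem applies to them automatically, so the spectrum of $a^{k-h}(x,D)u_k$ is contained in the closure of the set of all $\xi+\eta$ with $\xi$ in the $x$-frequency support of the symbol $a^{k-h}$ and $\eta\in\supp\hat u_k$, and likewise for $a_k(x,D)u^{k-h}$, for $a_k(x,D)(u^{k-1}-u^{k-h})$ and for $(a^k-a^{k-h})(x,D)u_k$; moreover these sets are already closed because the spectra of the distribution factors are compact. (Alternatively one may argue directly from \eqref{Fa-eq}.) After this reduction the whole statement is elementary geometry of coronas and balls in $\Rn$.

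First I would record the frequency localisations, read off from \eqref{phi-eq}, \eqref{Rr-eq} and the definitions of $a_j$, $a^j$, $u_j$, $u^j$: in $a_j$ and in $u_j$ the relevant frequency $\zeta$ obeys $r2^{j-1}\le|\zeta|\le R2^j$, whereas in $a^j$ and in $u^j$ it obeys $|\zeta|\le R2^j$. The one arithmetic input I would use repeatedly is that \eqref{Rr-eq} forces $r<R$ and, together with $R\le r2^{h-2}$, gives $R2^{-h}\le r2^{-2}$.

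Then \eqref{supp1-eq} follows by combining $|\xi|\le R2^{k-h}$ (from $a^{k-h}$) with $r2^{k-1}\le|\eta|\le R2^k$ (from $u_k$): the upper estimate is $|\xi+\eta|\le R2^{k-h}+R2^k\le\tfrac14 R2^k+R2^k=\tfrac{5R}{4}2^k$ and the lower one $|\xi+\eta|\ge|\eta|-|\xi|\ge r2^{k-1}-R2^{k-h}\ge\tfrac r2 2^k-\tfrac r4 2^k=\tfrac r4 2^k$. Estimate \eqref{supp3-eq} is the identical computation with the corona now carried by the symbol $a_k$ and the ball by $u^{k-h}$. For \eqref{supp2-eq} both $a_k(x,D)(u^{k-1}-u^{k-h})$ and $(a^k-a^{k-h})(x,D)u_k$ have output frequency $\xi+\eta$ with $|\xi|\le R2^k$ and $|\eta|\le R2^k$, so $|\xi+\eta|\le 2R2^k$, and the spectrum of the sum lies in the union of these two regions.

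The only genuinely new input, and the step I expect to require the most care, is the last assertion under \eqref{TDC-cnd}. Here I would observe that $a_k=\varphi(2^{-k}D_x)a$ and $a^k-a^{k-h}=\bigl(\psi(2^{-k}D_x)-\psi(2^{-(k-h)}D_x)\bigr)a$ both have $x$-Fourier transform supported where $\hat a$ is, so wherever they contribute one has $|\eta|\le C(|\xi+\eta|+1)$ by \eqref{TDC-cnd}, i.e.\ $|\xi+\eta|\ge|\eta|/C-1$; it then remains to bound $|\eta|$ below by a multiple of $2^k$. For $(a^k-a^{k-h})(x,D)u_k$ this is immediate, since $|\eta|\ge r2^{k-1}$ on $\supp\hat u_k$. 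For $a_k(x,D)(u^{k-1}-u^{k-h})$ the point is that $\psi(2^{-(k-1)}\eta)-\psi(2^{-(k-h)}\eta)$ vanishes for $|\eta|\le r2^{k-h}$, because $\psi\equiv1$ near the origin; hence there $|\eta|\ge r2^{k-h}$ and $|\xi+\eta|\ge r2^{k-h}/C-1\ge\tfrac{r}{2^{h+1}C}2^k$ as soon as $r2^{k-h}/C\ge2$, i.e.\ $k\ge h+1+\log_2(C/r)$; in particular $|\xi+\eta|\ge\tfrac{r}{2^{h+1}C}$. Combining this lower bound with the upper bound of \eqref{supp2-eq} gives the claimed corona.
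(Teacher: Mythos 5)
Your proposal is correct and follows exactly the route the paper indicates: the paper's ``proof'' is a one-line appeal to the spectral support rule (Theorem~\ref{supp-thm}), and you supply precisely the details it omits --- the applicability to $u_k,u^{k-h}\in\cal F^{-1}\cal E'$, the frequency localisations of $a_j$, $a^j$, $u_j$, $u^j$, and the corona arithmetic using $R2^{-h}\le r2^{-2}$. Note also that your computation for the twisted-diagonal case yields the lower bound $\frac{r}{2^{h+1}C}2^k$, which is the genuinely useful ($k$-dependent) corona estimate; the paper's displayed bound $\frac{r}{2^{h+1}C}$ appears to have dropped the factor $2^k$, and your argument delivers both.
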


This proposition follows straightforwardly from the spectral support rule in
Theorem~\ref{supp-thm},
with a special case explained in \cite{JJ05DTL}, so further details 
should hardly be needed here.

In addition one can estimate each series using the
Hardy--Littlewood maximal operator $Mu_k(x)$.
This gives eg for $\nu=1$ in \eqref{a(nu)-est}, when the Fefferman--Stein inequality is used in
the last step,
\begin{equation}
  \bigl(\int_{\Rn} \bigl( \sum_{k=h}^\infty 
    |2^{sk}a^{k-h}(x,D)u_k(x)|^2\bigr)^{\tfrac{p}{2}}\,dx\bigr)^{\tfrac{1}{p}}
  \le c(a)
  \Nrm{\bigl( \sum_{k=0}^\infty |2^{(s+d)k}Mu_k(\cdot )|^2
             \bigr)^{\tfrac{1}{2}}}{p}
  \le c'c(a)\nrm{u}{H^{s+d}_p};
\end{equation}
here $c(a)$ is a continuous seminorm on $a\in S^{d}_{1,1}$. Similar
estimates are obtained for $\nu=2$ and $\nu=3$.
The  reader is referred to \cite{JJ05DTL} for brevity here.
(Although the set-up was more
general there with Besov spaces $B^{s}_{p,q}$ and Triebel--Lizorkin spaces
$F^{s}_{p,q}$, it is easy to specialise to the present 
$H^s_p$-framework, mainly by
setting $q=2$ in the treatment of the $F^{s}_{p,q}$.
One difference in the framework of \cite{JJ05DTL}
is that certain functions $\tilde \Phi_j$ enter the expressions
$a_{j,k}(x,D)u_k$ 
there, but the $\tilde \Phi_j$ 
amount to special choices of $\chi$ in the above formula
\eqref{alter-eq}, hence may be removed when convenient.)
 
Combining such estimates with Proposition~\ref{corona-prop} it follows in a
well-known way that for $\nu=1$ and $\nu=3$,
\begin{equation} \label{a(nu)-est}
  \nrm{a_{\psi}^{(\nu)}(x,D)u}{H^s_{p}}\le  c
  \nrm{u}{H^{s+d}_p}
  \quad\text{for}\quad s\in \R,\ 1<p<\infty .
\end{equation}
For $\nu=2$ this holds for $s>0$,
because the coronas are replaced by balls.

Consequently 
$u\mapsto a_{\psi}(x,D)u
=a_{\psi}^{(1)}(x,D)u+a_{\psi}^{(2)}(x,D)u+a_{\psi}^{(3)}(x,D)u$ 
is a bounded linear operator 
$H^{s+d}_p(\Rn)\to H^s_p(\Rn)$ for $s>0$ and
\begin{equation}
  \nrm{a_{\psi}(x,D)u}{H^s_p}\le C(a)\nrm{u}{H^{s+d}_p},
\end{equation}
where $C(a)$ is a continuous seminorm on $a\in S^d_{1,1}$.
Moreover, if $a$ fulfils 
\eqref{TDC-cnd}, then the last part of
Proposition~\ref{corona-prop} leads to continuity for all $s\in \R$.  

Moreover, density of the Schwartz space $\cal
S(\Rn)$ in $H^{s+d}_p(\Rn)$ yields that $a_{\psi}(x,D)$ is independent of
$\psi$,
for they all agree with $\OP(a)u$ whenever $u\in \cal S(\Rn)$; 
cf \eqref{a123-eq}, \eqref{au_bilin-eq} and \eqref{bilSS-eq}.
So by Definition~\ref{gdef-defn} it follows that $a(x,D)u$ is defined on
every $u\in H^{s+d}_p$ with $s>0$; more precisely one has

\begin{thm}
  \label{Hsp-thm}
Let $a(x,\eta)$ be a symbol in $S^d_{1,1}(\Rn\times \Rn)$.
Then for every $s>0$, $1<p<\infty $ the type $1,1$-operator $a(x,D)$ has
$H^{s+d}_{p}(\Rn)$ in its domain and it is
a continuous linear map
\begin{equation}
  a(x,D)\colon H^{s+d}_p(\Rn)\to H^s_p(\Rn).
\end{equation}
This property extends to all $s\in \R$ when $a$ fulfils the twisted diagonal
condition \eqref{TDC-cnd}.
\end{thm}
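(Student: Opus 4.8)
The plan is to build on the paradifferential machinery already introduced in Section~\ref{LP-ssect}. First I would fix an auxiliary function $\psi\in C^\infty_0(\Rn)$ with $\psi\equiv1$ near the origin, and apply the telescopic identity \eqref{tele-eq} to both the symbol and the argument, so that the fully frequency-modulated operator becomes the finite bilinear sum $a^m(x,D)u^m=\sum_{j,k=0}^m a_j(x,D)u_k$ of \eqref{au_bilin-eq}. Splitting the index set into the three ranges $j\le k-h$, $|j-k|<h$, and $k\le j-h$ gives the candidate pieces $a_\psi^{(1)}$, $a_\psi^{(2)}$, $a_\psi^{(3)}$ of \eqref{a1-eq}--\eqref{a3-eq} and the decomposition \eqref{a123-eq}. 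One task is to verify that the three resulting series converge in $\cal D'(\Rn)$ when $u\in H^{s+d}_p$; this will be a by-product of the norm estimates below, once it is known that the constants there are uniform in $m$.

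Next I would estimate each series in the $H^s_p$-norm. For $\nu=1$ and $\nu=3$ the summands $a^{k-h}(x,D)u_k$ and $a_k(x,D)u^{k-h}$ have spectra in dyadic coronas $\tfrac{r}{4}2^k\le|\xi|\le\tfrac{5R}{4}2^k$ by Proposition~\ref{corona-prop}, so the standard Littlewood--Paley equivalence reduces $\nrm{a_\psi^{(\nu)}(x,D)u}{H^s_p}$ to the $L_p$-norm of the square function $\bigl(\sum_k|2^{sk}a^{k-h}(x,D)u_k|^2\bigr)^{1/2}$. Estimating each localised term pointwise by $c(a)\,2^{dk}Mu_k$, with $c(a)$ a continuous seminorm on $S^d_{1,1}$ and $M$ the Hardy--Littlewood maximal operator, and then applying the Fefferman--Stein vector-valued maximal inequality together with the square-function characterisation of $H^{s+d}_p$, yields $\nrm{a_\psi^{(\nu)}(x,D)u}{H^s_p}\le c\nrm{u}{H^{s+d}_p}$ for all $s\in\R$, $1<p<\infty$. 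For $\nu=2$ the resonant terms are, by Proposition~\ref{corona-prop}, supported only in balls $|\xi|\le 2R2^k$, so summing the $2^{sk}$-weighted pieces in $\ell^2$ now produces a geometric series that converges precisely when $s>0$; this gives the same bound for $s>0$. Adding the three contributions shows $a_\psi(x,D)\colon H^{s+d}_p(\Rn)\to H^s_p(\Rn)$ is bounded with norm $\le C(a)$, a continuous seminorm on $S^d_{1,1}$; and when $a$ satisfies \eqref{TDC-cnd}, the last part of Proposition~\ref{corona-prop} puts the $\nu=2$ terms back into coronas for all large $k$, so after absorbing finitely many terms the $\nu=2$ estimate holds for every $s\in\R$.

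It remains to identify $a_\psi(x,D)$ with $a(x,D)$ of Definition~\ref{gdef-defn}. Since $\cal S(\Rn)$ is dense in $H^{s+d}_p(\Rn)$, and by \eqref{a123-eq}, \eqref{au_bilin-eq} and the continuity \eqref{bilSS-eq} every $a_\psi(x,D)$ coincides with the integral operator $\OP(a)$ on $\cal S(\Rn)$, the uniform bound forces all the $a_\psi(x,D)$ to agree on $H^{s+d}_p$. Hence for $u\in H^{s+d}_p$ the limit in \eqref{ammu-eq} exists in $\cal D'(\Rn)$ and is independent of $\psi$, so $H^{s+d}_p\subset D(a(x,D))$ and $a(x,D)=a_\psi(x,D)$ there; this is the asserted continuity, valid for all $s\in\R$ under \eqref{TDC-cnd}.

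The main obstacle is the resonant block $a_\psi^{(2)}(x,D)u$: unlike the other two pieces its frequency content is a priori confined to balls rather than coronas, so the clean $\ell^2$-summation behind the square-function estimate must be replaced by a cruder one that converges only for $s>0$ — or, under \eqref{TDC-cnd}, by re-deriving the corona localisation. Keeping track of the constants so that they are continuous seminorms on $S^d_{1,1}$, uniformly in $m$, is the technical core, but this is routine given Proposition~\ref{corona-prop} and the Fefferman--Stein inequality, as indicated in \eqref{a(nu)-est} and the surrounding discussion.
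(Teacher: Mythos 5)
Your proposal is correct and follows essentially the same route as the paper: the bilinear splitting \eqref{au_bilin-eq} into the three paradifferential pieces \eqref{a1-eq}--\eqref{a3-eq}, the corona/ball localisations of Proposition~\ref{corona-prop}, the maximal-function plus Fefferman--Stein estimates \eqref{a(nu)-est}, and the density of $\cal S(\Rn)$ in $H^{s+d}_p(\Rn)$ to obtain $\psi$-independence and identify $a_\psi(x,D)$ with the operator of Definition~\ref{gdef-defn}. The paper likewise defers the detailed term-by-term estimates to \cite{JJ05DTL}, so your level of detail matches its own.
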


In \cite{JJ05DTL} a similar proof was given for Besov and Lizorkin--Triebel
spaces, ie for $B^{s}_{p,q}$ and $F^{s}_{p,q}$. But this contains the above
Theorem~\ref{Hsp-thm} in view of the well-known identification
$H^s_p(\Rn)=F^{s}_{p,2}(\Rn)$ for $1<p<\infty $, which through a reduction
to $s=0$ results from the Littlewood--Paley inequality.
The reader is referred to the more general continuity
results in \cite{JJ05DTL}, which also cover the H{\"o}lder--Zygmund classes
because of the identification $C^s=B^s_{\infty ,\infty }$.

However, a little precaution is needed because 
$\cal S(\Rn)$ is not dense in $B^s_{\infty,q}$. 
Even so $a(x,D)$ is defined on and bounded
from $B^{s}_{\infty ,q}$ for $s>d$ (and $s=d$, $q=1$ cf \eqref{Fdp1-eq} ff
and \cite[(1.6)]{JJ05DTL}), which may be seen from the Besov space estimates
of \cite{JJ05DTL} and the argument preceeding Theorem~\ref{Hsp-thm}  
as follows. 
By lowering $s$ one can arrange that $q<\infty $,
in which case it is well-known that $B^s_{\infty ,q}\bigcap\cal F^{-1}\cal
E'$ is dense; whence $a_{\psi}(x,D)u$ is independent of $\psi$ for all 
$u\in B^{s}_{\infty ,q}$ if it is so for all $u\in \cal F^{-1}\cal E'$. 
This last property is a consequence of the fact that $\cal F^{-1}\cal E'$
is in the domain of $a(x,D)$; cf Theorem~\ref{consistency-thm} and
Remark~\ref{consistency-rem}.

\begin{rem}   \label{iden-rem}
It is evident that the counter-example in Proposition~\ref{WF-prop} relied
on an 
extension of continuity of $a_{2\theta}(x,D)$ to a bounded operator
$H^{s+d}\to H^s$ for arbitrary $s<d$. Moreover, this extension has not
previously been identified with the definition of $a_{2\theta}(x,D)$ by
vanishing frequency modulation. However, by the density of $\cal S$, it
follows from the last part of Theorem~\ref{Hsp-thm}  that these
two extensions are identical, whence the operators in
Definition~\ref{gdef-defn} lack the microlocal property in the treated cases.
\end{rem}

\subsection{Composite functions}   \label{composite-ssect}
Finally it is verified that the formal definition of type $1,1$-operators by
vanishing frequency modulation also plays well together with Y.~Meyer's 
formula for composite functions.

\bigskip

Consider the map $u\mapsto F\circ u$ given by $F(u(x))$ for a fixed $F\in
C^\infty (\R)$ and a real-valued $u\in H^{s_0}_{p_0}(\Rn)$ for
$s_0>n/{p_0}$, $1<p_0<\infty $.
Then $u$ is uniformly continuous and bounded on $\Rn$ as well as in
$L_p(\Rn)$ for $p_0\le p\le \infty $.
Note that with the notation of the previous section, and in particular  
\eqref{tele-eq}, one has in $L_p(\Rn)$ 
\begin{equation}
  u_0+u_1+\dots +u_m=u^m=2^{mn}\cal F^{-1}\psi(2^m\cdot )*u
  \xrightarrow[m\to\infty ]{~} u.
\end{equation}
Assuming that $F(0)=0$ when $p<\infty $, then  
$v\mapsto F\circ v$ is Lipschitz continuous on 
the metric subspace $L_p(\Rn,B)$ for every ball $B\Subset \R$,
\begin{gather}
  F(w(x))-F(v(x))=\int_0^1 F'(v(x)+t(w(x)-v(x)))\,dt\cdot (w(x)-v(x))  
\\
  \nrm{F\circ w-F\circ v}{p}\le \sup_{B}|F'|\cdot \nrm{w-v}{p}.
\end{gather}
Since $\nrm{u^m}{\infty }\le \nrm{\cal F^{-1}\psi}{1}\nrm{u}{\infty }$, 
one can take $B$ so large that it contains $u(\Rn)$ and $u^m(\Rn)$ for every
$m$, so since $u^k-u^{k-1}=u_k$ 
it follows
from the Lipschitz continuity  
that with limits in $L_p$, $p_0\le p\le \infty $, 
\begin{equation}
  \begin{split}
  F(u(x))&=\lim_{m\to\infty } F(u^m(x))
  =F(0)+\lim_{m\to\infty } \sum_{k=0}^m(F(u^k(x))-F(u^{k-1}(x)))
\\
  &= F(0)+\sum_{k=0}^\infty 
   \int_0^1 F'(u^{k-1}(x)+tu_{k}(x))\,dt\cdot \varphi(2^{-k}D)u(x).
  \end{split}
  \label{Flin-eq}
\end{equation}
Setting $m_k(x)=\int_0^1 F'(u^{k-1}(x)+tu_{k}(x))\,dt$
it is not difficult to see that $m_k\in C^\infty (\Rn)$ 
with bounded derivatives of any order because 
$D^\beta u_k=2^{k(n+|\beta|)}D^\beta \check \varphi(2^k\cdot )*u$; and since 
$2^k\approx |\eta|$ on $\supp\varphi(2^{-k}\cdot )$ that
\begin{equation}
  a_u(x,\eta):= \sum_{k=0}^\infty m_k(x)\varphi(2^{-k}\eta)
 \in   S^0_{1,1}(\Rn\times \Rn).
  \label{auxD-eq}
\end{equation}
That the corresponding type $1,1$-operator $a_u(x,D)$ gives important
information on the function $F(u(x))$ was the idea of 
Y.~Meyer \cite{Mey80,Mey81},
and it is now confirmed that his results remain valid when the operators are
based on Definition~\ref{gdef-defn}:
 
\begin{thm}   \label{composite-thm}
When $u\in H^{s_0}_{p_0}(\Rn)$ for $s_0>n/{p_0}$, $1<p_0<\infty$ 
is real valued, 
and $F\in C^\infty (\R)$ with $F(0)=0$, 
then the type $1,1$-operator $a_u(x,D)$ is a
bounded linear operator
\begin{equation}
  a_u(x,D)\colon  H^s_p(\Rn)\to H^s_p(\Rn)
  \quad\text{for every}\quad s>0,\ 1<p<\infty .
\end{equation} 
Taking $s=s_0$, $p=p_0$ one has $a_u(x,D)u(x)=F(u(x))$, and the map
$u\mapsto  F\circ u$ is continuous on $H^{s_0}_{p_0}(\Rn,\R)$.
\end{thm}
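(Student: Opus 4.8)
The plan is to establish the boundedness of $a_u(x,D)$ first, then identify $a_u(x,D)u$ with $F\circ u$, and finally deduce continuity of $u\mapsto F\circ u$ by a perturbation argument. For the boundedness, I would invoke Theorem~\ref{Hsp-thm} directly: since $a_u\in S^0_{1,1}(\Rn\times\Rn)$ by \eqref{auxD-eq}, that theorem gives at once that $a_u(x,D)$ maps $H^s_p(\Rn)\to H^s_p(\Rn)$ continuously for every $s>0$, $1<p<\infty$. The only subtlety is that the operator norm of $a_u(x,D)$ is governed by a continuous seminorm $C(a_u)$ on $S^0_{1,1}$, so one should record explicitly that $C(a_u)$ is bounded in terms of $\sup_{k,|\beta|\le N}\nrm{D^\beta m_k}{\infty}$, which in turn is controlled via the chain rule by $\sup_{|j|\le N}\nrm{F^{(j)}}{L_\infty(B)}$ on a ball $B\supset u(\Rn)$, and by Bernstein-type estimates $\nrm{D^\beta u_k}{\infty}\le c2^{k|\beta|}\nrm{u_k}{\infty}\le c'2^{k|\beta|}\nrm{u}{\infty}$; hence $C(a_u)\le \Phi(\nrm{u}{H^{s_0}_{p_0}})$ for a continuous increasing function $\Phi$ depending only on $F$ and $n$. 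This quantitative dependence is what will make the final continuity statement work.

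Next, for the identity $a_u(x,D)u=F\circ u$ when $s=s_0$, $p=p_0$, the point is that the right-hand side of \eqref{Flin-eq} is, term by term, precisely $\sum_{k=0}^\infty m_k(x)\varphi(2^{-k}D)u(x)=\sum_{k=0}^\infty a_{u,k}(x,D)u_k$ in the notation of Section~\ref{LP-ssect} (with $a_{u,k}(x,\eta)=m_k(x)\varphi(2^{-k}\eta)$, so that $a_{u,k}(x,D)u_k=m_k\cdot\varphi(2^{-k}D)u_k=m_k\cdot u_k$ since $\varphi(2^{-k}\cdot)=1$ on $\supp\hat u_k$). On the other hand, by \eqref{au_bilin-eq} applied to the symbol $a_u$, one has $a_u^m(x,D)u^m=\sum_{j,k=0}^m a_{u,j}(x,D)u_k$; here the diagonal-type structure of $a_u$ (only frequencies $|\eta|\approx 2^k$ appear in the $k$-th term) forces $a_{u,j}(x,D)u_k$ to reduce, modulo the harmless spectral overlaps, to $m_k u_k$ when $j=k$ and to paraproduct-type terms otherwise; summing and passing to the limit $m\to\infty$ in $\cal D'(\Rn)$ reproduces the series in \eqref{Flin-eq}. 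Since \eqref{Flin-eq} converges in $L_{p_0}(\Rn)$ to $F(u(x))$, and since $u\in D(a_u(x,D))$ with the limit independent of $\psi$ (guaranteed by Theorem~\ref{Hsp-thm} together with density of $\cal S$ in $H^{s_0}_{p_0}$), the two limits coincide and $a_u(x,D)u=F\circ u$.

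Finally, for continuity of $u\mapsto F\circ u$ on $H^{s_0}_{p_0}(\Rn,\R)$, let $u,w\in H^{s_0}_{p_0}$ and write $F\circ w-F\circ u=a_w(x,D)w-a_u(x,D)u=a_w(x,D)(w-u)+(a_w-a_u)(x,D)u$. The first term is estimated by $C(a_w)\nrm{w-u}{H^{s_0}_{p_0}}$, which is acceptable once $C(a_w)$ stays bounded as $w\to u$ — true by the quantitative bound $C(a_w)\le\Phi(\nrm{w}{H^{s_0}_{p_0}})$ noted above. The main obstacle is the second term: one must show $(a_w-a_u)(x,D)u\to0$ in $H^{s_0}_{p_0}$ as $w\to u$. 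For this I would bound its norm by a continuous seminorm of the symbol difference $a_w-a_u=\sum_k(m_k^w-m_k^u)(x)\varphi(2^{-k}\eta)$; the coefficients satisfy $|m_k^w(x)-m_k^u(x)|\le\sup_B|F''|\cdot(\nrm{w-u}{\infty}+\nrm{w_k-u_k}{\infty})\le c\nrm{w-u}{\infty}\le c'\nrm{w-u}{H^{s_0}_{p_0}}$ by the Sobolev embedding $s_0>n/p_0$, and similarly for each derivative $D^\beta(m_k^w-m_k^u)$ after distributing $D^\beta$ via the chain rule and using Bernstein estimates on the $u_k$, $w_k$. Hence $a_w-a_u\to0$ in the Fréchet topology of $S^0_{1,1}$ as $w\to u$ in $H^{s_0}_{p_0}$, and since the operator-norm bound from Section~\ref{cont-sect} depends continuously (indeed, through a \emph{single} seminorm) on the symbol, $\nrm{(a_w-a_u)(x,D)u}{H^{s_0}_{p_0}}\le c(a_w-a_u)\nrm{u}{H^{s_0}_{p_0}}\to0$. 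Combining the two terms gives $\nrm{F\circ w-F\circ u}{H^{s_0}_{p_0}}\to0$, which is the asserted continuity.
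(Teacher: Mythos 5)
Your first and third steps are sound and essentially coincide with the paper's proof: boundedness is immediate from Theorem~\ref{Hsp-thm} since $a_u\in S^0_{1,1}$, and for the continuity of $u\mapsto F\circ u$ your two-term splitting $a_w(x,D)(w-u)+(a_w-a_u)(x,D)u$, together with the continuity of $w\mapsto a_w$ from $H^{s_0}_{p_0}$ into $S^0_{1,1}$, is an algebraic rearrangement of the paper's three-term splitting; your quantitative bound $C(a_w)\le\Phi(\nrm{w}{H^{s_0}_{p_0}})$ replaces the paper's appeal to the Banach--Steinhaus theorem, and either works.

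The gap is in the middle step, the identification $a_u(x,D)u=F\circ u$. You invoke \eqref{au_bilin-eq} with ``$a_{u,j}(x,\eta)=m_j(x)\varphi(2^{-j}\eta)$'', but in \eqref{au_bilin-eq} the pieces $a_j$ are the $x$-frequency localisations $\varphi(2^{-j}D_x)a$, not the terms of the defining series \eqref{auxD-eq}, which localise in $\eta$. For $a_u$ these are genuinely different: $\varphi(2^{-j}D_x)a_u=\sum_k(\varphi(2^{-j}D_x)m_k)(x)\varphi(2^{-k}\eta)$ mixes all the $m_k$. Hence the identity $a_u^m(x,D)u^m=\sum_{j,k\le m}m_j(x)\varphi(2^{-j}\eta)|_{\eta=D}u_k$ is not what \eqref{au_bilin-eq} yields, and the subsequent reduction ``modulo the harmless spectral overlaps'' to the series in \eqref{Flin-eq} is precisely the point that requires proof. (A secondary slip: $\varphi(2^{-k}D)u_k\ne u_k$, since $\varphi(2^{-k}\cdot)$ is supported in, but not identically $1$ on, the corona containing $\supp\hat u_k$.) The paper bridges this differently: the partial sums $\sum_{k\le m}m_k(x)\varphi(2^{-k}\eta)$ converge to $a_u$ in $S^1_{1,1}$ (the $k$-th term has $S^1_{1,1}$-seminorms $O(2^{-k})$ because $\ang{\eta}\approx 2^k$ on its support), and since the operator norm produced in the proof of Theorem~\ref{Hsp-thm} is controlled by a continuous seminorm of the symbol, the operators $\OP(\sum_{k\le m}m_k\varphi(2^{-k}\eta))$ converge to $a_u(x,D)$ in operator norm; applied to $u$ these partial sums are exactly $\sum_{k\le m}m_k u_k$, whose $L_{p_0}$-limit is $F\circ u$ by \eqref{Flin-eq}. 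Some argument of this kind is needed to connect the vanishing-frequency-modulation limit defining $a_u(x,D)u$ with the linearisation series; as written, your proposal does not supply it.
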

\begin{proof}
The continuity on $H^s_p$ follows from Theorem~\ref{Hsp-thm} since $a_u\in
S^0_{1,1}$. As the proof of this theorem shows, the operator norm
$\nrm{b(x,D)}{}$ in $\B(H^s_{p})$ is estimated by a seminorm $c(b)$ on $b\in
S^0_{1,1}\subset S^1_{1,1}$, and \eqref{auxD-eq} converges in $S^1_{1,1}$, 
so one has in $\B(H^s_p)$ that
\begin{equation}
  \OP(\sum_{k=0}^m m_k(x)\varphi(2^{-k}\eta)) 
  \xrightarrow[m\to\infty ]{~}  a_u(x,D).
\end{equation}
By \eqref{Flin-eq} this implies that in the larger space $L_{p_0}$
\begin{equation}
  a_u(x,D)u=
  \lim_{m\to\infty }\OP(\sum_{k=0}^m m_k(x)\varphi(2^{-k}\eta))u=
  \sum_{k=0}^\infty m_k(x)\varphi(2^{-k}D)u= F\circ u.
\end{equation}
Hence $u\mapsto F\circ u$ is a map $H^{s_0}_{p_0}\to H^{s_0}_{p_0}$, which
is continuous since for $v\to u$
\begin{equation}
  \begin{split}
  F(v(x))-F(u(x))=a_{u}(x,D)(v-u)+&[a_v(x,D)-a_u(x,D)]u
\\
                  +&[a_v(x,D)-a_u(x,D)](v-u)  \to 0.
  \end{split}
\end{equation}
Indeed, by continuity of $a_u(x,D)$ the first term tends to $0$, and by the
Banach--Steinhauss theorem the two other terms do so if only $a_v(x,D)\to
a_u(x,D)$ in $\B(H^{s_0}_{p_0})$, ie if $a_v\to a_u$ in
$S^0_{1,1}$. However, the non-linear map $u\mapsto a_u$ is continuous from
$H^{s_0}_{p_0}$ to $S^0_{1,1}$, for
$(1+|\eta|)^{|\alpha|-|\beta|}
|D^\alpha_\eta D^\beta_x(a_v(x,\eta)-a_u(x,\eta))|$
is at each $\eta$ estimated uniformly by terms that may have 
$\nrm{v-u}{\infty }$ as a factor or contains
$\sup_{x\in \Rn}\int_0^1|F^{(l)}(v^{k-1}(x)+tv_k(x))-
                         F^{(l)}(u^{k-1}(x)+tu_k(x))|\,dt$, 
which tends to $0$ by the
uniform continuity of $F^{(l)}$ on a sufficiently large ball.
\end{proof}

Among the merits of the theorem, note that for non-integer $s$
it is non-trivial to prove that $F(u(x))$ is in $H^s_p$ when $u$ is so.
When needed the reader may derive similar results for the $B^{s}_{p,q}$ and
$F^{s}_{p,q}$ from the estimates in \cite{JJ05DTL}.
Moreover, continuity of $u\mapsto F\circ u$ is as shown a straightforward
consequence of the factorisation $a_u(x,D)u$, but this was not mentioned in 
\cite{Mey80,Mey81,H97}.

\begin{rem}
As a small extension of the above, it may be noted that when $F'$ is bounded
on $\R$, then the assumption on $u$ can be relaxed to $u\in L_{p_0}$ for
$1\le p_0\le \infty $, for $F(u(x))$ is defined, and
the linearisation formula \eqref{Flin-eq} 
still holds as $u\mapsto F\circ u$ is Lipschitz continous on 
$L_{p_0}(\Rn,\R)$ in this case (however, the symbol $a_u(x,\eta)$ has much
weaker properties). 
\end{rem}

\subsection*{Acknowledgement}
My thanks are due to the anonymous referee for requesting
a more explicit comparison with the existing literature.

%
%
\providecommand{\bysame}{\leavevmode\hbox to3em{\hrulefill}\thinspace}
\providecommand{\MR}{\relax\ifhmode\unskip\space\fi MR }
\providecommand{\MRhref}[2]{%
  \href{http://www.ams.org/mathscinet-getitem?mr=#1}{#2}
}
\providecommand{\href}[2]{#2}

\end{document}